\documentclass[11pt]{amsart}

\usepackage[utf8]{inputenc}
\usepackage[margin=2.5cm]{geometry}
\usepackage{amsmath,amssymb,amsfonts,amscd}
\usepackage{mathrsfs}
\usepackage{mathtools}
\usepackage{tikz}
\usepackage{xcolor}
\usepackage{verbatim}
\usepackage{url}
\usepackage{parskip}
\usepackage{hyperref}

\allowdisplaybreaks
\mathtoolsset{showonlyrefs}

\makeatletter
\@namedef{subjclassname@2020}{\textup{2020} Mathematics Subject Classification}
\makeatother

\newcommand{\C}{\mathscr{C}}
\newcommand{\R}{\mathbb{R}}
\newcommand{\oo}[1]{\left({#1}\right)}
\newcommand{\cc}[1]{\left[{#1}\right]}
\newcommand{\inner}[1]{\left\langle{#1}\right\rangle}

\newcommand{\norm}[1]{\left|{#1}\right|}
\newcommand{\llll}[1]{\left\|{#1}\right\|}

\newcommand{\intM}[1]{\int_{\Sigma}{{#1}\,d\mu}}
\newcommand{\intgamma}[1]{\int_{\left[\gamma>0\right]}{{#1}\,d\mu}}

\newcommand{\nablanorm}[3]{\left|\nabla_{\oo{#1}}{#2}\right|^{#3}}

\newcommand{\cgam}{c_{\gamma}}
\newcommand{\ctild}{c_{\tilde{\gamma}}}
\newcommand{\cmss}{c_{\textrm{M\hskip-0.1mm S\hskip-0.1mm S}}}
\newcommand{\Ao}[1]{\left\|A^{o}\right\|_{2,\cc{\gamma>0}}^{#1}}
\newcommand{\A}[1]{\left\|A\right\|_{2,\cc{\gamma>0}}^{#1}}

\newcommand{\Aoc}[1]{\left\|A^{o}\right\|_{2}^{{#1}}}
\newcommand{\Aoi}[1]{\left\|A^{o}\right\|_{\infty}^{{#1}}}

\newcommand{\Kappa}{\mathcal K}

\newcommand{\ben}{\begin{eqnarray*}}
\newcommand{\een}{\end{eqnarray*}}

\newtheorem{theorem}{Theorem}
\newtheorem{corollary}[theorem]{Corollary}
\newtheorem{remark}[theorem]{Remark}

\newtheorem{proposition}[theorem]{Proposition}
\newtheorem{lemma}[theorem]{Lemma}

\makeatletter
\let\oldhypertarget\hypertarget
\renewcommand{\hypertarget}[2]{%
  \oldhypertarget{#1}{#2}%
    \protected@write\@mainaux{}{%
        \string\expandafter\string\gdef
          \string\csname\string\detokenize{#1}\string\endcsname{#2}%
    }%
  }
\newcommand{\myhyperlink}[1]{%
  \hyperlink{#1}{\csname #1\endcsname}%
  }
\makeatother

\begin{document}

\date{\today}

\title{Concentration-compactness for the geometric polyharmonic heat flow}

\author[J. McCoy]{James McCoy}
\address{Department of Mathematical and Geospatial Sciences, School of Science, Royal Melbourne Institute of Technology,\\
124 La Trobe Street, Melbourne VIC $3000$\\
Australia}
\email{James.McCoy@rmit.edu.au}

\author[S. Parkins]{Scott Parkins}
\address{Institute for Mathematics and its Applications, School of Mathematics and Applied Statistics\\
University of Wollongong\\
Northfields Ave, Wollongong, NSW $2500$\\
Australia}
\email{srp854@uowmail.edu.au}

\author[G. Wheeler]{Glen Wheeler}
\address{Institute for Mathematics and its Applications, School of Mathematics and Applied Statistics\\
University of Wollongong\\
Northfields Ave, Wollongong, NSW $2500$\\
Australia}
\email{glenw@uow.edu.au}

\author[V. Wheeler]{Valentina Wheeler}
\address{Institute for Mathematics and its Applications, School of Mathematics and Applied Statistics\\
University of Wollongong\\
Northfields Ave, Wollongong, NSW $2500$\\
Australia}
\email{vwheeler@uow.edu.au}

\thanks{
The research of the second author was supported by an Australian Postgraduate Award.
The research of the first and fourth authors was supported by Discovery Project DP180100431 of the Australian Research Council.  The research of the first author was also supported by DP250103952 of the Australian Reseqrach Council and a President's International Fellowship Initiative Visiting Fellowship of the Chinese Academy of Sciences, grant 2024PVA0042.
The research of the fourth author was also supported by grant DE190100379 of the Australian Research Council.
Some results from this paper appear in part of the Ph.D. thesis of the second author.
The authors would like to thank Graham Williams for useful discussions, and Ben Andrews for his interest in this work.
}

\subjclass[2020]{53E40}

\begin{abstract}
We develop a concentration-compactness theory for geometric evolution equations of arbitrarily high order, using the geometric polyharmonic heat flow of closed immersed surfaces in \(\R^3\) as the model case.
The flow is the \((2p+2)\)-order normal evolution
\[
    \partial_t f=(-1)^{p+1}\Delta^p H\,\nu,\qquad p\geq1,
\]
which includes the surface diffusion flow when \(p=1\).
We prove localised energy estimates with sharp cut-off bookkeeping, interior estimates, a lifespan/concentration alternative, tracefree-curvature \(\varepsilon\)-regularity estimates, and a gap theorem for stationary solutions.
These tools are then combined with a blowup argument, the preservation of signed enclosed volume, and the monotonicity of area to rule out singularities below a small tracefree-curvature threshold.
Consequently, for connected initial immersions satisfying \(\|A^o\|_2^2<\varepsilon\), where \(\varepsilon>0\) depends only on the order of the flow, the solution exists for all time and converges exponentially in \(C^\infty\) to a round sphere with the preserved enclosed volume.
\end{abstract}

\maketitle

\tableofcontents


\section{Introduction}

The concentration-compactness method, developed by Lions and others in the calculus of variations \cite{lions1,lions2}, has become one of the central tools for analysing loss of compactness in nonlinear problems.
Its role in geometric evolution equations began with Struwe's work on the harmonic map heat flow of surfaces \cite{Struwe1}.
Ecker used local integral estimates to obtain concentration-compactness and regularity results for the mean curvature flow of surfaces \cite{ecker95}, and Kuwert and Sch\"atzle adapted this circle of ideas to the Willmore flow \cite{Kuwert2}.
Their work showed that localised curvature estimates can replace pointwise maximum-principle arguments in genuinely higher-order geometric flows.

The purpose of this article is to extend this strategy to curvature flows of arbitrary even order.
The main analytical difficulty is that repeated covariant differentiation of local cut-off functions produces many curvature terms and, if the powers of the cut-off are not tracked precisely, one loses exactly the weights required to absorb the error terms into the diffusion.
A substantial part of the paper is therefore devoted to localised Sobolev and interpolation inequalities, and to weighted curvature estimates in which the cut-off powers are kept sharp.

We illustrate the method for the geometric polyharmonic heat flow.
Let \(f:\Sigma\times[0,T)\rightarrow\R^3\), \(T>0\), be a one-parameter family of compact immersed surfaces
\(f(\cdot,t)=f_t:\Sigma\rightarrow f_t(\Sigma)=\Sigma_t\subset\R^3\).
For an integer \(p\geq1\), we say that \(f\) evolves by the geometric polyharmonic heat flow of order \(2p+2\) if
\begin{equation}
    \partial_tf=(-1)^{p+1}(\Delta^pH)\,\nu,
    \label{PolyharmonicIntro1}\tag{GPHF}
\end{equation}
with smooth initial immersion \(f(\cdot,0)=f_0\).
Here \(\Delta\), \(H\), and \(\nu\) denote respectively the Laplace-Beltrami operator, the scalar mean curvature, and the outer unit normal of \(\Sigma_t\).
The case \(p=1\) is the surface diffusion flow.

\begin{remark}
The mean curvature vector is \(\vec H=-H\nu\).
Using the induced normal-bundle Laplacian \(\Delta^\perp\), the leading part of \eqref{PolyharmonicIntro1} may be written schematically as
\[
    \partial_tf=(-1)^p(\Delta^\perp)^p\Delta f,
\]
where \(\Delta f\) is the coordinatewise Laplace-Beltrami operator applied to the immersion.
The flow is therefore a quasilinear geometric parabolic system of order \(2p+2\), modulo the usual degeneracy under reparametrisation.
\end{remark}

The flows \eqref{PolyharmonicIntro1} are closely related to higher-order analogues of the Willmore flow.
They share the same leading term as the \(L^2(d\mu)\)-gradient flow of
\begin{equation}
    E_p[f]:=\intM{\left|\Delta^{\frac{p-1}{2}}H\right|^2},
    \qquad p\geq1.
    \label{penergy}
\end{equation}
Throughout the article we use the convention
\(\Delta^{(2k+1)/2}H=\nabla\Delta^kH\).
The energy \(E_1\) is a multiple of the Willmore energy; the case \(E_2\) is related to surfaces of minimal mean curvature variation \cite{xu2007g2}.
For curves, corresponding higher-order gradient flows have been studied in \cite{ideal,blatt,mccoy2020sixth, MWW2}.

Local existence for \eqref{PolyharmonicIntro1} follows from the standard theory for quasilinear geometric parabolic systems after fixing the diffeomorphism invariance; see for example \cite{baker2011mean,Mantegazza1,Mantegazza2}.
We shall use the following form.

\begin{theorem}[Local well-posedness of the flows]
\label{PMste}
Let \(\Sigma\) be a compact surface without boundary and let \(f_0:\Sigma\rightarrow\R^3\) be a smooth immersion.
There exists a maximal \(T>0\) and a unique smooth solution
\(f:\Sigma\times[0,T)\rightarrow\R^3\) of \eqref{PolyharmonicIntro1} with
    $f(\cdot,t)\rightarrow f_0(\cdot)$ as $t\searrow0$
in the smooth topology.
\end{theorem}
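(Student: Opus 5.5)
The plan is to reduce \eqref{PolyharmonicIntro1} to a strictly parabolic quasilinear system by breaking the diffeomorphism invariance, apply standard existence theory to that system, and then convert back. There are two standard ways to do the reduction: DeTurck's trick, or writing the evolving surface as a normal graph over the initial immersion. I would use the normal graph approach, since the flow is purely normal and the surfaces are codimension one. Concretely, write \(f(x,t)=f_0(x)+u(x,t)\nu_0(x)\) for a scalar function \(u\) with \(\|u\|_{C^1}\) small. The normal velocity condition \(\inner{\partial_t f,\nu}=(-1)^{p+1}\Delta^pH\) then becomes
\[
\partial_t u=\frac{(-1)^{p+1}\Delta^pH[u]}{\inner{\nu_0,\nu[u]}} .
\]
Here \(\inner{\nu_0,\nu[u]}\) stays close to \(1\) while \(u\) is \(C^1\)-small.

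The first key step is to identify the principal part. Since \(H[u]=-\Delta_{g_0}u+(\text{lower order})\) to leading order, \(\Delta^pH\) has leading term \(-\Delta_{g[u]}^{p+1}u\). The equation therefore reads \(\partial_t u=(-1)^{p}\Delta_{g[u]}^{p+1}u+F(x,u,\nabla u,\dots,\nabla^{2p+1}u)\). The linearisation about any \(u\) has symbol \(-|\xi|_{g}^{2p+2}\), so the system is uniformly parabolic of order \(2p+2\) in divergence-compatible quasilinear form.

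The second step is short-time existence and uniqueness for this scalar equation. I would use the theory of quasilinear parabolic equations in little Hölder spaces \(h^{2p+2+\alpha}\), following \cite{baker2011mean,Mantegazza1,Mantegazza2}: maximal regularity plus a fixed point argument. This gives a unique solution on \([0,\delta)\), and parabolic bootstrapping makes it smooth for \(t>0\). Smoothness up to \(t=0\) follows from the smoothness of \(f_0\) together with compatibility, which is automatic on a closed manifold.

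The third step is to pass back to \eqref{PolyharmonicIntro1}. The graph solution moves with velocity \(\partial_t u\,\nu_0\), which differs from the flow by a tangential term. I would remove it by composing with diffeomorphisms \(\phi_t\) that solve the ODE generated by the tangential component, with \(\phi_0=\mathrm{id}\).

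The main obstacle is uniqueness in the geometric sense. Suppose \(\tilde f\) is another smooth solution of \eqref{PolyharmonicIntro1} with the same initial data. Then for short time it is a normal graph over \(f_0\) after reparametrisation, and its height function solves the same scalar PDE. So the two heights agree, and the reparametrisations are determined by the same ODE. This shows \(\tilde f=f\), since the flow as written fixes the parametrisation through purely normal motion.

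Maximality comes from a standard continuation argument. Define \(T\) as the supremum of the existence times. Restarting the flow from \(f(\cdot,t_0)\) and using uniqueness lets local solutions be patched into a unique solution on \([0,T)\).
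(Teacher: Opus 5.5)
Your proposal is correct, but it fixes the gauge differently from the paper. The paper gives no proof of Theorem \ref{PMste}. It defers to the standard theory for quasilinear geometric parabolic systems after fixing the diffeomorphism invariance \cite{baker2011mean,Mantegazza1,Mantegazza2}. The proof of Lemma \ref{L:finite-order-continuation} shows that the intended gauge is DeTurck's: the immersion \(f\) itself is the unknown of a strictly parabolic system of order \(2p+2\), obtained by adding a tangential term to the leading part \((-1)^p(\Delta^\perp)^p\Delta f\).

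You instead write the surface as a normal graph \(f_0+u\nu_0\) and reduce to a single scalar quasilinear equation. Your symbol computation is right. The factor \(\inner{\nu_0,\nu[u]}\) in the principal part of \(H[u]\) is exactly cancelled by your division, leaving \(-|\xi|^{2p+2}_{g[u]}\). The remaining terms involve at most \(2p+1\) derivatives of \(u\), which is what the abstract quasilinear theory needs.

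The main thing your route buys is a clean geometric uniqueness argument. A second smooth solution converging smoothly to \(f_0\) can, for short time, be lifted through the local inverse of \((x,r)\mapsto f_0(x)+r\nu_0(x)\) near \(\Sigma\times\{0\}\). This lifting happens in \(\Sigma\), so embeddedness of \(f_0\) is not needed. The lift is a graph whose height solves the same scalar equation with zero initial value, and the reparametrisation then solves the same ODE with the same initial condition. In the DeTurck approach, undoing the gauge for an arbitrary geometric solution instead requires solving an auxiliary parabolic equation of order \(2p+2\) for the diffeomorphisms.

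The DeTurck formulation, in turn, works directly with the parametrised immersion, in any codimension, and is not tied to a reference surface. Your construction only lives while the solution stays a \(C^1\)-small graph over \(f_0\). So maximality must come, as you propose, from restarting at \(f(\cdot,t_0)\) with a new reference immersion and patching the pieces together using uniqueness.
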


Our analysis is localised by cut-off functions of the form
\(\gamma=\tilde\gamma\circ f:\Sigma\rightarrow[0,1]\), where
\(\tilde\gamma:\R^3\rightarrow[0,1]\) is smooth and satisfies
\begin{equation}
    0\leq\tilde\gamma\leq1,
    \qquad
    |D^k\tilde\gamma|\leq c_k\ctild^k
    \quad\text{for every }k\geq1.
    \label{GammaProp0}
\end{equation}
Typically \(\tilde\gamma\equiv1\) on \(B_\rho(x)\), \(\tilde\gamma\equiv0\) outside \(B_{2\rho}(x)\), and \(\ctild\simeq\rho^{-1}\).

The first main local estimate is the following weighted curvature inequality.
It is the estimate that drives both the lifespan theorem and the interior estimates.  First we define the relevant concentration function.  For \(x\in\R^3\), \(t\in[0,T)\), and \(\rho>0\), set
\[
    \C(x,t,\rho)
    :=
    \int_{f^{-1}(B_\rho(x))}|A|^2\,d\mu,
    \qquad
    \C(t,\rho):=\sup_{x\in\R^3}\C(x,t,\rho).
\]

\begin{theorem}[Local curvature estimate]\label{T:firstmainest}
Let \(f:\Sigma\times[0,T)\rightarrow\R^3\) satisfy \eqref{PolyharmonicIntro1}.
Let \(k\in\mathbb N_0\), set \(m=k+p+1\), and let \(s\geq2m\).
There exist \(\varepsilon_{\mathrm{base}}>0\), depending only on \(p\), and \(C=C(k,p,s)\) such that, if
\[
    \int_{[\gamma>0]}|A|^2\,d\mu\leq\varepsilon_{\mathrm{base}},
\]
then for every \(\delta\in(0,1]\),
\[
\frac{d}{dt}\intM{|\nabla_{(k)}A|^2\gamma^s}
+
\left(2-\delta-C\|A\|^2_{2,[\gamma>0]}\right)
\intM{|\nabla_{(m)}A|^2\gamma^s}
\leq
C_\delta\cgam^{2m}\|A\|^2_{2,[\gamma>0]}.
\]
Consequently, for each fixed \(k\) and each fixed admissible choice of \(s=s(k,p)\), there is a threshold
\(\varepsilon_{\mathrm{loc}}(k,p)>0\) such that, if \(\tilde\gamma\) is a standard cut-off between \(B_\rho(x)\) and \(B_{2\rho}(x)\), and if
\[
    \sup_{0\leq\tau\leq t}\C(x,\tau,2\rho)
    \leq \varepsilon_{\mathrm{loc}}(k,p),
\]
then
\[
\begin{split}
\int_{f^{-1}(B_\rho(x))}|\nabla_{(k)}A|^2\,d\mu
&+
\int_0^t\int_{f^{-1}(B_\rho(x))}|\nabla_{(m)}A|^2\,d\mu\,d\tau
\\
&\leq
\left.\int_{f^{-1}(B_{2\rho}(x))}|\nabla_{(k)}A|^2\,d\mu\right|_{\tau=0}
+
C\rho^{-2m}\int_0^t\C(x,\tau,2\rho)\,d\tau .
\end{split}
\]
The same threshold may be chosen simultaneously for any finite set of derivative orders by taking the minimum of the corresponding fixed-order thresholds.
\end{theorem}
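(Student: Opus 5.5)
The plan follows the Kuwert--Schätzle scheme for the Willmore flow, adapted to order \(2p+2\).

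\emph{Step 1: evolution equations.} From \eqref{PolyharmonicIntro1} I would derive the evolution of the metric, the measure, the normal and \(A\). Writing the normal speed as \(F=(-1)^{p+1}\Delta^pH\), we have \(\partial_t g_{ij}=-2FA_{ij}\), \(\partial_t d\mu=-HF\,d\mu\) and \(\partial_tA=\nabla^2F+A*A*F\). Simons' identity \(\Delta A=\nabla^2H+A*A*A\), iterated, then gives
\[
\partial_t\nabla_{(k)}A+(-1)^{p+1}\Delta^{p+1}\nabla_{(k)}A=\sum_{i+j+l=k+2p}\nabla_{(i)}A*\nabla_{(j)}A*\nabla_{(l)}A+\text{higher-order products},
\]
with the same total derivative-plus-curvature scaling \(2m+1\) in every term. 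Differentiating \(\int|\nabla_{(k)}A|^2\gamma^s\) and integrating the leading term by parts \(p+1\) times produces \(2\int|\nabla_{(m)}A|^2\gamma^s\). The cost is two kinds of error terms:
\begin{itemize}
\item cut-off terms of the form \(\nabla_{(a)}A*\nabla_{(b)}A*\nabla_{(c)}\gamma^s\);
\item polynomial curvature terms \(P^{2m+2}_{2m}(A)\gamma^s\).
\end{itemize}
The time derivative of \(\gamma\) is \(D\tilde\gamma\cdot F\nu\), which is itself a cut-off term.

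\emph{Step 2: cut-off terms, with sharp bookkeeping.} Each \(\nabla_{(c)}\gamma\) expands into products of \(D^j\tilde\gamma\) with derivatives of \(f\), which bring in \(A\) and its derivatives. The pure part is bounded by \(\cgam^c\gamma^{s-c}\). I would prove an interpolation inequality
\[
\int|\nabla_{(j)}A|^2\gamma^{s-2(m-j)}\le\theta\int|\nabla_{(m)}A|^2\gamma^s+C_\theta\cgam^{2(m-j)}\int_{[\gamma>0]}|A|^2.
\]
The hypothesis \(s\ge 2m\) is exactly what makes every weight exponent nonnegative. Combining Young's inequality with this interpolation absorbs all cut-off terms into \(\delta\int|\nabla_{(m)}A|^2\gamma^s+C_\delta\cgam^{2m}\|A\|^2_{2,[\gamma>0]}\). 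Curvature factors coming from derivatives of \(f\) are treated as part of the polynomial terms.

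\emph{Step 3: polynomial terms (the main obstacle).} I would use the localised Michael--Simon Sobolev inequality together with the Kuwert--Schätzle multiplicative estimate, extended inductively to higher derivatives with sharp cut-off powers. This gives
\[
\int|P^{2m+2}_{2m}(A)|\gamma^s\le C\|A\|^2_{2,[\gamma>0]}\int|\nabla_{(m)}A|^2\gamma^s+C\cgam^{2m}\|A\|^4_{2,[\gamma>0]}.
\]
The smallness \(\|A\|^2_{2,[\gamma>0]}\le\varepsilon_{\mathrm{base}}\) is used to close the Sobolev iteration. The hard part is tracking the \(\gamma\)-powers so that each application of Michael--Simon (which costs one derivative of \(\gamma\)) still leaves weights compatible with the interpolation of Step 2. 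I would set this up as a single inductive lemma over the derivative order. Adding Steps 2 and 3 gives the differential inequality, with the \(\|A\|^4\) term bounded by \(\|A\|^2\).

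\emph{Step 4: the integrated consequence.} Take \(\tilde\gamma\) a standard cut-off between \(B_\rho(x)\) and \(B_{2\rho}(x)\), so \(\cgam\simeq\rho^{-1}\) and \([\gamma>0]\subset f^{-1}(B_{2\rho}(x))\). Then \(\|A\|^2_{2,[\gamma>0]}\le\C(x,\tau,2\rho)\). Choose \(\delta=1/2\) and \(\varepsilon_{\mathrm{loc}}\le\varepsilon_{\mathrm{base}}\) so small that \(C\varepsilon_{\mathrm{loc}}\le1/2\); then the coefficient of the \(\nabla_{(m)}A\) term is at least \(1\). Integrating in time and using \(\chi_{B_\rho}\le\gamma^s\le\chi_{B_{2\rho}}\) yields the stated bound. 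For a finite set of derivative orders, take the minimum of the corresponding thresholds.
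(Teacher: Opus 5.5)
Your outline follows the same route as the paper: the identity of Corollary~\ref{EvolutionCorollary2}, then localized product and interpolation estimates, then integration in time. Step~4 is correct. However, there is a gap in Step~3 once $p\geq 2$: as set up, it does not apply to the terms Step~1 actually produces.

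\textbf{The gap in Step~3.}
\begin{itemize}
\item After expansion, the reaction terms $\int\nabla_{(i)}(\nabla_{(2p+k-i)}A*A*A)*\nabla_{(k)}A\,\gamma^s$ contain a factor $\nabla_{(2p+k)}A$. It has $2p+k=m+p-1>m$ derivatives.
\item The $\partial_t\gamma^s$ term contains $\Delta^pH$, which exceeds order $m$ whenever $k\leq p-2$. So calling it "itself a cut-off term" does not settle it.
\item A bound of the form $\int|P_4^{2m-2}(A)|\gamma^s\leq C\|A\|^2_{2,[\gamma>0]}\int|\nabla_{(m)}A|^2\gamma^s+\cdots$, with the absolute value inside the integral, holds only when no factor carries more than $m$ derivatives. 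This is the hypothesis of Lemma~\ref{L:local-product-interpolation} and of Proposition~\ref{EvolutionProposition4}.
\item An integrand containing $|\nabla_{(m+p-1)}A|$ is not controlled by $\|\nabla_{(m)}A\|_2$ and lower-order quantities. A small high-frequency perturbation of a fixed surface shows this.
\end{itemize}

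So before any product estimate you must integrate by parts to bring every factor down to order at most $m$:
\begin{itemize}
\item $p-1$ times in the reaction terms, moving derivatives onto $\nabla_{(k)}A\,\gamma^s$;
\item $p-k-1$ times in the $\partial_t\gamma^s$ term, moving derivatives onto $\nabla_{(k)}A*\nabla_{(k)}A*D_\nu\tilde\gamma\,\gamma^{s-1}$.
\end{itemize}

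This is exactly where the higher cut-off derivatives enter. By \eqref{E:highcovgammapow} and \eqref{E:gammanormderhighcovest}, $\nabla_{(v)}\gamma^s$ and $\nabla_{(v)}D_\nu\tilde\gamma$ carry curvature. The resulting terms are therefore not critical products but mixed terms $c_\gamma^{\theta}\int|P_r^{2m-r+2-\theta}(A)|\gamma^{s-\ell}$. They can be absorbed only by a positive-defect estimate such as \eqref{E:localinterp-positive-defect}, and only after checking that the loss of cut-off power satisfies $\ell\leq\theta$ in every case.

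That verification is the content of the $X_i$ case analysis in Proposition~\ref{EvolutionProposition1} and of Lemma~\ref{Claim2}. It is missing from your plan, since your Step~3 only covers the defect-zero case. For $p=1$ the problem disappears, which is why the Kuwert--Sch\"atzle scheme does not see it.

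\textbf{Smaller corrections.}
\begin{itemize}
\item The interpolation inequality in Step~2 is not scale-invariant as written. The form you need is $c_\gamma^{2(m-j)}\int|\nabla_{(j)}A|^2\gamma^{s-2(m-j)}\leq\eta\int|\nabla_{(m)}A|^2\gamma^s+C_\eta c_\gamma^{2m}\|A\|^2_{2,[\gamma>0]}$ (compare Lemma~\ref{NewLemma1}). This is also what your absorption into $C_\delta c_\gamma^{2m}\|A\|^2_{2,[\gamma>0]}$ actually uses.
\item The statement requires $\varepsilon_{\mathrm{base}}$ to depend only on $p$. If your Sobolev iteration absorbs terms with an order-dependent constant $C(m)\|A\|^2_{2,[\gamma>0]}$, your threshold will depend on $k$. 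The smallness must enter only through constants independent of $k$; the paper uses only $\|A\|^2_{2,[\gamma>0]}\leq1$ at this stage. All $k$-dependence should sit in the coefficient $C\|A\|^2_{2,[\gamma>0]}$.
\item Since the speed is linear in $H$, the reaction terms are purely cubic in $A$; there are no higher-order products.
\item In the paper's notation the critical products are $P_4^{2m-2}(A)$, not $P^{2m+2}_{2m}$.
\end{itemize}
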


Theorem \ref{T:firstmainest} says that, as long as \(\C(x,t,2\rho)\) remains below the local threshold, all higher curvature derivatives are controlled on \(B_\rho(x)\) by the initial local energy and a scale-correct error term.
For \(k=0\) this already gives linear-in-time control of local curvature concentration.
A standard Struwe-type argument then yields the following lifespan alternative.

\begin{theorem}[Lifespan theorem]\label{LifespanTheorem}
There exist \(\varepsilon_{\mathrm{life}}>0\) and \(C_{\mathrm{life}}<\infty\), depending only on \(p\), such that the following holds.
Let \(f:\Sigma\times[0,T)\rightarrow\R^3\) be the maximal smooth solution of \eqref{PolyharmonicIntro1} with smooth compact initial data.
If \(\rho>0\), \(0<\varepsilon\leq\varepsilon_{\mathrm{life}}\), and
\begin{equation}
    \C(0,\rho)\leq\varepsilon,
    \label{Lifespansmallness}
\end{equation}
then
\begin{equation}
    T\geq C_{\mathrm{life}}^{-1}\rho^{2(p+1)},
    \label{lifespan1}
\end{equation}
and
\begin{equation}
    \C(t,\rho)\leq C_{\mathrm{life}}\varepsilon
    \qquad\text{for }0\leq t\leq C_{\mathrm{life}}^{-1}\rho^{2(p+1)}.
    \label{lifespan2}
\end{equation}
\end{theorem}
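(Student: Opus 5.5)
We follow the Struwe/Kuwert–Schätzle scheme. Throughout, \(\Gamma\) denotes a universal covering constant: every ball \(B_{2\rho}(x)\subset\R^3\) is covered by at most \(\Gamma\) translates of \(B_\rho\). Consequently
\[
\C(t,2\rho)\leq\Gamma\,\C(t,\rho).
\]
We fix \(\varepsilon_{\mathrm{life}}\) so small that \(\Gamma\cdot 3\Gamma\,\varepsilon_{\mathrm{life}}\leq\varepsilon_{\mathrm{loc}}(0,p)\), the threshold of Theorem \ref{T:firstmainest} with \(k=0\), \(m=p+1\). Set
\[
t_0:=\sup\{\tau\in[0,T):\ \C(t,\rho)\leq 3\Gamma\varepsilon\ \text{for all } t\in[0,\tau]\}.
\]
By smoothness of the flow, \(\C(\cdot,\rho)\) is continuous in \(t\); indeed the area and \(|A|\) are locally uniformly controlled on compact time intervals. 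Since \(\C(0,\rho)\leq\varepsilon<3\Gamma\varepsilon\), we get \(t_0>0\).

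\textbf{Step 1 (estimate on \([0,t_0)\)).} For \(t<t_0\) and any \(x\), we have \(\C(x,\tau,2\rho)\leq\Gamma\cdot3\Gamma\varepsilon\leq\varepsilon_{\mathrm{loc}}\) for all \(\tau\leq t\). The \(k=0\) consequence of Theorem \ref{T:firstmainest} then gives
\[
\C(x,t,\rho)\leq \C(x,0,2\rho)+C\rho^{-2(p+1)}\int_0^t\C(x,\tau,2\rho)\,d\tau
\leq \Gamma\varepsilon + C\rho^{-2(p+1)}\,t\,3\Gamma^2\varepsilon .
\]
Taking the supremum over \(x\) yields
\[
\C(t,\rho)\leq\Gamma\varepsilon\bigl(1+3C\Gamma\rho^{-2(p+1)}t\bigr).
\]
Put \(C_{\mathrm{life}}:=\max\{3C\Gamma,3\Gamma\}\), enlarged if needed. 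For \(t\leq C_{\mathrm{life}}^{-1}\rho^{2(p+1)}\) this gives \(\C(t,\rho)\leq 2\Gamma\varepsilon<3\Gamma\varepsilon\).

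\textbf{Step 2 (dichotomy).} Suppose \(t_0<\min\{T,C_{\mathrm{life}}^{-1}\rho^{2(p+1)}\}\). Then continuity at \(t_0\) gives \(\C(t_0,\rho)\leq2\Gamma\varepsilon\). Continuity slightly beyond \(t_0\) then keeps \(\C\leq3\Gamma\varepsilon\) there, which contradicts the definition of \(t_0\). Hence \(t_0\geq\min\{T,C_{\mathrm{life}}^{-1}\rho^{2(p+1)}\}\), and \eqref{lifespan2} follows on that interval.

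\textbf{Step 3 (excluding \(T<C_{\mathrm{life}}^{-1}\rho^{2(p+1)}\)).} This is the main obstacle: we must show the flow extends. Assume \(T\) is smaller than this bound. Then on all of \([0,T)\) we have \(\C(t,2\rho)\leq3\Gamma^2\varepsilon\leq\varepsilon_{\mathrm{loc}}(k,p)\), provided \(\varepsilon_{\mathrm{life}}\) is also below the thresholds for finitely many \(k\). This is permitted by the final clause of Theorem \ref{T:firstmainest}, but a uniform-in-\(k\) threshold is needed for all \(k\). The plan is to handle this by applying the estimate with \(k=0,\dots,K\) to get \(L^2\) bounds on \(\nabla_{(k)}A\) for \(k\leq K\), with \(K\) large enough (\(K\approx3\) for a surface). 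Since \(T\) is finite, these bounds are uniform on \([0,T)\), with the initial term finite by smoothness of \(f_0\) and compactness. Localised Sobolev (Michael–Simon) inequalities then turn these into bounds on \(\|A\|_\infty\) and \(\|\nabla A\|_\infty\). From there we bootstrap to all higher derivatives without smallness. Once \(\|\nabla_{(j)}A\|_\infty\) is bounded, the standard non-localised energy estimates for \(\frac{d}{dt}\int|\nabla_{(k)}A|^2\) are closed by Gronwall on the finite interval. This bootstrapping step is where we expect the real work to lie.

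With uniform \(C^\infty\) curvature bounds, \(|\partial_t f|=|\Delta^pH|\) is bounded, so \(f_t\) converges smoothly as \(t\to T\) to an immersion; the metrics are equivalent by integrating the evolution of \(g\). Theorem \ref{PMste} then restarts the flow, contradicting maximality of \(T\). Therefore \(T\geq C_{\mathrm{life}}^{-1}\rho^{2(p+1)}\), which is \eqref{lifespan1}. Finally, \(\varepsilon\) enters \eqref{lifespan2} only through the factor \(3\Gamma\), which is absorbed into \(C_{\mathrm{life}}\).
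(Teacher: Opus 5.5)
Your proof is correct. Steps 1--2 are the paper's argument: the paper rescales to $\rho=1$ and invokes Proposition \ref{PN:estA}, which is the $k=0$ case of Theorem \ref{T:firstmainest}. Your bookkeeping $3\Gamma^2\varepsilon_{\mathrm{life}}\leq\varepsilon_{\mathrm{loc}}(0,p)$ is in fact the correct one, since the paper's bound $\int_{[\gamma_x>0]}|A|^2\,d\mu\leq 3C_{\Kappa}\varepsilon$ drops a covering factor.

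The genuine difference is how you exclude a short lifespan. The paper isolates this in Lemma \ref{L:finite-order-continuation}. It takes the minimum of the fixed-order thresholds of Proposition \ref{NewProposition1} for all $k\leq 2p+6$ and obtains $L^\infty$ bounds on all these derivatives purely from the localised estimates. It then passes to a $C^{2p+4,\alpha}$ limit $f_T$ and restarts via H\"older-space short-time existence for the DeTurck-gauged system, followed by parabolic smoothing.

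You instead use smallness only for the first two or three orders to get $\|A\|_\infty$, and then close global energy estimates by Gronwall on the finite interval. The $L^\infty$ step is really Proposition \ref{Chapter3NewProp2}, i.e.\ the multiplicative Sobolev inequality together with local smallness of $\int|A|^2$, not Michael--Simon alone. The Gronwall step is sound, and $\|A\|_\infty$ alone already suffices, so $\|\nabla A\|_\infty$ is not needed. After integrating by parts, the reaction terms in $\frac{d}{dt}\int_\Sigma|\nabla_{(k)}A|^2\,d\mu$ are $P_4^{2(k+p)}(A)$ with no factor carrying more than $k+p$ derivatives. Proposition \ref{EvolutionProposition4} with $\gamma\equiv1$, and interpolation between orders $k$ and $k+p+1$, then give $\frac{d}{dt}\|\nabla_{(k)}A\|_2^2\leq C\bigl(1+\|\nabla_{(k)}A\|_2^2\bigr)$. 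Here $C$ depends on $\|A\|_\infty$ and the nonincreasing area.

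Your route needs fewer thresholds and yields $C^\infty$ convergence, so the restart uses Theorem \ref{PMste} exactly as stated. The price is a global argument that uses compactness of $\Sigma$, which is available here. The paper's continuation lemma stays local but leans on a low-regularity existence theorem that the paper does not state.

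One small caveat, shared with the paper: with sharp ball cut-offs, $t\mapsto\C(t,\rho)$ is only lower semicontinuous, not continuous as you assert. The margin between $2\Gamma\varepsilon$ and $3\Gamma\varepsilon$, with a slightly larger covering constant, still lets the continuity step go through.
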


For every \(\varepsilon\in(0,\varepsilon_{\mathrm{life}})\), compactness of the initial surface guarantees that \eqref{Lifespansmallness} holds for sufficiently small \(\rho\).
Define the maximal nonconcentrating scale
\begin{equation}
    \rho^\varepsilon(t)
    :=
    \sup\{r>0:\C(t,r)\leq\varepsilon\}.
    \label{E:rhoepsdef}
\end{equation}
If \(T<\infty\), then Theorem \ref{LifespanTheorem} implies
$\rho^\varepsilon(t)
    \leq
    C(T-t)^{\frac1{2(p+1)}}
    \longrightarrow0$
 as $t\nearrow T$.
Equivalently, at a finite singular time at least \(\varepsilon\) units of \(L^2\)-curvature concentrate at arbitrarily small spatial scales.
This is the concentration-compactness alternative used later in the blowup analysis.

\subsection{Results for the flow near a sphere}

Round spheres, of arbitrary centre and radius, are equilibria of \eqref{PolyharmonicIntro1}; more generally, every constant-mean-curvature immersion is stationary.
To obtain convergence specifically to a round sphere, it is therefore natural to impose smallness of the scale-invariant tracefree curvature,
\begin{equation}
    \int_\Sigma |A^o|^2\,d\mu\leq\varepsilon.
\end{equation}
This condition singles out the umbilic equilibria: on a connected surface, \(A^o\equiv0\) forces the immersion to be either a round sphere or a plane.

A key input is the high-order Sobolev Interpolation Theorem proved later as Theorem \ref{HigherOrderSobolevTheorem1}.
For
\[
    S_l:=\intM{\left|\Delta^{l/2}H\right|^2\gamma^{2l}},
\]
it gives, under small local \(\|A^o\|_2\), estimates of the form
\[
    S_l
    \leq
    C\left(
        \Ao{\frac{2m}{l+m}}S_{l+m}^{\frac{l}{l+m}}
        +
        \cgam^{2l}\Ao{2}
    \right).
\]
Together with the local estimates above, this produces the following tracefree-curvature \(\varepsilon\)-regularity statement.
Here
\[
    \C^o(x,\rho):=\int_{f^{-1}(B_\rho(x))}|A^o|^2\,d\mu.
\]

\begin{theorem}[Local \(\varepsilon\)-regularity]\label{EpsilonRegularity}
There exists a universal \(\varepsilon_{\mathrm{reg}}>0\) such that the following holds.
Let \(f:\Sigma\rightarrow\R^3\) be a smooth immersion and suppose
\begin{equation}
    \C^o(x,2\rho)\leq\varepsilon_{\mathrm{reg}}.
    \label{EpsilonRegularity1}
\end{equation}
Then, for every \(m\geq3\), there is a constant \(C=C(m)\) such that
\[
\|A^o\|_{\infty,f^{-1}(B_\rho(x))}^{2m}
\leq
C\|A^o\|_{2,f^{-1}(B_{2\rho}(x))}^{2(m-1)}
\left(
    \left\|\Delta^{m/2}H\right\|_{2,f^{-1}(B_{2\rho}(x))}^{2}
    +
    \rho^{-2m}\|A^o\|_{2,f^{-1}(B_{2\rho}(x))}^{2}
\right).
\]
In particular, if \(\Delta^pH\) is constant for some \(p\in\mathbb N\), then
\[
    \|A^o\|_{\infty,f^{-1}(B_\rho(x))}
    \leq
    C(p)\rho^{-1}\|A^o\|_{2,f^{-1}(B_{2\rho}(x))}
    \leq
    C(p)\rho^{-1}\varepsilon_{\mathrm{reg}}^{1/2}.
\]
\end{theorem}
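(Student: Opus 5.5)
We combine a localised $L^\infty$ estimate for $A^o$ with the high-order Sobolev interpolation inequality (Theorem \ref{HigherOrderSobolevTheorem1}), using the Codazzi-type identity $\nabla^* A^o = \tfrac12\nabla H$ to convert derivatives of $A^o$ into derivatives of $H$. Fix a cut-off $\gamma=\tilde\gamma\circ f$ with $\tilde\gamma\equiv1$ on $B_\rho(x)$, $\tilde\gamma\equiv0$ outside $B_{2\rho}(x)$, and $\cgam\simeq\rho^{-1}$. Then $[\gamma>0]\subset f^{-1}(B_{2\rho}(x))$, so \eqref{EpsilonRegularity1} gives $\Ao{2}\le\varepsilon_{\mathrm{reg}}$. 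We choose $\varepsilon_{\mathrm{reg}}$ below the smallness threshold of Theorem \ref{HigherOrderSobolevTheorem1} and below that of the localised Michael--Simon Sobolev inequality ($\cmss$). Both inequalities can be stated with smallness of $A^o$ only, since for surfaces the $|A|^2$ terms they produce can be split into $|A^o|^2+\tfrac12H^2$.

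\textbf{Step 1.} Prove a localised $L^\infty$ bound in the style of Kuwert--Schätzle:
\[
\|A^o\|_{\infty,[\gamma=1]}^4\le C\,\Ao{2}\Big(\intM{(|\nabla_{(2)}A^o|^2+|A|^4|A^o|^2)\gamma^4}+\cgam^4\Ao{2}\Big).
\]
The method is to apply the Michael--Simon inequality to $|A^o|^2\gamma^{s}$ and iterate.

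\textbf{Step 2.} Control the higher-derivative terms by $\Delta^{m/2}H$. The Simons-type identity (Codazzi for surfaces in $\R^3$) expresses $\nabla_{(k)}A^o$ in terms of $\nabla_{(k)}H$ up to lower-order curvature products. Integrating by parts with weight $\gamma^{2k}$ then gives
\[
\intM{|\nabla_{(k)}A|^2\gamma^{2k}}\le C\big(S_k+\cgam^{2k}\Ao{2}\big)
\]
plus absorbable terms, and $\intM{|\nabla_{(k)}A^o|^2\gamma^{2k}}$ satisfies the same bound. The products $|A|^4|A^o|^2$ are handled the same way, by interpolation and smallness of $\Ao{2}$. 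Feeding this, together with the analogous $L^\infty$ estimate at derivative order $m$, into Step 1 yields
\[
\|A^o\|^{2m}_\infty\lesssim \Ao{2(m-1)}\cdot(\text{order-}m\text{ quantities}).
\]
To get exactly the exponent $2(m-1)$, use the interpolation inequality of Theorem \ref{HigherOrderSobolevTheorem1}, namely $S_l\le C(\Ao{\frac{2m}{l+m}}S_{l+m}^{l/(l+m)}+\cgam^{2l}\Ao2)$, on each intermediate $S_l$, and then apply Young's inequality so that every term has the homogeneity $\Ao{2(m-1)}\cdot(S_m+\cgam^{2m}\Ao2)$. Finally $S_m\le\|\Delta^{m/2}H\|^2_{2,f^{-1}(B_{2\rho})}$ because $\gamma\le1$. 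The restriction $m\ge3$ is where the Sobolev embedding in two dimensions closes with these exponents.

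\textbf{Step 3 (consequence).} Suppose $\Delta^pH\equiv c$. On a closed surface $\int\Delta^pH\,d\mu=0$ forces $c=0$. More generally, take $m=2p+1\ge3$: then $\Delta^{m/2}H=\nabla\Delta^pH=0$, and the estimate reduces to $\|A^o\|_\infty^{2m}\le C\rho^{-2m}\Ao{2m}$. Taking $2m$-th roots gives the stated bound. If $p$ is such that another $m$ is more convenient, use any odd $m\ge2p+1$, since $\nabla_{(j)}\Delta^pH=0$ for all $j\ge1$.

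The main obstacle is Step 2. All error terms generated when derivatives fall on $\gamma$, and all commutator curvature terms, must be absorbed while keeping the exact cut-off powers $\gamma^{2l}$ and the precise homogeneity $\Ao{2(m-1)}$. That bookkeeping is what Theorem \ref{HigherOrderSobolevTheorem1} is designed to supply, and I would lean on it directly rather than redo the interpolation by hand.
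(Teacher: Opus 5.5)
Your outline matches the paper's: a low-order local $L^\infty$ bound for $A^o$, upward interpolation to $S_m$ via Theorem \ref{HigherOrderSobolevTheorem1}, Young's inequality to fix the homogeneity, and $m=2p+1$ so that $\Delta^{m/2}H=\nabla\Delta^pH\equiv0$. But Step 2, which you rightly call the heart of the matter, is not supplied, and the tool you propose for it cannot supply it.

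Theorem \ref{HigherOrderSobolevTheorem1} only compares the quantities $S_l=\int|\Delta^{l/2}H|^2\gamma^{2l}$ with one another. It gives no bound on $\int|\nabla_{(2)}A^o|^2\gamma^4$ or on $\int|A|^4|A^o|^2\gamma^4$. ``Interpolation and smallness of $\|A^o\|_{2,[\gamma>0]}$'' does not suffice either. Splitting $|A|^2=|A^o|^2+\frac12H^2$, and commuting derivatives with $R=\frac12H^2-|A^o|^2$, produces $\int H^4|A^o|^2\gamma^4$ and $\int H^2|\nabla A^o|^2\gamma^4$. In these $H$ is not small (near a sphere $\int H^2\approx16\pi$). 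So the product interpolation estimates, which need $\int_{[\gamma>0]}|A|^2$ small, are unavailable.

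What is missing is a genuine local elliptic estimate for the tracefree part. Integrate by parts against the Simons identity $\Delta A^o=S^o(\nabla_{(2)}H)+\frac12H^2A^o-|A^o|^2A^o$, first with weight $H^2\gamma^4$. Then $\int H^2|\nabla A^o|^2\gamma^4+\frac12\int H^4|A^o|^2\gamma^4$ lands on the left with a favourable sign, exactly as $\frac12\int|A^o|^2H^2\gamma^2$ does in the proof of Lemma \ref{MultliplicativeSobolevLemma2}. Use that to handle $\int|\nabla_{(2)}A^o|^2\gamma^4$, and reserve Michael--Simon plus smallness for terms carrying extra factors of $|A^o|$. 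Without this, Step 1 feeds into quantities you have not actually bounded.

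The paper avoids this work by importing \cite[Theorem 28]{mccoy2015geometric}, which already contains that bookkeeping at third order under $A^o$-smallness alone:
\[
\|A^o\|^6_{\infty,[\gamma=1]}\le c\,\|A^o\|^4_{2,[\gamma>0]}\big(\|\nabla\Delta H\|^2_{2,\gamma^6}+c_\gamma^6\|A^o\|^2_{2,[\gamma>0]}\big).
\]
Theorem \ref{HigherOrderSobolevTheorem1} is then used only for $S_3\le c\big(\|A^o\|_{2,[\gamma>0]}^{2(m-3)/m}S_m^{3/m}+c_\gamma^6\|A^o\|^2_{2,[\gamma>0]}\big)$, and raising to the power $m/3$ gives the claim. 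This is also where $m\ge3$ comes from: the base estimate is third order and the interpolation only runs upward. It is not a two-dimensional Sobolev threshold; your second-order route, once Step 2 is proved, would give the estimate for $m\ge2$. So either cite that local estimate, or carry out the Simons-identity argument above for Step 2. Your treatment of the ``in particular'' part via $m=2p+1$ is fine.
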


The corresponding Gap Theorem classifies stationary solutions with sufficiently small tracefree curvature.

\begin{theorem}[Gap theorem]\label{GapTheorem}
There exists \(\varepsilon_{\mathrm{gap}}>0\) such that the following holds.
Let \(f:\Sigma\rightarrow\R^3\) be a proper smooth immersion with \(\Sigma\) connected and
\(\Delta^pH\equiv C\) for some constant \(C\).
If
\begin{equation}
    \intM{|A^o|^2}\leq\varepsilon_{\mathrm{gap}},
    \label{Smallness}
\end{equation}
then either \(f(\Sigma)\) is a standard round sphere or \(f(\Sigma)\) is a standard flat plane.
\end{theorem}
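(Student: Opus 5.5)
Two cases: compact and noncompact. Both rest on the \(\varepsilon\)-regularity theorem (Theorem \ref{EpsilonRegularity}) in the special case \(\Delta^pH\equiv C\). For \(\varepsilon_{\mathrm{gap}}\leq\varepsilon_{\mathrm{reg}}\), the hypothesis \eqref{Smallness} gives \(\C^o(x,2\rho)\leq\varepsilon_{\mathrm{gap}}\) for every \(x\in\R^3\) and every \(\rho>0\). Hence
\[
\|A^o\|_{\infty,f^{-1}(B_\rho(x))}\leq C(p)\rho^{-1}\|A^o\|_{2,f^{-1}(B_{2\rho}(x))}.
\]
The key point is that the scale \(\rho\) is arbitrary.

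\emph{Noncompact case.} Here \(f\) is proper with \(\Sigma\) noncompact, so \(f(\Sigma)\) is unbounded. Fix \(y\in\Sigma\) and let \(\rho\to\infty\) with \(x=f(y)\). The right-hand side is at most \(C\rho^{-1}\varepsilon_{\mathrm{gap}}^{1/2}\to0\), so \(A^o(y)=0\). Thus \(f\) is totally umbilic. Since \(\Sigma\) is connected, Codazzi (\(\nabla H = \nabla\cdot A^o\) up to constants in dimension two) makes \(H\) constant, so \(f(\Sigma)\) lies in a sphere or a plane. A proper noncompact connected immersion into a round sphere is impossible, because the sphere is compact and the preimage of the whole sphere would then be compact. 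So \(f(\Sigma)\) is a plane, and properness with connectedness forces it to be the full flat plane.

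\emph{Compact case.} The scaling trick fails here, since once \(\rho\) exceeds the diameter the ball \(B_{2\rho}\) contains everything and we only get \(\|A^o\|_\infty\leq C\rho^{-1}\varepsilon^{1/2}\), which does not vanish. I would instead argue through an integral identity. First, \(\Sigma\) is closed, so integrating \(\Delta^pH=C\) gives \(C\cdot|\Sigma|=\int\Delta^pH\,d\mu=0\) for \(p\geq1\). Hence \(\Delta^pH\equiv0\). Multiplying by \(\Delta^{p-1}H\) (or by \(H\)) and iterating integration by parts, \(\int|\Delta^{p/2}H|^2\,d\mu=0\), which gives \(\Delta^{\lfloor p/2\rfloor}H\equiv0\) or \(\nabla\Delta^{(p-1)/2}H\equiv0\). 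Repeating this descent shows \(H\) is harmonic on a closed surface, hence constant. So \(f\) is a closed CMC immersion with \(\nabla H=0\).

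It remains to show that a closed CMC surface with small \(\|A^o\|_2\) is a round sphere. With \(\nabla H=0\), Codazzi gives \(\nabla^*A^o=0\), and Simons' identity gives
\[
\tfrac12\Delta|A^o|^2=|\nabla A^o|^2+|A^o|^2\bigl(\tfrac{H^2}{2}-|A^o|^2\bigr)
\]
(up to the paper's normalisation). Integrating yields \(\int|\nabla A^o|^2+\tfrac{H^2}{2}|A^o|^2\leq\int|A^o|^4\). I would bound \(\int|A^o|^4\) using the Michael–Simon Sobolev inequality applied to \(|A^o|^2\):
\[
\int|A^o|^4\leq C\|A^o\|_2^2\int\bigl(|\nabla A^o|^2+H^2|A^o|^2\bigr).
\]
For \(\varepsilon_{\mathrm{gap}}\) small, the right-hand side is absorbed into the left, giving \(A^o\equiv0\). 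The surface is then a round sphere, since it is compact and connected.

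The main obstacle is this absorption in the compact case. The danger is that the constant in front of \(H^2|A^o|^2\) produced by Simons' identity might not match the one needed for absorption. If it does not, I would instead use the quantitative \(\varepsilon\)-regularity estimate combined with the sharp pointwise bound \(|A^o|^2\leq C\rho^{-2}\varepsilon\) at scale \(\rho\sim\) the intrinsic radius \(2/|H|\), and feed that into the integrated Simons inequality. The descent showing \(H\) is constant is routine.
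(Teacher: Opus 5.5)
Your proof is correct, but your stated reason for splitting into compact and noncompact cases is wrong, and the split is unnecessary. The bound $\|A^o\|_{\infty,f^{-1}(B_\rho(x))}\leq C(p)\rho^{-1}\|A^o\|_{2,f^{-1}(B_{2\rho}(x))}$ is scale-invariant and holds for every $\rho>0$ with a constant independent of $\rho$. So on a closed surface, once $B_{2\rho}$ contains $f(\Sigma)$, you get $\|A^o\|_\infty\leq C(p)\rho^{-1}\|A^o\|_2\leq C(p)\rho^{-1}\varepsilon_{\mathrm{gap}}^{1/2}$, and this \emph{does} tend to zero as $\rho\to\infty$. The paper's proof is exactly your noncompact argument, applied at an arbitrary $q\in\Sigma$ with no case distinction. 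It gives $A^o\equiv0$ directly, even when the constant in $\Delta^pH\equiv C$ is nonzero, and then Codazzi together with properness and connectedness yields the sphere or the plane.

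Your compact-case argument is still a valid alternative route. Integrating $\Delta^pH=C$ forces $C=0$, and the self-adjointness descent gives $\nabla H=0$. The integrated Simons identity then gives
\[
\int(|\nabla A^o|^2+\tfrac{H^2}{2}|A^o|^2)\,d\mu=\int|A^o|^4\,d\mu .
\]
Michael--Simon applied to $|A^o|^2$, with Kato's inequality, gives
\[
\int|A^o|^4\leq c\|A^o\|_2^2\int(|\nabla A^o|^2+H^2|A^o|^2)\leq 2c\,\varepsilon_{\mathrm{gap}}\int|A^o|^4 .
\]
So the ``main obstacle'' you flag is not one. Both terms on the left carry positive coefficients, so any value of $c$ is absorbed once $2c\,\varepsilon_{\mathrm{gap}}<1$, and your fallback at scale $2/|H|$ is not needed. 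This is essentially the closed case of \eqref{MultliplicativeSobolevLemma2Eqn1} with $\gamma\equiv1$ and $\nabla H=0$.

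Your route has a real advantage for closed surfaces. It is elementary and self-contained, with a threshold independent of $p$, and it bypasses the local tracefree estimate of \cite{mccoy2015geometric} and the high-order interpolation behind Theorem \ref{EpsilonRegularity}. Its cost is that it uses closedness essentially, through $\int\Delta^pH=0$ and integration by parts without boundary terms. It therefore cannot replace $\varepsilon$-regularity in the noncompact case, whereas the paper's single scaling argument handles both cases at once.
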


\begin{remark}
In the gap theorem the constant on the right-hand side of \(\Delta^pH\equiv C\) need not vanish.
Indeed, applying the preceding \(\varepsilon\)-regularity estimate with \(m=2p+1\) uses \(\nabla\Delta^pH=0\), which is enough to conclude that \(A^o\equiv0\) under the smallness hypothesis.
\end{remark}

We now describe how the preceding estimates enter the global analysis near spheres.
Assume, for contradiction, that a solution with initially small tracefree curvature develops a finite singular time \(T\).
Choose \(\varepsilon>0\) below all thresholds above and let \(\rho_j=\rho^\varepsilon(t_j)\rightarrow0\) for a sequence \(t_j\nearrow T\).
Choosing centres \(x_j\) where the concentration is essentially attained, consider the rescaled flows
\[
    f_j(q,\tau)
    =
    \rho_j^{-1}\Big(f(q,t_j+\rho_j^{2(p+1)}\tau)-x_j\Big).
\]
The lifespan theorem and interior estimates give uniform curvature bounds for \(f_j\) on compact subsets of space-time, provided \(\varepsilon\) is chosen sufficiently small.
A compactness theorem then produces a proper blowup limit \(\hat f\).
The finite dissipation supplied by the energy estimates forces this limit to be stationary, while the small tracefree-curvature hypothesis allows the gap theorem to be applied.
The gap theorem leaves only two possible models: a plane or a round sphere.  The plane is incompatible with the normalisation of the blowup by a positive amount of curvature concentration.  A compact round-sphere model is also impossible: after undoing the blowup scaling it would force the area of the original closed surface to tend to zero, while the signed enclosed volume is preserved and the isoperimetric inequality gives a positive lower bound for the area.

Once global existence is known, the preserved almost-sphericity estimates give exponential decay of the umbilic energy \(\|A^o\|_2^2\).
The interpolation and interior estimates then bootstrap this to exponential decay of all curvature derivatives, yielding smooth convergence to a round sphere.
The main global consequence is the following.

\begin{theorem}[Global existence and exponential convergence to round spheres]\label{MainTheorem}
There exists \(\varepsilon_1=\varepsilon_1(p)>0\) with the following property.
Let \(f:\Sigma\times[0,T)\rightarrow\R^3\) be the maximal smooth solution of \eqref{PolyharmonicIntro1} with connected compact initial immersion \(f_0\).
Assume that the orientation is chosen so that the preserved signed enclosed volume \(V_0\) is positive.
If
\begin{equation}
    \left.\intM{|A^o|^2}\right|_{t=0}
    \leq\varepsilon_1,
    \label{MainTheoremSmallness}
\end{equation}
then the flow exists for all time and, after reparametrisation, \(f_t\) converges exponentially in the \(C^\infty\) topology to a parametrisation of the round sphere
    $\mathbb S_{\left(3V_0/4\pi\right)^{1/3}}(x_0)$
for some \(x_0\in\R^3\).
\end{theorem}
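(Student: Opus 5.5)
The argument has three parts: the energy $\|A^o\|_2^2$ stays small along the flow, the flow exists globally, and it converges exponentially.

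\textbf{Step 1: smallness persists.} First I show that the smallness \eqref{MainTheoremSmallness} is preserved, up to a constant factor, for as long as the flow exists. I would compute the evolution of $\intM{|A^o|^2}$, or equivalently of $\intM{|A|^2}$ using Gauss--Bonnet, along \eqref{PolyharmonicIntro1} with $\gamma\equiv1$. The leading term is $-2\intM{|\nabla_{(p+1)}A|^2}$, up to lower-order commutator terms. The reaction terms are multilinear in $A$ with at least one factor of $A^o$, since spheres are stationary. I would bound them using Theorem \ref{HigherOrderSobolevTheorem1} with $\cgam=0$, which gives
\[
\intM{|\Delta^{l/2}H|^2}\le C\Aoc{\frac{2m}{l+m}}\Big(\intM{|\Delta^{(l+m)/2}H|^2}\Big)^{\frac{l}{l+m}},
\]
and using Simons-type identities to pass between $\nabla A^o$ and $\nabla H$. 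A continuity argument then yields
\[
\frac{d}{dt}\intM{|A^o|^2}+\intM{|\Delta^{(p+1)/2}H|^2}\le0
\]
as long as $\Aoc2\le 2\varepsilon_1$. So $\|A^o\|_2^2\le\varepsilon_1$ for all $t<T$, and the dissipation is integrable in time.

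\textbf{Step 2: global existence by blowup.} Suppose $T<\infty$. Because $|H|^2=2|A|^2-4|A^o|^2$ with the sign convention $|A|^2=|A^o|^2+\tfrac12H^2$, the local concentration $\C$ is not directly controlled by $\|A^o\|_2$. Instead I use Theorem \ref{LifespanTheorem}: $\rho^\varepsilon(t_j)\to0$. I rescale as described in the introduction, centring at points where the concentration is essentially attained, so that $\C_j(0,1)$ is comparable to $\varepsilon$. Theorems \ref{LifespanTheorem} and \ref{T:firstmainest} give uniform bounds on all derivatives of $A$ on compact subsets of space-time. A Langer/Breuning-type compactness theorem then produces a proper limit flow $\hat f$. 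Under the rescaling, the time-integrated dissipation from Step 1 scales to zero on any window $[0,\tau]$, because $\int|\Delta^{(p+1)/2}H|^2$ has dimension chosen so that $\rho^{2(p+1)}$ times it over rescaled time equals the original integral over a shrinking window. Hence $\hat f$ satisfies $\nabla\Delta^{(p)}H\equiv0$, actually $\Delta^{(p+1)/2}H=0$, and thus $\Delta^pH$ is constant on each component. By lower semicontinuity, $\intM{|\hat A^o|^2}\le\varepsilon_1\le\varepsilon_{\mathrm{gap}}$, so Theorem \ref{GapTheorem} applies on each component.

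\textbf{Step 3: ruling out the limits.} A plane is impossible because it carries no curvature, yet $\hat{\C}(\cdot,1)\gtrsim\varepsilon$. A round sphere has radius bounded above by the normalisation, so it must be a full compact component. Smooth convergence then means the entire connected surface $\Sigma$ is a single small sphere in the original scale, of area $\sim\rho_j^2\to0$. This contradicts the preserved $V_0>0$ together with the isoperimetric inequality. Making this compactness-and-connectedness step rigorous, including the convergence of the whole surface rather than just a piece of it, is the step I expect to be the main obstacle. Since $\varepsilon_1$ may be smaller than the explicit sphere energy $\int|A|^2=8\pi$, localisation in $\C$ must be phrased carefully.

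\textbf{Step 4: exponential convergence.} With $T=\infty$, I use the area bound $|\Sigma_t|\le|\Sigma_0|$, monotone along the flow, together with $V_0$ fixed. These give two-sided diameter bounds via the isoperimetric inequality and Simon-type estimates. A Poincaré-type inequality,
\[
\intM{|A^o|^2}\le C\intM{|\Delta^{(p+1)/2}H|^2},
\]
combined with interpolation and the Step 1 inequality gives $\frac d{dt}\|A^o\|_2^2\le-c\|A^o\|_2^2$, hence exponential decay. Interpolating against the uniform higher-derivative bounds from Theorem \ref{T:firstmainest} (with $\rho$ fixed) yields exponential decay of all $\|\nabla_{(k)}A^o\|_\infty$ and of $|\partial_tf|$. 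Hence $f_t$ converges in $C^\infty$, after reparametrisation, to an umbilic compact surface, which is a sphere. Volume preservation fixes its radius as $(3V_0/4\pi)^{1/3}$.
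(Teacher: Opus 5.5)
Your Steps 1--3 and the $L^2$ decay in Step 4 follow the paper's route. That route is: preserved almost-sphericity with integrable dissipation, then a blowup at a putative finite singular time, then the gap theorem on components of the limit. The plane is excluded by the concentration normalisation and a compact sphere by volume preservation plus isoperimetry; the decay is coercivity plus Gronwall.

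The genuine gap is in Step 4. To upgrade $\|A^o\|_2^2\le Ce^{-\delta t}$ to exponential decay of $\|\Delta^pH\|_\infty$ and to $C^\infty$ convergence, you need bounds on all derivatives of $A$ that are uniform in time. You attribute these to Theorem \ref{T:firstmainest} with $\rho$ fixed, but that theorem cannot supply them, for two reasons.
\begin{itemize}
\item It requires $\sup_{0\le\tau\le t}\C(x,\tau,2\rho)\le\varepsilon_{\mathrm{loc}}$ for every $t\ge0$, which you never establish. Step 2 only rules out concentration as $t\nearrow T<\infty$. As you note yourself, $\|A^o\|_2$ does not control $\C$, so nothing in your argument prevents $\rho^\varepsilon(t)\to0$ as $t\to\infty$.
\item Even granting the hypothesis, its right-hand side contains $C\rho^{-2m}\int_0^t\C(x,\tau,2\rho)\,d\tau$, which grows linearly in $t$.
\end{itemize}
The same tool is misused in Step 2. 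For the rescaled flows the initial time recedes to $-\infty$, so what yields uniform bounds on compact space-time sets is the time-weighted smoothing estimate, Theorem \ref{T:interiorests}, which needs only $L^2$ smallness of $A$. The compactness theorem also needs the local area bound, Lemma \ref{L:local-area-bound}, which you do not mention.

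The paper closes the Step 4 gap with Proposition \ref{P:uniform-curvature-bounds}, a second blowup at the curvature scale. Suppose $|A|$ were unbounded.
\begin{enumerate}
\item Take record times $t_i\to\infty$ with $Q_i=\max_{\Sigma\times[0,t_i]}|A|\to\infty$, and rescale by $Q_i$ so that $|A_i|\le1$ for $\tau\le0$ and $|A_i|=1$ at the base point.
\item Use the scale-invariant area bound to make the concentration small on balls of a fixed small radius, then apply Theorem \ref{T:interiorests} and compactness.
\item Use finiteness of $\int_0^\infty\!\int|\Delta^{(p+1)/2}H|^2$ to make the ancient limit stationary; this works because the corresponding original time windows drift to infinity.
\item Repeat your Step 3 rigidity argument to reach a contradiction.
\end{enumerate}
Once $|A|$ is bounded, the area bound gives uniformly small concentration at a fixed scale. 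Theorem \ref{T:interiorests} applied on windows of length $c_0\rho^{2(p+1)}$ then gives time-independent bounds on every $\nabla_{(k)}A$.

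A possible alternative is to derive uniformly small concentration at a fixed scale directly. One could combine an $L^2$ rigidity estimate of De Lellis--M\"uller type with the two-sided area bounds (from area monotonicity and isoperimetry with preserved $V_0$) and Simon's area estimate. Either way, some such argument has to be supplied; as written, your Step 4 assumes the conclusion it needs.
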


\section{Preliminaries} \label{S:prelim}

Our primary object of study is a surface $\Sigma$ immersed into $\R^3$ via a smooth immersion $f_0:\Sigma\rightarrow\mathbb{R}^{3}$ or, via a family of smooth immersions $f:\Sigma\times[0,T)\rightarrow\R^3$.
The induced metric $g$ on $\Sigma$ is given by pulling back the standard Euclidean inner product on $\mathbb{R}^{3}$ along $f$. It is defined pointwise by
\begin{equation*}
g_{ij}=(\partial_{i}f,\partial_{j}f).\label{Prelimiaries1}
\end{equation*}
Here $\partial$ denotes the coordinate derivatives on $\Sigma$ and $(\cdot,\cdot)$ the standard Euclidean inner product. This induces an inner product on all tensors along $f$ of similar type, which are defined via traces over pairs of indices. For example, the inner product on the $\oo{1,2}$-tensors $S,T$ is defined by
\[
\inner{S,T}=g^{jm}g^{kn}g_{il}S_{jk}^{i}T_{mn}^{l}.
\]
Here and throughout the paper we adopt the Einstein convention: repeated indices are summed from $1$ to $2$.
The norm squared of a tensor $T$ is the inner product of $T$ with itself and is denoted $\norm{T}^{2}$.
For tensors $S$ and $T$ we frequently use the notation of Hamilton \cite{Hamilton1}, using $S*T$ to denote a linear combination of tensors formed by contracting pairs of indices of $S$ and $T$ by $g$.
We also use $T^{\#2}$ for $T*T$.
A useful property of $*-$notation is that
\[
|S*T|\leq c\norm{S}\norm{T}\qquad
\text{and}\qquad
|T^{\#2}| \le c|T|^2
\]
for some constant $c$.
We will use this without comment in our estimates.
Throughout this article when we write $c$ without any decoration we denote bounded, absolute constants that may vary from term to term and line to line.
For a tensor $T$, we define
\[
P_{j}^{i}\oo{T}=\sum_{r_{1}+\dots+r_{j}=i} \nabla_{\oo{r_{1} }}T*\cdots*\nabla_{\oo{r_{j} }}T\,.
\]
That is, $P^i_j(T)$ is a lineaer combination of tensors, each term of which contains $j$ factors of $T$ with total order of covariant derivative equal to $i$.
Here $\nabla_{\oo{r_{k} }}T$ denotes the $\left( r_k\right)$-th iterated covariant derivative of $T$.  
We also use the convention that
\[
    P_0^0(T)=1,
    \qquad
    P_0^i(T)=0 \quad\text{for } i>0 .
\]

The $P_j^i$ notation is particularly useful when only the number of factors of $T$ and the total number of derivatives is important.  
In some cases, we may need to estimate a large collection of terms with additional important structure that is not captured by the $P^i_j$ notation.
For our analysis to treat these cases it suffices to extend this notation slightly.
We define
\[
P_{j}^{i,k}\oo{T}=\sum_{\substack{r_{1}+\dots+r_{j}=i\\\text{each}\ r_l\le k}}\nabla_{\oo{r_{1} }}T*\cdots*\nabla_{\oo{r_{j} }}T.
\]
For example, $|\nabla_{(2)}A|^2$ can be represented by $\nabla_{(2)}A*\nabla_{(2)}A$, $(\nabla_{(2)}A)^{\#2}$, $P^4_2(A)$ and $P^{4,2}_2(A)$.

The second fundamental form $A$ has components
\[
A_{ij}=-(\partial_{ij}f,\nu) \mbox{,}
\]
and the Weingarten map is $A_i^j = g^{ik}A_{jk}$.

From these we have the mean curvature
\[
H=g^{ij}A_{ij}=A_{i}^{i} \mbox{,}
\]
that is, the ordinary trace of the Weingarten map, or equivalently the trace with respect to metric $g$ of the second fundamental form.  The Gauss curvature $K$ is the determinant of the Weingarten map:
\[
K = \text{det }A_i^j\,.
\]
The tracefree second fundamental form is the symmetric
tracefree part of $A$, denoted $A^{o}$, with components 
\[
A_{ij}^{o}=A_{ij}-\frac{1}{2}H\, g_{ij}
\,.
\]
The norm of the tracefree second fundamental form is then
\begin{equation} \label{E:nAo}
  \left| A^o \right|^2 = \left| A \right|^2 - \frac{1}{2} H^2 \mbox{.}
\end{equation}
A short calculation shows that
\begin{equation}
\oo{\nabla^{*}A^{o}}_{j}:=\nabla^{i}A_{ij}^{o}=g^{ip}\nabla_{p}\oo{A_{ij}-\frac{1}{2}H\, g_{ij}}=\frac{1}{2}\nabla_{j}H;
\label{EQabove10}
\end{equation}
note that $\nabla^{*}$ is the geometric divergence operator or formal adjoint of $\nabla$ in $L^2(d\mu)$.
Here we use $d\mu = \sqrt{\text{det }g}\,d\mathcal{L}^2$ to denote the measure on $(\Sigma,g)$, where $d\mathcal{L}^2$ is standard Lebesgue measure.

In \eqref{EQabove10} we have used the total symmetry of the $\oo{0,3}$-tensor $\nabla A$, known as the Codazzi equations:
\[
\nabla_{i}A_{jk}=\nabla_{j}A_{ki}=\nabla_{k}A_{ij}.
\]
It follows that
\begin{equation*}
\norm{\nabla H}^{2}= 4\norm{\nabla^{*}A^{o}}^{2}\leq4\norm{\nabla A^{o}}^{2}.
\end{equation*}
Moreover, for any $k\in\mathbb{N}$ we have
\begin{equation}
\nablanorm{k}{H}{2}\leq4\nablanorm{k}{A^{o}}{2}\label{Prelimiaries2}
\end{equation}
and hence
\begin{equation}
\nablanorm{k}{A}{2}=\nablanorm{k}{A^{o}}{2}+\frac{1}{2}\nablanorm{k}{H}{2}\leq3\nablanorm{k}{A^{o}}{2}.\nonumber
\end{equation}
This implies the remarkable fact that only control on $H$, $A$, and the derivatives of the \emph{tracefree} second fundamental form are needed to control the regularity of $f$.
The remaining derivatives of $A$ are not needed.

The Laplace-Beltrami operator acts on the components of an $\oo{m,n}$-tensor $S$ via
\[
\Delta S_{j_{1}\dots j_{n}}^{i_{1}\dots i_{m}}=g^{pq}\nabla_{pq}S_{j_{1}\dots j_{n}}^{i_{1}\dots i_{m}}.
\]
We define the integral of a compactly supported function
$h:\Sigma\rightarrow\mathbb{R}$ as 
\[
\intM{h\,},
\]
where $d\mu$ is as above.
We denote the area  of $\Sigma$ by
\[
\mu(\Sigma) = |\Sigma| = \int_\Sigma\,d\mu\,.
\]

\subsection{On the localisation function.}
Our estimates are primarily local, in that we aim to control a quantity in a small (ambient) ball in terms of data in a slightly larger (still ambient) ball.
To achieve this, we use a cutoff function $\gamma=\tilde{\gamma}\circ f:\Sigma\rightarrow\cc{0,1}$, satisfying \eqref{GammaProp0}.
The existence of such a function is standard, for instance see \cite{willmore1997riemannian} for an explicit construction.

We now seek to show that the $k$-th iterated covariant derivative of $\gamma$, $\nabla_{(k)}\gamma$, can be controlled by the curvature and its derivatives.
So we have
\[
\nabla\gamma = D\tilde\gamma|_f*Df
\qquad
\text{and}
\qquad
\nabla_{(2)}\gamma = D^2\tilde\gamma|_f*Df + D\tilde\gamma*\nabla Df\,.
\]
In the above equation we have abused the $*$ notation to mean application of multilinear forms.
This will also occur below.
The standard Fa\'a di Bruno formula then gives the general expression:
\begin{equation}
\label{E:faadibruno}
\nabla_{i_1\cdots i_n}\gamma
= \sum_{\pi\in P(i_1,\ldots,i_n)}
    D^{|\pi|}\tilde\gamma|_f
    *
    \prod_{B\in\pi}
        \nabla_Bf
\,.
\end{equation}
Above we use $P(i_1,\ldots,i_n)$ to denote the set of partitions of $i_1,\ldots,i_n$.
If the indices are left un-named, we use the notation $P(n)$ to denote the set of partitions of a sequence of $n$ numbers, each of which are either 1 or 2.

Each partition $B = (b_1,\ldots,b_k)$ where $k$ is a divisor of $n$, and $\nabla_B = \nabla_{b_1}\cdots\nabla_{b_{k-1}}D_{b_k}$.
Note that each of these indices is either 1 or 2.

In a chart $(\nabla Df)(\partial_i,\partial_j) = -A_{ij}\nu$ is another expression for the (normal valued) second fundamental form (see \cite[Chapter 6]{DoCarmo1}).
Further derivatives of this expression must be calculated in order for us to properly estimate $\nabla_{(k)}\gamma$.
This becomes quite tedious to do in general, and so for our purposes here we focus only on the structure of $\nabla_{(k)}\gamma$.

\begin{lemma}
\label{L:highcovf}
We have for all $k\ge1$
\[
    \nabla_{(k)}Df
    = \sum_{i=0}^{\lfloor {k/2} \rfloor} 
        P_{2i+1}^{k-1-2i}(A)\nu
    + \sum_{i=1}^{\lfloor {k/2} \rfloor} 
        P_{2i}^{k-2i}(A)*Df
\]
and for $k>1$
\[
    \nabla_{(k)}Df
= \nabla_{(k-1)}A\,\nu
    + \sum_{i=1}^{\lfloor {k/2} \rfloor} 
        \bigg(
        P_{2i+1}^{k-1-2i}(A)\nu
        + P_{2i}^{k-2i}(A)*Df
        \bigg)
        \,.
\]
\end{lemma}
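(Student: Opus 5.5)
We argue by induction on \(k\), using the Gauss–Weingarten relations in the form
\[
\nabla Df = -A\,\nu,\qquad \nabla\nu = A*Df,
\]
that is, \(\nabla_{ij}f=-A_{ij}\nu\) and \(\partial_i\nu = A_i^{\,j}\partial_jf\). The computation takes place in the bundle \(T^*\Sigma^{\otimes}\otimes\R^3\), where \(\nabla\) acts on the \(T^*\Sigma\) slots via the Levi-Civita connection and on the \(\R^3\) slot by ordinary differentiation. The Leibniz rule then holds for \(*\)-products of tensors on \(\Sigma\) with \(\nu\) or \(Df\). For the counting we record
\[
\nabla\big(P^a_b(A)\big)=P^{a+1}_b(A),
\]
since differentiating any single factor raises the total derivative count by one and leaves the number of factors unchanged.

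The base case \(k=1\) is immediate: \(\nabla Df=-A\nu=P^0_1(A)\nu\), which is the \(i=0\) term, and the second sum is empty. For the inductive step, assume the first formula holds for \(k\) and apply \(\nabla\) to each term.
\begin{itemize}
\item A term \(P^{k-1-2i}_{2i+1}(A)\nu\) produces
\[
P^{k-2i}_{2i+1}(A)\nu + P^{k-1-2i}_{2i+1}(A)*A*Df = P^{(k+1)-1-2i}_{2i+1}(A)\nu + P^{(k+1)-2(i+1)}_{2(i+1)}(A)*Df .
\]
\item A term \(P^{k-2i}_{2i}(A)*Df\) produces
\[
P^{k+1-2i}_{2i}(A)*Df + P^{k-2i}_{2i}(A)*A\,\nu = P^{(k+1)-2i}_{2i}(A)*Df + P^{(k+1)-1-2i}_{2i+1}(A)\nu .
\]
\end{itemize}
It remains to check that the indices stay in range. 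In the \(\nu\)-family we need \(i\le\lfloor (k+1)/2\rfloor\). In the \(Df\)-family we need \(1\le i\le\lfloor (k+1)/2\rfloor\); this holds because the new index \(i+1\) satisfies \(2(i+1)\le k+1\) whenever \(2i\le k-1\). Any term whose derivative count \(k-1-2i\) would be negative does not occur, so no extra terms are created.

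The second formula follows by isolating the \(i=0\) term of the first sum. Among all terms, only that one contains a single factor of \(A\), namely \(P^{k-1}_1(A)\nu=\nabla_{(k-1)}A\,\nu\) up to a constant. This needs a short check. Single-factor terms can only arise by differentiating the previous single-factor term along its \(P\)-part. By induction this gives exactly \(\nabla_{(k-1)}A\,\nu\), with the coefficient \(-1\) absorbed into the convention, or tracked as \((-1)\) if one prefers. The remaining terms are of the listed forms with \(i\ge1\).

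The only real obstacle is bookkeeping: confirming that the index ranges \(\lfloor k/2\rfloor\) propagate correctly. In particular, the \(Df\)-terms must always carry an even number \(\ge2\) of factors of \(A\), and the \(\nu\)-terms an odd number. This parity structure is preserved because each application of the Weingarten or Gauss relation swaps \(\nu\leftrightarrow Df\) while adding one factor of \(A\).
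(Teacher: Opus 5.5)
Your induction on $k$ via $\nabla Df=-A\nu$ and $\nabla\nu=A*Df$ is correct, and it is exactly the argument the paper has in mind when it says the lemma ``follows readily by induction, using the relations above.'' Your remark that the isolated single-factor term is actually $-\nabla_{(k-1)}A\,\nu$ is also right; the plus sign in the second displayed formula holds only up to the $*$-convention for constants.
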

The proof of Lemma \ref{L:highcovf} follows readily by induction, using the relations above.
Combining Lemma \ref{L:highcovf} with the Fa\'a di Bruno formula \eqref{E:faadibruno} yields the following.
\begin{lemma}\label{L:highcovgamma}
For any $n\in\mathbb{N}$ each term in $\nabla_{(n)}\gamma$ is a
contraction of the form
\[
    D^q\tilde\gamma|_f * \mathcal F_{r_1}*\cdots *\mathcal F_{r_q},
    \qquad
    q\geq1,
    \qquad
    r_1+\cdots+r_q=n,
\]
where $\mathcal F_1=Df,$
and, for $r\geq2$,
\[
\mathcal F_r
=
\sum_{i=0}^{\lfloor (r-2)/2\rfloor}
        P_{2i+1}^{r-2-2i}(A)\nu
+
\sum_{i=1}^{\lfloor (r-1)/2\rfloor}
        P_{2i}^{r-1-2i}(A)*Df .
\]
In particular, no negative-order $P$-term is present: the case $r=1$ is
carried only by the tangential factor $Df$.
\end{lemma}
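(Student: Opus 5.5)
The plan is to combine the Fa\'a di Bruno formula \eqref{E:faadibruno} with Lemma \ref{L:highcovf}, block by block. By \eqref{E:faadibruno}, each term of $\nabla_{(n)}\gamma$ has the form $D^{q}\tilde\gamma|_f*\prod_{B\in\pi}\nabla_B f$, where $\pi$ is a partition of the $n$ indices into $q=|\pi|\ge1$ blocks. A block of size $r$ contributes $\nabla_B f=\nabla_{(r-1)}Df$, since the last derivative is the ambient $D$ and the remaining $r-1$ are covariant. The block sizes $r_1,\dots,r_q$ therefore sum to $n$. It remains to identify $\nabla_{(r-1)}Df$ with $\mathcal F_r$ for each $r\geq1$.

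For $r=1$ there is nothing to do: $\nabla_{(0)}Df=Df=\mathcal F_1$. For $r\ge2$, apply Lemma \ref{L:highcovf} with $k=r-1\ge1$:
\[
\nabla_{(r-1)}Df=\sum_{i=0}^{\lfloor (r-1)/2\rfloor}P^{r-2-2i}_{2i+1}(A)\nu+\sum_{i=1}^{\lfloor (r-1)/2\rfloor}P^{r-1-2i}_{2i}(A)*Df .
\]
Compare this with $\mathcal F_r$. The second sum matches exactly. In the first sum the index range is up to $\lfloor (r-1)/2\rfloor$, while $\mathcal F_r$ has $\lfloor (r-2)/2\rfloor$. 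The two ranges differ only when $r$ is odd, say $r=2j+1$, and then only for the extra index $i=j$. That term is $P^{-1}_{2j+1}(A)$, a negative total derivative order, which is an empty sum and so vanishes. The stated range in $\mathcal F_r$ therefore loses nothing.

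Before this, I would verify that Lemma \ref{L:highcovf} indeed yields no genuinely negative-order terms, and that the displayed range is exactly what the induction produces. To do so, re-run the induction briefly. The base case is $\nabla Df=-A\nu$, which is $P^0_1(A)\nu$. For the step, differentiate using $\nabla\nu=A*Df$ (Weingarten) and $\nabla Df=-A\nu$ (Gauss). A term $P^a_b(A)\nu$ differentiates into $P^{a+1}_b(A)\nu+P^a_{b+1}(A)*Df$, and a term $P^a_b(A)*Df$ differentiates into $P^{a+1}_b(A)*Df+P^a_{b+1}(A)\nu$. In both cases the quantity $a+b$ increases by exactly one and the parity of $b$ is attached to the normal or tangential type. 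Hence the $\nu$-terms in $\nabla_{(k)}Df$ are exactly $P^{k-1-2i}_{2i+1}$ with $k-1-2i\ge0$, and the $Df$-terms are $P^{k-2i}_{2i}$ with $i\ge1$ and $k-2i\ge0$. This confirms the ranges.

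The only delicate point is this index bookkeeping: checking that after the shift $k=r-1$ the upper limits agree, modulo terms of negative order which are empty. The final assertion then follows immediately, since $r=1$ blocks contribute only $Df$, and every $P$-term appearing in any $\mathcal F_r$ has nonnegative order.
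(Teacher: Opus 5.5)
Your proposal is correct and is essentially the paper's argument: apply the Fa\`a di Bruno formula \eqref{E:faadibruno}, and identify each block factor $\nabla_{(r-1)}Df$ using Lemma~\ref{L:highcovf} with $k=r-1$. The paper leaves implicit the two things you check explicitly. You note that, for even $k$, the extra top index $i=\lfloor k/2\rfloor$ in the $\nu$-sum of Lemma~\ref{L:highcovf} is an empty $P^{-1}$ term, which is why the $\nu$-sum in $\mathcal F_r$ stops at $\lfloor (r-2)/2\rfloor$. You also re-derive that lemma's induction from the Gauss and Weingarten relations.
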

Lemma \ref{L:highcovgamma} implies that there exists a constant $c=c(k)$ and a universal constant $\cgam>0$ such that
\begin{equation}
\norm{\nabla_{\oo{k}}\gamma}
\leq c\bigg(\cgam\sum_{i=1}^{k-1}\norm{P_{i}^{k-1-i}\oo{A}}
    + \cgam^2\sum_{i=1}^{k-2}\norm{P_{i}^{k-2-i}\oo{A}}
    + \dots + \cgam^{k}
    \bigg)\,.
    \label{GammaProp}
\end{equation}
and, as a consequence,
\begin{equation}
|\nabla_{(k)}\gamma^{s}| \leq c \bigg(\cgam^{k}\,\gamma^{s-k} + \sum_{l=1}^{k-1}\sum_{a=1}^{k-l}\sum_{b=1}^{a}\cgam^{k-a}|P_{b}^{a-b}(A)|\gamma^{s-l}\bigg).
\label{E:highcovgammapow}
\end{equation}
for every $k\geq 1$. 
Note that, in contrast to \cite{Kuwert2}, we include an extra factor of $c_\gamma$ in the first term on the right hand side to preserve scaling ($c_\gamma$ scales like $\left| A \right|$).
We conclude this section by providing some estimates on the cutoff function and its time derivative. Under \eqref{PolyharmonicIntro1}, by the definition of $\gamma$ it follows that
\begin{equation}
\partial_{t}\gamma=\partial_{t}\oo{\tilde{\gamma}\circ f}
=\oo{D\tilde{\gamma},\partial_{t}f}
=\oo{-1}^{p+1}\oo{D\tilde{\gamma}|_f,\Delta^{p}H\cdot\nu}
=\oo{-1}^{p+1}\,D_{\nu}\tilde{\gamma}|_f\oo{\Delta^{p}H}\,,
\label{EvolutionOfGammaEqn}
\end{equation} 
where $D_{\nu}\tilde{\gamma}|_f = \oo{D\tilde{\gamma}|_f,\nu}$ denotes the directional derivative of $\tilde\gamma$ in the direction $\nu$.
We may use the strategy above to also estimate this term.
In particular, Lemma \ref{L:highcovf} applies with $\nu$ and $Df$ swapped.
The consequent estimate is as follows:
\begin{equation}
|\nabla_{(k-1)}D_\nu\tilde\gamma|_f|
\leq c\bigg(\cgam\sum_{i=1}^{k-1}\norm{P_{i}^{k-1-i}\oo{A}}
    + \cgam^2\sum_{i=1}^{k-2}\norm{P_{i}^{k-2-i}\oo{A}}
    + \cgam^3\sum_{i=1}^{k-3}\norm{P_{i}^{k-3-i}\oo{A}}
    + \dots + \cgam^{k}
    \bigg)\,.
\label{E:gammanormderhighcovest}
\end{equation}
The above with $k = 2p+1$ can be used to estimate $\partial_t\gamma$.
For later convenience we record here the following estimate involving $\partial_t\gamma$.
Note that in its proof the local product interpolation estimate and Lemma \ref{NewLemma1} are used to simplify the explosion of terms arising from \eqref{E:highcovgammapow} and \eqref{E:gammanormderhighcovest}.
The reader may wish to skip its proof until coming to the proof of Proposition \ref{EvolutionProposition1}, where it is used and similar techniques are applied.

We also use in this proof and throughout the paper some additional notation for localised $L^p(d\mu)$ norms.
For $p, s\in\mathbb{N}$ and for a tensor $T$, define
\begin{equation}
||T||_{p,\gamma^{s}}^{p}:=\intM{|T|^{p}\gamma^{s}}.\label{TensorTNotation}
\end{equation}

\begin{lemma}[Local product interpolation]\label{L:local-product-interpolation}
Let $m\in\mathbb N$, let $s\geq 2m$, and set
\[
    M_m:=\llll{\nabla_{(m)}A}_{2,\gamma^s}^{2},
    \qquad
    E:=\A{2}.
\]
Assume $E\leq1$.  If
$\theta\in\{1,\ldots,m\}$, $r\geq2$, and no factor in the schematic
term $P_r^{2m-r+2-\theta}(A)$ contains more than $m$ derivatives of
$A$, then, for every $\eta\in(0,1]$,
\begin{equation}
\cgam^{\theta}
\intM{\left|P_r^{2m-r+2-\theta}(A)\right|\gamma^{s-\theta}}
\leq
\eta M_m+C_\eta \cgam^{2m}E .
\label{E:localinterp-positive-defect}
\end{equation}
If $\theta=0$ and $r\geq4$, then
\begin{equation}
\intM{\left|P_r^{2m-r+2}(A)\right|\gamma^s}
\leq
C E M_m+C\cgam^{2m}E .
\label{E:localinterp-critical}
\end{equation}
Here $C=C(m,r,s)$ and $C_\eta=C_\eta(m,r,s,\eta)$.
\end{lemma}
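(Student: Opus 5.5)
The strategy is the Kuwert--Sch\"atzle interpolation argument: apply H\"older's inequality to each schematic product, then bound each factor by a localised Gagliardo--Nirenberg inequality, and finally count exponents by scaling. Fix one monomial
\[
\nabla_{(i_1)}A*\cdots*\nabla_{(i_r)}A,
\qquad
i_1+\cdots+i_r=2m-r+2-\theta,
\qquad
0\le i_j\le m .
\]
Since $\sum_j(i_j+1)=2m+2-\theta$, I will use the H\"older exponents $p_j:=(2m+2-\theta)/(i_j+1)$, which satisfy $\sum_j 1/p_j=1$. I will also split the weight $\gamma^{s-\theta}$ as $\prod_j\gamma^{s_j}$ with $s_j$ proportional to $i_j+1$. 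This gives
\[
\cgam^{\theta}\intM{|P_r^{2m-r+2-\theta}(A)|\gamma^{s-\theta}}
\le c\,\cgam^{\theta}\prod_{j=1}^r\Big(\intM{|\nabla_{(i_j)}A|^{p_j}\gamma^{s_jp_j}}\Big)^{1/p_j}.
\]
A factor with $i_j=m$ has $p_j\le 2$. If $p_j<2$ I will use H\"older against the measure of $[\gamma>0]$; to keep scale invariance I would rather rebalance that factor with the $\cgam$ prefactor. In the critical case $\theta=0$, $r\ge4$ forces $i_j\le m-1$, so this case does not arise there.

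The core step is a localised Gagliardo--Nirenberg inequality, proved from the Michael--Simon Sobolev inequality applied to $|\nabla_{(i)}A|^{q}\gamma^{\sigma}$ and iterated in $i$:
\[
\Big(\intM{|\nabla_{(i)}A|^{p}\gamma^{ip}}\Big)^{1/p}\le C\,\|A\|_{2,[\gamma>0]}^{1-\alpha}\big(\|\nabla_{(m)}A\|_{2,\gamma^{s}}+\cgam^{m}\|A\|_{2,[\gamma>0]}\big)^{\alpha},
\]
with $\alpha=\alpha(i,p,m)$ fixed by scaling. In each Sobolev step the error terms come from $\nabla\gamma$ and, through $H$ in the Michael--Simon inequality, from extra curvature factors. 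The $\nabla\gamma$ terms are controlled by $|\nabla\gamma|\le c\,\cgam$, which follows from \eqref{GammaProp} with $k=1$ because only first derivatives of $\gamma$ appear. The curvature terms are absorbed using $E\le 1$. Here it matters that $s\ge 2m$, so that after all differentiations a nonnegative power of $\gamma$ remains on each factor.

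Multiplying the factor bounds together and counting by scaling, I expect
\[
\text{LHS}\le C\,\cgam^{\theta}\,E^{\frac{r}{2}-\beta}\,\big(M_m+\cgam^{2m}E\big)^{\beta},
\qquad
\beta=\frac{2m+2-\theta-r}{2m}+\frac{r-2}{2m}\cdot(\text{lower order})\ ,
\]
where the total homogeneity is determined because both sides scale identically. Equivalently, $\beta<1$ when $\theta\ge1$ and $\beta=1$ when $\theta=0$.

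When $\theta\ge1$, Young's inequality with exponents $1/\beta$ and $1/(1-\beta)$ distributes the powers of $\cgam$ exactly to $\cgam^{2m}$, again by scaling. This yields $\eta M_m+C_\eta\cgam^{2m}E$ after using $E\le1$ to reduce powers of $E$ to $E^1$. When $\theta=0$ we have $\beta=1$ and a leftover factor $E^{(r-2)/2}\le E$ since $r\ge4$, which gives \eqref{E:localinterp-critical}.

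I expect the main obstacle to be the localised Gagliardo--Nirenberg step, specifically the bookkeeping of $\gamma$-powers. In each iteration, exactly one power of $\gamma$ must be lost per derivative and compensated by one power of $\cgam$, without producing curvature terms carrying too few $\gamma$'s. I would first establish the case $i=0$, $p=4$ as a model, in the form $\|A\|_{4,\gamma^{s}}^4\le CE(\|\nabla A\|_{2,\gamma^s}^2+\cgam^2E)$, and then induct on $i$. In the induction I would integrate by parts once per step so that the highest-order factor is always paired with $\gamma^{s}$.
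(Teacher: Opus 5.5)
Your route is the paper's. You reduce to one monomial and use H\"older plus a localised Gagliardo--Nirenberg inequality to reach $\int_\Sigma|P_r^{2m-r+2-\theta}(A)|\gamma^{s-\theta}\,d\mu\leq CE^{\frac{r-2}{2}+\frac{\theta}{2m}}M_m^{1-\frac{\theta}{2m}}+C\cgam^{2m-\theta}E$, so your $\beta$ is exactly $1-\frac{\theta}{2m}$. You then finish with Young's inequality and $E\leq1$, and those last steps are fine.

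The step that fails as written is the choice $p_j=(2m+2-\theta)/(i_j+1)$ when $\theta\geq1$. It gives $p_j<2$ whenever $i_j>m-\theta/2$, not only when $i_j=m$. For instance, with $m=4$, $\theta=3$, $r=2$, the term $\cgam^{3}\int_\Sigma|\nabla_{(3)}A|\,|\nabla_{(2)}A|\,\gamma^{s-3}\,d\mu$ puts $\nabla_{(3)}A$ in $L^{7/4}$. Such a factor cannot be controlled by $L^2$ quantities without a bound on $\mu([\gamma>0])$. No such bound is among the hypotheses, so H\"older against the measure is not an option. The rebalancing you hint at must be done for all factors at once:
\begin{itemize}
\item write $\theta=\sum_j\theta_j$ with $0\leq\theta_j\leq m-i_j$, which is possible since $\sum_j(m-i_j)=(r-2)(m+1)+\theta$;
\item take $1/p_j=(i_j+1+\theta_j)/(2m+2)$.
\end{itemize}
Then every $p_j\geq2$, and a factor with $m$ derivatives gets $p_j=2$. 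The interpolation exponents $a_j=(i_j+1-2/p_j)/m$ lie in $[i_j/m,1]$ and sum to $2-\theta/m$. The cut-off powers also close: factor $j$ needs $\gamma^{sa_j/2}$ inside its $L^{p_j}$-norm, and $\sum_j sa_j/2=s-\frac{s\theta}{2m}\leq s-\theta$ because $s\geq2m$.

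Relatedly, your displayed core inequality has the wrong weight. With $\gamma^{ip}$ on the left it cannot hold; for $i=0$ the left side is not localised at all. The integral of $|\nabla_{(i)}A|^p$ must carry a power $\gamma^{\sigma}$ with $\sigma\geq psa/2$.

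Also, $r\geq4$ does not force $i_j\leq m-1$; consider $\nabla_{(m)}A*\nabla_{(m-2)}A*A*A$. This is harmless only because $p_j=2$ when $\theta=0$.

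The second gap is the claim that the curvature terms from the Michael--Simon inequality are absorbed using $E\leq1$. With Theorem \ref{MichaelSimon} as stated, the term $\int_\Sigma|H|u\,d\mu\leq\|H\|_{2,[\gamma>0]}\|u\|_2$ can be absorbed only if $\cmss\|H\|_{2,[\gamma>0]}^2<1$. That means $E$ must lie below a universal threshold, which need not be $1$. Absorbing afresh in each step of an induction in $i$ would even make the threshold depend on $m$.

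Absorb once instead. For Lipschitz $u\geq0$ supported in $[\gamma>0]$, derive the curvature-free inequality $\|u\|_2\leq C\|\nabla u\|_1$. After that, the whole Gagliardo--Nirenberg iteration (Kato's inequality, integration by parts, $|\nabla\gamma|\leq c\,\cgam$) generates no curvature terms.

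To get this under exactly $E\leq1$ you need Brendle's sharp Michael--Simon inequality, $2\sqrt{\pi}\|u\|_2\leq\|\nabla u\|_1+\|Hu\|_1$. Since $H^2\leq2|A|^2$ gives $\|H\|_{2,[\gamma>0]}^2\leq2<4\pi$, it yields $\|u\|_2\leq(2\sqrt{\pi}-\sqrt{2})^{-1}\|\nabla u\|_1$. Otherwise you prove the lemma under a universal smallness assumption on $E$, which is what the later applications actually assume.

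The paper's proof is no more explicit here. It attributes the intermediate bound to Lemmas \ref{EvolutionLemma3} and \ref{NewLemma1}, which are pure derivative interpolations. Together with Cauchy--Schwarz they handle $r=2$, but they supply no Sobolev embedding, which is needed once $r\geq3$.
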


\begin{proof}
It is enough to consider a single monomial
\[
    P_r^{2m-r+2-\theta}(A)
    =
    \nabla_{(i_1)}A*\cdots *\nabla_{(i_r)}A,
    \qquad
    \sum_{\alpha=1}^r i_\alpha=2m-r+2-\theta .
\]
The local Gagliardo--Nirenberg inequality obtained from Lemma
\ref{EvolutionLemma3}, followed by Lemma \ref{NewLemma1}, gives
\[
\intM{\left|P_r^{2m-r+2-\theta}(A)\right|\gamma^{s-\theta}}
\leq
C E^{\frac{r-2}{2}+\frac{\theta}{2m}}
M_m^{1-\frac{\theta}{2m}}
+
C\cgam^{2m-\theta}E .
\]
Multiplying by $\cgam^\theta$ and applying Young's inequality proves
\eqref{E:localinterp-positive-defect}; the smallness assumption
$E\leq1$ ensures that every remaining power of $E$ is bounded by $E$.
The same estimate with $\theta=0$ gives
\[
\intM{\left|P_r^{2m-r+2}(A)\right|\gamma^s}
\leq
C E^{\frac{r-2}{2}}M_m+C\cgam^{2m}E,
\]
and this implies \eqref{E:localinterp-critical} when $r\ge4$.
Finally, if a term carries the weight $\gamma^{s-\ell}$ with
$\ell\leq\theta$, then it is controlled by the same right-hand side,
because $0\leq\gamma\leq1$.
\end{proof}

\begin{lemma}\label{Claim2}
Let $f:\Sigma\times [0,T)\rightarrow\R^{3}$ satisfy
\eqref{PolyharmonicIntro1}.  Let $k\in\mathbb N_0$, set
$m=k+p+1$, and let $s\geq 2m$.  There exists $\varepsilon_0>0$,
depending only on $p$, and a constant $C=C(k,p,s)$ such that, if
\[
    \int_{[\gamma>0]} |A|^2\,d\mu \leq \varepsilon_0,
\]
then for every $\delta\in(0,1]$,
\begin{equation}
\left|
\intM{\nablanorm{k}{A}{2}\partial_t\gamma^s}
\right|
\leq
\delta \llll{\nabla_{(m)}A}_{2,\gamma^s}^{2}
+
C_\delta \cgam^{2m}\A{2}.
\label{Claim2Eqn1}
\end{equation}
\end{lemma}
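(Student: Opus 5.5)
We plan to write $\partial_t\gamma^s = s\gamma^{s-1}\partial_t\gamma$ and use \eqref{EvolutionOfGammaEqn}, so that
\[
\intM{\nablanorm{k}{A}{2}\partial_t\gamma^s}
= (-1)^{p+1}s\intM{\nablanorm{k}{A}{2}\gamma^{s-1}\,D_\nu\tilde\gamma|_f\,\Delta^pH}.
\]
The integrand has $|\nabla_{(k)}A|^2$, which is quadratic of total order $2k$, times $\Delta^pH$, which is linear of order $2p$, times one factor $\cgam$ from $D\tilde\gamma$. Naively this is $\cgam\,|P_3^{2k+2p}(A)|\gamma^{s-1}$, and $2k+2p = 2m-3+2-1$. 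So it has exactly the form $\cgam^\theta P_r^{2m-r+2-\theta}(A)\gamma^{s-\theta}$ with $r=3$ and $\theta=1$. The one problem is that $\Delta^pH$ carries $2p$ derivatives, and $2p$ can exceed $m=k+p+1$ when $p>k+1$. Lemma \ref{L:local-product-interpolation} forbids factors with more than $m$ derivatives, so a direct application is not possible in general.

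The first step is therefore to integrate by parts to redistribute derivatives. We write $\Delta^pH=\nabla^*\nabla\Delta^{p-1}H$ schematically and move covariant derivatives, one at a time, off the $\Delta^pH$ factor onto the product $\nablanorm{k}{A}{2}\gamma^{s-1}D_\nu\tilde\gamma|_f$. We repeat this $j$ times, choosing $j$ so that the highest order remaining on the $H$ factor is $2p-j\le m$ and the orders on the $\nabla_{(k)}A$ factors, which become at most $k+j$, also stay $\le m$. Since $2k+2p = 2m-2$, such a $j$ always exists: any split of $2m-2$ into three pieces each at most $m$ works. Each derivative landing on $\gamma^{s-1}$ is expanded with \eqref{E:highcovgammapow}, and each landing on $D_\nu\tilde\gamma|_f$ with \eqref{E:gammanormderhighcovest}. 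This produces terms
\[
\cgam^{a}\,|P_{b}^{\,\cdot}(A)|\,\gamma^{s-l}\,|P_3^{\,\cdot}(A)|, \qquad l\le a,
\]
where $a\ge1$. In every such term the count is preserved by scaling: total derivatives plus $\cgam$-powers plus the number of $A$ factors equals $2m+1$. Hence each term is of the form $\cgam^\theta P_r^{2m-r+2-\theta}(A)\gamma^{s-\ell}$ with $r\ge3$, $\theta\ge1$ and $\ell\le\theta$. The remark at the end of the proof of Lemma \ref{L:local-product-interpolation} covers weights $\gamma^{s-\ell}$ with $\ell\le\theta$. Since $s\ge 2m$, the powers of $\gamma$ stay nonnegative.

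The second step is to apply \eqref{E:localinterp-positive-defect} to each of the finitely many terms with $\eta=\delta/N$, where $N=N(k,p,s)$ is the number of terms. Taking $\varepsilon_0\le1$ ensures $E\le1$, and summing gives \eqref{Claim2Eqn1}. Where convenient, $|\nabla_{(k)}H|\le c|\nabla_{(k)}A|$ lets us treat $H$-factors as $A$-factors. The smallness assumption enters only through $E\le1$; any threshold $\varepsilon_0\le 1$ depending on $p$ suffices.

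The main obstacle is the bookkeeping in the integration by parts. We must check that at every stage no factor exceeds $m$ derivatives. We must also check that each $\cgam$ is accompanied by at most one lost power of $\gamma$, so that $\ell\le\theta$ holds. This second point is exactly what the sharp form of \eqref{E:highcovgammapow} and \eqref{E:gammanormderhighcovest} supplies. We would organise the check by an induction on the number of derivatives moved, recording for each term the triple of derivative count, $\cgam$-power and $\gamma$-loss.
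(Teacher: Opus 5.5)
Your proposal is correct and follows the paper's proof. Both arguments rewrite $\partial_t\gamma^s$ via \eqref{EvolutionOfGammaEqn}, integrate by parts until no curvature factor carries more than $m$ derivatives, and expand the cut-off derivatives with \eqref{E:highcovgammapow} and \eqref{E:gammanormderhighcovest} to reach terms $\cgam^\theta P_r^{2m-r+2-\theta}(A)\gamma^{s-\ell}$ with $1\le\ell\le\theta$, then apply \eqref{E:localinterp-positive-defect}. Your uniform choice $j=\max(0,p-k-1)$ is exactly the paper's case $k\le p-1$, and it shows that the paper's separate cases $p\le k\le 2p$ and $k\ge 2p+1$, where it integrates by parts off a $\nabla_{(k)}A$ factor, are unnecessary because $2p\le m$ there; the only slip is that your scaling invariant should read $2m+2$, not $2m+1$, which is what your displayed form $\cgam^\theta P_r^{2m-r+2-\theta}(A)$ already encodes.
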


\begin{proof}
By \eqref{EvolutionOfGammaEqn},
\[
\partial_t\gamma^s
=
s\gamma^{s-1}\partial_t\gamma
=
s(-1)^{p+1}D_\nu\tilde\gamma\,\Delta^pH\,\gamma^{s-1}.
\]
Schematically, \(\Delta^pH=\nabla_{(2p)}A\), and therefore
\begin{equation}
\left|
\intM{\nablanorm{k}{A}{2}\partial_t\gamma^s}
\right|
\leq
c\left|
\intM{
\nabla_{(k)}A*\nabla_{(k)}A*
D_\nu\tilde\gamma*
\nabla_{(2p)}A\,\gamma^{s-1}}
\right|.
\label{E:Claim2-start}
\end{equation}

First suppose \(k\leq p-1\).  Integrating by parts \(p-k-1\) times gives
\begin{align}
\left|
\intM{\nablanorm{k}{A}{2}\partial_t\gamma^s}
\right|
&\leq
c\sum_{u+v+w=p-k-1}
\left|
\intM{
\nabla_{(m)}A*
P_2^{2k+u}(A)*
\nabla_{(v)}D_\nu\tilde\gamma*
\nabla_{(w)}\gamma^{s-1}}
\right|.
\label{E:Claim2-case1}
\end{align}
The point of this integration by parts is that the distinguished
curvature factor now carries exactly \(m=k+p+1\) derivatives.

We next estimate the two cut-off factors.  Combining
\eqref{E:gammanormderhighcovest} and \eqref{E:highcovgammapow}, and
including the case \(w=0\), yields
\begin{equation}
\begin{split}
|\nabla_{(v)}D_\nu\tilde\gamma|\,|\nabla_{(w)}\gamma^{s-1}|
\leq
C\sum_{\ell=0}^{w}
\bigg(
\cgam^{v+w+1}
+
\sum_{a=1}^{v+w-\ell}\sum_{b=1}^{a}
\cgam^{v+w+1-a}|P_b^{a-b}(A)|
\bigg)
\gamma^{s-1-\ell}.
\end{split}
\label{E:Claim2-cutoff}
\end{equation}
The lower limit \(\ell=0\) is essential: when \(w=0\), the factor
\(\nabla_{(w)}\gamma^{s-1}\) is simply \(\gamma^{s-1}\).

Consider first the pure cut-off contribution in
\eqref{E:Claim2-cutoff}.  Since \(u+v+w=p-k-1\), we have
\[
\nabla_{(m)}A*P_2^{2k+u}(A)
=
P_3^{2m-1-\theta}(A),
\qquad
\theta:=v+w+1\geq 1.
\]
Moreover \(\ell+1\leq \theta\), and hence
\(\gamma^{s-1-\ell}\leq \gamma^{s-\theta}\).  Thus
\eqref{E:localinterp-positive-defect} gives, for every
\(\eta\in(0,1]\),
\[
\cgam^{v+w+1}
\left|
\intM{
\nabla_{(m)}A*P_2^{2k+u}(A)\gamma^{s-1-\ell}}
\right|
\leq
\eta M_m+C_\eta\cgam^{2m}E.
\]
For the remaining terms in \eqref{E:Claim2-cutoff}, we combine
\(P_b^{a-b}(A)\) with \(P_2^{2k+u}(A)\).  The integrand becomes
\[
\cgam^{v+w+1-a}
P_{b+3}^{2m-(b+3)+2-\theta}(A),
\qquad
\theta:=v+w+1-a.
\]
Since \(a\leq v+w-\ell\), we again have \(\theta\geq \ell+1\geq 1\).
Therefore \eqref{E:localinterp-positive-defect} applies to each such
term.  Summing over the finite index set in \eqref{E:Claim2-case1}, we
obtain
\begin{equation}
\left|
\intM{\nablanorm{k}{A}{2}\partial_t\gamma^s}
\right|
\leq
\eta M_m+C_\eta\cgam^{2m}E
\label{E:Claim2-case1-final}
\end{equation}
in the case \(k\leq p-1\).

Now suppose \(p\leq k\leq 2p\).  Integrating by parts \(k-p+1\) times in
\eqref{E:Claim2-start} gives
\[
\intM{\nablanorm{k}{A}{2}\partial_t\gamma^s}
=
\intM{
\nabla_{(p-1)}A*
\nabla_{(k-p+1)}
\big(
\nabla_{(k)}A*
D_\nu\tilde\gamma*
\nabla_{(2p)}A*
\gamma^{s-1}
\big)}.
\]
After expanding the derivative, a typical term has the form
\[
\intM{
\nabla_{(p-1)}A*
\nabla_{(k+a)}A*
\nabla_{(b)}D_\nu\tilde\gamma*
\nabla_{(2p+c)}A*
\nabla_{(d)}\gamma^{s-1}},
\qquad
a+b+c+d=k-p+1.
\]
Here
\[
k+a\leq 2k-p+1\leq k+p+1=m,
\qquad
2p+c\leq k+p+1=m,
\]
so no curvature factor has more than \(m\) derivatives.  Estimating the
two cut-off factors exactly as in \eqref{E:Claim2-cutoff}, each resulting
term is of the form
\[
\cgam^\theta
\intM{P_r^{2m-r+2-\theta}(A)\gamma^{s-\ell}},
\qquad
1\leq \ell\leq \theta,\quad \theta\geq 1,
\]
with no factor differentiated more than \(m\) times.  Hence
\eqref{E:localinterp-positive-defect} gives
\[
\left|
\intM{\nablanorm{k}{A}{2}\partial_t\gamma^s}
\right|
\leq
\eta M_m+C_\eta\cgam^{2m}E
\]
also in this case.

It remains to consider \(k\geq 2p+1\).  We integrate by parts \(p+1\)
times in \eqref{E:Claim2-start}:
\[
\intM{\nablanorm{k}{A}{2}\partial_t\gamma^s}
=
\intM{
\nabla_{(k-p-1)}A*
\nabla_{(p+1)}
\big(
\nabla_{(k)}A*
D_\nu\tilde\gamma*
\nabla_{(2p)}A*
\gamma^{s-1}
\big)}.
\]
A typical expanded term is
\[
\intM{
\nabla_{(k-p-1)}A*
\nabla_{(k+a)}A*
\nabla_{(b)}D_\nu\tilde\gamma*
\nabla_{(2p+c)}A*
\nabla_{(d)}\gamma^{s-1}},
\qquad
a+b+c+d=p+1.
\]
Now
\[
k+a\leq k+p+1=m,
\qquad
2p+c\leq 3p+1\leq k+p=m-1,
\]
because \(k\geq 2p+1\).  Thus, as above, after applying
\eqref{E:Claim2-cutoff}, every term has positive defect
\(\theta\geq 1\) and no factor contains more than \(m\) derivatives of
\(A\).  Another application of
\eqref{E:localinterp-positive-defect} yields
\[
\left|
\intM{\nablanorm{k}{A}{2}\partial_t\gamma^s}
\right|
\leq
\eta M_m+C_\eta\cgam^{2m}E .
\]

Combining the three cases and choosing
\(\eta=c(k,p,s)^{-1}\delta\), where the constant absorbs the finite
number of terms produced above, proves \eqref{Claim2Eqn1}.
\end{proof}

\section{Evolution equations for geometric quantities}
The induced metric, measure, normal and curvature satisfy the following evolution equation under \eqref{PolyharmonicIntro1}.
\begin{lemma}\label{EvolutionLemma1}
Let $f:\Sigma\times\left[0,T\right)\rightarrow\mathbb{R}^{3}$ satisfy \eqref{PolyharmonicIntro1}. Then
\begin{align*}
&\frac{\partial}{\partial t}g=2\oo{-1}^{p+1}\oo{\Delta^p H}A,\hspace*{.5cm}\frac{\partial}{\partial t}d\mu=\oo{-1}^{p+1}H\oo{\Delta^{p}H}d\mu, \hspace*{.5cm}\frac{\partial}{\partial t}\nu=\oo{-1}^{p}\nabla\Delta^{p}H,\\
&\frac{\partial}{\partial t}A=\oo{-1}^{p}\Delta^{p+1}A+\sum_{j=0}^{2p}\nabla_{\oo{j}}\oo{\nabla_{\oo{2p-j}}A*A*A},\mbox{ \textnormal{and}  } \frac{\partial}{\partial t}H=\oo{-1}^{p}\left[ \Delta^{p+1}H+(\Delta^{p}H)\norm{A}^{2}\right].
\end{align*}
\end{lemma}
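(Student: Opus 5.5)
We derive all five formulas from the general first-variation identities for a normal deformation \(\partial_t f=F\nu\) and then set \(F=(-1)^{p+1}\Delta^pH\). With the sign conventions of Section \ref{S:prelim}, namely \(A_{ij}=-(\partial_{ij}f,\nu)\) and \(\nu\) the outer normal, the Weingarten relation reads \(\partial_i\nu=A_i^{\,k}\partial_kf\).

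The first step is the metric. We differentiate \(g_{ij}=(\partial_if,\partial_jf)\) in time and use \((\partial_if,\nu)=0\):
\[
\partial_t g_{ij}=(\partial_i(F\nu),\partial_jf)+(\partial_if,\partial_j(F\nu))=2FA_{ij}.
\]
This is the first formula. From \(\partial_t g^{ij}=-2FA^{ij}\) and \(\partial_t\sqrt{\det g}=\tfrac12 g^{ij}\partial_tg_{ij}\sqrt{\det g}\) we obtain \(\partial_t d\mu=FH\,d\mu\), which is the second formula.

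The second step is the normal. Since \(|\nu|=1\), the vector \(\partial_t\nu\) is tangential. Its components are
\[
(\partial_t\nu,\partial_if)=-(\nu,\partial_i(F\nu))=-\partial_iF,
\]
so \(\partial_t\nu=-\nabla F\). Substituting \(F\) gives \((-1)^p\nabla\Delta^pH\), which is the third formula.

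The third step is the second fundamental form. We differentiate \(A_{ij}=-(\partial_{ij}f,\nu)\) in time:
\[
\partial_tA_{ij}=-(\partial_{ij}(F\nu),\nu)-(\partial_{ij}f,\partial_t\nu).
\]
In the first term, \((\partial_{ij}\nu,\nu)=-(\partial_i\nu,\partial_j\nu)=-A_i^{\,k}A_{kj}\) gives \(-\partial_{ij}F+FA_i^{\,k}A_{kj}\). In the second term, the tangential part of \(\partial_{ij}f\) is \(\Gamma_{ij}^k\partial_kf\), which contributes \(+\Gamma_{ij}^k\partial_kF\). Combining the two yields the standard identity
\[
\partial_tA_{ij}=-\nabla_{ij}F+FA_{ik}A^k_{\,j}.
\]
Tracing with \(\partial_tg^{ij}=-2FA^{ij}\) gives \(\partial_tH=-\Delta F-F|A|^2\). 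With \(F=(-1)^{p+1}\Delta^pH\) this is exactly \((-1)^p[\Delta^{p+1}H+(\Delta^pH)|A|^2]\), which is the last formula.

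The main obstacle is the evolution of \(A\). There one must convert \(-\nabla_{ij}\Delta^pH\) into \(\Delta^{p+1}A_{ij}\) up to lower-order terms, and show that the error has the divergence structure \(\sum_j\nabla_{(j)}(\nabla_{(2p-j)}A*A*A)\). We plan an induction on \(p\) based on two ingredients.

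\begin{itemize}
\item The first ingredient is Simons' identity. Codazzi and the Gauss equation \(R_{ijkl}=A_{ik}A_{jl}-A_{il}A_{jk}\) give
\[
\nabla_{ij}H=\Delta A_{ij}+A*A*A.
\]
This handles \(p=0\), and the cubic term has the required form with \(j=0\).
\item The second ingredient is the commutator identity for \(\nabla\) and \(\Delta\) acting on a tensor \(T\):
\[
\nabla_{(2)}\Delta T-\Delta\nabla_{(2)}T=\nabla(R*\nabla T)+R*\nabla_{(2)}T+\nabla R*\nabla T+\cdots.
\]
Since \(R=A*A\), every commutator term has the schematic form \(\nabla_{(a)}(A*A*\nabla_{(b)}T)\).
\end{itemize}

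Writing \(\nabla_{ij}\Delta^pH=\Delta^p\nabla_{ij}H+(\text{commutators})\), we move \(\nabla_{(2)}\) past each \(\Delta\) one at a time. Each commutator is then written as a derivative of a product \(A*A*\nabla_{(2p-j)}A\), arranging the total derivative order to equal \(2p\) so that the term fits into \(\nabla_{(j)}(\nabla_{(2p-j)}A*A*A)\). Applying Simons' identity inside \(\Delta^p\) produces \(\Delta^{p+1}A\) plus \(\Delta^p(A*A*A)\), and the latter is again of the stated form after expanding \(\Delta^p\) as \(\nabla_{(2p)}\).

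Finally, the term \(FA_{ik}A^k_j=\Delta^pH*A*A\) is of the required form with \(j=0\). The overall sign \((-1)^p\) in front of \(\Delta^{p+1}A\) comes from \(-(-1)^{p+1}\). Throughout, we only track derivative counts and factor numbers, not the exact constants.
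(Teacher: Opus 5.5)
Your proposal is correct and follows the same route as the paper, which simply calls the lemma standard and cites Huisken's first-variation computation for $\partial_tf=F\nu$ with $F=(-1)^{p+1}\Delta^pH$; your signs for $g$, $d\mu$, $\nu$, $A$ and $H$ all agree with the conventions $A_{ij}=-(\partial_{ij}f,\nu)$ and outer $\nu$. The one compressed step is putting terms like $\nabla R*\nabla T$ into divergence form: write $\nabla R*\nabla T=\nabla(R*\nabla T)-R*\nabla_{(2)}T$, treating $R$ as a single tensor, and only then substitute the Gauss equation $R=A*A$.
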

The proof of Lemma \ref{EvolutionLemma1} is standard.
Techniques of the proof that also apply in our case appear for example in the classic paper of Huisken \cite{Huisken2}.

We shall also require the evolution of higher-order derivatives of curvature.
We make use of the $*-$notation, observing that any tensor field $T$ evolves under \eqref{PolyharmonicIntro1} according to
\begin{equation}
\partial_{t}\nabla_{\oo{k}}T=\nabla_{\oo{k}}\partial_{t}T+\sum_{i=0}^{k-1}\nabla_{\oo{i}}\oo{\textrm{II}*\nabla_{\oo{k-1-i}}T}.\label{GeneralTensor1}
\end{equation}
This can be derived formally by induction.  
Here
\[
\textrm{II}:=\partial_{t}\Gamma= \left( \Delta^p H\right) * \nabla A + \nabla \left( \Delta^p H\right) * A =\nabla_{\oo{2p}}A*\nabla A+\nabla_{\oo{2p+1}}A*A\,.
\]
Again, this evolution equation can be derived using techniques as in Huisken \cite{Huisken2}.

We also have
\[
\nabla_{\oo{k}}\Delta^{p+1}T=\nabla_{\oo{k-1}}\Delta^{p+1}\nabla T+\sum_{i=0}^{2p+k}\nabla_{\oo{i}}\oo{\nabla_{\oo{2p+k-i}}T*A*A},
\]
which we use inductively to obtain
\begin{equation}
\nabla_{\oo{k}}\Delta^{p+1}A=\Delta^{p+1}\nabla_{\oo{k}}A+\sum_{i=0}^{2p+k}\nabla_{\oo{i}}\oo{\nabla_{\oo{2p+k-i}}A* A* A}.\label{InterpolationIdentity}
\end{equation}
By combining the evolution equations in Lemma \ref{EvolutionLemma1} with \eqref{GeneralTensor1} and \eqref{InterpolationIdentity}, we obtain the following.

\begin{lemma}\label{EvolutionLemma2}
Let $f:\Sigma\times\left[0,T\right)\rightarrow\mathbb{R}^{3}$ satisfy \eqref{PolyharmonicIntro1}. Then for any $k\in\mathbb{N}_{0}$
\[
\frac{\partial}{\partial t}\nabla_{\oo{k}}A=\oo{-1}^{p}\nabla^{i_{p+1}\dots i_{1}}\nabla_{i_{1}\dots i_{p+1}}\nabla_{\oo{k}}A+\sum_{i=0}^{2p+k}\nabla_{\oo{i}}\oo{\nabla_{\oo{2p+k-i}}A* A* A}.
\]
\end{lemma}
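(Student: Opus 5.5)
Lemma \ref{EvolutionLemma2} follows by combining the three ingredients just recorded, arguing by induction on $k$ in the $*$-notation. The aim is to show that every error term has the form $\nabla_{(i)}\big(\nabla_{(2p+k-i)}A*A*A\big)$ with $0\le i\le 2p+k$.

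\emph{Base case $k=0$.} This is exactly the evolution of $A$ in Lemma \ref{EvolutionLemma1}. The leading term $(-1)^p\Delta^{p+1}A$ equals $(-1)^p\nabla^{i_{p+1}\dots i_1}\nabla_{i_1\dots i_{p+1}}A$ up to commutators of covariant derivatives. Each commutator introduces a curvature factor $\mathrm{Rm}=A*A$, by the Gauss equation, and can be written as $\nabla_{(j)}\big(\nabla_{(2p-j)}A*A*A\big)$. The total derivative count is $2p$, because reordering $2p+2$ derivatives and spending two of them on a curvature commutator leaves $2p$ derivatives distributed among the factors. Such terms are therefore absorbed into the sum.

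\emph{Inductive step.} Apply \eqref{GeneralTensor1} with $T=A$:
\[
\partial_t\nabla_{(k)}A=\nabla_{(k)}\partial_tA+\sum_{i=0}^{k-1}\nabla_{(i)}\big(\mathrm{II}*\nabla_{(k-1-i)}A\big).
\]
\begin{itemize}
\item In $\nabla_{(k)}\partial_tA$, the error part $\nabla_{(k)}\nabla_{(j)}\big(\nabla_{(2p-j)}A*A*A\big)$ is already of the required form with $i=k+j$.
\item In $\nabla_{(k)}\partial_tA$, the leading part $(-1)^p\nabla_{(k)}\Delta^{p+1}A$ is handled by \eqref{InterpolationIdentity}. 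This yields $\Delta^{p+1}\nabla_{(k)}A$ plus admissible errors. The same commutator reasoning as in the base case then converts $\Delta^{p+1}\nabla_{(k)}A$ into the divergence form $\nabla^{i_{p+1}\dots i_1}\nabla_{i_1\dots i_{p+1}}\nabla_{(k)}A$, modulo terms of the same type.
\item For the $\mathrm{II}$ terms, substitute $\mathrm{II}=\nabla_{(2p)}A*\nabla A+\nabla_{(2p+1)}A*A$. Each summand has the form $\nabla_{(i)}\big(\nabla_{(2p)}A*\nabla A*\nabla_{(k-1-i)}A\big)$ or $\nabla_{(i)}\big(\nabla_{(2p+1)}A*A*\nabla_{(k-1-i)}A\big)$.
\end{itemize}

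The main point needing care is the $\mathrm{II}$ terms. They are cubic in $A$ with $2p+k-i$ derivatives inside $\nabla_{(i)}$, but the derivatives are not concentrated on a single factor as in the displayed form. I would interpret the displayed sum, as is customary in this paper, schematically: $\nabla_{(2p+k-i)}A*A*A$ stands for a general $P_3^{2p+k-i}(A)$. Any such term can also be rewritten in that form by repeatedly pulling derivatives outward with the product rule, $\nabla_{(a)}A*\nabla_{(b)}A = \nabla\big(\nabla_{(a-1)}A*\nabla_{(b)}A\big)-\nabla_{(a-1)}A*\nabla_{(b+1)}A$. Repeating this shifts all derivatives onto one factor, with outer derivatives accumulating; this is an induction on the number of factors carrying derivatives. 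Checking that the derivative count $2p+k$ is preserved at every stage is straightforward bookkeeping, and with it the lemma follows.
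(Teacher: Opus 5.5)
Your argument is the paper's own: the lemma is obtained there in one line by combining Lemma~\ref{EvolutionLemma1} with \eqref{GeneralTensor1} and \eqref{InterpolationIdentity}, and your derivative counts for the commutator and $\mathrm{II}$ terms are correct. Reading $\nabla_{(2p+k-i)}A*A*A$ schematically as $P_3^{2p+k-i}(A)$ is exactly what that derivation does implicitly, since the $\mathrm{II}$ terms never come out with all derivatives on one factor, and it is all the later estimates use.

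Drop the aside claiming that such terms can always be rewritten literally with every inner derivative on one factor. The product rule returns a derivative removed from one factor to one of the other two, so the number of differentiated factors can go up rather than down. In fact the claim is false. On a cylinder over a plane curve, each monomial of $P_3^n(A)$ reduces to $\kappa^{(a)}\kappa^{(b)}\kappa^{(c)}$, and each $\nabla_{(j)}(\nabla_{(n-j)}A*A*A)$ reduces to a multiple of $\partial_s^j(\kappa^{(n-j)}\kappa^2)$. For $n=8$ there are ten independent cubic monomials but at most nine such expressions.
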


\begin{corollary}\label{EvolutionCorollary1}
Let $f:\Sigma\times\left[0,T\right)\rightarrow\mathbb{R}^{3}$ satisfy \eqref{PolyharmonicIntro1}. Then for any $k\in\mathbb{N}_{0}:= \mathbb{N}\cup \left\{ 0 \right\}$
\begin{equation*}
\frac{\partial}{\partial
t}\nablanorm{k}{A}{2}\\
=2\oo{-1}^{p}\inner{\nabla_{\oo{k}}A,\nabla^{i_{p+1}i_{p}\dots i_{1}}\nabla_{i_{1}\dots i_{p}i_{p+1}}\nabla_{\oo{k}}A}+\sum_{i=0}^{2p+k}\nabla_{\oo{i}}\oo{\nabla_{\oo{2p+k-i}}A* A* A}*\nabla_{\oo{k}}A.
\end{equation*}
\end{corollary}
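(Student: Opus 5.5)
We will differentiate \(|\nabla_{(k)}A|^2=g^{i_1j_1}\cdots g^{i_{k+2}j_{k+2}}\nabla_{i_1\cdots i_k}A_{i_{k+1}i_{k+2}}\nabla_{j_1\cdots j_k}A_{j_{k+1}j_{k+2}}\) directly in time. The proof then uses Lemma \ref{EvolutionLemma2} for the evolution of \(\nabla_{(k)}A\) and Lemma \ref{EvolutionLemma1} for the evolution of the metric.

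\textbf{Step 1: the product rule.} Differentiating the full contraction gives two kinds of terms. The terms in which \(\partial_t\) falls on one of the two copies of \(\nabla_{(k)}A\) give
\[
2\inner{\nabla_{(k)}A,\partial_t\nabla_{(k)}A}.
\]
The terms in which \(\partial_t\) falls on an inverse metric factor involve \(\partial_t g^{ij}=-g^{ia}g^{jb}\partial_tg_{ab}\). By Lemma \ref{EvolutionLemma1} this equals \(2(-1)^{p}(\Delta^pH)A^{ij}\). Each of the \(k+2\) such terms is therefore of the form
\[
\nabla_{(2p)}A*A*\nabla_{(k)}A*\nabla_{(k)}A .
\]

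\textbf{Step 2: the leading term.} We substitute Lemma \ref{EvolutionLemma2} into \(2\inner{\nabla_{(k)}A,\partial_t\nabla_{(k)}A}\). This produces exactly the leading term \(2(-1)^p\inner{\nabla_{(k)}A,\nabla^{i_{p+1}\dots i_1}\nabla_{i_1\dots i_{p+1}}\nabla_{(k)}A}\) appearing in the statement. It also produces the lower-order sum
\[
\sum_{i=0}^{2p+k}\nabla_{(i)}\bigl(\nabla_{(2p+k-i)}A*A*A\bigr)*\nabla_{(k)}A .
\]

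\textbf{Step 3: absorbing the metric terms.} The metric terms from Step 1 are absorbed into this sum. Since \(\nabla_{(2p)}A*A*\nabla_{(k)}A\) has total derivative order \(2p+k\) spread over three factors of \(A\), we can rewrite it in the same schematic form. For \(k=0\) it is literally the \(i=0\) term \(\nabla_{(2p)}A*A*A\). For \(k\ge1\) we write the monomial as
\[
\nabla_{(k)}\bigl(\nabla_{(2p)}A*A*A\bigr)-\sum_{j\ge1}\nabla_{(\ast)}A*\nabla_{(\ast)}A*\nabla_{(\ast)}A .
\]
Each subtracted monomial is of the form \(\nabla_{(i)}(\nabla_{(2p+k-i)}A*A*A)\) for suitable \(i\), either directly or by the same expansion applied recursively. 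Equivalently, since the \(*\)-notation only records the total derivative count and the number of factors, the whole lower-order family is exactly \(P_3^{2p+k}(A)\), and both the sum in Lemma \ref{EvolutionLemma2} and the metric terms lie in it. The constants in \(*\) depend only on \(k\) and \(p\).

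The only point needing care is this bookkeeping identification in Step 3. The main issue is convention: one must confirm that the paper's notation \(\sum_i\nabla_{(i)}(\nabla_{(2p+k-i)}A*A*A)\) is meant to represent all of \(P_3^{2p+k}(A)\). It is, since expanding each \(\nabla_{(i)}\) by the Leibniz rule shows every \(P_3^{2p+k}\) monomial arises. With that settled, the corollary follows immediately.
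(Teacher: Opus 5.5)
Your proposal is correct and is essentially the paper's argument. The paper only remarks that the corollary follows from Lemma \ref{EvolutionLemma2} and the metric evolution in Lemma \ref{EvolutionLemma1}, and your Steps 1--3 supply exactly the omitted details: the product rule over the two copies of $\nabla_{(k)}A$ and the $k+2$ inverse metrics, substitution of Lemma \ref{EvolutionLemma2}, and absorption of the $\Delta^pH\,A*\nabla_{(k)}A*\nabla_{(k)}A$ terms into the $*$-sum, which is legitimate at the schematic level at which the paper uses that notation.
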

\begin{proof}
The result follows immediately from Lemma \ref{EvolutionLemma2} and the evolution equation for $g$ from Lemma \ref{EvolutionLemma1}.
\end{proof}
These evolution equations are now used to determine how $||\nabla_{(k)}A||^2_{2,\gamma^s}$ evolves in time.

\begin{corollary}\label{EvolutionCorollary2}
Let $f:\Sigma\times\left[0,T\right)\rightarrow\mathbb{R}^{3}$ satisfy \eqref{PolyharmonicIntro1}. Then for any $k,s\in\mathbb{N}_{0}$
\begin{multline*}
\frac{d}{dt}\intM{\nablanorm{k}{A}{2}\gamma^{s}}+2\intM{\nablanorm{k+p+1}{A}{2}\gamma^{s}}\\
=\intM{\nablanorm{k}{A}{2}\partial_{t}\gamma^{s}}-2\sum_{j=1}^{p+1}\dbinom{p+1}{j}\intM{\inner{\nabla_{\oo{j}}\gamma^{s}\otimes\nabla_{\oo{k+p+1-j}}A,\nabla_{\oo{k+p+1}}A}}\\
+\sum_{i=0}^{2p+k}\intM{\nabla_{\oo{i}}\oo{\nabla_{\oo{2p+k-i}}A* A* A}*\nabla_{\oo{k}}A\,\gamma^{s}}.
\end{multline*}
\end{corollary}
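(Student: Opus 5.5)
The plan is to differentiate $\intM{\nablanorm{k}{A}{2}\gamma^s}$ directly and then integrate the leading term by parts $p+1$ times. Differentiating under the integral sign gives three contributions. The first is $\intM{\partial_t\nablanorm{k}{A}{2}\gamma^s}$. The second is $\intM{\nablanorm{k}{A}{2}\partial_t\gamma^s}$, which is kept as the first term on the right-hand side. The third is $\intM{\nablanorm{k}{A}{2}\gamma^s\,\partial_t(d\mu)}$. By Lemma \ref{EvolutionLemma1}, $\partial_t d\mu=(-1)^{p+1}H\Delta^pH\,d\mu$, so this last term has the form $\nabla_{(2p)}A*A*\nabla_{(k)}A*\nabla_{(k)}A\,\gamma^s$. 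That is of the type $\nabla_{(0)}(\nabla_{(2p+k-k)}A*A*\nabla_{(k)}A)$, after regrouping one of the $\nabla_{(k)}A$ factors, and so it is absorbed into the schematic sum $\sum_{i=0}^{2p+k}\nabla_{(i)}(\nabla_{(2p+k-i)}A*A*A)*\nabla_{(k)}A$. Since the $*$-notation allows arbitrary contractions, the term $(\nabla_{(2p)}A*A)*\nabla_{(k)}A$ indeed matches the $i=k$ summand up to redistribution of derivatives. If a literal match is wanted, I would instead note that the metric-variation terms were already included in Corollary \ref{EvolutionCorollary1}.

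Corollary \ref{EvolutionCorollary1} then gives the first contribution:
\[
\intM{\partial_t\nablanorm{k}{A}{2}\gamma^s}=2(-1)^p\intM{\inner{\nabla_{(k)}A,\nabla^{i_{p+1}\dots i_1}\nabla_{i_1\dots i_{p+1}}\nabla_{(k)}A}\gamma^s}+\text{(lower order)}.
\]
The lower-order part is exactly the last sum in the claim.

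The key step is to integrate the leading term by parts $p+1$ times, moving the divergences $\nabla^{i_{p+1}},\dots,\nabla^{i_1}$ one at a time onto $\nabla_{(k)}A\,\gamma^s$. Each integration by parts produces a sign $-1$, so $(-1)^p(-1)^{p+1}=-1$, and the overall sign is negative. After all $p+1$ steps we obtain
\[
-2\intM{\inner{\nabla_{(p+1)}(\gamma^s\nabla_{(k)}A),\nabla_{(k+p+1)}A}}.
\]
To make this clean, I would first write the integrand as $\inner{\gamma^s\nabla_{(k)}A,(\nabla^*)^{p+1}\nabla_{(p+1)}\nabla_{(k)}A}$ and use the $L^2$-adjointness of $\nabla^*$ and $\nabla$ on the closed surface $\Sigma$, so no boundary terms arise.

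Next I would expand $\nabla_{(p+1)}(\gamma^s\nabla_{(k)}A)$ by the Leibniz rule. The structure to use is
\[
\nabla_{(p+1)}(\gamma^s T)=\sum_{j=0}^{p+1}\binom{p+1}{j}\nabla_{(j)}\gamma^s\otimes\nabla_{(p+1-j)}T,
\]
with the index ordering suitably symmetrised; since $\nabla_{(j)}\gamma^s$ is a tensor, this is a contraction with the appropriate slots. The $j=0$ term gives $-2\intM{\nablanorm{k+p+1}{A}{2}\gamma^s}$, which is moved to the left-hand side. The $j\geq1$ terms give the binomial sum in the statement.

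The main obstacle is that the Leibniz expansion is exact only up to the order in which the covariant indices appear. Commuting derivatives, or reordering slots, produces curvature terms of the form $\nabla_{(\cdot)}(\nabla_{(\cdot)}A*A*A)$. I would handle this by writing $\nabla_{(p+1)}(\gamma^sT)$ as a sum over subsets of the $p+1$ slots, which is literally the binomial count when the tensor product is interpreted with that slot ordering. Any reordering needed to match $\nabla_{(k+p+1)}A$ costs only commutator terms, which are of type $A*A*\nabla_{(\cdot)}A$ via the Gauss equation and are then absorbed into the final sum. I would check that the total derivative count of these terms is $2p+2k$ when paired with $\nabla_{(k)}A$, which matches the schematic sum.
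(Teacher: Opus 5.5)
Your argument is the paper's: use Corollary \ref{EvolutionCorollary1} together with $\partial_t\,d\mu=(-1)^{p+1}H\Delta^pH\,d\mu$ from Lemma \ref{EvolutionLemma1}, then integrate the nested operator by parts $p+1$ times and expand $\nabla_{(p+1)}(\gamma^s\nabla_{(k)}A)$ by the product rule. The measure term is simply one of the Leibniz terms of the $i=k$ summand. Your fallback is not right: that term is not already contained in Corollary \ref{EvolutionCorollary1}, which only accounts for $\partial_t g$ in the norm.

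The one correction concerns your ``main obstacle''. The iterated Leibniz rule is exact as a sum over the $\binom{p+1}{j}$ choices of which slots fall on $\gamma^s$. The $(p+1)$-fold integration by parts of the nested operator needs no reordering either, so no commutators arise anywhere. This matters, because curvature terms produced by reordering slots after the Leibniz expansion would carry a factor $\nabla_{(j)}\gamma^s$. Contrary to what you assert, they could not be absorbed into the last sum.
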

\begin{proof}
Use Corollary \ref{EvolutionCorollary1}, Lemma \ref{EvolutionLemma1}, the product rule, and integrate by parts $(p+1)$ times.
\end{proof}

To estimate the intermediate terms in Corollary \ref{EvolutionCorollary2} we will use \eqref{GammaProp} and the interpolation inequality of Lemma \ref{NewLemma1} in the Appendix.

\begin{proposition}\label{EvolutionProposition1}
Let $f:\Sigma\times[0,T)\to\mathbb{R}^3$ satisfy \eqref{PolyharmonicIntro1}.
Let $k\in\mathbb N_0$, set $m=k+p+1$, and let $s\ge 2m$.
There exists $\varepsilon_0>0$ depending only on $p$ and a constant
$C=C(k,p,s)$ such that, if
\[
    \int_{[\gamma>0]} |A|^2\,d\mu \le \varepsilon_0,
\]
then for every $\delta\in(0,1]$,
\[
\frac{d}{dt}\int_\Sigma |\nabla_{(k)}A|^2\gamma^s\,d\mu
+
\left(2-\delta-C\|A\|^2_{2,[\gamma>0]}\right)
\int_\Sigma |\nabla_{(m)}A|^2\gamma^s\,d\mu
\le
C_\delta\,c_\gamma^{2m}\|A\|^2_{2,[\gamma>0]}.
\]
\end{proposition}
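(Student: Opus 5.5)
We start from the identity of Corollary \ref{EvolutionCorollary2}. It produces $2M_m$ on the left, where $M_m:=\llll{\nabla_{(m)}A}_{2,\gamma^s}^2$ and $m=k+p+1$. Three groups of terms appear on the right, and the plan is to bound each by $\frac{\delta}{3}M_m + C E M_m + C_\delta\cgam^{2m}E$, with $E=\A{2}\le\varepsilon_0\le1$. Adding the three bounds and moving the $M_m$ terms to the left then gives the stated inequality, with the factor $2-\delta-CE$.

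\textbf{First group: the $\partial_t\gamma^s$ term.} This is exactly Lemma \ref{Claim2} with $\delta$ replaced by $\delta/3$, so nothing further is needed here.

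\textbf{Second group: the cut-off commutator terms.} These have the form $\inner{\nabla_{(j)}\gamma^s\otimes\nabla_{(m-j)}A,\nabla_{(m)}A}$ for $1\le j\le p+1$. We expand $\nabla_{(j)}\gamma^s$ using \eqref{E:highcovgammapow}.
\begin{itemize}
\item The pure term $\cgam^j\gamma^{s-j}$ gives $\cgam^j\int \nabla_{(m-j)}A*\nabla_{(m)}A\,\gamma^{s-j}$. This is $P_2^{2m-j}(A)$, which has $r=2$ and defect $\theta=j\ge1$.
\item A mixed term $\cgam^{j-a}P_b^{a-b}(A)\gamma^{s-l}$ with $l\le j-a$ combines with the two curvature factors into $\cgam^{j-a}P_{b+2}^{2m-j+a-b}(A)$. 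In the notation of Lemma \ref{L:local-product-interpolation} this has $r=b+2$ and $\theta=j-a\ge l\ge1$.
\end{itemize}
In both cases no factor carries more than $m$ derivatives. The first estimate \eqref{E:localinterp-positive-defect} therefore applies to every term with $\eta$ small, and summing over the finitely many terms gives $\frac{\delta}{3}M_m+C_\delta\cgam^{2m}E$.

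\textbf{Third group: the reaction terms.} These are $\int\nabla_{(i)}(\nabla_{(2p+k-i)}A*A*A)*\nabla_{(k)}A\,\gamma^s$. Each is a sum of $P_4^{2p+2k}(A)=P_4^{2m-2}(A)$ terms, so $r=4$ and $\theta=0$, but a single factor may carry up to $2p+k>m$ derivatives. We therefore first integrate by parts to redistribute derivatives.
\begin{itemize}
\item If $i\le p+1$, move all $i$ derivatives onto $\nabla_{(k)}A\gamma^s$. Then $\nabla_{(2p+k-i)}A$ has at most $m$ derivatives only when $i\ge p-1$.
\item Otherwise, integrate by parts enough times that the highest-order factor is reduced to at most $m$ derivatives while the other factor increases to at most $m$. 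This is possible because the total order $2m-2$ is split between two distinguished factors.
\end{itemize}
Each integration by parts that lands on $\gamma^s$ produces terms with positive defect $\theta\ge1$; these are handled as in the second group via \eqref{E:highcovgammapow} and \eqref{E:localinterp-positive-defect}. The terms in which no derivative hits the cut-off keep $\theta=0$ and $r=4$, and \eqref{E:localinterp-critical} bounds them by $CEM_m+C\cgam^{2m}E$. This produces the $-CE$ in the coefficient.

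The main obstacle is the bookkeeping in this third group. The integrations by parts have to be organised so that simultaneously
\begin{itemize}
\item no factor exceeds $m$ derivatives, and
\item the exponent of $\gamma$ never drops below $s-\theta$.
\end{itemize}
This is where the hypothesis $s\ge2m$ is used. I would organise it as in the three-case split in the proof of Lemma \ref{Claim2}, according to the size of $i$ relative to $p$. I would check that every derivative landing on $\gamma^s$ contributes one unit of $\cgam$-scaling and one unit of defect, so that each term falls under Lemma \ref{L:local-product-interpolation}. Finally, $\varepsilon_0$ is chosen so that $E\le1$, which is what Lemma \ref{L:local-product-interpolation} requires.
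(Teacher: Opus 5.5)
Your proposal is correct and follows essentially the same route as the paper. Both start from Corollary \ref{EvolutionCorollary2} and dispatch the $\partial_t\gamma^s$ term by Lemma \ref{Claim2}. Both treat the cut-off commutator terms via \eqref{E:highcovgammapow} and \eqref{E:localinterp-positive-defect}, with the same defect count $\theta=j-a\geq\ell\geq1$. Both integrate the reaction terms by parts until no factor carries more than $m$ derivatives, and apply \eqref{E:localinterp-critical} to the cut-off-free $P_4^{2m-2}(A)$ terms, which is what produces the $-CE$ in the coefficient. The only difference is cosmetic: the paper splits the reaction terms into $i\leq p-1$ and $i\geq p$, while your redistribution scheme organises the same integrations by parts slightly differently, with identical bookkeeping.
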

\begin{proof}
Set
\[
    m:=k+p+1,\qquad
    M_m:=\llll{\nabla_{(m)}A}_{2,\gamma^s}^{2},
    \qquad
    E:=\A{2}.
\]
We choose \(\varepsilon_0\leq 1\).  This is enough for the local
interpolation estimates \eqref{E:localinterp-positive-defect} and
\eqref{E:localinterp-critical}; no smallness constant depending on
\(k\) is required at this point.

By Corollary \ref{EvolutionCorollary2},
\begin{align}
\frac{d}{dt}\intM{\nablanorm{k}{A}{2}\gamma^s}
+2M_m
&=
\intM{\nablanorm{k}{A}{2}\partial_t\gamma^s}\\ \nonumber
& \quad -2\sum_{j=1}^{p+1}\binom{p+1}{j}
\intM{
\inner{\nabla_{(j)}\gamma^s\otimes\nabla_{(m-j)}A,\nabla_{(m)}A}}
\\ \nonumber
& \quad+
\sum_{i=0}^{2p+k}
\intM{
\nabla_{(i)}
\big(
\nabla_{(2p+k-i)}A*A*A
\big)
*
\nabla_{(k)}A\,\gamma^s}.
\label{E:EvolutionProposition1-start}
\end{align}
We estimate the three terms on the right-hand side.

The first term is controlled by Lemma \ref{Claim2}: for every
\(\eta\in(0,1]\),
\begin{equation}
\left|
\intM{\nablanorm{k}{A}{2}\partial_t\gamma^s}
\right|
\leq
\eta M_m+C_\eta\cgam^{2m}E .
\label{E:EvolutionProposition1-time}
\end{equation}

We next estimate the terms containing derivatives of \(\gamma^s\).
Fix \(j\in\{1,\ldots,p+1\}\).  From
\eqref{E:highcovgammapow},
\[
|\nabla_{(j)}\gamma^s|
\leq
C\cgam^j\gamma^{s-j}
+
C\sum_{\ell=1}^{j-1}\sum_{a=1}^{j-\ell}\sum_{b=1}^{a}
\cgam^{j-a}|P_b^{a-b}(A)|\gamma^{s-\ell}.
\]
The pure cut-off term gives
\[
\cgam^j
\intM{\nablanorm{m-j}{A}{}\nablanorm{m}{A}{}\gamma^{s-j}}
=
\cgam^j
\intM{|P_2^{2m-j}(A)|\gamma^{s-j}},
\]
which is of the form \eqref{E:localinterp-positive-defect} with
\(\theta=j\).  Hence
\[
\cgam^j
\intM{\nablanorm{m-j}{A}{}\nablanorm{m}{A}{}\gamma^{s-j}}
\leq
\eta M_m+C_\eta\cgam^{2m}E .
\]
For the remaining terms we combine \(P_b^{a-b}(A)\),
\(\nabla_{(m-j)}A\), and \(\nabla_{(m)}A\).  Since
\(a\leq j-\ell\), the defect
\[
    \theta:=j-a
\]
satisfies \(\theta\geq\ell\geq 1\), and the resulting product has the
form
\[
\cgam^\theta
P_{b+2}^{2m-(b+2)+2-\theta}(A)\gamma^{s-\ell}.
\]
As \(\ell\leq\theta\), this is again controlled by
\eqref{E:localinterp-positive-defect}.  Therefore, for each
\(j=1,\ldots,p+1\),
\begin{equation}
\left|
\intM{
\inner{\nabla_{(j)}\gamma^s\otimes\nabla_{(m-j)}A,\nabla_{(m)}A}}
\right|
\leq
\eta M_m+C_\eta\cgam^{2m}E .
\label{E:EvolutionProposition1-gammaterms}
\end{equation}

It remains to estimate the nonlinear evolution terms.  Define
\[
X_i:=
\intM{
\nabla_{(i)}
\big(
\nabla_{(2p+k-i)}A*A*A
\big)
*
\nabla_{(k)}A\,\gamma^s},
\qquad
i=0,\ldots,2p+k .
\]

First suppose \(i\leq p-1\).  Integrating by parts \(p-i-1\) times gives
\begin{align}
X_i
&=
\sum_{u=0}^{p-i-1}\sum_{v=0}^{i+u}
\intM{
\nabla_{(m)}A*
P_3^{m-v-2}(A)*
\nabla_{(v)}\gamma^s}.
\label{E:EvolutionProposition1-Xi-low}
\end{align}
The contribution with \(v=0\) is a critical product:
\[
\intM{P_4^{2m-2}(A)\gamma^s}.
\]
By \eqref{E:localinterp-critical},
\[
\left|
\intM{P_4^{2m-2}(A)\gamma^s}
\right|
\leq
C E M_m+C\cgam^{2m}E .
\]
For \(v\geq 1\), using \eqref{E:highcovgammapow} gives a pure cut-off
part
\[
\cgam^v
\intM{
\nabla_{(m)}A*P_3^{m-v-2}(A)\gamma^{s-v}},
\]
which is of the form \eqref{E:localinterp-positive-defect} with
\(\theta=v\).  The terms containing curvature from
\(\nabla_{(v)}\gamma^s\) have the form
\[
\cgam^{v-a}
\intM{
\nabla_{(m)}A*
P_{3+b}^{m-v-2+a-b}(A)
\gamma^{s-\ell}},
\qquad
1\leq\ell\leq v-a,
\]
and are again controlled by \eqref{E:localinterp-positive-defect}, with
\(\theta=v-a\).  Thus, for \(i\leq p-1\),
\begin{equation}
|X_i|
\leq
\eta M_m+C E M_m+C_\eta\cgam^{2m}E .
\label{E:EvolutionProposition1-Xi-low-est}
\end{equation}

Now let \(i\geq p\).  Integrating by parts \(p-1\) times, with no
integration by parts needed when \(p=1\), gives
\[
X_i
=
\intM{
\nabla_{(i-p+1)}
\big(
\nabla_{(2p+k-i)}A*A*A
\big)
*
\nabla_{(p-1)}
\big(
\nabla_{(k)}A\gamma^s
\big)}.
\]
After expanding the derivatives, the terms in which no derivative falls
on \(\gamma^s\) are schematic \(P_4^{2m-2}(A)\gamma^s\)-terms.  Moreover
no curvature factor contains more than \(m\) derivatives: the first
bracket contains at most
\[
2p+k-i+i-p+1=k+p+1=m
\]
derivatives on a single curvature factor, while the second bracket gives
at most
\[
k+p-1=m-2
\]
derivatives on its curvature factor.  Hence the no-cutoff terms are
bounded by \eqref{E:localinterp-critical}:
\[
\left|
\intM{P_4^{2m-2}(A)\gamma^s}
\right|
\leq
C E M_m+C\cgam^{2m}E .
\]
If at least one derivative falls on \(\gamma^s\), then
\eqref{E:highcovgammapow} produces terms of the schematic form
\[
\cgam^\theta
\intM{
P_r^{2m-r+2-\theta}(A)\gamma^{s-\ell}},
\qquad
1\leq \ell\leq \theta ,
\]
again with no factor differentiated more than \(m\) times.  These terms
are controlled by \eqref{E:localinterp-positive-defect}.  Therefore
\begin{equation}
|X_i|
\leq
\eta M_m+C E M_m+C_\eta\cgam^{2m}E
\label{E:EvolutionProposition1-Xi-high-est}
\end{equation}
for every \(i\geq p\).

Summing \eqref{E:EvolutionProposition1-Xi-low-est} and
\eqref{E:EvolutionProposition1-Xi-high-est} over
\(i=0,\ldots,2p+k\), and combining with
\eqref{E:EvolutionProposition1-time} and
\eqref{E:EvolutionProposition1-gammaterms}, we obtain
\[
\frac{d}{dt}\intM{\nablanorm{k}{A}{2}\gamma^s}
+2M_m
\leq
\big(\eta+C E\big)M_m+C_\eta\cgam^{2m}E .
\]
Choosing \(\eta=c(k,p,s)^{-1}\delta\), absorbing harmless numerical constants into
\(C\) and \(C_\delta\), gives
\[
\frac{d}{dt}\intM{\nablanorm{k}{A}{2}\gamma^s}
+
\big(2-\delta-C\A{2}\big)
\llll{\nabla_{(m)}A}_{2,\gamma^s}^{2}
\leq
C_\delta\cgam^{2m}\A{2}.
\]
This is the desired estimate.
\end{proof}

Now we convert Proposition \ref{EvolutionProposition1} to a local curvature estimate under a small concentration condition.
This result shows that small concentration on an open set of a certain size persists along the flow to at least an open strict subset.

\begin{proposition}\label{PN:estA}
Let $f:\Sigma\times[0,T)\rightarrow\mathbb{R}^3$ satisfy
\eqref{PolyharmonicIntro1}.  There are constants
$C<\infty$ and $\varepsilon_0>0$, depending only on $p$, such that if
\begin{equation}
\sup_{0\leq \tau\leq T^*}\int_{[\gamma>0]} |A|^2\,d\mu\leq\varepsilon_0,
\label{EvolutionProposition1,0}
\end{equation}
then for every $t\in[0,T^*]$,
\begin{equation}
\int_{[\gamma=1]} |A|^2\,d\mu
+ \int_0^t \int_{[\gamma=1]} |\nabla_{(p+1)}A|^2\,d\mu\,d\tau
\le
\left.\int_{[\gamma>0]} |A|^2\,d\mu\right|_{t=0}
    + C\,\cgam^{2(p+1)}\varepsilon_0 t.
\label{EQ:estA}
\end{equation}
\end{proposition}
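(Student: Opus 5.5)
Apply Proposition \ref{EvolutionProposition1} with \(k=0\), so that \(m=p+1\), and fix the admissible power \(s=2m=2(p+1)\). Take \(\varepsilon_0\) no larger than the threshold of that proposition, and small enough that \(C\varepsilon_0\leq\tfrac12\), where \(C=C(0,p,2p+2)\) is the constant there. Choose \(\delta=\tfrac12\). Since the hypothesis \eqref{EvolutionProposition1,0} holds for every \(\tau\in[0,T^*]\), the differential inequality holds at each such time, and the coefficient satisfies \(2-\delta-C\A{2}\geq 1\). This gives
\[
\frac{d}{dt}\intM{|A|^2\gamma^{s}}+\intM{|\nabla_{(p+1)}A|^2\gamma^{s}}\leq C\cgam^{2(p+1)}\A{2}\leq C\cgam^{2(p+1)}\varepsilon_0
\]
on \([0,T^*]\), with \(C\) depending only on \(p\) because \(k\), \(s\) and \(\delta\) are now fixed in terms of \(p\).

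Integrate this in time from \(0\) to \(t\). This bounds \(\intM{|A|^2\gamma^s}\big|_{t}+\int_0^t\intM{|\nabla_{(p+1)}A|^2\gamma^s}\,d\tau\) by \(\intM{|A|^2\gamma^s}\big|_{0}+C\cgam^{2(p+1)}\varepsilon_0 t\). Then use \(0\leq\gamma\leq1\) to compare regions:
\begin{itemize}
\item on the left-hand side, \(\gamma^s\geq\chi_{[\gamma=1]}\), which bounds the local energy and dissipation over \([\gamma=1]\) from below;
\item on the right-hand side, \(\gamma^s\leq\chi_{[\gamma>0]}\), which bounds the initial term by \(\int_{[\gamma>0]}|A|^2\,d\mu\) at \(t=0\).
\end{itemize}
Together these give \eqref{EQ:estA}.

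The one point needing care is that \(\gamma=\tilde\gamma\circ f\) depends on time, so the sets \([\gamma=1]\) and \([\gamma>0]\) move with \(t\). This causes no difficulty, because the inequalities are pointwise in \(\tau\) and are applied inside the integrals at each time. The time dependence of \(\gamma\) itself has already been absorbed through Lemma \ref{Claim2}. Otherwise the argument is routine, and the main obstacle, the differential inequality itself, is already supplied by Proposition \ref{EvolutionProposition1}.
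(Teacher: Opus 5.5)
Your proposal is correct and follows the paper's proof essentially verbatim. Like the paper, you apply Proposition \ref{EvolutionProposition1} with $k=0$, $s=2(p+1)$ and $\delta=1/2$, and shrink $\varepsilon_0$ so that the dissipation coefficient stays bounded below. You then integrate in time and compare $\gamma^{s}$ with the indicator functions of $[\gamma=1]$ and $[\gamma>0]$.
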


\begin{proof}
Apply Proposition \ref{EvolutionProposition1} with $k=0$,
$m=p+1$, $s=2(p+1)$, and $\delta=1/2$.  After decreasing
$\varepsilon_0$ if necessary, the coefficient of
$\llll{\nabla_{(p+1)}A}_{2,\gamma^{2(p+1)}}^2$ is bounded below by a
positive constant depending only on $p$.  Hence
\[
\frac{d}{dt}\intM{|A|^2\gamma^{2(p+1)}}
+
\intM{|\nabla_{(p+1)}A|^2\gamma^{2(p+1)}}
\leq
C\cgam^{2(p+1)}\varepsilon_0 .
\]
Integrating this inequality over $[0,t]$ and using
$\gamma\equiv1$ on $[\gamma=1]$ gives \eqref{EQ:estA}.
\end{proof}

We do not wish to use Proposition \ref{EvolutionProposition1} directly to control the concentration of higher derivatives of curvature.
This is because the constant $C$ depends on $k$, so obtaining uniform estimates for $k$ larger and larger will drive $C$ larger and larger, and this in turn will require our smallness condition to tighten.
To avoid this,  we use the control given by Proposition \ref{PN:estA} and in particular \eqref{EQ:estA} to control a variety of curvature integrals in terms of their $L^1$-in-time norm.
Then, we derive a new evolution equation to take advantage of this control.
Finally, we combine these estimates together to prove a version of \ref{EvolutionProposition1} where the constant on the left hand side is universal.

\begin{lemma}\label{LM:goodPguys}
There are constants $\varepsilon_0>0$ and $C<\infty$, depending only on
$p$, such that if
\[
    \int_{[\gamma>0]}|A|^2\,d\mu\leq\varepsilon_0,
\]
then
\[
    \sum_{i=2}^{p+2} \int_{[\gamma=1]} P_i^{p+2-i}(A)^{\#2}\,d\mu
    \le C\llll{\nabla_{(p+1)}A}_{2,\gamma^{2(p+1)}}^2
    + C\,\cgam^{2(p+1)}\llll{A}_{2,[\gamma>0]}^2 .
\]
\end{lemma}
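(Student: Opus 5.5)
Since $\gamma\equiv1$ on $[\gamma=1]$ and $0\le\gamma\le1$, the plan is to bound each term by the corresponding weighted integral,
\[
\int_{[\gamma=1]}P_i^{p+2-i}(A)^{\#2}\,d\mu\le\intM{|P_i^{p+2-i}(A)|^2\gamma^{2(p+1)}},
\]
and then treat each squared monomial as a single schematic product. A monomial of $P_i^{p+2-i}(A)$ is $\nabla_{(j_1)}A*\cdots*\nabla_{(j_i)}A$ with $\sum j_\alpha=p+2-i$. Its square is a product of $r=2i$ factors with total derivative count $2(p+2-i)$. Setting $m=p+1$, we have $2m-r+2=2p+4-2i$, so the square is exactly a critical term $P_{2i}^{2m-2i+2}(A)$ with $\theta=0$. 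Moreover each factor carries at most $p+2-i\le p\le m$ derivatives.

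For $i\ge2$ we have $r=2i\ge4$, so the critical estimate \eqref{E:localinterp-critical} of Lemma \ref{L:local-product-interpolation} with $m=p+1$ and $s=2(p+1)=2m$ applies directly. It gives
\[
\intM{|P_{2i}^{2m-2i+2}(A)|\gamma^{2m}}\le C E\,M_{p+1}+C\cgam^{2(p+1)}E,
\]
where $E=\A{2}\le\varepsilon_0\le1$ and $M_{p+1}=\llll{\nabla_{(p+1)}A}_{2,\gamma^{2(p+1)}}^2$. Since $E\le1$, the factor $E$ in front of $M_{p+1}$ may be dropped, leaving the claimed bound $C M_{p+1}+C\cgam^{2(p+1)}E$. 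Summing over the finitely many $i\in\{2,\dots,p+2\}$ and the finitely many monomials yields constants depending only on $p$. Here $\varepsilon_0\le1$ suffices, although any smaller $\varepsilon_0$ from earlier results may be kept.

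The one point needing care is to confirm that the pointwise square of a $*$-product is again a schematic $P_{2i}$-term, with no hidden $|\,\cdot\,|$ issue. This holds because $|S*T|^2\le c|S|^2|T|^2$, and the Gagliardo--Nirenberg step underlying Lemma \ref{L:local-product-interpolation} only uses the pointwise norms $\prod|\nabla_{(j_\alpha)}A|$. I would therefore apply that lemma's proof, via Lemma \ref{EvolutionLemma3} and Lemma \ref{NewLemma1}, to $\prod_\alpha|\nabla_{(j_\alpha)}A|^2$. The main (mild) obstacle is the extreme case $i=p+2$, the term $|A|^{2(p+2)}$ with no derivatives. Here the interpolation produces $E^{p+1}M_{p+1}$ plus the lower-order cut-off term. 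I expect the localized Sobolev inequality to handle this case without difficulty, since the weight $\gamma^{2m}$ is exactly the sharp power allowed by $s\ge2m$.
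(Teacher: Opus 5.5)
Your proposal is correct and is essentially the paper's argument. The paper reduces to the weight $\gamma^{2(p+1)}$ and applies Theorem~\ref{AppendixTheorem1} with $n=p+2$. The proof of that theorem is precisely your observation: each squared term is a critical product $P_{2i}^{2m-2i+2}(A)$ with $m=p+1$, $r=2i\ge4$ and $\theta=0$, handled by \eqref{E:localinterp-critical} with $s=2m$, after which $\|A\|^2_{2,[\gamma>0]}\le1$ lets you drop the extra factor. The extreme case $i=p+2$ needs no separate treatment, since $P_{2p+4}^{0}(A)$ is already covered by that lemma.
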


\begin{proof}
Because $\gamma\equiv1$ on $[\gamma=1]$, it is enough to estimate the
same integrals with the weight $\gamma^{2(p+1)}$.  Applying
Theorem \ref{AppendixTheorem1} with $n=p+2$ gives
\[
\sum_{i=2}^{p+2}\intM{\oo{P_i^{p+2-i}(A)}^{\#2}\gamma^{2(p+1)}}
\leq
C\A{2}
\left(
\llll{\nabla_{(p+1)}A}_{2,\gamma^{2(p+1)}}^2
+
\cgam^{2(p+1)}\A{2}
\right).
\]
The asserted estimate follows after decreasing $\varepsilon_0$ so that
$\A{2}\leq1$.
\end{proof}

We proceed by customising two $\varepsilon$-regularity style inequalities from Kuwert-Sch\"{a}tzle \cite{Kuwert2}; the second  is time-dependent. This is not only interesting in its own right, but is used later in Theorem \ref{T:interiorests} to prove interior estimates along the flow.  The first three authors proved a similar result for a general tensor $T$ in \cite{mccoy2015geometric}.

\begin{proposition}\label{Chapter3NewProp2}
Let $f:\Sigma^{2}\rightarrow\mathbb{R}^{3}$ be an immersion. If the small energy assumption 
\[
\intgamma{\norm{A}^{2}}\leq\varepsilon
\]
holds for $\varepsilon>0$ sufficiently small then for any $m\in\mathbb{N}_{0}$, there exists a constant $C>0$ that depends only on $m$ such that
\begin{equation}
\llll{\nabla_{\oo{m}}A}_{\infty,\cc{\gamma=1}}^{2\oo{m+2}}\leq C\A{2}\oo{\llll{\nabla_{\oo{m+2}}A}_{2,\gamma^{2\oo{m+2}}}^{2\oo{m+1}}+\cgam^{2\oo{m+1}\oo{m+2}}\A{2\oo{m+1}}}.\label{Chapter3NewProp2,0}
\end{equation}
\end{proposition}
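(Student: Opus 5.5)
The estimate is a localised version of Kuwert--Sch\"atzle's \(L^\infty\) bound, and I would prove it the same way. Apply a local Michael--Simon/Ladyzhenskaya-type \(L^\infty\) inequality to the tensor \(T=\nabla_{(m)}A\) multiplied by a power of \(\gamma\). This controls \(\|\nabla_{(m)}A\|_{\infty}\) by \(L^2\) norms of \(T\), \(\nabla T\) and \(\nabla_{(2)}T\), together with \(H\)-weighted error terms. The lower-order norms are then interpolated up to \(\nabla_{(m+2)}A\) using the localised Gagliardo--Nirenberg inequalities of the Appendix (Lemma \ref{EvolutionLemma3}, Lemma \ref{NewLemma1}, Theorem \ref{AppendixTheorem1}). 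Finally, powers of \(\A{}\) are collected.

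\textbf{Step 1: the base inequality.} I would start from the two-dimensional multiplicative inequality, valid for any tensor \(T\) and any \(\phi=\gamma^{q}\) with \(q\) large:
\[
\llll{T\phi}_{\infty}^{4}\leq c\,\llll{T\phi}_{2}^{2}\Big(\intM{\big(|\nabla_{(2)}(T\phi)|^2+|A|^4|T\phi|^2\big)}\Big).
\]
This follows from the Michael--Simon Sobolev inequality applied twice. A first application gives \(\|u\|_\infty^2\lesssim \|u\|_2\|\nabla u\|_{4}+\ldots\). A second application to \(|\nabla u|^2\) together with Hölder gives \(\|\nabla u\|_4^2\lesssim \|\nabla u\|_2\,\|\nabla_{(2)}u\|_2+\ldots\). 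The \(H\)-terms are absorbed using \(\int_{[\gamma>0]}|A|^2\leq\varepsilon\) small, in the manner of Kuwert--Sch\"atzle. Since \(\gamma\equiv1\) on \([\gamma=1]\), the left side dominates \(\llll{\nabla_{(m)}A}_{\infty,[\gamma=1]}^4\).

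\textbf{Step 2: expand and interpolate.} Take \(T=\nabla_{(m)}A\) and \(\phi=\gamma^{m+2}\). Expanding \(\nabla_{(2)}(T\phi)\) via \eqref{E:highcovgammapow} produces \(\nabla_{(m+2)}A\,\gamma^{m+2}\) plus terms \(\cgam^{\theta}P_r^{\cdot}(A)\gamma^{m+2-\ell}\) with \(\ell\le\theta\), exactly as in the proof of Lemma \ref{L:local-product-interpolation}. The term \(|A|^4|T|^2\) is a \(P_6\)-type product. I would bound every term by the local interpolation inequality with \(m+2\) as top order. The expected form is
\[
\intM{|\nabla_{(2)}(T\phi)|^2+|A|^4|T\phi|^2}\leq C\big(\llll{\nabla_{(m+2)}A}^2_{2,\gamma^{2(m+2)}}+\cgam^{2(m+2)}\A{2}\big),
\]
with smallness used to absorb critical terms. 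For the first factor, the Gagliardo--Nirenberg inequality gives
\[
\llll{\nabla_{(m)}A\,\gamma^{m+2}}_2^2\leq C\A{\frac{4}{m+2}}\llll{\nabla_{(m+2)}A}_{2,\gamma^{2(m+2)}}^{\frac{2m}{m+2}}+C\cgam^{2m}\A{2}.
\]

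\textbf{Step 3: raise to the right power.} Raise the product to the power \((m+2)/2\), which turns the \(L^\infty\) exponent \(4\) into \(2(m+2)\). Then use Young's inequality to bring everything into the homogeneous form \(\A{2}\big(\llll{\nabla_{(m+2)}A}^{2(m+1)}+\cgam^{2(m+1)(m+2)}\A{2(m+1)}\big)\). The exponents are consistent with the scaling of \eqref{Chapter3NewProp2,0}: \(|\nabla_{(m)}A|\sim\) length\(^{-(m+1)}\), and both sides scale as length\(^{-2(m+1)(m+2)}\).

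\textbf{Main obstacle.} The hardest part is the bookkeeping in Step 3. The cross terms mix powers of \(\llll{\nabla_{(m+2)}A}\) with \(\cgam\) and \(\A{}\). Each must either be of the pure form on the right-hand side or be split by Young's inequality so that exactly one factor \(\A{2}\) survives in front, with the remaining \(\A{}\) powers bounded using \(\A{2}\le\varepsilon\le1\). I would organise this by scaling: a homogeneous term \(X^aY^b\) with \(X=\llll{\nabla_{(m+2)}A}^2\), \(Y=\cgam^{2(m+2)}\A{2}\) and \(a+b=m+1\) is bounded by \(X^{m+1}+Y^{m+1}\). The leftover \(\A{}\) powers are always at least \(\A{2}\), which gives the factor in front.
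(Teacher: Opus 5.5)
Your argument is correct, and it takes a different and leaner route than the paper.

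\textbf{The paper's route.} The paper applies Theorem~\ref{EvolutionTheorem1} with $p=4$, $\beta=2$ directly to $|\nabla_{(m)}A|\gamma^{(2m+3)/2}$. This bounds $\|\nabla_{(m)}A\|_{\infty,[\gamma=1]}^{6}$ by $\|\nabla_{(m)}A\|_{2,\gamma^{2m}}^{2}$ times $L^4$ norms of $\nabla_{(m+1)}A$, $c_\gamma\nabla_{(m)}A$ and $H\nabla_{(m)}A$. Each $L^4$ norm is then converted into products of $L^2$ norms by its own Michael--Simon argument. That leaves three factors, at derivative levels $m$, $m+1$ and $m+2$. These are interpolated with Lemma~\ref{NewLemma1} and recombined through several Young inequalities before raising to the power $(m+2)/3$.

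\textbf{Your route.} You start instead from the second-order inequality $\|\psi\|_\infty^4\le c\|\psi\|_2^2\int(|\nabla_{(2)}\psi|^2+|A|^4|\psi|^2)\,d\mu$ with $\psi=\nabla_{(m)}A\,\gamma^{m+2}$, so only two factors occur. Write $X=\|\nabla_{(m+2)}A\|_{2,\gamma^{2(m+2)}}^2$, $E=\|A\|_{2,[\gamma>0]}^2$ and $Y=c_\gamma^{2(m+2)}E$.
\begin{itemize}
\item Lemma~\ref{NewLemma1} bounds the first factor by $CE^{2/(m+2)}(X+Y)^{m/(m+2)}$, since $c_\gamma^{2m}E=E^{2/(m+2)}Y^{m/(m+2)}$.
\item The pure cut-off terms are handled by Lemma~\ref{NewLemma1} and Young. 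The $P_4$ and $P_6$ terms are handled by Lemma~\ref{L:local-product-interpolation}, using $E\le1$. Together these bound the second factor by $C(X+Y)$.
\item Raising the product to the power $(m+2)/2$ gives exactly $CE(X+Y)^{m+1}\le CE(X^{m+1}+Y^{m+1})$, which is the claimed estimate.
\end{itemize}
So the Step~3 bookkeeping you flag as the main obstacle is in fact immediate. The paper's route uses the cited inequality directly with no auxiliary lemma. Yours gives a much shorter exponent computation and uses smallness only through $E\le1$.

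\textbf{A correction to Step~1.} Your justification of the base inequality has two problems.
\begin{itemize}
\item The intermediate bound $\|u\|_\infty^2\lesssim\|u\|_2\|\nabla u\|_4$ is not scale invariant. Theorem~\ref{EvolutionTheorem1} with $p=4$, $\beta=2$ (so $\alpha=2/3$) gives $\|u\|_\infty^3\le c\|u\|_2(\|\nabla u\|_4+\|Hu\|_4)^2$ instead.
\item A second Michael--Simon step on $|\nabla u|^2$ produces $\int H^2|\nabla u|^2$. For a general tensor this cannot be absorbed using smallness of $\int_{[\gamma>0]}|A|^2$. It only becomes harmless after specialising $\psi$, where it is a critical $P_4$ term.
\end{itemize}
The clean derivation needs no smallness at all. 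By Kato's inequality and one integration by parts, $\int|\nabla\psi|^4\le c\|\psi\|_\infty^2\int|\nabla_{(2)}\psi|^2$. Trivially, $\int H^4|\psi|^4\le\|\psi\|_\infty^2\int H^4|\psi|^2$. Inserting both into the $p=4$, $\beta=2$ case and dividing by $\|\psi\|_\infty^2$ gives your base inequality for every compactly supported tensor.
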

\begin{proof}
Using the multiplicative Sobolev inequality from Theorem \ref{EvolutionTheorem1} with $\varphi=\nablanorm{m}{A}{}\gamma^{\frac{2m+3}{2}}$ gives
\begin{align}
&\llll{\nabla_{\oo{m}}A}_{\infty,\cc{\gamma=1}}^{6}\\
&\leq c\llll{\nabla_{\oo{m}}A}_{2,\gamma^{2m+3}}^{2}\\
& \qquad\cdot\Biggl(\intM{\nablanorm{m+1}{A}{4}\gamma^{2\oo{2m+3}}}+\cgam^{4}\intM{\nablanorm{m}{A}{4}\gamma^{2\oo{2m+1}}} +\intM{\nablanorm{m}{A}{4}H^{4}\gamma^{2\oo{2m+3}}}\Biggr)\\
&\leq c\llll{\nabla_{\oo{m}}A}_{2,\gamma^{2m}}^{2}\\
&\qquad \cdot\Biggl(\intM{\nablanorm{m+1}{A}{4}\gamma^{2\oo{2m+3}}}+\cgam^{4}\intM{\nablanorm{m}{A}{4}\gamma^{2\oo{2m+1}}} +\intM{\nablanorm{m}{A}{4}H^{4}\gamma^{2\oo{2m+3}}}\Biggr).\label{Chapter3NewProp2,1}
\end{align}
We estimate each of these terms separately. Firstly, we use Theorem \ref{MichaelSimon} with $u=\nablanorm{m+1}{A}{2}\gamma^{2m+3}$ to estimate the second term:
\begin{align}
&\intM{\nablanorm{m+1}{A}{4}\gamma^{2\oo{2m+3}}}\\
&\leq c\Bigl(\intM{\nablanorm{m+2}{A}{}\nablanorm{m+1}{A}{}\gamma^{2m+3}}+\cgam\llll{\nabla_{\oo{m+1}}A}_{2,\gamma^{2(m+1)}}^{2} +\intM{\nablanorm{m+1}{A}{2}\norm{H}\gamma^{2m+3}}\Bigr)^{2}\\
&\leq c\llll{\nabla_{\oo{m+1}}A}_{2,\gamma^{2(m+1)}}^{2}\\
&\qquad \cdot\Bigl(\llll{\nabla_{\oo{m+2}}A}_{2,\gamma^{2(m+2)}}^{2} +\cgam^{2}\llll{\nabla_{\oo{m+1}}A}_{2,\gamma^{2(m+1)}}^{2}+\intM{\nablanorm{m+1}{A}{2}\norm{A}^{2}\gamma^{2\oo{m+2}}}\Bigr)\\
&\leq c\llll{\nabla_{\oo{m+1}}A}_{2,\gamma^{2(m+1)}}^{2}\cdot\Bigl(\llll{\nabla_{\oo{m+2}}A}_{2,\gamma^{2(m+2)}}^{2}+\intM{\oo{P_{2}^{m+1}\oo{A}}^{\#2}\gamma^{2\oo{m+2}}}+\cgam^{2\oo{m+2}}\A{2}\Bigr)\\
&\leq c\llll{\nabla_{\oo{m+1}}A}_{2,\gamma^{2(m+1)}}^{2}\cdot\oo{\llll{\nabla_{\oo{m+2}}A}_{2,\gamma^{2(m+2)}}^{2}+\cgam^{2\oo{m+2}}\A{2}}.\label{Chapter3NewProp2,3}
\end{align}
Similarly, using Theorem \ref{MichaelSimon} with $u=\nablanorm{m}{A}{2}\gamma^{2m+1}$ gives
\begin{align}
&\cgam^{4}\intM{\nablanorm{m}{A}{4}\gamma^{2\oo{2m+1}}}\\
&\leq c\,\cgam^{4}\Bigl(\intM{\nablanorm{m+1}{A}{}\nablanorm{m}{A}{}\gamma^{2m+1}}+\cgam\llll{\nabla_{\oo{m}}A}_{2,\gamma^{2m}}^{2}+\intM{\nablanorm{m}{A}{2}\norm{H}\gamma^{2m+1}}\Bigr)^{2}\\
&\leq c\,\cgam^{4}\llll{\nabla_{\oo{m}}A}_{2,\gamma^{2m}}^{2}\cdot\Bigl(\llll{\nabla_{\oo{m+1}}A}_{2,\gamma^{2(m+1)}}^{2}+\cgam^{2}\llll{\nabla_{\oo{m}}A}_{2,\gamma^{2m}}^{2}+\intM{\nablanorm{m}{A}{2}\norm{A}^{2}\gamma^{2\oo{m+1}}}\Bigr)\\
&\leq c\,\cgam^{4}\llll{\nabla_{\oo{m}}A}_{2,\gamma^{2m}}^{2}\cdot\Bigl(\llll{\nabla_{\oo{m+1}}A}_{2,\gamma^{2(m+1)}}^{2}+\intM{\oo{P_{2}^{m}\oo{A}}^{\#2}\gamma^{2\oo{m+1}}}+\cgam^{2\oo{m+1}}\A{2}\Bigr)\\
&\leq c\,\cgam^{4}\llll{\nabla_{\oo{m}}A}_{2,\gamma^{2m}}^{2}\cdot\oo{\llll{\nabla_{\oo{m+1}}A}_{2,\gamma^{2(m+1)}}^{2}+\cgam^{2\oo{m+1}}\A{2}}\\
&\leq c\oo{\llll{\nabla_{\oo{m+2}}A}_{2,\gamma^{2(m+2)}}^{2}+\cgam^{2\oo{m+2}}\A{2}}\oo{\llll{\nabla_{\oo{m+1}}A}_{2,\gamma^{2(m+1)}}^{2}+\cgam^{2\oo{m+1}}\A{2}}.\label{Chapter3NewProp2,4}
\end{align}
For the last term in \eqref{Chapter3NewProp2,1} we use Theorem \ref{MichaelSimon} again, this time with $u=\nablanorm{m}{A}{2}H^{2}\gamma^{2m+3}$:
\begin{align}
&\intM{\nablanorm{m}{A}{4}H^{4}\gamma^{2\oo{2m+3}}}
\leq c\Bigl(\intM{\nablanorm{m+1}{A}{}\nablanorm{m}{A}{}\gamma^{2m+3}}+\intM{\nablanorm{m}{A}{2}\norm{\nabla H}\norm{H}\gamma^{2m+3}}\\
& \hspace*{5.5cm} +\cgam\intM{\nablanorm{m}{A}{2}H^{2}\gamma^{2\oo{m+1}}}
+\intM{\nablanorm{m}{A}{2}\norm{H}^{3}\gamma^{2m+3}}\Bigr)^{2}\\
&\leq c\intM{\nablanorm{m}{A}{2}\norm{A}^{2}\gamma^{2\oo{m+1}}}\cdot\Bigl(\intM{\nablanorm{m+1}{A}{2}\norm{A}^{2}\gamma^{2\oo{m+2}}}+\intM{\nablanorm{m}{A}{2}\norm{\nabla A}^{2}\gamma^{2\oo{m+2}}}\\
&\qquad \qquad +\cgam^{2}\intM{\nablanorm{m}{A}{2}\norm{A}^{2}\gamma^{2\oo{m+1}}}+\intM{\nablanorm{m}{A}{2}\norm{A}^{4}\gamma^{2\oo{m+2}}}\Bigr)\\
&\leq c\intM{\oo{P_{2}^{m}\oo{A}}^{\#2}\gamma^{2\oo{m+1}}}\cdot\Bigl(\sum_{i=2}^{3}\intM{\oo{P_{i}^{m+3-i}\oo{A}}^{\#2}\gamma^{2\oo{m+2}}}+\cgam^{2}\intM{\oo{P_{2}^{m}\oo{A}}^{\#2}\gamma^{2\oo{m+1}}}\Biggr)\\
&\leq c\A{4}\oo{\intM{\nablanorm{m+1}{A}{2}\gamma^{2\oo{m+1}}}+\cgam^{2\oo{m+1}}\A{2}}\\
&\qquad\cdot\Bigl(\llll{\nabla_{\oo{m+2}}A}_{2,\gamma^{2(m+2)}}^{2}+\cgam^{2}\llll{\nabla_{\oo{m+1}}A}_{2,\gamma^{2(m+1)}}^{2}+\cgam^{2\oo{m+2}}\A{2}\Bigr)\\
&\leq c\A{4}\oo{\llll{\nabla_{\oo{m+1}}A}_{2,\gamma^{2(m+1)}}^{2}+\cgam^{2\oo{m+1}}\A{2}}\\
&\qquad \cdot\oo{\llll{\nabla_{\oo{m+2}}A}_{2,\gamma^{2(m+2)}}^{2}+\cgam^{2\oo{m+2}}\A{2}}.\label{Chapter3NewProp2,5}
\end{align}
Substituting \eqref{Chapter3NewProp2,3},\eqref{Chapter3NewProp2,4} and \eqref{Chapter3NewProp2,5} into \eqref{Chapter3NewProp2,1}, and using Lemma \ref{NewLemma1} and Young's inequality multiple times: first twice with conjugate exponents $p=\frac{2m+1}{m+1},p^{*}=\frac{2m+1}{m}$ and then twice with $p=\frac{3\oo{m+1}}{m+2},p^{*}=\frac{3\oo{m+1}}{2m+1}$, gives
\begin{align}
&\llll{\nabla_{\oo{m}}A}_{\infty,\cc{\gamma=1}}^{6}
 \leq c\llll{\nabla_{\oo{m}}A}_{2,\gamma^{2m}}^{2}\cdot\oo{\llll{\nabla_{\oo{m+1}}A}_{2,\gamma^{2\oo{m+1}}}^{2}+\cgam^{2\oo{m+1}}\A{2}}\\
 &\hspace*{6.5cm}\cdot\oo{\llll{\nabla_{\oo{m+2}}A}_{2,\gamma^{2\oo{m+2}}}^{2}+\cgam^{2\oo{m+2}}\A{2}}\\
&\leq c\oo{\A{\frac{4}{m+2}}\llll{\nabla_{\oo{m+2}}A}_{2,\gamma^{2\oo{m+2}}}^{\frac{2m}{m+2}}+\cgam^{2m}\A{2}}\\
& \qquad \cdot \oo{\A{\frac{2}{m+2}}\llll{\nabla_{\oo{m+2}}A}_{2,\gamma^{2\oo{m+2}}}^{\frac{2\oo{m+1}}{m+2}}+\cgam^{2\oo{m+1}}\A{2}}
\cdot\oo{\llll{\nabla_{\oo{m+2}}A}_{2,\gamma^{2\oo{m+2}}}^{2}+\cgam^{2\oo{m+2}}\A{2}}\\
&\leq C\A{\frac{6}{m+2}}\Biggl(\llll{\nabla_{\oo{m+2}}A}_{2,\gamma^{2\oo{m+2}}}^{\frac{2\oo{2m+1}}{m+2}}+\cgam^{2\oo{m+1}}\A{\frac{2\oo{m+1}}{m+2}}\llll{\nabla_{\oo{m+2}}A}_{2,\gamma^{2\oo{m+2}}}^{\frac{2m}{m+2}}\\
&\qquad +\cgam^{2m}\A{\frac{2m}{m+2}}\llll{\nabla_{\oo{m+2}}A}_{2,\gamma^{2\oo{m+2}}}^{\frac{2\oo{m+1}}{m+2}}+\cgam^{2\oo{2m+1}}\A{\frac{2\oo{2m+1}}{m+2}}\Biggr)\\
& \qquad \cdot\oo{\llll{\nabla_{\oo{m+2}}A}_{2,\gamma^{2\oo{m+2}}}^{2}+\cgam^{2\oo{m+2}}\A{2}}\\
&\leq C\A{\frac{6}{m+2}}\oo{\llll{\nabla_{\oo{m+2}}A}_{2,\gamma^{2\oo{m+2}}}^{\frac{2\oo{2m+1}}{m+2}}+\cgam^{2\oo{2m+1}}\A{\frac{2\oo{2m+1}}{m+2}}}\\
&\qquad \cdot \oo{\llll{\nabla_{\oo{m+2}}A}_{2,\gamma^{2\oo{m+2}}}^{2}+\cgam^{2\oo{m+2}}\A{2}}\\
&\leq C\A{\frac{6}{m+2}}\cdot\Biggl(\llll{\nabla_{\oo{m+2}}A}_{2,\gamma^{2\oo{m+2}}}^{\frac{6\oo{m+1}}{m+2}}+\cgam^{2\oo{m+2}}\A{2}\llll{\nabla_{\oo{m+2}}A}_{2,\gamma^{2\oo{m+2}}}^{\frac{2\oo{2m+1}}{m+2}}\\
&\qquad \qquad +\cgam^{2\oo{2m+1}}\A{\frac{2\oo{2m+1}}{m+2}}\llll{\nabla_{\oo{m+2}}A}_{2,\gamma^{2\oo{m+2}}}^{2}+\cgam^{6\oo{m+1}}\A{\frac{6\oo{m+1}}{m+2}}\Biggr)\\
&\leq C\A{\frac{6}{m+2}}\cdot\oo{\llll{\nabla_{\oo{m+2}}A}_{2,\gamma^{2\oo{m+2}}}^{\frac{6\oo{m+1}}{m+2}}+\cgam^{6\oo{m+1}}\A{\frac{6\oo{m+1}}{m+2}}}.
\end{align}
where the coefficient $C>0$ on the right hand side depends only on $m$. Raising each side to the power $\oo{m+2}/3$ then finishes the proof.
\end{proof}

\begin{corollary}\label{InfinityCorollary}
Let $f:\Sigma^{2}\rightarrow\R^{3}$ be an immersion. If the small energy assumption 
\[
\intgamma{\norm{A}^{2}}\leq\varepsilon
\]
holds for $\varepsilon>0$ sufficiently small then there exists a universal constant $c>0$ such that
\begin{equation}
\llll{A}_{\infty,[\gamma=1]}^{4} \leq c\A{2}\big(||\nabla_{(2)}A||_{2,\gamma^{4}}^{2} + \cgam^{4}\A{2}\big).\label{InfinityCorollary:0}
\end{equation}
Moreover, for any $m\in\mathbb{N}$ there exists a constant $C>0$ which depends only on $m$ such that
\begin{equation}
\llll{A}_{\infty,[\gamma=1]}^{2(m+1)}\leq C \A{2m} \big(||\nabla_{(m+1)}A||_{2,\gamma^{2(m+1)}}^{2} + \cgam^{2(m+1)}\A{2}\big)\label{InfinityCorollary:1}
\end{equation}
\end{corollary}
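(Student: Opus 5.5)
The first inequality \eqref{InfinityCorollary:0} is simply Proposition \ref{Chapter3NewProp2} with $m=0$: the exponent on the left is $2(0+2)=4$, and the right-hand side reads $C\A{2}\big(\|\nabla_{(2)}A\|_{2,\gamma^{4}}^{2}+\cgam^{4}\A{2}\big)$. For $m=0$ the constant depends on nothing, so it is universal. Nothing further is needed there.

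For \eqref{InfinityCorollary:1} the plan is to start from \eqref{InfinityCorollary:0} and trade $\nabla_{(2)}A$ for $\nabla_{(m+1)}A$ by local interpolation. The case $m=1$ is exactly \eqref{InfinityCorollary:0}, so we assume $m\ge2$. We apply the interpolation inequality Lemma \ref{NewLemma1} on the weighted spaces, with $\gamma^4\le1$ adjusted to the weight $\gamma^{2(m+1)}$ where necessary. For $2\le m+1$ this should give
\[
\|\nabla_{(2)}A\|_{2,\gamma^{4}}^{2}\le C\,\A{\frac{2(m-1)}{m+1}}\,\|\nabla_{(m+1)}A\|_{2,\gamma^{2(m+1)}}^{\frac{4}{m+1}}+C\cgam^{4}\A{2}.
\]
This scaling is forced: $\|\nabla_{(j)}A\|^2$ scales like $\rho^{-2j}$ and $\A{2}$ is scale-invariant.

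Substituting into \eqref{InfinityCorollary:0} gives
\[
\|A\|_{\infty,[\gamma=1]}^{4}\le C\A{\frac{4m}{m+1}}\,X^{\frac{2}{m+1}}+C\cgam^4\A{4},
\qquad X:=\|\nabla_{(m+1)}A\|_{2,\gamma^{2(m+1)}}^2 .
\]
Next we raise both sides to the power $(m+1)/2$, using $(a+b)^q\le C(a^q+b^q)$. This yields
\[
\|A\|_{\infty,[\gamma=1]}^{2(m+1)}\le C\A{2m}X+C\cgam^{2(m+1)}\A{2(m+1)} .
\]
The last term equals $C\A{2m}\,\cgam^{2(m+1)}\A{2}$, which is exactly the claimed form \eqref{InfinityCorollary:1}.

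The only real point to check is the weighted interpolation step. One must confirm that Lemma \ref{NewLemma1} delivers the Gagliardo--Nirenberg-type bound above with cut-off power $s=2(m+1)\ge 2(m+1)$ on the top-order term, and that the lower-order weight $\gamma^4$ is dominated by the corresponding power. This is the same mechanism used in the Young's inequality manipulations of the proof of Proposition \ref{Chapter3NewProp2}. The smallness $\A{2}\le\varepsilon\le1$ is used there to absorb the curvature terms that Lemma \ref{NewLemma1} produces.
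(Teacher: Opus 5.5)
Your proposal is correct and is essentially the paper's argument: \eqref{InfinityCorollary:0} is Proposition \ref{Chapter3NewProp2} with $m=0$, and \eqref{InfinityCorollary:1} follows by bounding $\|\nabla_{(2)}A\|_{2,\gamma^{4}}^{2}$ via Lemma \ref{NewLemma1} (with $k=2$, interpolation gap $m-1$, and $s=4$, so the top-order weight is exactly $\gamma^{2(m+1)}$ and no adjustment of weights is needed), substituting, and raising to the power $(m+1)/2$. One small correction: Lemma \ref{NewLemma1} is a pure tensor interpolation that produces no curvature terms, so the smallness hypothesis enters only through Proposition \ref{Chapter3NewProp2}, not through the interpolation step.
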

\begin{proof}
Applying Proposition \ref{Chapter3NewProp2} with $m=0$ immediately proves \eqref{InfinityCorollary:0}, noting that the coefficient $c>0$ on the right hand side becomes universal since $m=0$ is fixed. 
To prove \eqref{InfinityCorollary:1}, we note that by  the interpolative inequalities of Lemma \ref{NewLemma1}, for any $m\in\mathbb{N}$ there exists a constant $C>0$ which depends only on $m$ such that
\[
||\nabla_{(2)}A||_{2,\gamma^{4}}^{2} \leq C \A{\frac{2(m-1)}{m+1}}\llll{\nabla_{(m+1)}A}_{2,\gamma^{2(m+1)}}^{\frac{4}{m+1}} + C\,\cgam^{4}\A{2}.
\]
Combining the above with \eqref{InfinityCorollary:0} (which was just proven) then yields
\begin{align}
\llll{A}_{\infty,[\gamma=1]}^{4} &\leq C\A{2}\big(\A{\frac{2(m-1)}{m+1}}\llll{\nabla_{(m+1)}A}_{2,\gamma^{2(m+1)}}^{\frac{4}{m+1}} + \cgam^{4}\A{2}\big)\\
&= C\A{\frac{4m}{m+1}}\big(\llll{\nabla_{(m+1)}A}_{2,\gamma^{2(m+1)}}^{\frac{4}{m+1}}+\cgam^{4}\A{\frac{4}{m+1}}\big)
\end{align}
for some constant $C>0$ which depends only on $m$. Raising both sides of the above inequality to the power $(m+1)/2$ then proves \eqref{InfinityCorollary:1}.
\end{proof}

Proposition \ref{Chapter3NewProp2} has the following fixed-order consequence, modelled on the local smoothing step of Kuwert--Sch\"atzle.  The smallness threshold is allowed to depend on the derivative order.

\begin{proposition}\label{NewProposition1}
Suppose that $f:\Sigma\times\left[0,T\right)\rightarrow\mathbb{R}^{3}$ satisfies \eqref{PolyharmonicIntro1}.  For each fixed $k\in\mathbb N_0$ there is an $\varepsilon_k>0$, depending only on $k$ and $p$ (and on the fixed cut-off profile), such that if
\[
\sup_{\cc{0,T^{*}}}\int_{\cc{\gamma>0}}{\norm{A}^{2}\,d\mu}\leq\varepsilon_k,
\]
then there is a constant $\tilde c_k$, depending only on $k,T^{*},\cgam,\varepsilon_k$ and
\[
\alpha_{0}\oo{k+2}:=\sum_{j=0}^{k+2}\llll{\nabla_{\oo{j}}A}_{2,\cc{\gamma>0}}^{2}\Big|_{t=0},
\]
such that
\begin{equation}
\llll{\nabla_{\oo{k}}A}_{\infty,\cc{\gamma=1}}^{2}\leq\tilde{c}_{k}.
\label{NewProposition1,1}
\end{equation}
\end{proposition}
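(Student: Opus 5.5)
The plan is to feed time-integrated $L^2$ bounds on derivatives of curvature into the $\varepsilon$-regularity inequality of Proposition \ref{Chapter3NewProp2}. That proposition controls $\llll{\nabla_{(k)}A}_{\infty,[\gamma=1]}$ by $\A{2}$ and $\llll{\nabla_{(k+2)}A}_{2,\gamma^{2(k+2)}}^2$. So it suffices to bound $\llll{\nabla_{(k+2)}A}_{2,\gamma^{2(k+2)}}^2$ \emph{pointwise in time} on $[0,T^*]$, in terms of $T^*$, $\cgam$, $\varepsilon_k$ and $\alpha_0(k+2)$. Since $\A{2}\le\varepsilon_k$, the right-hand side of \eqref{Chapter3NewProp2,0} is then bounded by a constant $\tilde c_k$, and taking the $(k+2)$-th root gives \eqref{NewProposition1,1}.

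To obtain the pointwise-in-time bound I apply Proposition \ref{EvolutionProposition1} with derivative order $j=k+2$, weight power $s=2(j+p+1)\ge 2m$ and $\delta=1/2$. I choose $\varepsilon_k\le\varepsilon_0$ small enough, depending on $k$ and $p$ because the constant $C=C(k+2,p,s)$ does, so that the coefficient $2-\delta-C\A{2}$ is at least $1$. This gives
\[
\frac{d}{dt}\intM{|\nabla_{(k+2)}A|^2\gamma^{s}}+\intM{|\nabla_{(k+p+3)}A|^2\gamma^{s}}\le C\cgam^{2(k+p+3)}\varepsilon_k .
\]
Integrating over $[0,t]$, $t\le T^*$, and using $\gamma\le1$ at $t=0$ yields
\[
\intM{|\nabla_{(k+2)}A|^2\gamma^{s}}\le \alpha_0(k+2)+C\cgam^{2(k+p+3)}\varepsilon_k T^*.
\]

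The main obstacle is a mismatch of weights: Proposition \ref{Chapter3NewProp2} uses $\gamma^{2(k+2)}$, while the energy estimate needs the larger power $s\ge 2(k+p+3)$. Because $\gamma^{s}\le\gamma^{2(k+2)}$, the bound above cannot be used directly. I would handle this by applying both results with nested cut-offs. Take a second cut-off $\hat\gamma$ with $\hat\gamma=1$ on $[\gamma=1]$ and $[\hat\gamma>0]\subset[\gamma=1/2]$, with $c_{\hat\gamma}\le c\,\cgam$; this is what the dependence on the fixed cut-off profile in the statement allows. On $[\hat\gamma>0]$ we have $\gamma^{s}\ge 2^{-s}$, so
\[
\llll{\nabla_{(k+2)}A}_{2,\hat\gamma^{2(k+2)}}^2\le 2^{s}\intM{|\nabla_{(k+2)}A|^2\gamma^{s}}.
\]
Then Proposition \ref{Chapter3NewProp2} applied with $\hat\gamma$ gives the $L^\infty$ bound on $[\hat\gamma=1]\supset[\gamma=1]$, using $\int_{[\hat\gamma>0]}|A|^2\le\varepsilon_k$.

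If nested cut-offs are not wanted, the alternative is to interpolate the weight power down using Lemma \ref{NewLemma1}. Lower-order weighted quantities would be controlled at each stage by the same energy estimate for $j<k+2$, and only finitely many orders are needed, which is why $\varepsilon_k$ depends on $k$. Either way, the constants depend only on $k$, $p$, $T^*$, $\cgam$, $\varepsilon_k$ and $\alpha_0(k+2)$, as claimed.
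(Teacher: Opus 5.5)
Your argument is correct and is essentially the paper's proof. The paper applies Proposition \ref{EvolutionProposition1} with an auxiliary cut-off $\gamma_{0,1/2}$ (equal to one on $[\gamma\ge 1/2]$) to bound $\|\nabla_{(k+2)}A\|_{2,[\gamma\ge 1/2]}^2$ by $\alpha_0(k+2)+C\,c_\gamma^{2(k+p+3)}\varepsilon_k T^*$, then applies Proposition \ref{Chapter3NewProp2} with a second cut-off $\gamma_{1/2,1}$ supported in $[\gamma\ge 1/2]$; this is your nested-cut-off scheme, and using $\gamma^s\ge 2^{-s}$ in place of the first auxiliary cut-off is an equivalent variant.

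Two small corrections. The support condition should read $[\hat\gamma>0]\subset[\gamma\ge 1/2]$, not $[\gamma=1/2]$. The fallback in your last paragraph is unnecessary and would not work as stated: Lemma \ref{NewLemma1} raises the weight by $\gamma^2$ for each extra derivative. So the gap between the weight $\gamma^{2(j+p+1)}$ that Proposition \ref{EvolutionProposition1} gives at order $j$ and the weight $\gamma^{2j}$ that Proposition \ref{Chapter3NewProp2} needs remains at every order.
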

\begin{proof}
The point is that $k$ is fixed.  Choose $\varepsilon_k$ smaller than the fixed-order smallness thresholds needed in Proposition \ref{EvolutionProposition1} for the derivative orders $k$ and $k+2$, and smaller than the smallness threshold in Proposition \ref{Chapter3NewProp2}.  Applying Proposition \ref{EvolutionProposition1} with a cut-off $\gamma_{0,1/2}$ and absorbing the top-order term gives
\begin{equation}
\llll{\nabla_{\oo{j}}A}_{2,\cc{\gamma\geq1/2}}^{2}
\leq
\llll{\nabla_{\oo{j}}A}_{2,\cc{\gamma>0}}^{2}\Big|_{t=0}
+C_j\,\cgam^{2\oo{j+p+1}}\varepsilon_k t
\leq
c_j\oo{\alpha_0(j),j,T^*,\cgam,\varepsilon_k}
\label{NewProposition1,2}
\end{equation}
for each $j\in\{k,k+2\}$.  Combining \eqref{NewProposition1,2} with Proposition \ref{Chapter3NewProp2}, using the cut-off $\gamma_{1/2,1}$, yields
\begin{multline*}
\llll{\nabla_{\oo{k}}A}_{\infty,\cc{\gamma=1}}^{2\oo{k+2}}
\leq
c\A{2}\oo{\llll{\nabla_{\oo{k+2}}A}_{2,\gamma_{1/2,1}}^{2\oo{k+1}}+
\cgam^{2\oo{k+1}\oo{k+2}}\A{2\oo{k+1}}}
\\
\leq
c\oo{\varepsilon_k,\alpha_0\oo{k+2},k,T^*,\cgam}.
\end{multline*}
Taking the $2(k+2)$-th root proves \eqref{NewProposition1,1}.
\end{proof}

\section{High order interpolation and Sobolev inequalities}
\label{S:interp}

\begin{lemma}\label{HigherOrderSobolevLemma1}
Let $l\in\mathbb{N}$, $f:\Sigma\rightarrow\mathbb{R}^{3}$ an immersion and $\gamma$ a cut-off function as in \eqref{GammaProp0}. Define  
\[
S_{l}:=\intM{|\Delta^{\frac{l}{2}}H|^{2}\gamma^{2l}}.
\]
Then if $\Ao{2}\leq\varepsilon_{0}$ for $\varepsilon_{0}>0$ sufficiently small, there exists an absolute constant $c>0$ such that 
\begin{equation}
S_{l}\leq c\oo{\Ao{\frac{2}{l+1}}S_{l+1}^{\frac{l}{l+1}}+\cgam^{2l}\Ao{2}}.\label{HigherOrderSobolevLemma1Eqn1}
\end{equation}
\end{lemma}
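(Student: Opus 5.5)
The plan is to establish a log-convexity inequality for the sequence $S_j$, with the role of "$S_0$" played by $\Ao{2}$ rather than $\intM{H^2}$, and then to iterate it. The reason is that $S_0=\intM{H^2}$ is not small and not scale-invariant-small, so the argument must start from the tracefree curvature. For $j\ge2$ the basic step is one integration by parts. Using the convention $\Delta^{(2k+1)/2}H=\nabla\Delta^kH$, we write $S_j=\intM{\inner{\Delta^{j/2}H,\Delta^{j/2}H}\gamma^{2j}}$ and move one derivative (a $\nabla$ or a $\nabla^*$) off one factor. The result is
\[
S_j = -\intM{\inner{\Delta^{(j-1)/2}H,\Delta^{(j+1)/2}H}\gamma^{2j}} + \intM{\Delta^{(j-1)/2}H*\Delta^{j/2}H*\nabla\gamma\,\gamma^{2j-1}}.
\]
We split $\gamma^{2j}=\gamma^{j-1}\gamma^{j+1}$ and use $|\nabla\gamma|\le\cgam$ (the first-order case of \eqref{GammaProp}, which involves no curvature). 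Cauchy--Schwarz and Young then give
\[
S_j\le c\,S_{j-1}^{1/2}S_{j+1}^{1/2}+c\,\cgam^2S_{j-1}\qquad (j\ge2).
\]

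The base case $j=1$ is where the tracefree structure enters, and I expect it to be the main obstacle. By Codazzi \eqref{EQabove10}, $\nabla H=2\nabla^*A^o$. Hence
\[
S_1=2\intM{\inner{\nabla^*A^o,\nabla H}\gamma^2}=-2\intM{\inner{A^o,\nabla_{(2)}H}\gamma^2}+\text{(cut-off term)},
\]
so that $S_1\le c\,\Ao{}\,(\intM{|\nabla_{(2)}H|^2\gamma^4})^{1/2}+c\,\cgam^2\Ao{2}$. It then remains to control $\nabla_{(2)}H$ by $\Delta H$. For this we integrate the Bochner identity against $\gamma^4$: $\intM{|\nabla_{(2)}H|^2\gamma^4}=\intM{|\Delta H|^2\gamma^4}-\intM{K|\nabla H|^2\gamma^4}+$ cut-off terms. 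The Gauss equation gives $K=\tfrac14H^2-\tfrac12|A^o|^2$, so the $H^2|\nabla H|^2$ contribution has a favourable sign and moves to the left-hand side. What remains is $\tfrac12\intM{|A^o|^2|\nabla H|^2\gamma^4}$. I would estimate this by Hölder and the Michael--Simon Sobolev inequality (Theorem \ref{MichaelSimon}) applied to $|\nabla H|\gamma^2$. The mean-curvature term produced by Michael--Simon is absorbed by the good term $\tfrac14\intM{H^2|\nabla H|^2\gamma^4}$, and the remaining terms carry a factor $\Ao{2}\le\varepsilon_0$, so they are absorbed for $\varepsilon_0$ small. The cut-off terms are handled by Young. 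The outcome is $\intM{|\nabla_{(2)}H|^2\gamma^4}\le c(S_2+\cgam^2S_1+\cgam^4\Ao{2})$, and consequently, after Young,
\[
S_1\le c\,\Ao{}\,S_2^{1/2}+c\,\cgam^2\Ao{2},
\]
which is exactly the statement for $l=1$.

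We then prove \eqref{HigherOrderSobolevLemma1Eqn1} by induction on $l$. Assume $S_{l-1}\le c(\Ao{2/l}S_l^{(l-1)/l}+\cgam^{2(l-1)}\Ao{2})$. Substituting this into $S_l\le c\,S_{l-1}^{1/2}S_{l+1}^{1/2}+c\,\cgam^2S_{l-1}$ produces terms of the form $\Ao{1/l}S_l^{(l-1)/(2l)}S_{l+1}^{1/2}$ and $\cgam^{2}\Ao{2/l}S_l^{(l-1)/l}$, together with pure $\cgam$ terms. We apply Young's inequality so that a factor $\tfrac12S_l$ can be absorbed on the left. The first term has Young exponents $\tfrac{2l}{l-1}$ and $\tfrac{2l}{l+1}$, giving $\Ao{2/(l+1)}S_{l+1}^{l/(l+1)}$. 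The second term has exponents $\tfrac{l}{l-1}$ and $l$, giving $\cgam^{2l}\Ao{2}$. Scaling guarantees that the exponents match, as in the proof of Lemma \ref{NewLemma1}. This closes the induction. The constant depends on $l$ only through finitely many applications of Young's inequality; the smallness threshold $\varepsilon_0$ is used only in the base step, and is therefore independent of $l$.
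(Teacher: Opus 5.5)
The overall structure of your argument matches the paper's. The base case uses $\nabla H=2\nabla^*A^o$ together with a Bochner estimate for $\nabla_{(2)}H$. The induction uses one integration by parts, $S_l\le c\,S_{l-1}^{1/2}S_{l+1}^{1/2}+c\,\cgam^2S_{l-1}$, followed by Young. The inductive step is fine. One small omission: substituting the hypothesis also produces the mixed term $\cgam^{l-1}\Ao{}S_{l+1}^{1/2}$. This is not a ``pure $\cgam$ term'', but weighted Young splits it into $\Ao{\frac{2}{l+1}}S_{l+1}^{\frac{l}{l+1}}+\cgam^{2l}\Ao{2}$, exactly as in the paper.

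The genuine gap is in the base case, at the term $\frac12\intM{|A^o|^2|\nabla H|^2\gamma^4}$. Your claim that after H\"older and Michael--Simon ``the remaining terms carry a factor $\Ao{2}$'' does not follow from the tools you list.

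\begin{itemize}
\item Michael--Simon applied to $|\nabla H|\gamma^2$, or to its square, controls $\nabla H\gamma^2$ only in $L^2$ or $L^4$. The H\"older partner is then $\|A^o\|_{L^4}^2$ on $[\gamma>0]$, which is not controlled by $\Ao{2}$.
\item The $L^\infty$ alternative also fails: $\nabla_{(2)}H\in L^2$ does not give $\nabla H\in L^\infty$ in two dimensions.
\item If you instead apply Michael--Simon to $u=|A^o||\nabla H|\gamma^2$, as the paper does in \eqref{MultliplicativeSobolevLemma2Eqn9}, you produce $\big(\intM{|\nabla A^o||\nabla H|\gamma^2}\big)^2\le c\big(\intM{|\nabla A^o|^2\gamma^2}\big)^2$. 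This carries no factor of $\Ao{}$ by itself.
\end{itemize}

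Either way you need control of the full derivative $\nabla A^o$, equivalently of $\intM{|A^o|^4\gamma^2}$. Codazzi only gives you its divergence.

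The missing ingredient is the first-order tracefree estimate \eqref{MultliplicativeSobolevLemma2Eqn1},
$\intM{|\nabla A^o|^2\gamma^2}+\intM{|A^o|^2H^2\gamma^2}\le c\intM{|\nabla H|^2\gamma^2}+c\,\cgam^2\Ao{2}$, together with \eqref{MultliplicativeSobolevLemma2Eqn3}. These are proved as follows:
\begin{itemize}
\item integrate $|\nabla A^o|^2\gamma^2$ by parts against the Simons identity $\Delta A^o=S^o(\nabla_{(2)}H)+\frac12H^2A^o-|A^o|^2A^o$;
\item absorb the resulting $\intM{|A^o|^4\gamma^2}$ term using Michael--Simon on $u=|A^o|^2\gamma$ and the smallness hypothesis.
\end{itemize}
Combined with your first step (the paper's \eqref{MultliplicativeSobolevLemma2Eqn7,0}), this gives $\big(\intM{|\nabla A^o|^2\gamma^2}\big)^2\le c\Ao{2}\intM{|\nabla_{(2)}H|^2\gamma^4}+c\,\cgam^4\Ao{4}$. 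Only then does the absorption in your Bochner estimate close and yield the $l=1$ case.
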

\begin{proof}
The proof is completed by combining induction and multiple applications of integration by parts. Note that combining \eqref{MultliplicativeSobolevLemma2Eqn2} and \eqref{MultliplicativeSobolevLemma2Eqn7,0} proves \eqref{HigherOrderSobolevLemma1Eqn1} for $l=1$. Next assume that the statement is true for $l=m\in\mathbb{N}$:
\begin{equation}
S_{m} \leq c\oo{\Ao{\frac{2}{m+1}}S_{m+1}^{\frac{m}{m+1}}+\cgam^{2m}\Ao{2}}.\label{HigherOrderSobolevLemma1Eqn2}
\end{equation}
Then integration by parts, together with the Cauchy-Schwarz and Young's inequalities and the inductive assumption \eqref{HigherOrderSobolevLemma1Eqn2} gives
\begin{align}
&S_{m+1} = \intM{|\Delta^{\frac{m+1}{2}}H|^{2}\gamma^{2(m+1)}}\\
= \intM{\Delta^{\frac{m}{2}}H * \Delta^{\frac{m+2}{2}}H * \gamma^{2(m+1)}} + \intM{\Delta^{\frac{m}{2}}H * \Delta^{\frac{m+1}{2}}H * \nabla\gamma * \gamma^{2m+1}}\\
&\leq c \, S_{m}^{\frac{1}{2}}S_{m+2}^{\frac{1}{2}} + \eta\,S_{m+1} + c\,\eta^{-1}\cgam^{2}S_{m}\\
&\leq c\oo{\Ao{\frac{2}{m+1}}S_{m+1}^{\frac{m}{m+1}}+\cgam^{2m}\Ao{2}}^{\frac{1}{2}} S_{m+2}^{\frac{1}{2}}\\
&\qquad + \eta\,S_{m+1} + c\,\eta^{-1}\cgam^{2}\oo{\Ao{\frac{2}{m+1}}S_{m+1}^{\frac{m}{m+1}}+\cgam^{2m}\Ao{2}},
\end{align}
which holds for any $\eta>0$. Next using Young's inequality with conjugate exponents $p=2(m+1)/m$ and $p^{*}=2(m+1)/(m+2)$ gives
\[
\Ao{\frac{1}{m+1}}S_{m+1}^{\frac{m}{2(m+1)}}S_{m+2}^{\frac{1}{2}} \leq \eta S_{m+1} + c\,\eta^{-1}\Ao{\frac{2}{m+2}}S_{m+2}^{\frac{m+1}{m+2}}.
\]
Similarly,
\[
\cgam^{m}\Ao{}S_{m+2}^{\frac{1}{2}}
\leq
c\oo{
\Ao{\frac{2}{m+2}}S_{m+2}^{\frac{m+1}{m+2}}
+
\cgam^{2(m+1)}\Ao{2}
}.\]
and
\[
c\,\eta^{-1}\cgam^{2} \Ao{\frac{2}{m+1}}S_{m+1}^{\frac{m}{m+1}} \leq \tilde{\eta}S_{m+1}+c\,\tilde{\eta}^{-1}\cgam^{2(m+1)}\Ao{2},
\]
where $\tilde{\eta}= \tilde{\eta}(\eta)>0$ is a new constant obtained from Young's inequality that is allowed to be as small as desired. Therefore
\[
(1-2\eta - \tilde{\eta})S_{m+1} \leq c\,(\eta^{-1}+\tilde{\eta}^{-1})\oo{\Ao{\frac{2}{m+2}}S_{m+2}^{\frac{m+1}{m+2}}+\cgam^{2(m+1)}\Ao{2}},
\]
and so choosing $\eta,\tilde{\eta}>0$ small enough proves the inductive step, and hence \eqref{HigherOrderSobolevLemma1Eqn1}.
\end{proof}

\begin{theorem}\label{HigherOrderSobolevTheorem1}
Let $l\in\mathbb{N},\,m\in\mathbb{N}_{0}$, and $f:\Sigma\rightarrow\mathbb{R}^{3}$ be an immersion and $\gamma$ a cut-off function as in \eqref{GammaProp0}. Then if $\Ao{2}\leq\varepsilon_{0}$ for $\varepsilon_{0}>0$ sufficiently small, there exists an absolute constant $c>0$ such that
\begin{equation}
S_{l} \leq c\oo{\Ao{\frac{2m}{l+m}}S_{l+m}^{\frac{l}{l+m}} + \cgam^{2l}\Ao{2}}.\label{HigherOrderSobolevTheorem1Eqn1}
\end{equation}
\end{theorem}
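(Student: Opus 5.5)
The plan is to iterate Lemma \ref{HigherOrderSobolevLemma1}, by induction on $m$ with $l$ arbitrary. Throughout, write $a:=\Ao{2}$.

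For $m=0$ the statement is trivial: it reads $S_l\le c(S_l+\cgam^{2l}a)$. For $m=1$ it is exactly Lemma \ref{HigherOrderSobolevLemma1}.

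For the inductive step, assume that for every $l\in\mathbb{N}$
\[
S_l\le c\bigl(a^{\frac{m}{l+m}}S_{l+m}^{\frac{l}{l+m}}+\cgam^{2l}a\bigr).
\]
Apply Lemma \ref{HigherOrderSobolevLemma1} with index $l+m$:
\[
S_{l+m}\le c\bigl(a^{\frac{1}{l+m+1}}S_{l+m+1}^{\frac{l+m}{l+m+1}}+\cgam^{2(l+m)}a\bigr).
\]
Substitute this into the inductive bound and use $(x+y)^\theta\le x^\theta+y^\theta$ with $\theta=\tfrac{l}{l+m}\le1$. The first resulting term is
\[
a^{\frac{m}{l+m}}\,a^{\frac{l}{(l+m)(l+m+1)}}\,S_{l+m+1}^{\frac{l}{l+m+1}}.
\]
The exponent of $a$ is
\[
\frac{m(l+m+1)+l}{(l+m)(l+m+1)}=\frac{(m+1)(l+m)}{(l+m)(l+m+1)}=\frac{m+1}{l+m+1},
\]
which is exactly the required power. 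The second resulting term is $a^{\frac{m}{l+m}}\cgam^{\frac{2l(l+m)}{l+m}}a^{\frac{l}{l+m}}=\cgam^{2l}a$. This closes the induction, and no Young absorption is needed.

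The constant grows with $m$, roughly like $c^{m}$, and the bound also depends on $l$ unless the lemma's constant is uniform in $l$. The statement calls it "absolute", so I would either interpret this as depending only on $l$ and $m$, or note that Lemma \ref{HigherOrderSobolevLemma1} gives a uniform $c$ and accept $c^{m}$ growth. This constant-tracking is the only real obstacle, and I would state the dependence on $l$ and $m$ explicitly.

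The smallness condition $a\le\varepsilon_0$ is only used through the invocations of Lemma \ref{HigherOrderSobolevLemma1}, so the same $\varepsilon_0$ works for all $l$ and $m$.
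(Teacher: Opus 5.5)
Your proof is correct and follows the paper's argument: induction on $m$ for all $l$ simultaneously, with base cases $m=0$ (trivial) and $m=1$ (Lemma~\ref{HigherOrderSobolevLemma1}), and the inductive step obtained by applying the lemma at index $l+m$ and substituting into the hypothesis, where your exponent computation $\frac{m(l+m+1)+l}{(l+m)(l+m+1)}=\frac{m+1}{l+m+1}$ is the bookkeeping the paper leaves implicit. Your remark that the constant grows with $m$, so that ``absolute'' should really be read as $c=c(l,m)$, is a fair correction to the paper's wording.
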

\begin{proof}
The proof is completed by combining induction and the results of Lemma \ref{HigherOrderSobolevLemma1}. The result is trivial for $m=0$. Moreover, for any $l\in\mathbb{N}$, the statement is true for $m=1$ by Lemma \ref{HigherOrderSobolevLemma1}. Assume that for any $l\in\mathbb{N}$ the statement is true for some $m=k\in\mathbb{N}_{0}$. That is, assume that for every $l\in\mathbb{N}$:
\begin{equation}
S_{l} \leq c\oo{\Ao{\frac{2k}{l+k}}S_{l+k}^{\frac{l}{l+k}} + \cgam^{2l}\Ao{2}}.\label{HigherOrderSobolevTheorem1Eqn2}
\end{equation}
Next, applying \eqref{HigherOrderSobolevLemma1Eqn1} with $l+k$ instead of $l$ gives
\begin{equation}
S_{l+k} \leq c\oo{\Ao{\frac{2}{l+k+1}}S_{l+k+1}^{\frac{l+k}{l+k+1}}+\cgam^{2(l+k)}\Ao{2}}\label{HigherOrderSobolevTheorem1Eqn3}
\end{equation}
Combining \eqref{HigherOrderSobolevTheorem1Eqn2} and \eqref{HigherOrderSobolevTheorem1Eqn3} then yields, for any $l\in\mathbb{N}$:
\begin{align}
S_{l} &\leq c \Ao{\frac{2k}{l+k}}\oo{\Ao{\frac{2}{l+k+1}}S_{l+k+1}^{\frac{l+k}{l+k+1}}+\cgam^{2(l+k)}\Ao{2}}^{\frac{l}{l+k}} + c\,\cgam^{2l}\Ao{2}\nonumber\\
&\leq c\oo{\Ao{\frac{2(k+1)}{l+k+1}}S_{l+k+1}^{\frac{l}{l+k+1}}+\cgam^{2l}\Ao{2}}, 
\end{align}
which is inequality \eqref{HigherOrderSobolevTheorem1Eqn1} with $m=k+1\in\mathbb{N}$. Therefore, by mathematical induction, inequality \eqref{HigherOrderSobolevTheorem1Eqn1} holds for all $l\in\mathbb{N},\,m\in\mathbb{N}_{0}$.
\end{proof}

\section{Proof of the local tracefree curvature estimate and the Gap Lemma}
\label{S:localest}

We dedicate this section to the proof of Theorems \ref{EpsilonRegularity} and
\ref{GapTheorem}. Both are consequences of the local tracefree curvature
estimate of \cite{mccoy2015geometric} together with the higher-order
interpolation inequality proved above.

\begin{proof}[Proof of Theorem \ref{EpsilonRegularity}]
Let $\gamma$ be a cut-off function satisfying \eqref{GammaProp0}.  By
\cite[Theorem 28]{mccoy2015geometric}, if
\[
    \int_{[\gamma>0]} |A^o|^2\,d\mu \leq \varepsilon_0,
\]
then
\begin{equation}
\llll{A^{o}}_{\infty,\cc{\gamma=1}}^{6}
\leq
c\llll{A^{o}}_{2,\cc{\gamma>0}}^{4}
\oo{
\llll{\nabla\Delta H}_{2,\gamma^{6}}^{2}
+
\cgam^{6}\llll{A^{o}}_{2,\cc{\gamma>0}}^{2}
}.
\label{EpsilonRegularityProof0}
\end{equation}
For $m\geq3$, apply Theorem \ref{HigherOrderSobolevTheorem1} with
$l=3$ and with $m-3$ in place of the interpolation parameter.  Recalling
that
\[
S_j=\int_\Sigma \left|\Delta^{\frac{j}{2}}H\right|^2\gamma^{2j}\,d\mu,
\]
we obtain
\[
\llll{\nabla\Delta H}_{2,\gamma^6}^{2}
=
S_3
\leq
c\oo{
\Ao{\frac{2(m-3)}{m}}
S_m^{\frac{3}{m}}
+
\cgam^6\Ao{2}
}.
\]
Substituting this estimate into \eqref{EpsilonRegularityProof0} gives
\[
\llll{A^{o}}_{\infty,\cc{\gamma=1}}^{6}
\leq
c\oo{
\Ao{\frac{6m-6}{m}}S_m^{\frac{3}{m}}
+
\cgam^6\Ao{6}
}.
\]
Since $m\geq3$, raising both sides to the power $m/3$ and using
\((a+b)^{m/3}\leq c(m)(a^{m/3}+b^{m/3})\) yields
\begin{equation}
\llll{A^{o}}_{\infty,\cc{\gamma=1}}^{2m}
\leq
c\Ao{2m-2}
\oo{
S_m+\cgam^{2m}\Ao{2}
}.
\label{EpsilonRegularityProof1}
\end{equation}
Equivalently,
\[
\llll{A^{o}}_{\infty,\cc{\gamma=1}}^{2m}
\leq
c\llll{A^{o}}_{2,\cc{\gamma>0}}^{2m-2}
\oo{
\llll{\Delta^{\frac{m}{2}}H}_{2,\gamma^{2m}}^{2}
+
\cgam^{2m}\llll{A^{o}}_{2,\cc{\gamma>0}}^{2}
}.
\]

Now choose $\gamma=\tilde{\gamma}\circ f$, where $\tilde\gamma$ is a
standard cut-off satisfying
\[
    \chi_{B_{\rho}\oo{x}}\leq\tilde{\gamma}\leq\chi_{B_{2\rho}\oo{x}},
    \qquad
    |D^j\tilde\gamma|\leq c_j\rho^{-j}.
\]
Then $\cgam\leq c\rho^{-1}$, $[\gamma=1]$ contains
$f^{-1}\oo{B_\rho\oo{x}}$, and $[\gamma>0]$ is contained in
$f^{-1}\oo{B_{2\rho}\oo{x}}$.  Therefore
\[
\llll{A^{o}}_{\infty,f^{-1}\oo{B_{\rho}\oo{x}}}^{2m}
\leq
c\,\llll{A^{o}}_{2,f^{-1}\oo{B_{2\rho}\oo{x}}}^{2m-2}
\oo{
\llll{\Delta^{\frac{m}{2}}H}_{2,f^{-1}\oo{B_{2\rho}\oo{x}}}^{2}
+
\rho^{-2m}\llll{A^{o}}_{2,f^{-1}\oo{B_{2\rho}\oo{x}}}^{2}
},
\]
which proves the first assertion.

For the ``in particular'' statement, assume $\Delta^pH\equiv c_0$.
Apply the estimate just proved with
\[
    m=2p+1.
\]
Then
\[
    \Delta^{\frac{2p+1}{2}}H
    =
    \nabla\Delta^pH
    \equiv 0.
\]
Hence
\[
\llll{A^{o}}_{\infty,f^{-1}\oo{B_{\rho}\oo{x}}}^{4p+2}
\leq
c\oo{p}
\rho^{-(4p+2)}
\llll{A^{o}}_{2,f^{-1}\oo{B_{2\rho}\oo{x}}}^{4p+2}.
\]
Taking the $(4p+2)$-th root gives
\[
\llll{A^{o}}_{\infty,f^{-1}\oo{B_{\rho}\oo{x}}}
\leq
c\oo{p}\rho^{-1}
\llll{A^{o}}_{2,f^{-1}\oo{B_{2\rho}\oo{x}}}.
\]
Using \eqref{EpsilonRegularity1} gives the stated
$c(p)\rho^{-1}\varepsilon_0^{1/2}$ bound.
\end{proof}

We are now ready to prove the Gap Lemma, Theorem \ref{GapTheorem}.

\begin{proof}[Proof of Theorem \ref{GapTheorem}]
Fix an arbitrary point $q\in\Sigma$.  For every $\rho>0$, apply the
``in particular'' part of Theorem \ref{EpsilonRegularity} with
\[
    x=f(q).
\]
The global smallness assumption \eqref{Smallness} implies
\[
    \C^o(f(q),2\rho)
    \leq
    \int_\Sigma |A^o|^2\,d\mu
    \leq
    \varepsilon_0,
\]
and so the theorem gives
\[
    |A^o|(q)
    \leq
    c(p)\rho^{-1}
    \llll{A^o}_{2,f^{-1}\oo{B_{2\rho}\oo{f(q)}}}
    \leq
    c(p)\rho^{-1}\llll{A^o}_{2}.
\]
Letting $\rho\nearrow\infty$ gives $|A^o|(q)=0$.  Since $q\in\Sigma$
was arbitrary, $A^o\equiv0$ on $\Sigma$.

Thus $f$ is totally umbilic.  In an orthonormal frame,
\[
\norm{A^{o}}^{2}
=
\frac{1}{2}\oo{\kappa_{1}-\kappa_{2}}^{2},
\]
so the two principal curvatures agree identically.  By the Codazzi
equations, the common principal curvature is constant.  Hence the image
of $f$ lies either in an affine plane or in a round sphere.  Since $f$ is
proper and $\Sigma$ is connected, the image is respectively a standard
flat plane or a standard round sphere.
\end{proof}

\section{Exponential improvement of the energy condition}

In this section we show that evolving any immersion with small initial umbilic energy via \eqref{PolyharmonicIntro1} drives the umbilic energy down towards zero exponentially fast.
Recall that this energy gives a measure of the `average' distance from a round sphere in $L^{2}$. We will first state a result of Li and Yau \cite{LiYau1}. This result will later allow us, under a similar small energy condition, to establish that $f\oo{\Sigma,t},t\in\left[0,T\right)$ is a one-parameter family of embeddings. 
\begin{theorem}[Li-Yau $\text{\cite[Theorem 6]{LiYau1}}$]\label{LiYauTheorem1}
If an immersion $f:\Sigma\rightarrow\mathbb{R}^{3}$ has the property that
\begin{equation}
\frac{1}{4}\intM{H^{2}}<8\, \pi,\label{LiYauTheorem1,1}
\end{equation}
then $f$ is an embedding.
\end{theorem}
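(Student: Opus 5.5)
We would prove Theorem~\ref{LiYauTheorem1} with L.~Simon's monotonicity formula for the area density, rather than Li--Yau's original conformal-volume argument. Set $W(f):=\frac14\intM{H^2}$. The goal is to show that for every $x_0\in\R^3$ the density
\[
\theta(x_0):=\lim_{\sigma\searrow0}\frac{\mu\big(f^{-1}(B_\sigma(x_0))\big)}{\pi\sigma^2}
\]
exists and satisfies $\theta(x_0)\leq W(f)/(4\pi)$. Granting this, the proof is short. Suppose $f$ is not injective, say $f(q_1)=f(q_2)=x_0$ with $q_1\neq q_2$. Near each $q_i$ the immersion is a smooth embedded graph over its tangent plane, and these two sheets have disjoint preimages. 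Each sheet therefore contributes $1$ to the density, so $\theta(x_0)\geq2$. This gives $W(f)\geq 8\pi$, contradicting \eqref{LiYauTheorem1,1}. An injective immersion of the compact surface $\Sigma$ is an embedding, which finishes the argument.

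To obtain the density bound we would use the first variation formula on the closed surface,
\[
\intM{\operatorname{div}_\Sigma X}=\intM{\inner{\vec H,X}},\qquad \vec H=-H\nu,
\]
valid for any smooth vector field $X$ along $f$. We test it with $X(x)=\big(|x-x_0|_\sigma^{-2}-\rho^{-2}\big)_+(x-x_0)$, where $|y|_\sigma:=\max(|y|,\sigma)$ and $0<\sigma<\rho$; this $X$ is Lipschitz, and an approximation argument justifies its use. The tangential divergence of $x-x_0$ equals $2$, and the derivative of the radial weight produces $|(x-x_0)^\perp|^2/|x-x_0|^4$. Completing the square against $\tfrac12\vec H$ then yields Simon's identity
\[
\frac{\mu(B_\sigma)}{\sigma^2}+\int_{B_\rho\setminus B_\sigma}\Big|\tfrac14\vec H+\tfrac{(x-x_0)^\perp}{|x-x_0|^2}\Big|^2 d\mu
=\frac{\mu(B_\rho)}{\rho^2}+\frac1{16}\int_{B_\rho}|H|^2d\mu+R(\sigma,\rho).
\]
Here $B_r$ abbreviates $f^{-1}(B_r(x_0))$. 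The remainder is
\[
R(\sigma,\rho)=\frac12\int_{B_\rho}\inner{\vec H,x-x_0}\rho^{-2}\,d\mu-\frac12\int_{B_\sigma}\inner{\vec H,x-x_0}\sigma^{-2}\,d\mu .
\]

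We then pass to the limits. As $\rho\to\infty$, compactness of $\Sigma$ gives $\mu(B_\rho)/\rho^2\to0$; the $\rho$-part of $R$ is bounded by $\rho^{-1}\int|H|\to0$. As $\sigma\to0$, the $\sigma$-part of $R$ is at most $\sigma^{-1}\int_{B_\sigma}|H|$. This tends to $0$ because $|H|$ is bounded and $\mu(B_\sigma)=O(\sigma^2)$ by smoothness of the finitely many local sheets passing through $x_0$. The local graph description gives $\mu(B_\sigma)/(\pi\sigma^2)\to$ the number of preimages of $x_0$, so $\theta(x_0)$ exists and is an integer. Dropping the nonnegative square term leaves $\pi\theta(x_0)\leq\frac1{16}\intM{H^2}$, that is, $\theta(x_0)\leq W(f)/(4\pi)$.

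The main obstacle is the careful derivation of the monotonicity identity. The signs and constants must come out exactly: factor $\frac14$ in front of $\vec H$, $\frac1{16}$ in front of $\int H^2$, and $\pi$ in the density normalisation. The non-smooth test field must also be justified, which we would do by mollifying the cut-off $(\cdot)_+$. If the bookkeeping becomes awkward, a fallback is to differentiate $\sigma\mapsto\sigma^{-2}\mu(B_\sigma)$ directly, using the first variation with a smooth radial cut-off $\phi(|x-x_0|/\sigma)$, and then integrate in $\sigma$.
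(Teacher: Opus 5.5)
Your argument is correct, but it follows a different route from the one the paper relies on. The paper gives no proof of its own. It quotes Li--Yau, who obtain the statement from conformal geometry. They show that $\frac14\int_\Sigma H^2\,d\mu$ dominates the conformal volume of $f$ (after inverse stereographic projection into $S^3$). If some point has $k$ preimages, composing with Möbius transformations that blow up a neighbourhood of that point spreads each of the $k$ sheets over almost a full round sphere, so the conformal volume is at least $4\pi k$.

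You use only the Euclidean first variation formula, through Simon's monotonicity identity, and reach the same multiplicity inequality $\frac14\int_\Sigma H^2\,d\mu\geq 4\pi\,\theta(x_0)$ directly. What your route buys:
\begin{itemize}
\item It is elementary and self-contained, and the argument is local; it extends to varifolds with square-integrable mean curvature.
\item It rests on the same identity that produces the Simon local area bound (Lemma \ref{L:simon-local-area}) used later in the paper.
\item It shows exactly where compactness enters, namely in $\mu(B_\rho)/\rho^2\to0$. This is precisely what fails for two transversal planes, which have zero Willmore energy and are not embedded.
\end{itemize}
The Li--Yau route gives in exchange a conformally invariant lower bound in terms of conformal volume, with further applications such as first eigenvalue estimates.

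There are two harmless slips.
\begin{itemize}
\item With the paper's convention $\vec H=-H\nu$, where $\nu$ is the outer normal, the first variation formula is $\int_\Sigma\operatorname{div}_\Sigma X\,d\mu=-\int_\Sigma\langle\vec H,X\rangle\,d\mu$. You can check this with $X=x$ on a round sphere. Your completed square and your remainder $R(\sigma,\rho)$ are the ones this corrected sign produces. With the sign as you wrote it, one gets $\frac14\vec H-(x-x_0)^\perp/|x-x_0|^2$ and $-R$ instead. That yields the same final bound, since only $|\vec H|^2=H^2$ survives.
\item The derivation produces $\frac1{16}\int_{B_\rho\setminus B_\sigma}H^2\,d\mu$. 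With $\int_{B_\rho}$ in its place, your display is an inequality rather than an identity, which is all you need.
\end{itemize}
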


\begin{lemma}\label{LemmaPreservedSphericityEstimates2}
Let $f:\Sigma\rightarrow\R^{3}$ be a closed immersion satisfying $\Aoc{2} \leq \varepsilon_{0}$ for $\varepsilon_{0}>0$ sufficiently small. Then there exists a universal constant $c>0$ such that the following estimate holds:
\begin{equation}
\intM{\nablanorm{p}{A^{o}}{2}}\leq c\intM{|\Delta^{\frac{p}{2}}H|^{2}}\leq c\Aoc{\frac{2}{p+1}}\oo{\intM{|\Delta^{\frac{p+1}{2}}H|^{2}}}^{\frac{p}{p+1}},\label{LemmaPreservedSphericityEstimates2,Eqn1}
\end{equation}
\end{lemma}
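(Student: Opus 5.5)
The plan is to prove the two inequalities in \eqref{LemmaPreservedSphericityEstimates2,Eqn1} separately. Since $\Sigma$ is closed, throughout I take the trivial cut-off $\tilde\gamma\equiv1$. Then $\gamma\equiv1$, $[\gamma>0]=\Sigma$, and all derivatives of $\tilde\gamma$ vanish, so every $\cgam$ term in the localised estimates drops out.

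\emph{Second inequality.} With this choice, $S_l=\intM{|\Delta^{l/2}H|^2}$ and $\Ao{}=\Aoc{}$. Applying Lemma \ref{HigherOrderSobolevLemma1} (equivalently Theorem \ref{HigherOrderSobolevTheorem1} with interpolation parameter $1$) with $l=p$ gives
\[
S_p\leq c\,\Aoc{\frac{2}{p+1}}S_{p+1}^{\frac{p}{p+1}}.
\]
This is exactly the right-hand inequality, with an absolute constant, provided $\varepsilon_0$ is below the threshold of that lemma.

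\emph{First inequality.} I would argue by induction on $p$, in the spirit of Kuwert--Sch\"atzle.

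\textbf{Base case $p=1$.} Integrate $\intM{|\nabla A^o|^2}$ by parts and use Codazzi in the form \eqref{EQabove10}, i.e.\ $\nabla^*A^o=\tfrac12\nabla H$. Commuting the two derivatives via the Gauss equation then gives
\[
\intM{|\nabla A^o|^2}=c_1\intM{|\nabla H|^2}+\intM{A^o*A^o*A*A}.
\]
\textbf{Inductive step.} Assume the inequality for orders below $p$. I would write
\[
\intM{|\nabla_{(p)}A^o|^2}=\intM{\inner{\nabla_{(p-1)}A^o,(\nabla^*)\nabla_{(p)}A^o}}.
\]
I then commute derivatives so that a divergence falls directly on $A^o$, replace it by $\tfrac12\nabla H$, and repeat. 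In two dimensions every pair of derivatives of $H$ can be traded, up to commutators, for a Laplacian. This yields
\[
\intM{|\nabla_{(p)}A^o|^2}\leq c\intM{|\Delta^{p/2}H|^2}+\sum\left|\intM{P^{2p-2}_4(A)}\right|+\cdots,
\]
where each commutator term comes from the Gauss equation. Each such term carries at least two factors of $A^o$, or of derivatives of $A^o$, since $\nabla_{(k)}A$ is controlled by $\nabla_{(k)}A^o$ for $k\geq1$ via \eqref{Prelimiaries2} and the identity following it.

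\textbf{Absorbing the errors.} I would estimate these error terms by the multiplicative Sobolev/Michael--Simon inequality (Theorem \ref{MichaelSimon}) and the interpolation Lemma \ref{NewLemma1}, with $\gamma\equiv1$. The aim is the bound
\[
|\text{error}|\leq c\,\Aoc{2}\intM{|\nabla_{(p)}A^o|^2}.
\]
Then, for $\varepsilon_0$ small, the error can be absorbed into the left-hand side.

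The main obstacle is exactly these error terms. They contain undifferentiated factors $H$, for example $H^2|\nabla_{(p-1)}A^o|^2$, and $H$ is \emph{not} small; only $A^o$ is. The saving feature I would exploit is the Michael--Simon term $|H|\varphi$ itself. Following the scheme of \eqref{Chapter3NewProp2,5}, I would first bound $\intM{|\nabla_{(p-1)}A^o|^2H^2}$ through
\[
\intM{|\nabla_{(p-1)}A^o|^4}\ \text{and}\ \intM{|\nabla_{(p-1)}A^o|^2|\nabla A^o|^2},
\]
and then bound these via Michael--Simon by top-order quantities times powers of $\Aoc{}$. For the remaining pieces I would use the lower-order cases of the induction hypothesis, interpolating with Lemma \ref{NewLemma1} to convert intermediate derivatives of $A^o$ into $\intM{|\nabla_{(p)}A^o|^2}$ times small factors. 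If pure-$H$ terms of the form $H^2|\nabla_{(p-1)}A^o|^2$ resist this, the fallback is the closed-surface identity $\intM{K}=2\pi\chi$ combined with the Gauss--Bonnet structure of the commutator $[\nabla,\Delta]$. That structure shows the $H^2$-commutator contributions appear only paired with $A^o$, as $K=\tfrac14H^2-\tfrac12|A^o|^2$. One can then bound them by $\intM{|\nabla_{(p)}A^o|^2}$ plus terms already controlled by smallness.
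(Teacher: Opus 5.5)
Your second inequality is correct and matches the paper: it is Lemma \ref{HigherOrderSobolevLemma1} with $\gamma\equiv1$. Your integration-by-parts reduction for the first inequality also matches the paper's route to \eqref{LemmaPreservedSphericityEstimates2,Eqn6}. The gap is the absorption step. The target $|\text{error}|\le c\|A^o\|_2^2\int_\Sigma|\nabla_{(p)}A^o|^2\,d\mu$ fails for the $H$-weighted remainders, such as $\int_\Sigma H^2|\nabla_{(p-1)}A^o|^2\,d\mu$. This term is only quadratic in $A^o$, like the left-hand side, whereas your proposed bound is quartic.

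To see this concretely, take a perturbation of a round sphere of radius $r$ with $A^o$ of relative size $\delta$. Then $H\approx 2/r$, and both $\int_\Sigma|\nabla_{(p)}A^o|^2\,d\mu$ and $\int_\Sigma H^2|\nabla_{(p-1)}A^o|^2\,d\mu$ are of order $\delta^2r^{-2p}$. But $\|A^o\|_2^2\int_\Sigma|\nabla_{(p)}A^o|^2\,d\mu$ is of order $\delta^4r^{-2p}$. So carrying two factors of $A^o$ does not make a term absorbable; it needs four. This already fails at $p=1$, where your remainder $\int A^o*A^o*A*A$ contains $\int_\Sigma|A^o|^2H^2\,d\mu$.

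Your proposed remedies do not repair this.
\begin{itemize}
\item The model computation \eqref{Chapter3NewProp2,5} works only because there $\|A\|_2$ is small, hence $H$ is small. Here $\int_\Sigma H^2\,d\mu\ge16\pi$.
\item Any Michael--Simon or Cauchy--Schwarz treatment of $\int H^2|\nabla_{(p-1)}A^o|^2$ produces a factor like $\int H^2$ or $(\int H^4)^{1/2}$, never $\|A^o\|_2$.
\item Gauss--Bonnet controls $\int_\Sigma K\,d\mu$, but says nothing about $\int_\Sigma K|\nabla_{(p-1)}A^o|^2\,d\mu$.
\item A bound by $C\int_\Sigma|\nabla_{(p)}A^o|^2\,d\mu$ with $C$ of order one cannot be absorbed into the left-hand side.
\end{itemize}

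What is missing is either exact sign bookkeeping, or an order-one estimate of the $H$-weighted terms by $\int_\Sigma|\Delta^{p/2}H|^2\,d\mu$ so that they land on the right-hand side. For $p=1$ the paper keeps the sign. By the Simons identity \eqref{MultliplicativeSobolevLemma2Eqn4},
\[
\int_\Sigma|\nabla A^o|^2\,d\mu+\frac12\int_\Sigma|A^o|^2H^2\,d\mu=\frac12\int_\Sigma|\nabla H|^2\,d\mu+\int_\Sigma|A^o|^4\,d\mu,
\]
and only the genuinely quartic term $\int|A^o|^4$ is absorbed (Lemma \ref{MultliplicativeSobolevLemma2}). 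Your schematic $A^o*A^o*A*A$ discards exactly this sign. For $p=2$, the endpoint estimate \eqref{LemmaPreservedSphericityEstimates2,p2endpoint} likewise keeps $\int|\nabla A^o|^2H^2$ on the left.

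For $p\ge3$ the paper treats the two kinds of remainder differently.
\begin{itemize}
\item It absorbs only the quartic term $\int_\Sigma|P_4^{2(p-1)}(A^o)|\,d\mu$, using $\|A^o\|_\infty^2$ from the $\varepsilon$-regularity bound \eqref{EpsilonRegularityProof1} together with interpolation.
\item It proves $\int_\Sigma|\nabla_{(p-1)}A^o|^2H^2\,d\mu\le c\int_\Sigma|\Delta^{p/2}H|^2\,d\mu$ with a non-small constant. This uses the weighted hierarchies $Q_{m,l}$ and $R_{m,l}$ of Theorems \ref{TheoremX}, \ref{TheoremY}, \ref{TheoremXX} and Lemma \ref{AppendxLemma1}, which successively trade powers of $H$ for derivatives of $H$.
\end{itemize}
Your proposal contains neither ingredient, so it does not establish the first inequality.
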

\begin{proof}
If $p=1$, then applying \eqref{MultliplicativeSobolevLemma2Eqn1} with
$\gamma\equiv1$ gives
\[
\intM{\norm{\nabla A^o}^{2}}
\leq
c\intM{\norm{\nabla H}^{2}}.
\]
Moreover, by Theorem \ref{HigherOrderSobolevTheorem1} with $l=1$ and
$m=1$, again with $\gamma\equiv1$,
\[
\intM{\norm{\nabla H}^{2}}
\leq
c\Aoc{}\oo{\intM{\norm{\Delta H}^{2}}}^{\frac{1}{2}}.
\]
Thus
\[
\intM{\norm{\nabla A^o}^{2}}
\leq
c\intM{\norm{\nabla H}^{2}}
\leq
c\Aoc{}\oo{\intM{\norm{\Delta H}^{2}}}^{\frac{1}{2}},
\]
which is precisely \eqref{LemmaPreservedSphericityEstimates2,Eqn1}
when $p=1$.

If $p=2$, we use the endpoint closed tracefree elliptic estimate
\begin{equation}
\intM{\nablanorm{2}{A^o}{2}}
+
\intM{\nablanorm{1}{A^o}{2}H^2}
+
\intM{\norm{P_4^2(A^o)}}
\leq
C\intM{|\Delta H|^2}.
\label{LemmaPreservedSphericityEstimates2,p2endpoint}
\end{equation}
This follows from the tracefree Codazzi identity
$\operatorname{div}A^o=\frac12\nabla H$, the Simons identity for $A^o$,
and the multiplicative Sobolev inequality; the terms containing
$A^o$ are absorbed using $\|A^o\|_2^2\leq\varepsilon_0$.  Consequently
\[
\intM{\nablanorm{2}{A^o}{2}}
\leq
C\intM{|\Delta H|^2}.
\]
The second inequality in \eqref{LemmaPreservedSphericityEstimates2,Eqn1}
for $p=2$ is the closed interpolation estimate
\[
\intM{|\Delta H|^2}
\leq
C\Aoc{\frac23}\left(\intM{|\nabla\Delta H|^2}\right)^{\frac23},
\]
which is Theorem \ref{HigherOrderSobolevTheorem1} with $l=2$ and
$m=1$.  Thus the lemma holds when $p=2$.

We therefore assume for the remainder of the proof that $p\geq3$.

Now, by multiple applications the interchange of covariant formula and the fact that in dimensions $n=2$, $R_{ijk}^{l}=\frac{1}{2}R\oo{g_{ik}\delta_{j}^{l}-g_{jk}\delta_{i}^{l}}$, we have
\begin{equation}
\Delta\nabla_{\oo{k}}T=\nabla_{\oo{k}}\Delta T+\sum_{i=0}^{k}\nabla_{\oo{i}}\oo{R*\nabla_{\oo{k-i}}T}.\label{LemmaPreservedSphericityEstimates2,Eqn3}
\end{equation} 
Using integration by parts, identity \eqref{LemmaPreservedSphericityEstimates2,Eqn3} with $T=A^{o},k=p-1$, as well as identity \eqref{MultliplicativeSobolevLemma2Eqn4}, it follows that
\begin{align}
&\intM{\nablanorm{p}{A^{o}}{2}}=-\intM{\inner{\nabla_{\oo{p-1}}A^{o},\Delta\nabla_{\oo{p-1}}A^{o}}}\\
&\quad=-\intM{\inner{\nabla_{\oo{p-1}}A^{o},\nabla_{\oo{p-1}}\Delta A^{o}}}+\sum_{i=0}^{p-1}\intM{\nabla_{\oo{p-1}}A^{o}*\nabla_{\oo{i}}\oo{R*\nabla_{\oo{p-1-i}}A^{o}}}\\
&\quad=-\intM{\inner{\nabla_{\oo{p-1}}A^{o},\nabla_{(p+1)}H}}+\sum_{i=0}^{p-1}\intM{\nabla_{\oo{p-1}}A^{o}*\nabla_{\oo{i}}\oo{R*\nabla_{\oo{p-1-i}}A^{o}}}\\
&\quad=\intM{\inner{\Delta\nabla_{\oo{p-2}}A^{o},\nabla_{\oo{p}}H}}+\sum_{i=0}^{p-1}\intM{\nabla_{\oo{p-1}}A^{o}*\nabla_{\oo{i}}\oo{R*\nabla_{\oo{p-1-i}}A^{o}}},
\end{align}
which implies that
\begin{equation}
\intM{\nablanorm{p}{A^{o}}{2}}\leq c\intM{\nablanorm{p}{H}{2}}+\sum_{i=0}^{p-1}\intM{\nabla_{\oo{p-1}}A^{o}*\nabla_{\oo{i}}\oo{R*\nabla_{\oo{p-1-i}}A^{o}}}\label{LemmaPreservedSphericityEstimates2,Eqn4}
\end{equation}
for some universal constant $c>0$. By following a process similar to that above, we have
\begin{multline}
\intM{\nablanorm{p}{H}{2}}=\intM{|\Delta^{\frac{p}{2}}H|^{2}}+\sum_{i=0}^{p-1}\intM{\nabla_{\oo{p-1}}H*\nabla_{\oo{i}}\oo{R*\nabla_{\oo{p-1-i}}H}}\\
+\sum_{i=0}^{p-2}\intM{\nabla_{\oo{p}}H*\nabla_{\oo{i}}\oo{R*\nabla_{\oo{p-2-i}}H}}.
\end{multline}
Combining the last two identities gives
\begin{multline}
\intM{\nablanorm{p}{A^{o}}{2}} \leq c\intM{|\Delta^{\frac{p}{2}}H|^{2}}+\sum_{i=0}^{p-1}\intM{\nabla_{\oo{p-1}}A^{o}*\nabla_{\oo{i}}\oo{R*\nabla_{\oo{p-1-i}}A^{o}}}\\
+\sum_{i=0}^{p-1}\intM{\nabla_{\oo{p-1}}H*\nabla_{\oo{i}}\oo{R*\nabla_{\oo{p-1-i}}H}}+\sum_{i=0}^{p-2}\intM{\nabla_{\oo{p}}H*\nabla_{\oo{i}}\oo{R*\nabla_{\oo{p-2-i}}H}}. \label{LemmaPreservedSphericityEstimates2,Eqn5}
\end{multline}
We need to estimate the extraneous terms on the right hand side of \eqref{LemmaPreservedSphericityEstimates2,Eqn5}. For the first summation we use the identity $R=\frac{1}{2}H^{2}-\norm{A^{o}}^{2}$ to establish the following estimate:
\begin{align}
&\sum_{i=0}^{p-1}\intM{\nabla_{\oo{p-1}}A^{o}*\nabla_{\oo{i}}\oo{R*\nabla_{\oo{p-1-i}}A^{o}}}\\
&\quad\leq\sum_{i=0}^{p-2}\intM{\nabla_{\oo{p-1}}A^{o}*\nabla_{\oo{i}}\oo{H^{2}\,\nabla_{\oo{p-1-i}}A^{o}}}+\intM{\norm{P_{4}^{2\oo{p-1}}\oo{A^{o}}}}\\
&\quad=\intM{\oo{\nabla_{\oo{p-1}}A^{o}}^{\#2}\,H^{2}}+\sum_{i=1}^{p-2}\intM{H\,\nabla_{\oo{p-1}}A^{o}*\nabla_{\oo{i}}H*\nabla_{\oo{p-1-i}}A^{o}}+\intM{\norm{P_{4}^{2\oo{p-1}}\oo{A^{o}}}}\\
&\quad\leq c\intM{\nablanorm{p-1}{A^{o}}{2}H^{2}}+\sum_{i=1}^{p-2}\intM{\nablanorm{p-1}{A^{o}}{}\nablanorm{p-1-i}{A^{o}}{}\nablanorm{i}{A^{o}}{}\norm{H}}+\intM{\norm{P_{4}^{2\oo{p-1}}\oo{A^{o}}}}\\
&\quad\leq c\intM{\nablanorm{p-1}{A^{o}}{2}H^{2}}+\intM{\norm{P_{4}^{2\oo{p-1}}\oo{A^{o}}}}\\
&\qquad+\sum_{i=1}^{p-2}\oo{\intM{\nablanorm{p-1}{A^{o}}{2}H^{2}}\cdot\intM{\nablanorm{p-1-i}{A^{o}}{2}\nablanorm{i}{A^{o}}{2}}}^{\frac{1}{2}}\\
&\quad\leq c\intM{\nablanorm{p-1}{A^{o}}{2}H^{2}}+\intM{\norm{P_{4}^{2\oo{p-1}}\oo{A^{o}}}}.
\end{align}
Here we have used the Cauchy-Schwarz inequality in the last step along with the fact that
\[
\intM{\nablanorm{p-1-i}{A^{o}}{2}\nablanorm{i}{A^{o}}{2}}\leq \intM{\norm{P_{4}^{2\oo{p-1}}\oo{A^{o}}}}.
\]
The other two summations in the right hand side of \eqref{LemmaPreservedSphericityEstimates2,Eqn5} are estimated in the same way. 
Therefore
\begin{equation}
\intM{\nablanorm{p}{A^{o}}{2}}\leq c\intM{|\Delta^{\frac{p}{2}}H|^{2}}+c\intM{\nablanorm{p-1}{A^{o}}{2}H^{2}}+\intM{\norm{P_{4}^{2\oo{p-1}}\oo{A^{o}}}}.\label{LemmaPreservedSphericityEstimates2,Eqn6}
\end{equation}
The last term can be estimated using the multiplicative Sobolev inequality (Proposition \ref{EvolutionProposition4}),  interpolation inequalities (Theorem \ref{HigherOrderSobolevTheorem1}) and Lemma \ref{NewLemma1}, as well as our estimate for the trace-free curvature, \eqref{EpsilonRegularityProof1}:
\begin{align}
\intM{\norm{P_{4}^{2\oo{p-1}}\oo{A^{o}}}}&\leq c\Aoi{2}\intM{\nablanorm{p-1}{A^{o}}{2}}\\
&\leq c\oo{\Aoc{\frac{2(p-1)}{p}}\oo{\intM{|\Delta^{\frac{p}{2}}H|^{2}}}^{\frac{1}{p}}}\cdot\oo{\Aoc{\frac{2}{p}}\oo{\intM{\nablanorm{p}{A^{o}}{2}}}^{\frac{p-1}{p}}}\\
&\leq c\Aoc{2}\oo{\intM{|\Delta^{\frac{p}{2}}H|^{2}}}^{\frac{1}{p}}\oo{\intM{\nablanorm{p}{A^{o}}{2}}}^{\frac{p-1}{p}}\\
&\leq c\Aoc{2}\oo{\intM{|\Delta^{\frac{p}{2}}H|^{2}}+\intM{\nablanorm{p}{A^{o}}{2}}}.\label{LemmaPreservedSphericityEstimates2,Eqn7}
\end{align} 
Here in the last step we have used Young's inequality with conjugate exponents $\alpha=p/\oo{p-1}$, $\alpha^{*}=p$.
For the penultimate term in \eqref{LemmaPreservedSphericityEstimates2,Eqn6}, we combine the results of Theorem \ref{TheoremXX} and Lemma \ref{AppendxLemma1} to give
\begin{multline}
\intM{\nablanorm{p-1}{A^{o}}{2}H^{2}}\leq c\intM{\norm{\nabla A^{o}}^{2}H^{2\oo{p-1}}}+\intM{\norm{A^{o}}^{2}H^{2p}}+c\intM{|\Delta^{\frac{p}{2}}H|^{2}}\\
\leq c\intM{\norm{\Delta H}^{2}H^{2\oo{p-2}}}+c\intM{|\Delta^{\frac{p}{2}}H|^{2}}
\leq c\intM{|\Delta^{\frac{p}{2}}H|^{2}}.\label{LemmaPreservedSphericityEstimates2,Eqn8}
\end{multline}
Substituting \eqref{LemmaPreservedSphericityEstimates2,Eqn7} and \eqref{LemmaPreservedSphericityEstimates2,Eqn8} in \eqref{LemmaPreservedSphericityEstimates2,Eqn6} then implies
\begin{equation}
\oo{1-c\Aoc{2}}\cdot\intM{\nablanorm{p}{A^{o}}{2}}\leq c\cdot\oo{1+\Aoc{2}}\intM{|\Delta^{\frac{p}{2}}H|^{2}}.
\end{equation}
Therefore if $\varepsilon_{0}>0$ is small enough we obtain
\begin{equation}
\intM{\nablanorm{p}{A^{o}}{2}}\leq c\intM{|\Delta^{\frac{p}{2}}H|^{2}}\leq c\Aoc{\frac{2}{p+1}}\oo{\intM{|\Delta^{\frac{p+1}{2}}H|^{2}}}^{\frac{p}{p+1}},\label{LemmaPreservedSphericityEstimates2,Eqn9}
\end{equation}
where we have used the interpolation inequality of Theorem \ref{HigherOrderSobolevTheorem1} once more in the last step.
\end{proof}

\begin{lemma}\label{LemmaPreservedSphericityEstimates1}
Let $f:\Sigma\rightarrow\R^{3}$ be a closed immersion satisfying $\Aoc{2} \leq \varepsilon_{0}$ for $\varepsilon_{0}>0$ sufficiently small. Then there exists a universal constant $c>0$ such that the following estimate holds:
\begin{equation}
\intM{\norm{\Delta^{\frac{p-1}{2}}\oo{H\norm{A^{o}}^{2}}}^{2}}\leq c\Aoc{2}\intM{|\Delta^{\frac{p+1}{2}}H|^{2}}.\label{LemmaPreservedSphericityEstimates1,Eqn1}
\end{equation}
\end{lemma}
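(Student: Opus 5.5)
We plan to reduce the left-hand side of \eqref{LemmaPreservedSphericityEstimates1,Eqn1} to a sum of $P$-style products and then close with the closed (i.e.\ $\gamma\equiv1$, $\cgam=0$) versions of the interpolation estimates already proved. We first note that $\Delta^{\frac{p-1}{2}}$ (with the convention $\Delta^{(2k+1)/2}=\nabla\Delta^k$) applied to a scalar is a linear combination of full covariant derivatives of order $p-1$, so pointwise
\[
\norm{\Delta^{\frac{p-1}{2}}\oo{H\norm{A^o}^2}}
\leq c\sum_{a+b+c=p-1}\nablanorm{a}{H}{}\,\nablanorm{b}{A^o}{}\,\nablanorm{c}{A^o}{}.
\]
Hence the integral is controlled by $\sum\intM{\nablanorm{a}{H}{2}\nablanorm{b}{A^o}{2}\nablanorm{c}{A^o}{2}}$, a sum of sextic terms with $2(p-1)$ derivatives in total, each containing at least four factors of $A^o$ (derivatives of $H$ we bound by those of $A^o$ via \eqref{Prelimiaries2} whenever $a\geq1$). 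This is of scaling type $P_6^{2p-2}$, matching $\intM{|\Delta^{\frac{p+1}{2}}H|^2}$, which has $2p+2$ derivatives on two factors.

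We then split into two cases. If $a\geq1$, all factors are derivatives of $A^o$ or $H$ with $|\nabla_{(j)}H|\le 2|\nabla_{(j)}A^o|$, and we use Hölder plus the closed Gagliardo--Nirenberg estimates of the appendix (Lemma \ref{NewLemma1}, Theorem \ref{AppendixTheorem1} with $\gamma\equiv1$) to bound $\intM{|P_6^{2p-2}(A^o)|}\le c\Aoc{4}\intM{\nablanorm{p+1}{A^o}{2}}$; this produces the smallness prefactor since four factors carry $L^2$ weight. If $a=0$, the undifferentiated $H$ is not small, and here we plan to pull out $\Aoi{2}$ and reduce to $\intM{H^2\nablanorm{b}{A^o}{2}}$-type terms, handling these exactly as in \eqref{LemmaPreservedSphericityEstimates2,Eqn8} (Theorem \ref{TheoremXX} and Lemma \ref{AppendxLemma1}), which trade powers of $H$ for derivatives and bound by $c\intM{|\Delta^{\frac{p}{2}}H|^2}$ or its analogue at order $p+1$; the factor $\Aoi{2}$ is in turn estimated by \eqref{EpsilonRegularityProof1} with $\gamma\equiv1$, giving $\Aoc{2(m-1)/m}$ times powers of $S_m$, with $m$ chosen so the total scaling matches.

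Finally we convert every appearance of $\intM{\nablanorm{j}{A^o}{2}}$ into $\intM{|\Delta^{\frac j2}H|^2}$ by Lemma \ref{LemmaPreservedSphericityEstimates2} (applied at order $j$ in place of $p$), and interpolate all $\intM{|\Delta^{\frac j2}H|^2}$ up to order $p+1$ by Theorem \ref{HigherOrderSobolevTheorem1} with $\gamma\equiv1$. Collecting exponents, scaling forces each term to be $\Aoc{\alpha}S_{p+1}^{\beta}$ with $\beta=1$ and $\alpha\geq2$, and the smallness $\Aoc2\le\varepsilon_0\le1$ reduces $\alpha$ to $2$. The main obstacle is the $a=0$ case with the non-small undifferentiated $H$: ensuring that, after using the $L^\infty$ bound on $A^o$, the exponents of $\Aoc{}$ still total at least $2$ and that no lower-order $|H|^{2p}$-type term survives without being absorbed into $\intM{|\Delta^{\frac{p+1}{2}}H|^2}$. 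For this we would rely on \eqref{LemmaPreservedSphericityEstimates2,Eqn8}-style bounds where each $H^2$ weight is traded against derivatives of $A^o$ via Codazzi \eqref{EQabove10}.
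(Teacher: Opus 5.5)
There is a genuine gap, and it is where you expected it. Your split into $a\geq1$ versus $a=0$ is the same as the paper's. However, for $p\geq3$ the $a=0$ part of $\Delta^{\frac{p-1}{2}}(H|A^o|^2)$ contains terms $H\,\nabla_{(b)}A^o*\nabla_{(c)}A^o$ with $b,c\geq1$; already for $p=3$, $\Delta(H|A^o|^2)$ contains $2H|\nabla A^o|^2$. In $\int_\Sigma H^2|\nabla_{(b)}A^o|^2|\nabla_{(c)}A^o|^2\,d\mu$ there is no undifferentiated $A^o$ to bound by $\|A^o\|_\infty$. Nothing available on a closed surface gives $L^\infty$ control of $\nabla_{(c)}A^o$ for $c\geq1$. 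Moreover, Theorem \ref{TheoremXX}, Lemma \ref{AppendxLemma1} and \eqref{LemmaPreservedSphericityEstimates2,Eqn8} only control quadratic quantities such as $\int_\Sigma|\nabla_{(j)}A^o|^2H^{2l}\,d\mu$, not $H^2$-weighted quartic products. Your reduction does work for the two extreme terms $\{b,c\}=\{p-1,0\}$.

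The missing idea is to not expand the derivatives of $|A^o|^2$. The paper writes $\Delta^{\frac{p-1}{2}}(H|A^o|^2)=H\,\Delta^{\frac{p-1}{2}}|A^o|^2+P_3^{p-1}(A^o)$. It then views $\Delta^{\frac{p-1}{2}}|A^o|^2$ as $\nabla\varphi$ with $\varphi=\Delta^k|A^o|^2$ when $p=2k+2$, or as $\Delta\varphi$ with $\varphi=\Delta^{k-1}|A^o|^2$ when $p=2k+1$. The scalar weighted elliptic estimates \eqref{LemmaPreservedSphericityEstimates1,Eqn3} and \eqref{LemmaPreservedSphericityEstimates1,Eqn4} trade the $H^2$ weight for one more derivative on $\varphi$. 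What remains are unweighted pure-$A^o$ integrals, such as the $P_4^{2p}(A^o)$-integral $\int_\Sigma|\Delta^{k+1}|A^o|^2|^2\,d\mu$ and products like $\int_\Sigma|\nabla A^o|^2\,d\mu\cdot\int_\Sigma|P_4^{2(p-1)}(A^o)|\,d\mu$. Proposition \ref{EvolutionProposition4} bounds these by $\|A^o\|_\infty^2$ times $\int_\Sigma|\nabla_{(p)}A^o|^2\,d\mu$ (respectively $\int_\Sigma|\nabla_{(p-1)}A^o|^2\,d\mu$). The rest is exponent bookkeeping with \eqref{EpsilonRegularityProof1}, Lemma \ref{LemmaPreservedSphericityEstimates2} and Theorem \ref{HigherOrderSobolevTheorem1}.

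Two further steps need repair.

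First, in the case $a\geq1$ you cannot use Theorem \ref{AppendixTheorem1} with $\gamma\equiv1$. Its hypothesis $\int|A|^2\leq\varepsilon_0$ fails on every closed surface, since $\int_\Sigma|A|^2\,d\mu\geq\frac12\int_\Sigma H^2\,d\mu\geq8\pi$. Also, an $L^2$-prefactor product bound for $A^o$ would have to pass through Michael--Simon, whose $|H|u$ term is not controlled by $A^o$. The correct route is Proposition \ref{EvolutionProposition4}, giving $\int_\Sigma|P_6^{2(p-1)}(A^o)|\,d\mu\leq c\|A^o\|_\infty^4\int_\Sigma|\nabla_{(p-1)}A^o|^2\,d\mu$. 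Follow this with \eqref{EpsilonRegularityProof1} at $m=p+1$, Lemma \ref{NewLemma1} and Lemma \ref{LemmaPreservedSphericityEstimates2}. This does yield $c\|A^o\|_2^4\int_\Sigma|\Delta^{\frac{p+1}{2}}H|^2\,d\mu$.

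Second, \eqref{EpsilonRegularityProof1} requires $m=p+1\geq3$, so for $p=1$ (surface diffusion) your scheme cannot close. Bounding $\|A^o\|_\infty^2$ in $\int_\Sigma H^2|A^o|^4\,d\mu\leq\|A^o\|_\infty^2\int_\Sigma H^2|A^o|^2\,d\mu$ introduces some $S_m$ with $m\geq3$, which is not controlled by $\int_\Sigma|\Delta H|^2\,d\mu$. The paper instead applies Michael--Simon to $u=|H||A^o|^2$. It uses \eqref{MultliplicativeSobolevLemma2Eqn1} and \eqref{MultliplicativeSobolevLemma2Eqn3} to get $\int_\Sigma H^2|A^o|^4\,d\mu\leq c\big(\int_\Sigma|\nabla H|^2\,d\mu\big)^2$. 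It finishes with $\int_\Sigma|\nabla H|^2\,d\mu\leq c\|A^o\|_2\big(\int_\Sigma|\Delta H|^2\,d\mu\big)^{1/2}$. For $p=2$ your outline essentially coincides with the paper's argument, using \eqref{LemmaPreservedSphericityEstimates2,p2endpoint} for the $H^2$-weighted term.
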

\begin{proof}
We first treat the endpoint case $p=1$.  In this case the left hand side
of \eqref{LemmaPreservedSphericityEstimates1,Eqn1} is
\[
I:=\intM{\norm{H}^{2}\norm{A^o}^{4}}.
\]
Applying the Michael--Simon Sobolev inequality, Theorem
\ref{MichaelSimon}, to $u=\norm{H}\norm{A^o}^{2}$ yields
\begin{align}
I
&\leq
c\oo{
\intM{\norm{\nabla H}\norm{A^o}^{2}}
+
\intM{\norm{H}\norm{A^o}\norm{\nabla A^o}}
+
\intM{\norm{H}^{2}\norm{A^o}^{2}}
}^{2}.
\label{LemmaPreservedSphericityEstimates1,p1MS}
\end{align}
The three terms on the right are estimated using
\eqref{MultliplicativeSobolevLemma2Eqn1} and
\eqref{MultliplicativeSobolevLemma2Eqn3}, again with $\gamma\equiv1$.
Indeed,
\[
\intM{\norm{\nabla H}\norm{A^o}^{2}}
\leq
\oo{\intM{\norm{\nabla H}^{2}}}^{\frac{1}{2}}
\oo{\intM{\norm{A^o}^{4}}}^{\frac{1}{2}}
\leq
c\Aoc{}\intM{\norm{\nabla H}^{2}},
\]
while
\[
\intM{\norm{H}\norm{A^o}\norm{\nabla A^o}}
\leq
\oo{\intM{\norm{H}^{2}\norm{A^o}^{2}}}^{\frac{1}{2}}
\oo{\intM{\norm{\nabla A^o}^{2}}}^{\frac{1}{2}}
\leq
c\intM{\norm{\nabla H}^{2}},
\]
and
\[
\intM{\norm{H}^{2}\norm{A^o}^{2}}
\leq
c\intM{\norm{\nabla H}^{2}}.
\]
Therefore, since $\Aoc{2}$ is small,
\[
I\leq c\oo{\intM{\norm{\nabla H}^{2}}}^{2}.
\]
Finally, Theorem \ref{HigherOrderSobolevTheorem1} with $l=1$ and
$m=1$ gives
\[
\intM{\norm{\nabla H}^{2}}
\leq
c\Aoc{}\oo{\intM{\norm{\Delta H}^{2}}}^{\frac{1}{2}}.
\]
Hence
\[
\intM{\norm{H}^{2}\norm{A^o}^{4}}
\leq
c\Aoc{2}\intM{\norm{\Delta H}^{2}},
\]
which proves \eqref{LemmaPreservedSphericityEstimates1,Eqn1} when
$p=1$.

If $p=2$, then
\[
\Delta^{\frac{p-1}{2}}(H|A^o|^2)=\nabla(H|A^o|^2),
\]
and hence
\begin{align*}
\intM{|\nabla(H|A^o|^2)|^2}
&\leq
C\intM{|\nabla H|^2|A^o|^4}
+
C\intM{H^2|A^o|^2|\nabla A^o|^2}  \\
&\leq
C\|A^o\|_\infty^4\intM{|\nabla H|^2}
+
C\|A^o\|_\infty^2\intM{|\nabla A^o|^2H^2}.
\end{align*}
Applying Theorem \ref{EpsilonRegularity} with $m=3$ globally and using
Theorem \ref{HigherOrderSobolevTheorem1} gives
\[
\|A^o\|_\infty^2
\leq
C\Aoc{\frac43}\left(\intM{|\nabla\Delta H|^2}\right)^{\frac13},
\quad
\intM{|\nabla H|^2}
\leq
C\Aoc{\frac43}\left(\intM{|\nabla\Delta H|^2}\right)^{\frac13},
\]
and
\[
\intM{|\Delta H|^2}
\leq
C\Aoc{\frac23}\left(\intM{|\nabla\Delta H|^2}\right)^{\frac23}.
\]
Together with \eqref{LemmaPreservedSphericityEstimates2,p2endpoint},
these estimates imply
\[
\intM{|\nabla(H|A^o|^2)|^2}
\leq
C\Aoc{2}\intM{|\nabla\Delta H|^2},
\]
after decreasing $\varepsilon_0$ if necessary.  This is precisely
\eqref{LemmaPreservedSphericityEstimates1,Eqn1} for $p=2$.

We therefore assume for the remainder of the proof that $p\geq3$.
By the identity \eqref{Prelimiaries2} we have
\begin{equation}
\Delta^{\frac{p-1}{2}}\oo{H\norm{A^{o}}^{2}}
\leq
\Delta^{\frac{p-1}{2}}\norm{A^{o}}^{2}\cdot H
+
\norm{P_{3}^{p-1}\oo{A^{o}}}
\end{equation}
and therefore by the multiplicative Sobolev inequality of Proposition \ref{EvolutionProposition4} one has
\begin{align}
\intM{\norm{\Delta^{\frac{p-1}{2}}\oo{H\norm{A^{o}}^{2}}}^{2}}&\leq c\intM{\norm{\Delta^{\frac{p-1}{2}}\norm{A^{o}}^{2}}^{2}H^{2}}+\intM{\norm{P_{6}^{2\oo{p-1}}\oo{A^{o}}}}\\
&\leq c\intM{\norm{\Delta^{\frac{p-1}{2}}\norm{A^{o}}^{2}}^{2}H^{2}}+c\Aoi{4}\intM{\nablanorm{p-1}{A^{o}}{2}}.\label{LemmaPreservedSphericityEstimates1,Eqn2}
\end{align}
Next we estimate the first term on the right hand side of \eqref{LemmaPreservedSphericityEstimates1,Eqn2}. We treat the cases $p$ is even and $p$ is odd separately.  We use the following inequalities, which hold for $\varphi\in C^{3}$ under the small-energy assumption \eqref{Smallness}:
\begin{equation}
\intM{\nablanorm{2}{\varphi}{2}} + \intM{\norm{\nabla\varphi}^{2}H^{2}} \leq c \intM{\norm{\Delta \varphi}^{2}} + c\intM{\norm{\nabla A^{o}}^{2}}\intM{\norm{\nabla\varphi}^{2}}.\label{LemmaPreservedSphericityEstimates1,Eqn3}
\end{equation}
and
\begin{multline}
\intM{\nablanorm{3}{\varphi}{2}} + \intM{\nablanorm{2}{\varphi}{2}H^{2}}+\intM{\norm{\nabla\varphi}^{2}H^{4}}\\
\leq c\intM{\norm{\nabla\Delta\varphi}^{2}}+c\intM{\norm{\nabla H}^{2}}\cdot\intM{\norm{\Delta\varphi}^{2}}+c\intM{\norm{\Delta H}^{2}}\cdot\intM{\norm{\nabla\varphi}^{2}}.\label{LemmaPreservedSphericityEstimates1,Eqn4}
\end{multline}
The inequalities \eqref{LemmaPreservedSphericityEstimates1,Eqn3} and \eqref{LemmaPreservedSphericityEstimates1,Eqn4} are proven at the end of the appendix.  If $p$ is even with $p=2\oo{k+1},\,k\geq1$, then using \eqref{LemmaPreservedSphericityEstimates1,Eqn3} with $\varphi=\Delta^{k}\norm{A^{o}}^{2}$, followed by the multiplicative Sobolev inequality of Proposition \ref{EvolutionProposition4} gives
\begin{align}
\intM{\norm{\Delta^{\frac{p-1}{2}}\norm{A^{o}}^{2}}^{2}H^{2}}&=\intM{\norm{\nabla\varphi}^{2}H^{2}}\\
&\leq c\intM{\norm{\Delta\varphi}^{2}}+c\intM{\norm{\nabla A^{o}}^{2}}\cdot\intM{\norm{\nabla\varphi}^{2}}\\
&=c\intM{\norm{\Delta^{k+1}\norm{A^{o}}^{2}}^{2}}+c\intM{\norm{\nabla A^{o}}^{2}}\cdot\intM{\norm{\nabla\Delta^{k}\norm{A^{o}}^{2}}^{2}}\\
&\leq \intM{\norm{P_{4}^{4\oo{k+1}}\oo{A^{o}}}}+\intM{\norm{\nabla A^{o}}^{2}}\cdot\intM{\norm{P_{4}^{2\oo{2k+1}}\oo{A^{o}}}}\\
&\leq c\Aoi{2}\oo{\intM{\nablanorm{p}{A^{o}}{2}}+\intM{\norm{\nabla A^{o}}}^{2}\cdot\intM{\nablanorm{p-1}{A^{o}}{2}}}.
\end{align}
Using \eqref{MultliplicativeSobolevLemma2Eqn1} to estimate $\llll{\nabla A^{o}}_{2}^{2}$, along with the interpolation inequalities from Lemma \ref{NewLemma1} and Theorem \ref{HigherOrderSobolevTheorem1} as well as Young's inequality with conjugate exponents $\alpha=p/\oo{p-1},\alpha^{*}=p$, the last term on the right can be estimated as follows:

Using \eqref{MultliplicativeSobolevLemma2Eqn1} to estimate $\llll{\nabla A^{o}}_{2}^{2}$, the results of Lemma \ref{LemmaPreservedSphericityEstimates2}, along with the interpolation inequalities from Lemma \ref{NewLemma1} yields
\begin{align}
&\intM{\norm{\nabla A^{o}}}^{2}\cdot\intM{\nablanorm{p-1}{A^{o}}{2}}\\
&\qquad\leq c\oo{\Aoc{\frac{2p}{p+1}}\oo{\intM{|\Delta^{\frac{p+1}{2}}H|^{2}}}^{\frac{1}{p+1}}} \cdot \oo{\Aoc{\frac{2}{p}}\oo{\intM{\nablanorm{p}{A^{o}}{2}}}^{\frac{p-1}{p}}}\\
&\qquad\leq c\Aoc{\frac{2p}{p+1}+\frac{2}{p}}\oo{\intM{|\Delta^{\frac{p+1}{2}}H|^{2}}}^{\frac{1}{p+1}}\cdot \oo{\Aoc{\frac{2}{p+1}}\oo{\intM{|\Delta^{\frac{p+1}{2}}H|^{2}}}^{\frac{p}{p+1}}}^{\frac{p-1}{p}}\\
&\qquad= c\Aoc{\frac{2(p+2)}{p+1}}\oo{\intM{|\Delta^{\frac{p+1}{2}}H|^{2}}}^{\frac{p}{p+1}}.\label{LemmaPreservedSphericityEstimates1,Eqn5}
\end{align}
Combining this with the estimates of Lemma \ref{LemmaPreservedSphericityEstimates2} gives
Therefore
\begin{align}
\intM{\norm{\Delta^{\frac{p-1}{2}}\norm{A^{o}}^{2}}^{2}H^{2}}&\leq c\Aoi{2}\oo{\Aoc{\frac{2}{p+1}}+\Aoc{\frac{2(p+2)}{p+1}}}\cdot\oo{\intM{|\Delta^{\frac{p+1}{2}}H|^{2}}}^{\frac{p}{p+1}}\\
&\leq c\Aoi{2}\Aoc{\frac{2}{p+1}}\oo{\intM{|\Delta^{\frac{p+1}{2}}H|^{2}}}^{\frac{p}{p+1}},\label{LemmaPreservedSphericityEstimates1,Eqn6}
\end{align}
where we have used the results of Lemma \ref{LemmaPreservedSphericityEstimates2} to estimate $||\nabla_{(p)}A^{o}||_{2}^{2}$ and also used the fact that $p\geq1$ which implies that $\Aoc{\frac{2(p+2)}{p+1}} \leq c \Aoc{\frac{2}{p+1}}$.

Now suppose $p$ is odd.  Since the case $p=1$ has already been handled,
we may write $p=2k+1$ with $k\geq1$.  Applying
\eqref{LemmaPreservedSphericityEstimates1,Eqn4} with
$\varphi=\Delta^{k-1}\norm{A^{o}}^{2}$, followed by the multiplicative
Sobolev inequality of Proposition \ref{EvolutionProposition4}, gives
\begin{align}
&\intM{\norm{\Delta^{\frac{p-1}{2}}\norm{A^{o}}^{2}}^{2}H^{2}}
\leq
c\intM{\nablanorm{2}{\varphi}{2}H^{2}}
\nonumber\\
&\quad\leq
c\intM{\norm{\nabla\Delta\varphi}^{2}}
+
c\intM{\norm{\nabla H}^{2}}\cdot\intM{\norm{\Delta\varphi}^{2}}
+
c\intM{\norm{\Delta H}^{2}}\cdot\intM{\norm{\nabla\varphi}^{2}}
\nonumber\\
&\quad=
c\intM{\norm{\nabla\Delta^{k}\norm{A^{o}}^{2}}^{2}}
+
c\intM{\norm{\nabla H}^{2}}\cdot
  \intM{\norm{\Delta^{k}\norm{A^{o}}^{2}}^{2}}
+
c\intM{\norm{\Delta H}^{2}}\cdot
  \intM{\norm{\nabla\Delta^{k-1}\norm{A^{o}}^{2}}^{2}}
\nonumber\\
&\quad\leq
c\intM{\norm{P_{4}^{2p}\oo{A^{o}}}}
+
c\intM{\norm{\nabla H}^{2}}\cdot
  \intM{\norm{P_{4}^{2\oo{p-1}}\oo{A^{o}}}}
+
c\intM{\norm{\Delta H}^{2}}\cdot
  \intM{\norm{P_{4}^{2\oo{p-2}}\oo{A^{o}}}}
\nonumber\\
&\quad\leq
c\Aoi{2}\\
& \qquad  \cdot \Biggl(
\intM{\nablanorm{p}{A^{o}}{2}}
+
\intM{\norm{\nabla H}^{2}}\cdot
  \intM{\nablanorm{p-1}{A^{o}}{2}}
+
\intM{\norm{\Delta H}^{2}}\cdot
  \intM{\nablanorm{p-2}{A^{o}}{2}}
\Biggr).
\label{LemmaPreservedSphericityEstimates1,OddCaseIntermediate}
\end{align}
The last two products are estimated in the same way as
\eqref{LemmaPreservedSphericityEstimates1,Eqn5}, using Lemma
\ref{LemmaPreservedSphericityEstimates2} and the interpolation inequality
of Theorem \ref{HigherOrderSobolevTheorem1}.  Hence
\begin{equation}
\intM{\norm{\Delta^{\frac{p-1}{2}}\norm{A^{o}}^{2}}^{2}H^{2}}
\leq
c\Aoi{2}\Aoc{\frac{2}{p+1}}
\oo{\intM{|\Delta^{\frac{p+1}{2}}H|^{2}}}^{\frac{p}{p+1}}
\label{LemmaPreservedSphericityEstimates1,OddCase}
\end{equation}
whenever $p$ is odd and $p\geq3$.

Combining the even estimate \eqref{LemmaPreservedSphericityEstimates1,Eqn6}
with the odd estimate \eqref{LemmaPreservedSphericityEstimates1,OddCase},
we obtain, for every $p\geq2$,
\begin{equation}
\intM{\norm{\Delta^{\frac{p-1}{2}}\norm{A^{o}}^{2}}^{2}H^{2}}
\leq
c\Aoi{2}\Aoc{\frac{2}{p+1}}
\oo{\intM{|\Delta^{\frac{p+1}{2}}H|^{2}}}^{\frac{p}{p+1}}.
\label{LemmaPreservedSphericityEstimates1,Eqn7}
\end{equation}

Next, by combining the results of Lemma \ref{LemmaPreservedSphericityEstimates2} along with the interpolation inequalities from Lemma \ref{LemmaPreservedSphericityEstimates2} gives
\begin{equation}
\intM{\nablanorm{p-1}{A^{o}}{2}} \leq c\Aoc{\frac{2}{p}}\oo{\intM{\nablanorm{p}{A^{o}}{2}}}^{\frac{p-1}{p}} \leq c\Aoc{\frac{4}{p+1}}\oo{\intM{|\Delta^{\frac{p+1}{2}}H|^{2}}}^{\frac{p-1}{p+1}}.\label{LemmaPreservedSphericityEstimates1,Eqn8}
\end{equation}
Substituting \eqref{LemmaPreservedSphericityEstimates1,Eqn7} and \eqref{LemmaPreservedSphericityEstimates1,Eqn8} back into \eqref{LemmaPreservedSphericityEstimates1,Eqn2} and using the $L^{\infty}$ estimate for the trace-free curvature from \eqref{EpsilonRegularityProof1} gives
\begin{align*}
&\intM{\norm{\Delta^{\frac{p-1}{2}}\oo{H\norm{A^{o}}^{2}}}^{2}}
\leq c\intM{\norm{\Delta^{\frac{p-1}{2}}\norm{A^{o}}^{2}}^{2}H^{2}}+c\Aoi{4}\intM{\nablanorm{p-1}{A^{o}}{2}}\\
&\leq c\Aoi{2}\Aoc{\frac{2}{p+1}}\oo{\intM{|\Delta^{\frac{p+1}{2}}H|^{2}}}^{\frac{p}{p+1}} + c\Aoi{4}\Aoc{\frac{4}{p+1}}\oo{\intM{|\Delta^{\frac{p+1}{2}}H|^{2}}}^{\frac{p-1}{p+1}}\\
&\leq c\Aoc{\frac{2p}{p+1}+\frac{2}{p+1}}\oo{\intM{|\Delta^{\frac{p+1}{2}}H|^{2}}}^{\frac{1}{p+1} + \frac{p}{p+1}} + c\Aoc{\frac{4p}{p+1}+\frac{4}{p+1}}\oo{\intM{|\Delta^{\frac{p+1}{2}}H|^{2}}}^{\frac{2}{p+1}+\frac{p-1}{p+1}}\\
&\leq c \Aoc{2}\oo{1+\Aoc{2}}\intM{|\Delta^{\frac{p+1}{2}}H|^{2}}
\leq c\Aoc{2}\intM{|\Delta^{\frac{p+1}{2}}H|^{2}},
\end{align*}
which completes the proof.
\end{proof}

We are able to show that if initially small, the total tracefree curvature decreases under \eqref{PolyharmonicIntro1}. We refer to this as ``preserved almost-sphericity''.

\begin{theorem}[Preserved almost-sphericity]\label{T:preservealmostsphere}
Let $f:\Sigma\times[0,T)\rightarrow\mathbb{R}^3$ satisfy
\eqref{PolyharmonicIntro1}, where $\Sigma$ is connected and closed.  There exists an $\varepsilon_0>0$, depending
only on $p$, such that if
\begin{equation}
\intM{|A^o|^2}\Big|_{t=0}\leq\varepsilon_0<8\pi,
\label{GlobalEnergy1}
\end{equation}
then, for every $t<T$,
\begin{equation}
\frac{d}{dt}\intM{|A^o|^2}
\leq
-\frac{1}{2}\intM{|\Delta^{\frac{p+1}{2}}H|^2}.
\label{Preservedsmallnesstheorem1,00}
\end{equation}
In particular, by Theorem \ref{LiYauTheorem1} (and Gauss-Bonnet), each
$\Sigma_t=f(\Sigma,t)$ is embedded.
\end{theorem}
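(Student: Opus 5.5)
We compute the evolution of $\int_\Sigma |A^o|^2\,d\mu$ under \eqref{PolyharmonicIntro1} and compare it with the dissipation $\int_\Sigma |\Delta^{\frac{p+1}{2}}H|^2\,d\mu$. The natural route is the closed-surface identity
\[
\intM{|A^o|^2}=\frac12\intM{H^2}-2\pi\chi(\Sigma),
\]
which follows from $|A^o|^2=\frac12H^2-2K$ and Gauss--Bonnet. Since $\chi(\Sigma)$ is constant in time, it suffices to compute $\frac{d}{dt}\frac12\int_\Sigma H^2\,d\mu$. By Lemma \ref{EvolutionLemma1},
\[
\frac{d}{dt}\frac12\intM{H^2}
=(-1)^p\intM{H\big(\Delta^{p+1}H+\Delta^pH\,|A|^2\big)}+\frac12(-1)^{p+1}\intM{H^3\Delta^pH}.
\]
Using $|A|^2=|A^o|^2+\frac12H^2$, the $H^3\Delta^pH$ terms cancel exactly, leaving
\[
\frac{d}{dt}\frac12\intM{H^2}=(-1)^p\intM{H\Delta^{p+1}H}+(-1)^p\intM{H|A^o|^2\,\Delta^pH}.
\]
Integrating by parts $p+1$ times, using the convention for half powers, the first term equals $-\int_\Sigma|\Delta^{\frac{p+1}{2}}H|^2\,d\mu$. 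In the second term we move $p+1$ of the $2p$ derivatives, symmetrically split, onto $H|A^o|^2$. This gives
\[
\Big|\intM{H|A^o|^2\Delta^pH}\Big|\le\Big(\intM{|\Delta^{\frac{p-1}{2}}(H|A^o|^2)|^2}\Big)^{1/2}\Big(\intM{|\Delta^{\frac{p+1}{2}}H|^2}\Big)^{1/2},
\]
with the appropriate parity handling: when $p-1$ is odd, both factors carry a gradient.

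Lemma \ref{LemmaPreservedSphericityEstimates1} now applies while $\|A^o\|_2^2\le\varepsilon_0$. It bounds the second term by $c\|A^o\|_2\int_\Sigma|\Delta^{\frac{p+1}{2}}H|^2\,d\mu$. Hence
\[
\frac{d}{dt}\intM{|A^o|^2}\le-\big(1-c\,\varepsilon_0^{1/2}\big)\intM{|\Delta^{\frac{p+1}{2}}H|^2},
\]
and choosing $\varepsilon_0$ with $c\varepsilon_0^{1/2}\le\frac12$ gives \eqref{Preservedsmallnesstheorem1,00} at any time where the smallness holds.

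The remaining point is a continuity argument to show the smallness holds for all $t<T$. Let $t_0=\sup\{t:\|A^o\|_2^2\le\varepsilon_0\text{ on }[0,t]\}$. On $[0,t_0)$ the integral is nonincreasing, so it stays $\le$ its initial value. To close the argument strictly, we run the scheme with threshold $2\varepsilon_0$ in the lemmas and initial bound $\varepsilon_0$; by smooth dependence in time, $t_0=T$. The main obstacle is exactly this bootstrap together with verifying that Lemma \ref{LemmaPreservedSphericityEstimates1}'s constant is uniform, which it is since it depends only on $p$.

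For embeddedness, since the surface is closed and connected, Gauss--Bonnet gives
\[
\frac14\intM{H^2}=\frac12\intM{|A^o|^2}+\pi\chi(\Sigma).
\]
For genus $g\ge1$ this is at most $\frac12\varepsilon_0<8\pi$. For spheres it is at most $4\pi+\frac12\varepsilon_0<8\pi$, using $\varepsilon_0<8\pi$. Theorem \ref{LiYauTheorem1} then gives that each $\Sigma_t$ is embedded.
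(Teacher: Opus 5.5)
Your proposal is correct and follows the paper's proof essentially step for step: Gauss--Bonnet reduces the problem to $\frac12\frac{d}{dt}\int_\Sigma H^2\,d\mu$, the $H^3\Delta^pH$ terms cancel, integration by parts and Cauchy--Schwarz split the derivatives, Lemma \ref{LemmaPreservedSphericityEstimates1} is applied, a bootstrap with slack threshold $2\varepsilon_0$ closes the argument, and Li--Yau gives embeddedness. For the last step you bound $\frac14\int_\Sigma H^2\,d\mu<8\pi$ in every genus rather than first using the Willmore inequality to force genus zero, and this is equally valid. There are two harmless slips: the Gauss--Bonnet constant should read $\int_\Sigma|A^o|^2\,d\mu=\frac12\int_\Sigma H^2\,d\mu-4\pi\chi(\Sigma)$, i.e.\ $\frac14\int_\Sigma H^2\,d\mu=\frac12\int_\Sigma|A^o|^2\,d\mu+2\pi\chi(\Sigma)$ (which is what your sphere bound $4\pi+\frac12\varepsilon_0$ actually uses), and it is $p-1$, not $p+1$, of the $2p$ derivatives that are moved onto $H|A^o|^2$, as your displayed inequality correctly shows.
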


\begin{proof}
Let
\[
    I:=\left\{t\in[0,T):\intM{|A^o|^2}(\tau)\leq2\varepsilon_0
    \text{ for all }\tau\in[0,t]\right\}.
\]
By continuity, $I$ is a nonempty interval containing $0$.  We prove the
estimate on $I$ and then close the bootstrap.

Using Gauss--Bonnet and $|A^o|^2=\frac{1}{2}H^2-2K$, together with the
evolution equation for $H$, gives
\begin{equation}
\frac{d}{dt}\intM{|A^o|^2}
=\frac{1}{2}\frac{d}{dt}\intM{H^2}
=(-1)^p\intM{H\Delta^{p+1}H}
+(-1)^p\intM{H\Delta^pH\,|A^o|^2}.
\end{equation}
Integrating by parts $p+1$ times in the first integral and $p-1$ times
in the second gives
\begin{equation}
\frac{d}{dt}\intM{|A^o|^2}
+\intM{|\Delta^{\frac{p+1}{2}}H|^2}
\leq
C\left(
\intM{|\Delta^{\frac{p+1}{2}}H|^2}
\intM{\left|\Delta^{\frac{p-1}{2}}(H|A^o|^2)\right|^2}
\right)^{1/2}.
\label{Preservedsmallnesstheorem1,1}
\end{equation}
By Lemma \ref{LemmaPreservedSphericityEstimates1},
\[
\intM{\left|\Delta^{\frac{p-1}{2}}(H|A^o|^2)\right|^2}
\leq
C\left(\intM{|A^o|^2}\right)
\intM{|\Delta^{\frac{p+1}{2}}H|^2}.
\]
Substituting this into \eqref{Preservedsmallnesstheorem1,1} gives, on
$I$,
\[
\frac{d}{dt}\intM{|A^o|^2}
+\left(1-C\sqrt{2\varepsilon_0}\right)
\intM{|\Delta^{\frac{p+1}{2}}H|^2}
\leq0.
\]
Choose $\varepsilon_0$ so small that $C\sqrt{2\varepsilon_0}\leq1/2$.
Then \eqref{Preservedsmallnesstheorem1,00} holds on $I$, and in
particular $\int |A^o|^2$ is nonincreasing on $I$.  Therefore
$\int |A^o|^2\leq\varepsilon_0$ on $I$, so the bootstrap interval is
both open and closed in $[0,T)$.  Hence $I=[0,T)$ and the estimate holds
for all $t<T$.

Since $\int|A^o|^2<8\pi$, the identity
$\int |A^o|^2=2W-4\pi\chi(\Sigma)$ together with the Willmore
inequality forces $\Sigma$ to have genus zero.  Hence
$W=\frac14\int H^2=4\pi+\frac12\int |A^o|^2<8\pi$, and Theorem
\ref{LiYauTheorem1} implies that the immersion is embedded at every time.
\end{proof}

\begin{lemma}[Global tracefree coercivity]\label{L:global-tracefree-coercivity}
Let $f:\Sigma\to\mathbb R^3$ be a connected
closed embedded immersion satisfying
\[
    \int_\Sigma |A^o|^2\,d\mu\leq\varepsilon_0
\]
with $\varepsilon_0$ sufficiently small.  Then
\begin{equation}
\int_\Sigma |A^o|^2\,d\mu
\leq
C |\Sigma|^{p+1}
\int_\Sigma |\Delta^{\frac{p+1}{2}}H|^2\,d\mu,
\label{E:coercive-tracefree}
\end{equation}
where $C=C(p)$.
\end{lemma}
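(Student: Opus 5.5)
The plan is to combine the global form of the $\varepsilon$-regularity estimate of Theorem \ref{EpsilonRegularity} with the trivial bound $\int_\Sigma|A^o|^2\,d\mu\leq|\Sigma|\,\|A^o\|_\infty^2$, and then absorb. The estimate \eqref{E:coercive-tracefree} is scale-invariant: $\int|A^o|^2$ is invariant, $|\Sigma|^{p+1}$ scales like $r^{2(p+1)}$, and $\int|\Delta^{\frac{p+1}{2}}H|^2$ scales like $r^{-2(p+1)}$. So a single $L^\infty$–$L^2$ interpolation of the correct homogeneity should suffice, and no spectral or Poincaré argument is needed.

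\textbf{Step 1 (global $L^\infty$ bound).} Since $\Sigma$ is closed, $f(\Sigma)$ is bounded. Fix $x=f(q)$ for some $q\in\Sigma$ and take $\rho$ larger than $\operatorname{diam} f(\Sigma)$. Then $f^{-1}(B_\rho(x))=\Sigma$ and $\C^o(x,2\rho)=\|A^o\|_2^2\leq\varepsilon_0\leq\varepsilon_{\mathrm{reg}}$. Theorem \ref{EpsilonRegularity} with $m=p+1$ (valid when $p\geq2$, since then $m\geq3$) gives
\[
\|A^o\|_\infty^{2(p+1)}\leq C\|A^o\|_2^{2p}\Big(\|\Delta^{\frac{p+1}{2}}H\|_2^2+\rho^{-2(p+1)}\|A^o\|_2^2\Big).
\]
Letting $\rho\to\infty$ removes the last term, because the left side and the first term do not depend on $\rho$.

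\textbf{Step 2 (conclusion).} Write $S:=\int_\Sigma|\Delta^{\frac{p+1}{2}}H|^2\,d\mu$. Then
\[
\|A^o\|_2^2\leq|\Sigma|\,\|A^o\|_\infty^2\leq C|\Sigma|\,\|A^o\|_2^{\frac{2p}{p+1}}\,S^{\frac1{p+1}}.
\]
If $A^o\equiv0$ there is nothing to prove. Otherwise divide by $\|A^o\|_2^{2p/(p+1)}$ to get $\|A^o\|_2^{2/(p+1)}\leq C|\Sigma|\,S^{1/(p+1)}$, and raising to the power $p+1$ gives \eqref{E:coercive-tracefree}.

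\textbf{Step 3 (the case $p=1$, the main obstacle).} Here $m=2$ lies outside the range $m\geq3$ of Theorem \ref{EpsilonRegularity}, so we need $\|A^o\|_\infty^4\leq C\|A^o\|_2^2\int|\Delta H|^2$ directly. I would first try the tracefree analogue of Corollary \ref{InfinityCorollary} with $m=0$, that is, the multiplicative Sobolev argument of Proposition \ref{Chapter3NewProp2} applied to $A^o$ with $\gamma\equiv1$ (equivalently the low-order version of \cite[Theorem 28]{mccoy2015geometric}). This should give $\|A^o\|_\infty^4\leq c\|A^o\|_2^2\int|\nabla_{(2)}A^o|^2$, possibly with lower-order curvature terms that are absorbed by smallness. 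Then apply the closed endpoint elliptic estimate \eqref{LemmaPreservedSphericityEstimates2,p2endpoint}, $\int|\nabla_{(2)}A^o|^2\leq C\int|\Delta H|^2$. The delicate point is checking that the $H$-weighted terms which appear in the Michael--Simon step, such as $\int|A^o|^2H^4$ and $\int|\nabla A^o|^2H^2$, are controlled by $\int|\Delta H|^2$ using the left side of \eqref{LemmaPreservedSphericityEstimates2,p2endpoint} together with Theorem \ref{TheoremXX}. If this works, Step 2 goes through verbatim with exponent $p+1=2$. Embeddedness is not essential to the argument; it only guarantees by Theorem \ref{T:preservealmostsphere} that $\Sigma$ is a sphere, which we do not otherwise need.
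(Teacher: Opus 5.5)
Your argument is correct for $p\ge2$, and it differs from the paper's proof. The paper first proves $\int_\Sigma|A^o|^2\,d\mu\le C\int_\Sigma|\nabla H|^2\,d\mu$ on area-normalised surfaces. It does this using the absence of tracefree Codazzi tensors on $S^2$, plus a De Lellis--M\"uller compactness/contradiction argument for uniformity; this is where embeddedness and spherical topology are used. It then interpolates up to $\Delta^{\frac{p+1}{2}}H$. You instead apply Theorem \ref{EpsilonRegularity} globally with $m=p+1$ and absorb via $\int_\Sigma|A^o|^2\,d\mu\le|\Sigma|\,\|A^o\|_\infty^2$. This is constructive, needs no topology or embeddedness, and your scaling, limit $\rho\to\infty$, and absorption step are fine.

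The genuine gap is $p=1$ (surface diffusion), which is part of the statement. There you need $\|A^o\|_\infty^4\le C\|A^o\|_2^2\int_\Sigma|\Delta H|^2\,d\mu$. Nothing in the paper provides this: \cite[Theorem 28]{mccoy2015geometric} and Theorem \ref{EpsilonRegularity} start at $m=3$. Your sketch does not close with the tools you cite:
\begin{itemize}
\item Running the multiplicative Sobolev argument on $u=|A^o|$ produces $\int_\Sigma H^4|A^o|^4\,d\mu$, and Michael--Simon then leads to $\int_\Sigma H^4|A^o|^2\,d\mu$.
\item That term is not on the left of \eqref{LemmaPreservedSphericityEstimates2,p2endpoint}.
\item Theorem \ref{TheoremXX} requires $m\ge3$. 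In any case it bounds the derivative-heavy $Q_{m,l}$ by the $H$-heavier $Q_{m,l+1}$, which is the opposite direction to what you need.
\item The term cannot be absorbed by smallness either, since $\int_\Sigma H^2\,d\mu\ge16\pi$. This differs from Proposition \ref{Chapter3NewProp2}, where $|H|\le c|A|$ and $\|A\|_2$ is small.
\end{itemize}
You would need an additional argument, for example testing the Simons identity \eqref{MultliplicativeSobolevLemma2Eqn4} against $H^2A^o$ as Kuwert--Sch\"atzle do.

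A cheaper repair covers every $p\ge1$ and keeps your Step 2 unchanged. Replace the $L^\infty$ bound by Theorem \ref{MichaelSimon} with $u=|A^o|$ (using Kato's inequality), then Cauchy--Schwarz, then \eqref{MultliplicativeSobolevLemma2Eqn1} with $\gamma\equiv1$:
\[
\int_\Sigma|A^o|^2\,d\mu\le c\Big(\int_\Sigma|\nabla A^o|\,d\mu+\int_\Sigma|H||A^o|\,d\mu\Big)^2\le c|\Sigma|\int_\Sigma\big(|\nabla A^o|^2+H^2|A^o|^2\big)\,d\mu\le C|\Sigma|\int_\Sigma|\nabla H|^2\,d\mu .
\]
Theorem \ref{HigherOrderSobolevTheorem1} with $l=1$, $m=p$ and $\gamma\equiv1$ gives $\int_\Sigma|\nabla H|^2\,d\mu\le c\|A^o\|_2^{2p/(p+1)}S^{1/(p+1)}$. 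Your division-and-power step then yields \eqref{E:coercive-tracefree} for all $p\ge1$. As a side effect, the displayed chain also proves the paper's first-order coercivity estimate directly, without the compactness argument.
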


\begin{proof}
It is enough to prove the estimate after normalising $|\Sigma|=1$.
For sufficiently small $\varepsilon_0$, Gauss--Bonnet, the Willmore
inequality and the Li--Yau inequality imply that $\Sigma$ is an embedded
topological sphere.  We first prove the first-order estimate
\begin{equation}
    \int_\Sigma |A^o|^2\,d\mu
    \leq C\int_\Sigma |\nabla H|^2\,d\mu .
\label{E:first-order-tracefree-coercivity}
\end{equation}
Indeed, on a sphere there are no nontrivial tracefree Codazzi tensors.
Equivalently, the operator
\[
    B\longmapsto \operatorname{div} B
\]
from tracefree symmetric two-tensors to one-forms has trivial kernel on
the tracefree Codazzi subspace.  The standard elliptic estimate for this
first-order operator therefore gives
\[
    \|B\|_{L^2}\leq C\|\operatorname{div}B\|_{L^2}
\]
for tracefree tensors on the round sphere.  

The required elliptic estimate for tracefree symmetric two-tensors is standard:
on a two-sphere, the operator
\[
    \operatorname{div}: \Gamma(S^2_0T^*\Sigma)
    \longrightarrow \Gamma(T^*\Sigma)
\]
is elliptic with trivial kernel.  Equivalently, if \(B\) is tracefree and symmetric,
then
\[
    \|B\|_{L^2} \leq C\|\operatorname{div}B\|_{L^2}.
\]
See Christodoulou \cite[Section 5.4, Lemma 5.6]{ChristodoulouBlackHoles};
the vanishing of the kernel on \(S^2\) is also the classical fact that there are no
nontrivial tracefree Codazzi tensors on the sphere, see
\cite[Corollary 3.7]{FoxEinsteinLikeSurfaces}.

The same estimate holds with a
uniform constant for metrics induced by embedded almost-umbilic spheres
with $\|A^o\|_2$ sufficiently small.  One way to see the uniformity is by
contradiction: if the constants degenerated, the quantitative
almost-umbilical compactness theorem of De Lellis--M\"uller
\cite{DeLellisMueller2005} would, after
translation, scaling, and reparametrisation, give convergence to a round
sphere, while the normalised tensors would converge weakly to a nonzero
tracefree divergence-free tensor on $S^2$, equivalently a nonzero
holomorphic quadratic differential; this is impossible on $S^2$.
Applying the estimate to $B=A^o$ and using
$\operatorname{div}A^o=\frac12\nabla H$ proves
\eqref{E:first-order-tracefree-coercivity}.

It remains to pass from $\nabla H$ to the top-order quantity.  On the
area-normalised surface, the closed-surface interpolation inequalities and
the smallness of $\|A^o\|_2$ give
\[
    \int_\Sigma |\nabla H|^2\,d\mu
    \leq
    C\int_\Sigma |\Delta^{\frac{p+1}{2}}H|^2\,d\mu .
\]
Rescaling back to arbitrary area gives the factor $|\Sigma|^{p+1}$ and
hence \eqref{E:coercive-tracefree}.
\end{proof}

\begin{theorem}[Exponential decay of umbilic energy]\label{T:expdecayenergy}
Let $f:\Sigma\times[0,T)\rightarrow\mathbb{R}^3$ satisfy
\eqref{PolyharmonicIntro1}, where $\Sigma$ is connected and closed.  After decreasing the constant
$\varepsilon_0$ in Theorem \ref{T:preservealmostsphere}, if
\eqref{GlobalEnergy1} holds, then there is a constant
$\delta>0$, depending only on $p$ and on the initial area, such that, for every $t<T$, 
\begin{equation}
\intM{|A^o|^2}(t)
\leq
\left.\intM{|A^o|^2}\right|_{t=0}e^{-\delta t}
\label{TheoremExponentialDecayEqn1}
\end{equation}
\end{theorem}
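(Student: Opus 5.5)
The plan is to combine the differential inequality of Theorem \ref{T:preservealmostsphere} with the coercivity estimate of Lemma \ref{L:global-tracefree-coercivity}, and then to control the area factor using monotonicity of area.

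First, by Theorem \ref{T:preservealmostsphere}, for all $t<T$ we have $\int|A^o|^2\,d\mu\le\varepsilon_0$, each $\Sigma_t$ is embedded, and
\[
\frac{d}{dt}\intM{|A^o|^2}\le -\frac12\intM{|\Delta^{\frac{p+1}{2}}H|^2}.
\]
Since $\Sigma_t$ is connected, closed, embedded and has small tracefree energy, Lemma \ref{L:global-tracefree-coercivity} applies at each time and gives
\[
\intM{|\Delta^{\frac{p+1}{2}}H|^2}\ge C^{-1}|\Sigma_t|^{-(p+1)}\intM{|A^o|^2}.
\]
This is the step that requires decreasing $\varepsilon_0$ below the threshold of that lemma.

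Second, we bound $|\Sigma_t|$ from above uniformly in time. By Lemma \ref{EvolutionLemma1},
\[
\frac{d}{dt}|\Sigma_t|=(-1)^{p+1}\intM{H\Delta^pH}.
\]
Integrating by parts, this equals $-\intM{|\Delta^{p/2}H|^2}\le0$, with the convention $\Delta^{(2k+1)/2}H=\nabla\Delta^kH$. The sign works out for both parities of $p$: for $p=2k$ we get $-\intM{(\Delta^kH)^2}$, and for $p=2k+1$ we get $-\intM{|\nabla\Delta^kH|^2}$. Hence $|\Sigma_t|\le|\Sigma_0|$.

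Combining the two steps gives
\[
\frac{d}{dt}\intM{|A^o|^2}\le -\frac{1}{2C}|\Sigma_0|^{-(p+1)}\intM{|A^o|^2}.
\]
Gronwall's inequality then yields \eqref{TheoremExponentialDecayEqn1} with $\delta=(2C)^{-1}|\Sigma_0|^{-(p+1)}$. This constant depends only on $p$ and the initial area, as claimed.

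The main obstacle is the sign bookkeeping in the area monotonicity: the integration by parts must be checked carefully so that the right-hand side is nonpositive for every $p$. Everything else is direct application of the earlier results.
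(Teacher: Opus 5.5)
Your proposal is correct and is essentially the paper's proof: you combine the dissipation inequality of Theorem~\ref{T:preservealmostsphere} with the coercivity estimate of Lemma~\ref{L:global-tracefree-coercivity}, bound $|\Sigma_t|\le|\Sigma_0|$, and apply Gronwall. Your explicit check that $\frac{d}{dt}|\Sigma_t|=(-1)^{p+1}\int_\Sigma H\Delta^pH\,d\mu=-\int_\Sigma|\Delta^{p/2}H|^2\,d\mu\le0$ for both parities of $p$ supplies the area monotonicity that the paper only asserts, and your $\delta=(2C)^{-1}|\Sigma_0|^{-(p+1)}$ keeps the factor $\tfrac12$ that the paper absorbs into $C$.
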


\begin{proof}
By Theorem \ref{T:preservealmostsphere},
\begin{equation}
\frac{d}{dt}\intM{|A^o|^2}
\leq
-\frac{1}{2}\intM{|\Delta^{\frac{p+1}{2}}H|^2}.
\label{TheoremExponentialDecayEqn2}
\end{equation}
By Lemma \ref{L:global-tracefree-coercivity}, and using that area is
nonincreasing along the flow,
\[
\intM{|A^o|^2}
\leq
C |\Sigma_0|^{p+1}
\intM{|\Delta^{\frac{p+1}{2}}H|^2}.
\]
Combining
\eqref{TheoremExponentialDecayEqn2} and \eqref{E:coercive-tracefree}
yields
\[
\frac{d}{dt}\intM{|A^o|^2}
+\frac{1}{C|\Sigma_0|^{p+1}}\intM{|A^o|^2}
\leq0.
\]
Gronwall's inequality proves \eqref{TheoremExponentialDecayEqn1} with
$\delta=(C|\Sigma_0|^{p+1})^{-1}$.
\end{proof}

\section{Proof of the Lifespan Theorem}
\label{S:lifespan}

\begin{lemma}[Finite-order continuation under small concentration]\label{L:finite-order-continuation}
Let
\[
    K_{\rm cont}:=2p+6.
\]
There is a constant $\varepsilon_{\rm cont}>0$, depending only on $p$, with the following property.  Let $f:\Sigma\times[0,T)\to\mathbb R^3$ be a maximal smooth solution on a compact surface, with $T<\infty$.  If, for some $\rho>0$,
\begin{equation}
    \sup_{0\leq t<T}\C(t,2\rho)\leq\varepsilon_{\rm cont},
\label{E:finite-cont-smallness}
\end{equation}
then the solution extends smoothly past $T$.
\end{lemma}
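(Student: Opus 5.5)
The plan is to turn the uniform smallness \eqref{E:finite-cont-smallness} into uniform $C^\infty$ bounds on the curvature on $[0,T)$, and then use the standard continuation criterion from the local theory (Theorem \ref{PMste}).

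\textbf{Step 1: cover the surface.} Cover $\R^3$ by balls $B_\rho(x)$, with centres $x$ on a lattice of spacing comparable to $\rho$. For each centre use a standard cut-off $\tilde\gamma$ between $B_\rho(x)$ and $B_{2\rho}(x)$, so that $\cgam\le c\rho^{-1}$. Then $[\gamma>0]\subset f^{-1}(B_{2\rho}(x))$, and so
\[
\int_{[\gamma>0]}|A|^2\,d\mu\le\C(t,2\rho)\le\varepsilon_{\rm cont}
\]
for all $t<T$. We choose $\varepsilon_{\rm cont}$ at most the minimum of the fixed-order thresholds $\varepsilon_k$ of Proposition \ref{NewProposition1} and of Proposition \ref{EvolutionProposition1}, over the finitely many orders $k\le K_{\rm cont}$. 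This minimum over finitely many orders is exactly why $K_{\rm cont}=2p+6$ is fixed and $\varepsilon_{\rm cont}$ depends only on $p$.

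\textbf{Step 2: finite-order bounds.} For every $k\le K_{\rm cont}-2$, Proposition \ref{NewProposition1} with $T^*\nearrow T<\infty$ gives the uniform bound
\[
\|\nabla_{(k)}A\|_{\infty,f^{-1}(B_\rho(x))}\le \tilde c_k .
\]
The constant depends only on $T$, $\rho$, $\varepsilon_{\rm cont}$ and the initial quantity $\alpha_0(k+2)$, which is finite because $f_0$ is smooth and compact. Since the lattice covers $f(\Sigma_t)$, these pointwise bounds hold on all of $\Sigma$, uniformly in $t<T$.

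\textbf{Step 3: all orders.} Bounds of finite order do not by themselves give smooth extension, so we need every derivative. Once $\|A\|_\infty$ is bounded on $[0,T)$, we work globally with $\gamma\equiv1$ and get higher derivative bounds by the usual energy method. Corollary \ref{EvolutionCorollary2} with $\gamma\equiv1$, combined with closed-surface Gagliardo--Nirenberg interpolation (Lemma \ref{NewLemma1}), gives
\[
\frac{d}{dt}\|\nabla_{(k)}A\|_2^2\le C\big(\|A\|_\infty,\|\nabla_{(j)}A\|_{\infty},\ j\le K_{\rm cont}-2\big)\big(1+\|\nabla_{(k)}A\|_2^2\big).
\]
Here every reaction term $P_3^{2p+2k}(A)$ has been absorbed into the diffusion term using the $L^\infty$ bounds from Step 2. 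Gronwall's inequality on the finite interval then bounds $\|\nabla_{(k)}A\|_2$ for every $k$, and Sobolev embedding upgrades these to $L^\infty$ bounds. The area stays bounded, since area is monotone.

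\textbf{Step 4: extend.} With all derivatives of $A$ bounded, integrating $\partial_tf=(-1)^{p+1}\Delta^pH\,\nu$ in time shows that the metrics $g(t)$ are uniformly equivalent and $f(t)\to f_T$ in $C^\infty$. Then $f_T$ is a smooth immersion, and restarting the flow from $f_T$ via Theorem \ref{PMste} contradicts the maximality of $T$.

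The main obstacle is Step 3. Proposition \ref{NewProposition1} only reaches fixed orders, and its thresholds shrink as the order grows. We must check that $K_{\rm cont}$ finite-order derivatives are enough that the global higher-order estimates need no smallness at all. Here $\|A\|_\infty$ bounded already suffices, since the reaction terms are lower order relative to the top-order term $\nabla_{(k+p+1)}A$.
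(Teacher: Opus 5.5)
Your proposal is correct. Its first half (uniform-in-centre cut-offs, $\varepsilon_{\rm cont}$ below finitely many fixed-order thresholds of Proposition \ref{NewProposition1}, letting $T^*\nearrow T<\infty$) matches the paper. One point should be said explicitly: the initial quantities $\alpha_0(k+2)$ are bounded uniformly in the centre by $\sum_j\int_\Sigma|\nabla_{(j)}A|^2\,d\mu$ at $t=0$.

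The two routes diverge after that. The paper stops at finite order. The bounds on $\nabla_{(k)}A$ for $k\le K_{\rm cont}=2p+6$ give convergence of $f(\cdot,t)$ in $C^{2p+4,\alpha}$ to an immersion $f_T$. The paper then appeals to H\"older-space short-time existence for the DeTurck-gauged system plus parabolic smoothing, and this is where the specific value of $K_{\rm cont}$ matters.

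You instead bootstrap to all orders with a global energy estimate on the closed surface with $\gamma\equiv1$. Since the cut-off terms vanish, the reaction terms can be integrated by parts so that no factor carries more than $k+p$ derivatives. These terms are $P_4^{2k+2p}(A)$, with four factors, not $P_3$ as you wrote. Closed-surface product interpolation bounds them by $c\|A\|_\infty^2\|\nabla_{(k+p)}A\|_2^2$. This is absorbed into $\|\nabla_{(k+p+1)}A\|_2^2$ up to a term $C\|A\|_2^2\le C\|A\|_\infty^2|\Sigma_0|$, using that area is nonincreasing. The result is linear growth in $t$, with no smallness needed. The remaining steps (metric equivalence, $C^\infty$ convergence, restart and concatenation) are standard.

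Your route is more self-contained. It needs only the smooth local existence Theorem \ref{PMste} as stated, and only the $k=0$ case of Proposition \ref{NewProposition1}, so $K_{\rm cont}$ plays no real role. The cost is one extra global computation. The paper's route avoids that computation but relies on low-regularity well-posedness and smoothing that are not stated in the paper.
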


\begin{proof}
By scaling we take $\rho=1$.  For every $y\in\mathbb R^3$ choose a standard cut-off $\gamma_y$ which is identically equal to $1$ on $B_1(y)$ and supported in $B_2(y)$, with cut-off constants independent of $y$.  We choose $\varepsilon_{\rm cont}$ no larger than the fixed-order thresholds in Proposition \ref{NewProposition1} for $k=0,\ldots,K_{\rm cont}$, for this cut-off profile.  Then \eqref{E:finite-cont-smallness} and Proposition \ref{NewProposition1} give for $k=0,\ldots,K_{\rm cont}$,
\[
    \sup_{0\leq t<T}\sup_\Sigma |\nabla_{(k)}A|(t)
    \leq C_k(T,f_0)<\infty,
\]
Indeed, the constants are uniform in $y$, because
\[
    \sup_{y\in\mathbb R^3}\sum_{j=0}^{K_{\rm cont}+2}
    \left.\int_{f_0^{-1}(B_2(y))}|\nabla_{(j)}A|^2\,d\mu\right|_{t=0}<\infty
\]
by compactness of the initial surface.

The boundedness of $A$ gives uniform equivalence of the metrics on every finite time interval ending at $T$, and the bounds for $\nabla_{(k)}A$ control the coordinate derivatives of the metric, Christoffel symbols, and immersion up to order $2p+4$.  Since the normal velocity
\[
    \partial_t f=(-1)^{p+1}\Delta^pH\,\nu
\]
is controlled by derivatives of $A$ of order at most $2p$, the maps $f(\cdot,t)$ converge as $t\nearrow T$ in a sufficiently high $C^{2p+4,\alpha}$ topology to an immersion $f_T$.  The standard short-time existence and continuation theorem for the DeTurck-gauged form of \eqref{PolyharmonicIntro1} applies to $f_T$ and extends the solution past $T$; parabolic smoothing then gives a smooth extension.  This contradicts maximality.
\end{proof}

\begin{proof}[Proof of Theorem \ref{LifespanTheorem}]
We prove the theorem by the usual localisation and continuity argument.
The quantity $\int |A|^2$ is scale-invariant on surfaces, and the flow is
invariant under the parabolic rescaling
\[
    \tilde f(x,t)=\rho^{-1}f(x,\rho^{2(p+1)}t).
\]
It is therefore enough to prove the assertion for $\rho=1$.

For $t<T$ define
\[
\Kappa(t):=\sup_{x\in\mathbb{R}^3}
\int_{f^{-1}(B_1(x))}|A|^2\,d\mu .
\]
Let $\varepsilon_{\rm evol}$ be the smallness constant in Proposition
\ref{PN:estA}.  Let $C_\Kappa$ be the finite covering constant for
covering a ball of radius $2$ by finitely many balls of radius $1$.
Let $\varepsilon_{\rm cont}$ be the threshold from Lemma
\ref{L:finite-order-continuation}.  Choose $\varepsilon_{\mathrm{life}}>0$ so small that
\[
    3C_\Kappa\varepsilon_{\mathrm{life}}
    \leq \varepsilon_{\rm evol},
    \qquad
    2C_\Kappa^2\varepsilon_{\mathrm{life}}
    \leq \varepsilon_{\rm cont}.
\]
Now fix $0<\varepsilon\leq\varepsilon_{\mathrm{life}}$ and assume
$\Kappa(0)\leq\varepsilon$.
Set
\[
    t_0:=\sup\left\{t<T:\Kappa(\tau)\leq3C_\Kappa\varepsilon
    \text{ for every }\tau\in[0,t]\right\}.
\]
By continuity, $t_0>0$.  For any $x\in\mathbb{R}^3$ choose a cut-off
$\gamma_x$ with $\gamma_x\equiv1$ on $B_1(x)$, supported in $B_2(x)$,
and with $c_{\gamma_x}\leq C$.  The finite covering argument and the
definition of $t_0$ imply that, for $t\leq t_0$,
\[
    \int_{[\gamma_x>0]}|A|^2\,d\mu
    \leq 3C_\Kappa\varepsilon
    \leq \varepsilon_{\rm evol}.
\]
Proposition \ref{PN:estA} therefore gives
\[
\int_{f^{-1}(B_1(x))}|A|^2\,d\mu(t)
\leq
\int_{f^{-1}(B_2(x))}|A|^2\,d\mu(0)+C\varepsilon t
\leq
C_\Kappa\varepsilon+C\varepsilon t .
\]
Taking the supremum over $x$ yields
\begin{equation}
    \Kappa(t)\leq C_\Kappa\varepsilon+C\varepsilon t
    \qquad\text{for }0\leq t\leq t_0.
\label{E:lifespan-kappa-bound}
\end{equation}
Choose
\[
    \lambda:=\frac{C_\Kappa}{C}.
\]
Then \eqref{E:lifespan-kappa-bound} implies
$\Kappa(t)\leq2C_\Kappa\varepsilon$ for all
$t\leq\min\{t_0,\lambda\}$.  Hence the defining inequality for $t_0$
cannot fail before time $\lambda$.

It remains to exclude the endpoint possibility $T<\lambda$.  In that case $t_0=T$ and the preceding bound holds for every $t<T$.  Covering every ball of radius $2$ by at most $C_\Kappa$ balls of radius $1$ gives
\[
    \C(t,2)\leq C_\Kappa\Kappa(t)
    \leq 2C_\Kappa^2\varepsilon
    \leq \varepsilon_{\rm cont}
    \qquad\text{for all }t<T.
\]
Lemma \ref{L:finite-order-continuation} then extends the solution past $T$, contradicting maximality.  Therefore $T\geq\lambda$.

Returning to a general radius $\rho>0$ gives
\[
    T\geq \lambda\rho^{2(p+1)}.
\]
Moreover, for every
$t\leq\min\{T,\lambda\rho^{2(p+1)}\}$ the same argument gives
\[
\sup_{x\in\mathbb{R}^3}
\int_{f^{-1}(B_\rho(x))}|A|^2\,d\mu(t)
\leq C\varepsilon,
\]
after absorbing the covering constants into $C$.  This proves
\eqref{lifespan1} and \eqref{lifespan2}, with
$C_{\mathrm{life}}$ chosen larger than both $\lambda^{-1}$ and the last
constant.
\end{proof}

\section{Construction of the Blowup and Global Analysis of the Flow}
\label{S:blowupglobal}

We now use the lifespan theorem and the interior estimates to prove the
global existence alternative and the convergence statement for initially
almost-umbilic data.

We begin by stating and proving Theorem \ref{T:interiorests}. Here we assume a bound on the concentration of curvature in the ball $f^{-1}(B_{2\rho}(x))$ on some time interval that is a proper subset of $[0,T)$. We use this assumption to establish a universal bound on the $L^{\infty}-$norm of any derivative of curvature on the smaller ball $f^{-1}(B_{\rho}(x))$.

\begin{theorem}[Interior Estimates]\label{T:interiorests}
Let $m\in\mathbb N_0$.  There exist constants
$\varepsilon_m>0$, $c_0>0$, and $c_m<\infty$, depending only on
$m$ and $p$, with the following property.  Suppose
$f:\Sigma\times[0,T)\to\mathbb R^3$ satisfies
\eqref{PolyharmonicIntro1}.  Let $x_0\in\mathbb R^3$, $\rho>0$, and
$0<T^*<T$ be such that
\begin{equation}
\sup_{0<t\leq T^*}
\int_{f^{-1}\oo{B_{2\rho}\oo{x_0}}}|A|^2\,d\mu
\leq \varepsilon_m,
\qquad
T^*\leq c_0\rho^{2\oo{p+1}} .
\label{InteriorEstimatesEqn1}
\end{equation}
Then, for every $t\in(0,T^*]$,
\begin{equation}
\llll{\nabla_{\oo{m}}A}_{2,f^{-1}\oo{B_{\rho}\oo{x_0}}}
\leq
c_m\sqrt{\varepsilon_m}\,
t^{-\frac{m}{2\oo{p+1}}},
\label{InteriorEstimatesEqn01}
\end{equation}
and
\begin{equation}
\llll{\nabla_{\oo{m}}A}_{\infty,f^{-1}\oo{B_{\rho}\oo{x_0}}}
\leq
c_m\sqrt{\varepsilon_m}\,
t^{-\frac{m+1}{2\oo{p+1}}}.
\label{InteriorEstimatesEqn02}
\end{equation}
\end{theorem}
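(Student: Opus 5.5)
By parabolic scaling ($\tilde f(x,t)=\rho^{-1}f(x,\rho^{2(p+1)}t)$), I will take $\rho=1$. The bounds are then $t^{-m/(2(p+1))}$ and $t^{-(m+1)/(2(p+1))}$ for $t\le T^*\le c_0$. The approach is the Kuwert--Sch\"atzle time-weighted localised energy argument, applied to $t^{j/(p+1)}\int|\nabla_{(j)}A|^2\gamma^{s}$. Since the radius must shrink with each derivative order, I use a nested family of cut-offs $\gamma_j$: each equals $1$ on $B_{1+2^{-j-1}}(x_0)$, is supported in $B_{1+2^{-j}}(x_0)$, and has $c_{\gamma_j}\le C(m)$. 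All of these lie inside $B_2(x_0)$, so $\int_{[\gamma_j>0]}|A|^2\le\varepsilon_m$. I choose $\varepsilon_m$ below the thresholds of Proposition \ref{EvolutionProposition1} for orders $k=0,\dots,m+2$ (finitely many), below that of Proposition \ref{Chapter3NewProp2}, and below $1$.

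\textbf{Step 1: $L^2$ bounds \eqref{InteriorEstimatesEqn01}.} For $0\le j\le m$ I set $s=2(j+p+1)$ and
\[
E_j(t):=\sum_{i=0}^{j} t^{\frac{i}{p+1}}\intM{|\nabla_{(i)}A|^2\gamma^{s}}.
\]
Differentiating each summand gives two contributions. The first is $\frac{i}{p+1}t^{\frac{i}{p+1}-1}\int|\nabla_{(i)}A|^2\gamma^{s}$. The second comes from Proposition \ref{EvolutionProposition1} with $\delta=1/2$: the good term is $-t^{i/(p+1)}\int|\nabla_{(i+p+1)}A|^2\gamma^{s}$, and the error is $C t^{i/(p+1)}\varepsilon_m$ (with $c_\gamma\le C$).

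The first contribution with $i\ge1$ is the main obstacle, because $t^{i/(p+1)-1}\int|\nabla_{(i)}A|^2$ must be absorbed by lower-order good terms. I handle it with the localised interpolation of Lemma \ref{NewLemma1}. Since $i$ lies strictly between $i-p-1$ (or $0$) and $i$, interpolating between $\int|A|^2\le\varepsilon_m$ and the good term at a higher order gives
\[
t^{\frac{i}{p+1}-1}\int|\nabla_{(i)}A|^2\gamma^{s}\le \eta\sum_{l\le j} t^{\frac{l}{p+1}}\int|\nabla_{(l+p+1)}A|^2\gamma^{s}+C_\eta\varepsilon_m t^{-1+\cdots}.
\]
Here the powers of $t$ match by scaling: each derivative costs $t^{-1/(2(p+1))}$, and the pure term is $C\varepsilon_m t^{0}$ once $t\le c_0$. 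Concretely, I interpolate $\nabla_{(i)}A$ between $A$ and $\nabla_{(i+p)}A$, after which a Young inequality balances the time weights. Summing then gives $E_j'\le C\varepsilon_m$.

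Since $E_j(0)=\int|A|^2\gamma^s\le\varepsilon_m$, I obtain $E_j(t)\le C\varepsilon_m(1+t)\le C\varepsilon_m$. The term $i=j$ is exactly \eqref{InteriorEstimatesEqn01} on $[\gamma=1]\supset f^{-1}(B_1)$. If the weight bookkeeping for orders not a multiple of $p+1$ proves awkward, a fallback is to prove the estimate first for $j$ a multiple of $p+1$ and then interpolate with Lemma \ref{NewLemma1}.

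\textbf{Step 2: $L^\infty$ bounds \eqref{InteriorEstimatesEqn02}.} I apply Proposition \ref{Chapter3NewProp2} at order $m$ with the cut-off $\gamma_{m+2}$, and feed in the Step 1 bound at order $m+2$:
\[
\|\nabla_{(m)}A\|_\infty^{2(m+2)}\le C\varepsilon_m\Big(\big(\varepsilon_m t^{-\frac{m+2}{p+1}}\big)^{m+1}+\varepsilon_m^{m+1}\Big).
\]
Using $t\le c_0$, this is at most $C\varepsilon_m^{m+2}t^{-\frac{(m+1)(m+2)}{p+1}}$. Taking the $2(m+2)$-th root yields $\sqrt{\varepsilon_m}\,t^{-\frac{m+1}{2(p+1)}}$, which is \eqref{InteriorEstimatesEqn02}. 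Undoing the scaling restores the factors of $\rho$, which are absent from the statement since $T^*\le c_0\rho^{2(p+1)}$.
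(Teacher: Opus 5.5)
There is a genuine gap in Step 1, at exactly the point you flag as the main obstacle. With the scale-natural weights $t^{i/(p+1)}$, the good term of order $q\ge p+1$ in $E_j'$ is, up to the factor $2-\delta-C\varepsilon_m$, $X_q:=t^{\frac{q}{p+1}-1}\int|\nabla_{(q)}A|^2\gamma^s$, coming from the summand of index $q-p-1$.

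For $i\ge p+1$ the bad term $\frac{i}{p+1}t^{\frac{i}{p+1}-1}\int|\nabla_{(i)}A|^2\gamma^s$ is exactly $\frac{i}{p+1}X_i$. It can therefore be absorbed without interpolation. This does not work with all coefficients equal to $1$, because $\frac{i}{p+1}$ eventually exceeds $2-\delta-C\varepsilon_m$; you need decreasing coefficients along each chain $i,\,i-(p+1),\dots$.

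For $1\le i\le p$, however, there is no good term of order $i$, and your interpolation--Young step does not give $C\varepsilon_m t^0$. Interpolating between $\int|A|^2\le\varepsilon_m$ and any order $q>i$ gives a leading term with no factor of $c_\gamma$, and then
\[
t^{\frac{i}{p+1}-1}\,\varepsilon_m^{1-\frac iq}\Big(\int|\nabla_{(q)}A|^2\gamma^{s}\Big)^{\frac iq}
=\varepsilon_m^{1-\frac iq}\,t^{\frac iq-1}X_q^{\frac iq}
\le \eta X_q+C_\eta\,\varepsilon_m\,t^{-1}.
\]
This holds for every $q$, in particular for your choice $q=i+p$. It is forced by scaling: $E_j$ is scale invariant, so any error not carrying a power of $c_\gamma$ must have the form $\varepsilon_m t^{-1}$. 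Using an already-controlled lower order as the bottom endpoint produces the same thing. You therefore only get $E_j'\le C\varepsilon_m(1+t^{-1})$. This is not integrable at $t=0$, so $E_j(t)\le C\varepsilon_m$ does not follow.

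The orders $1,\dots,p$ are the starting points of the chains $i,\,i+(p+1),\,i+2(p+1),\dots$. The only chain that closes up is therefore the multiples of $p+1$, which means your fallback is the actual argument, with the following mechanism.

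1. Take $\sum_{l=0}^{L}C_l t^{l}\int|\nabla_{(l(p+1))}A|^2\gamma^{s}$ with a common $s\ge2(L+1)(p+1)$ and coefficients satisfying $C_{l-1}\ge 2lC_l$.
2. The bad term $lC_lt^{l-1}\int|\nabla_{(l(p+1))}A|^2\gamma^s$ is then absorbed by half the good term of level $l-1$, with no interpolation.
3. The only remaining errors are $C\varepsilon_m t^{l}c_\gamma^{2(l+1)(p+1)}\le C\varepsilon_m$ for $t\le c_0$.
4. Intermediate orders follow from Lemma \ref{NewLemma1} at each fixed time. There is no time integration there, so the $t^{-1}$ does no harm, and $t\le c_0$ absorbs the $c_\gamma^{2m}\varepsilon_m$ term.

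This Kuwert--Sch\"atzle route differs from the paper's. The paper goes up one order at a time, interpolating $\nabla_{(j)}A$ between $\nabla_{(j-1)}A$ and $\nabla_{(j+p+1)}A$. It avoids the $t^{-1}$ by fixing $T_0$ and using the steeper-than-scaling weights $\beta^{\lambda j}$ with $\beta=t/T_0$ and $\lambda=\frac{p+2}{p+1}$. Since $\frac{p+1}{p+2}\lambda=1$, one has $(\beta^{\lambda j})'\le CT_0^{-1}(\beta^{\lambda(j-1)})^{\frac{p+1}{p+2}}(\beta^{\lambda j})^{\frac{1}{p+2}}$. The Young residual is then bounded in $t$, and the estimate is read off only at $t=T_0$, where the weight equals $1$.

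Your Step 2 is the same as the paper's $L^\infty$ argument and is fine once Step 1 is repaired.
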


\begin{proof}
We first prove the local $L^2$-estimate.  Fix an integer
$N\in\mathbb N_0$; later we take $N=m$.  Let
$T_0\in(0,T^*]$ be arbitrary.  Choose a cutoff
$\gamma=\tilde\gamma\circ f$ such that
\[
    0\leq \gamma\leq 1,\qquad
    \gamma\equiv 1 \text{ on } f^{-1}\oo{B_\rho\oo{x_0}},
    \qquad
    [\gamma>0]\subset f^{-1}\oo{B_{2\rho}\oo{x_0}},
\]
and
\[
    \cgam\leq C\rho^{-1}.
\]
The constant $C$ here is universal and depends only on the fixed cutoff
profile.

For $j\in\{0,\ldots,N\}$ set
\[
    s_j:=2(j+p+1),
\]
and define
\[
    E_j(t):=\llll{\nabla_{(j)}A}_{2,\gamma^{s_j}}^2,
    \qquad
    D_j(t):=\llll{\nabla_{(j+p+1)}A}_{2,\gamma^{s_j}}^2 .
\]
Since $[\gamma>0]\subset f^{-1}\oo{B_{2\rho}\oo{x_0}}$, the hypothesis
\eqref{InteriorEstimatesEqn1} implies
\[
    E_0(t)\leq \varepsilon_m
    \qquad\text{for }0<t\leq T_0.
\]

We choose $\varepsilon_m$ no larger than the smallness constant in
Proposition \ref{EvolutionProposition1}, and also so small that the
coefficient of $D_j$ in Proposition \ref{EvolutionProposition1} is
bounded below by a positive constant for every $j=0,\ldots,N+2$.  Applying
Proposition \ref{EvolutionProposition1} with $k=j$, $s=s_j$, and with
$\delta$ fixed, we obtain
\begin{equation}
    E_j'(t)+D_j(t)
    \leq
    C_N\cgam^{2(j+p+1)}\varepsilon_m
    \leq
    C_N\varepsilon_m\rho^{-2(j+p+1)}
\label{InteriorEstimatesL2Evolution}
\end{equation}
for $j=0,\ldots,N+2$.  The constant $C_N$ depends only on $N$ and $p$.

Next we record the interpolation step needed in the induction.  For
$j\geq1$, Lemma \ref{NewLemma1}, applied to
$T=\nabla_{(j-1)}A$ with $k=1$ and interpolation gap $p+1$, gives
\begin{multline}
E_j
=
\llll{\nabla_{(j)}A}_{2,\gamma^{s_j}}^2
\leq
C_N
\oo{
\llll{\nabla_{(j-1)}A}_{2,\gamma^{s_j-2}}^2
}^{\frac{p+1}{p+2}}
\oo{
\llll{\nabla_{(j+p+1)}A}_{2,\gamma^{s_j+2(p+1)}}^2
}^{\frac{1}{p+2}}\\
+
C_N\cgam^2
\llll{\nabla_{(j-1)}A}_{2,\gamma^{s_j-2}}^2
\leq
C_N E_{j-1}^{\frac{p+1}{p+2}}D_j^{\frac{1}{p+2}}
+
C_N\rho^{-2}E_{j-1}.
\label{InteriorEstimatesInterpolation}
\end{multline}
Here we used $s_j-2=s_{j-1}$ and $0\leq\gamma\leq1$.

Set
\[
    a:=\frac{p+1}{p+2},
    \qquad
    b:=\frac{1}{p+2},
    \qquad
    \lambda:=\frac{p+2}{p+1}.
\]
For $0\leq t\leq T_0$ define
\[
    \beta(t):=\frac{t}{T_0},
    \qquad
    \beta_j(t):=\beta(t)^{\lambda j},
    \qquad
    F_j(t):=\beta_j(t)E_j(t),
\]
with the convention $\beta_0\equiv1$.  Then, for $j\geq1$,
\[
    0\leq\beta_j'(t)
    \leq
    C_NT_0^{-1}\beta_{j-1}(t)^a\beta_j(t)^b,
    \qquad
    0\leq\beta_j'(t)
    \leq
    C_NT_0^{-1}\beta_{j-1}(t).
\]
Indeed, this is precisely why the exponent $\lambda=(p+2)/(p+1)$ is
chosen.

We claim that, for $j=0,\ldots,N$ and $0\leq t\leq T_0$,
\begin{equation}
    F_j(t)
    \leq
    C_N\varepsilon_m T_0^{-\frac{j}{p+1}}.
\label{InteriorEstimatesEqn3}
\end{equation}
For $j=0$ this follows immediately from the concentration hypothesis.
Assume that \eqref{InteriorEstimatesEqn3} holds for $j-1$.  Using
\eqref{InteriorEstimatesL2Evolution},
\eqref{InteriorEstimatesInterpolation}, and the estimates for
$\beta_j'$, we obtain
\begin{multline*}
F_j'
=
\beta_j'E_j+\beta_jE_j'
\leq
\beta_j'E_j-\beta_jD_j
+
C_N\beta_j\varepsilon_m\rho^{-2(j+p+1)}
\\
\leq
C_NT_0^{-1}
F_{j-1}^{a}
\oo{\beta_jD_j}^{b}
+
C_NT_0^{-1}\rho^{-2}F_{j-1}
-\beta_jD_j
+
C_N\varepsilon_m\rho^{-2(j+p+1)}.
\end{multline*}
Young's inequality gives
\[
    C_NT_0^{-1}
    F_{j-1}^{a}
    \oo{\beta_jD_j}^{b}
    \leq
    \frac12\beta_jD_j
    +
    C_NT_0^{-\frac{p+2}{p+1}}F_{j-1}.
\]
Moreover, since $T_0\leq T^*\leq c_0\rho^{2(p+1)}$,
\[
    T_0^{-1}\rho^{-2}
    \leq
    C T_0^{-\frac{p+2}{p+1}},
    \qquad
    \rho^{-2(j+p+1)}
    \leq
    C T_0^{-\frac{j+p+1}{p+1}}.
\]
Therefore
\[
F_j'
\leq
C_NT_0^{-\frac{p+2}{p+1}}F_{j-1}
+
C_N\varepsilon_mT_0^{-\frac{j+p+1}{p+1}}.
\]
Using the induction hypothesis,
\[
F_j'
\leq
C_N\varepsilon_mT_0^{-\frac{j+p+1}{p+1}}.
\]
Since $\beta_j(0)=0$ for $j\geq1$, we have $F_j(0)=0$, and hence
\[
F_j(t)
\leq
C_N\varepsilon_mT_0^{-\frac{j+p+1}{p+1}}t
\leq
C_N\varepsilon_mT_0^{-\frac{j}{p+1}}.
\]
This proves \eqref{InteriorEstimatesEqn3}.

Taking $j=N$ and $t=T_0$, we have $\beta_N(T_0)=1$, so
\[
    E_N(T_0)
    \leq
    C_N\varepsilon_mT_0^{-\frac{N}{p+1}}.
\]
Since $\gamma\equiv1$ on $f^{-1}\oo{B_\rho\oo{x_0}}$,
\[
\llll{\nabla_{(N)}A}_{2,f^{-1}\oo{B_\rho\oo{x_0}}}^2
\leq
C_N\varepsilon_mT_0^{-\frac{N}{p+1}}.
\]
As $T_0\in(0,T^*]$ was arbitrary, this proves
\eqref{InteriorEstimatesEqn01} with $N=m$.

It remains to prove the $L^\infty$-estimate.  Fix
$T_0\in(0,T^*]$.  Choose a second cutoff $\hat\gamma$ such that
\[
    \hat\gamma\equiv1
    \text{ on } f^{-1}\oo{B_\rho\oo{x_0}},
    \qquad
    [\hat\gamma>0]\subset f^{-1}\oo{B_{\frac32\rho}\oo{x_0}},
    \qquad
    \hat\cgam\leq C\rho^{-1}.
\]
Repeating the $L^2$ argument above with a cutoff which is identically one
on $f^{-1}\oo{B_{\frac32\rho}\oo{x_0}}$ and supported in
$f^{-1}\oo{B_{2\rho}\oo{x_0}}$ gives
\[
\llll{\nabla_{(m+2)}A}_{2,f^{-1}\oo{B_{\frac32\rho}\oo{x_0}}}^2
\leq
C_m\varepsilon_mT_0^{-\frac{m+2}{p+1}}.
\]
Hence
\[
\llll{\nabla_{(m+2)}A}_{2,\hat\gamma^{2(m+2)}}^2
\leq
C_m\varepsilon_mT_0^{-\frac{m+2}{p+1}}.
\]
Applying Proposition \ref{Chapter3NewProp2} with the cutoff $\hat\gamma$
and using
\[
    \int_{[\hat\gamma>0]}|A|^2\,d\mu\leq\varepsilon_m,
    \qquad
    \hat\cgam\leq C\rho^{-1},
\]
we find
\begin{align*}
\llll{\nabla_{(m)}A}_{\infty,f^{-1}\oo{B_\rho\oo{x_0}}}^{2(m+2)}
&\leq
C_m\varepsilon_m
\left(
\llll{\nabla_{(m+2)}A}_{2,\hat\gamma^{2(m+2)}}^{2(m+1)}
+
\rho^{-2(m+1)(m+2)}\varepsilon_m^{m+1}
\right)
\\
&\leq
C_m\varepsilon_m
\left(
\oo{\varepsilon_mT_0^{-\frac{m+2}{p+1}}}^{m+1}
+
\rho^{-2(m+1)(m+2)}\varepsilon_m^{m+1}
\right).
\end{align*}
Since $T_0\leq c_0\rho^{2(p+1)}$,
\[
    \rho^{-2(m+1)(m+2)}
    \leq
    C_mT_0^{-\frac{(m+1)(m+2)}{p+1}}.
\]
Therefore
\[
\llll{\nabla_{(m)}A}_{\infty,f^{-1}\oo{B_\rho\oo{x_0}}}^{2(m+2)}
\leq
C_m\varepsilon_m^{m+2}
T_0^{-\frac{(m+1)(m+2)}{p+1}}.
\]
Taking the $2(m+2)$-th root gives
\[
\llll{\nabla_{(m)}A}_{\infty,f^{-1}\oo{B_\rho\oo{x_0}}}
\leq
C_m\sqrt{\varepsilon_m}\,
T_0^{-\frac{m+1}{2(p+1)}}.
\]
Again $T_0\in(0,T^*]$ was arbitrary, so this proves
\eqref{InteriorEstimatesEqn02}.
\end{proof}

\begin{theorem}[Concentration alternative]\label{T:concentration-alternative}
Let $f:\Sigma\times[0,T)\rightarrow\mathbb{R}^3$ be a maximal solution of
\eqref{PolyharmonicIntro1} with $T<\infty$.  Then there is a constant
$\varepsilon_0>0$, depending only on $p$, such that for every
$\rho>0$ with $\rho^{2(p+1)}\leq T$,
\[
    \limsup_{t\nearrow T}\sup_{x\in\mathbb{R}^3}
    \int_{f^{-1}(B_\rho(x))}|A|^2\,d\mu
    \geq \varepsilon_0 .
\]
\end{theorem}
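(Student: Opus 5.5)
I will argue by contradiction, using the Lifespan Theorem \ref{LifespanTheorem} restarted at times close to $T$. I take $\varepsilon_0:=\varepsilon_{\mathrm{life}}$. Suppose that for some $\rho>0$ with $\rho^{2(p+1)}\leq T$ the conclusion fails, so that
\[
\limsup_{t\nearrow T}\C(t,\rho)<\varepsilon_0 .
\]
Then there are $\varepsilon<\varepsilon_{\mathrm{life}}$ and $t_1<T$ such that $\C(t,\rho)\leq\varepsilon$ for every $t\in[t_1,T)$. The hypothesis $\rho^{2(p+1)}\leq T$ is not really needed for this argument. It only guarantees that restart times with the required property exist inside $[0,T)$, and the argument below uses restart times near $T$ anyway.

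The key observation is that the flow is autonomous. For any $t'\in[t_1,T)$, the map $\tau\mapsto f(\cdot,t'+\tau)$ is the maximal smooth solution of \eqref{PolyharmonicIntro1} with smooth compact initial immersion $f(\cdot,t')$. Its maximal time is $T-t'$, by uniqueness in Theorem \ref{PMste}. Its initial concentration satisfies $\C(0,\rho)\leq\varepsilon\leq\varepsilon_{\mathrm{life}}$. Theorem \ref{LifespanTheorem} therefore gives
\[
T-t'\geq C_{\mathrm{life}}^{-1}\rho^{2(p+1)} .
\]
Now choose $t'\in[t_1,T)$ with $T-t'<C_{\mathrm{life}}^{-1}\rho^{2(p+1)}$. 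This is possible because $T<\infty$ and $t'$ may be taken arbitrarily close to $T$. The resulting inequality is a contradiction, which proves the theorem.

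The only step needing care is the restart. The restarted family must be the \emph{maximal} solution from its initial data, so that the lower bound \eqref{lifespan1} applies to the remaining existence time $T-t'$. This follows from uniqueness in Theorem \ref{PMste}: if the restarted flow extended beyond $T-t'$, gluing it to $f$ on $[0,t']$ would extend $f$ beyond $T$. The constant $\varepsilon_0$ then depends only on $p$, as required.
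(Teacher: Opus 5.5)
Your proposal is correct and follows the same argument as the paper, which proves this as the contrapositive of Theorem \ref{LifespanTheorem}: restarting the flow at a time close to $T$ yields a further existence time of at least $C_{\mathrm{life}}^{-1}\rho^{2(p+1)}$, contradicting maximality. Your version fills in details the paper leaves implicit, namely that the restarted flow is the maximal solution (by uniqueness and gluing), and your remark that the hypothesis $\rho^{2(p+1)}\le T$ is never used is accurate.
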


\begin{proof}
This is the contrapositive of Theorem \ref{LifespanTheorem}.  If the
limsup were smaller than the lifespan threshold for some $\rho$, then
starting the flow at a time sufficiently close to $T$ would extend the
solution for a further definite time comparable to $\rho^{2(p+1)}$,
contradicting maximality of $T$.
\end{proof}

\begin{lemma}[Scale-invariant local area bound]\label{L:local-area-bound}
Under the hypotheses of Theorem \ref{T:globalalmostsphere}, after decreasing the smallness threshold if necessary, there is a constant $C<\infty$ depending only on the initial data such that
\begin{equation}
    \frac{\mu_t\oo{f_t^{-1}(B_\sigma(y))}}{\sigma^2}
    \leq C
\label{E:local-area-bound}
\end{equation}
for every $t<T$, every $y\in\mathbb R^3$, and every $\sigma>0$.
Consequently the same estimate holds for every parabolic rescaling of the flow, with the same constant.
\end{lemma}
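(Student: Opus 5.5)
The plan is to use Simon's monotonicity formula for surfaces in $\R^3$. For a closed immersed surface it gives, for every $y\in\R^3$ and $\sigma>0$,
\[
    \frac{\mu\oo{f^{-1}(B_\sigma(y))}}{\sigma^2}
    \leq C\intM{H^2},
\]
with a universal constant $C$. This follows by letting the outer radius tend to infinity in the monotonicity identity: the boundary term at infinity vanishes on a compact surface, and the weighted $H$ term is absorbed by Cauchy--Schwarz. Equivalently, Simon's estimate $\mu(\Sigma\cap B_\sigma)\leq C\sigma^2 W$ holds with $W=\frac14\int H^2$ the Willmore energy. The statement is therefore reduced to a uniform-in-time bound on the Willmore energy.

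That bound comes from the almost-umbilic hypotheses of Theorem~\ref{T:globalalmostsphere}, which I expect to include $\|A^o\|_2^2\leq\varepsilon_0$ at $t=0$. By Theorem~\ref{T:preservealmostsphere}, $\int|A^o|^2$ is nonincreasing in time, and $\Sigma$ has genus zero. Gauss--Bonnet then gives
\[
    \frac14\intM{H^2}=4\pi+\frac12\intM{|A^o|^2}\leq 4\pi+\frac12\varepsilon_0<8\pi
\]
for all $t<T$. Substituting this into the monotonicity inequality yields \eqref{E:local-area-bound} with $C$ universal, so in fact $C$ does not depend on the initial data beyond the smallness assumption.

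Both sides of \eqref{E:local-area-bound} are invariant under parabolic rescaling. Under $f\mapsto\lambda^{-1}(f-x)$ with a reparametrised time, areas scale by $\lambda^{-2}$ and balls $B_\sigma(y)$ correspond to $B_{\sigma/\lambda}(\cdot)$, so the ratio $\mu/\sigma^2$ is unchanged. The same constant therefore works for every rescaled flow, which gives the consequence stated in the lemma.

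The main point requiring care is the monotonicity inequality with the sharp form of the constant on an immersed surface. I will quote Simon's result, or the version in Kuwert--Sch\"atzle, and check that it applies to closed immersions, where multiplicity is counted by $\mu_t$. If a self-contained argument is preferred, I would instead test the first variation formula $\int\operatorname{div}_\Sigma X\,d\mu=-\int\langle \vec H,X\rangle\,d\mu$ with $X=(x-y)\,\phi(|x-y|)$ and integrate the resulting ODE in $\sigma$. No result about the flow is needed beyond the Willmore energy bound.
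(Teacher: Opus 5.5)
Your proposal is correct and takes essentially the same route as the paper. Both arguments bound $\int H^2=16\pi+2\int|A^o|^2$ uniformly in time, using genus zero, Gauss--Bonnet and the preserved smallness from Theorem~\ref{T:preservealmostsphere}. Both then apply Simon's local area estimate in the Kuwert--Sch\"atzle form with the outer radius sent to infinity, and conclude by scale invariance. Your remark that the resulting constant is actually universal, rather than merely depending on the initial data, is a harmless sharpening of the stated dependence.
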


\begin{proof}
The small tracefree-curvature hypothesis, Gauss--Bonnet and the Willmore inequality imply that $\Sigma$ is a topological sphere, and Theorem \ref{T:preservealmostsphere} preserves the smallness of $\int |A^o|^2$.  Hence
\[
    \int_\Sigma H^2\,d\mu
    =16\pi+2\int_\Sigma |A^o|^2\,d\mu
    \leq C.
\]
Simon's local area estimate, in the form used by Kuwert--Sch\"atzle \cite[Lemma 4.1]{Kuwert1}, gives for a proper immersed surface
\[
    \frac{\mu\oo{f^{-1}(B_\sigma(y))}}{\sigma^2}
    \leq
    C\left(
        \frac{\mu\oo{f^{-1}(B_R(y))}}{R^2}
        +\int_{f^{-1}(B_R(y))}H^2\,d\mu
    \right)
\]
for $0<\sigma\leq R$.  Letting $R\rightarrow\infty$ on the closed surface proves \eqref{E:local-area-bound}.  The estimate is scale-invariant.
\end{proof}

\begin{theorem}[Global existence for almost spheres]\label{T:globalalmostsphere}
Let $f_0:\Sigma\rightarrow\mathbb{R}^3$ be a smooth immersion of a connected closed surface.  Choose the orientation so that the signed enclosed volume $V_0$ is positive.  If
\[
    \int_\Sigma |A^o|^2\,d\mu\leq\varepsilon_0,
\]
where $\varepsilon_0>0$ is sufficiently small depending only on $p$.
Then the solution of \eqref{PolyharmonicIntro1} with initial data $f_0$
exists smoothly for all time.
\end{theorem}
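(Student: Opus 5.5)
We argue by contradiction: suppose the maximal time satisfies \(T<\infty\). Fix \(\varepsilon\) below \(\varepsilon_{\mathrm{life}}\), the thresholds \(\varepsilon_m\) of Theorem \ref{T:interiorests} for finitely many orders, \(\varepsilon_{\mathrm{gap}}\) and \(\varepsilon_{\mathrm{reg}}\). By Theorem \ref{T:preservealmostsphere}, \(\int|A^o|^2\le\varepsilon_0\) for all \(t<T\), every \(\Sigma_t\) is an embedded sphere, and \(\int H^2\le 16\pi+2\varepsilon_0\). Area is nonincreasing, so \(|\Sigma_t|\le|\Sigma_0|\). The signed volume is preserved, since \(\frac{d}{dt}V=\pm\int\Delta^pH\,d\mu=0\). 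The isoperimetric inequality then gives \(|\Sigma_t|\ge c V_0^{2/3}>0\).

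By Theorem \ref{T:concentration-alternative} and the discussion after Theorem \ref{LifespanTheorem}, the scale \(r_t:=\rho^\varepsilon(t)\) satisfies \(r_t\le C(T-t)^{1/(2p+2)}\to0\). Choose \(t_j\nearrow T\) and set \(r_j=r_{t_j}\). Let \(x_j\) be centres with \(\C(x_j,t_j,r_j)\ge\varepsilon/2\). Define \(f_j(q,\tau)=r_j^{-1}(f(q,t_j+r_j^{2p+2}\tau)-x_j)\). After rescaling, \(\C_j(0,1)\le\varepsilon\). Theorem \ref{LifespanTheorem} then gives \(\C_j(\tau,1)\le C_{\mathrm{life}}\varepsilon\) for \(\tau\in[0,C_{\mathrm{life}}^{-1}]\).

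Applying Theorem \ref{T:interiorests} on the window \([0,C_{\mathrm{life}}^{-1}]\), starting at time \(0\) for the rescaled flow, gives uniform bounds on all \(\nabla_{(m)}A_j\) on \(\R^3\times[\tau_0,C_{\mathrm{life}}^{-1}]\) for any \(\tau_0>0\). To keep the concentration point in the good interval, I would instead start the rescaling at \(t_j-\tfrac12 C_{\mathrm{life}}^{-1}r_j^{2p+2}\). Lemma \ref{L:local-area-bound} gives uniform local area bounds. The Langer/Kuwert--Sch\"atzle compactness theorem then yields a subsequence converging locally smoothly, after reparametrisation, to a proper limit flow \(\hat f\) on a time interval. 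Its curvature satisfies \(\int_{\hat f^{-1}(B_1(0))}|\hat A|^2\ge\varepsilon/2\) at the concentration time, using smooth local convergence.

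Stationarity comes from dissipation. By Theorem \ref{T:preservealmostsphere},
\[
\int_0^T\intM{|\Delta^{\frac{p+1}{2}}H|^2}\,dt\le2\varepsilon_0.
\]
This quantity is scale invariant under the parabolic rescaling. Over rescaled windows of fixed length it corresponds to original time intervals shrinking to \(T\), so the rescaled dissipation tends to \(0\). Hence \(\nabla\Delta^p\hat H\equiv0\) or \(\Delta^{(p+1)/2}\hat H=0\), and integrating by parts gives \(\Delta^p\hat H\) constant on each component of \(\hat\Sigma\). Lower semicontinuity gives \(\int|\hat A^o|^2\le\varepsilon_0\le\varepsilon_{\mathrm{gap}}\). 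Theorem \ref{GapTheorem}, applied componentwise, then says each component is a plane or a round sphere.

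A plane contradicts the lower bound \(\varepsilon/2\) on the curvature in \(B_1(0)\). So some component is a round sphere \(\mathbb S_R(y)\), and the curvature lower bound at scale one gives \(R\le C\). Smooth convergence and connectedness of \(\Sigma\) should show that \(f_j(\cdot,\tau)\) for large \(j\) is a graph over this sphere near it. A closed connected surface with a whole compact component captured cannot have further pieces, so the entire \(f_j(\Sigma)\) is close to \(\mathbb S_R\). Undoing the scaling gives \(|\Sigma_{t}|\approx r_j^2\,4\pi R^2\to0\), which contradicts the isoperimetric lower bound. Hence \(T=\infty\).

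The main obstacle is this last step: showing that a compact component of the blowup exhausts the whole surface. I would first try the following argument. Convergence on compact sets shows that for large \(j\) the preimage of a neighbourhood of \(\mathbb S_R(y)\) is an open and closed subset of \(\Sigma\), since it is a graph over a closed surface with no boundary points. By connectedness this subset is all of \(\Sigma\).
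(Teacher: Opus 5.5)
Your overall architecture matches the paper's proof: contradiction, blowup at the maximal nonconcentrating scale, the lifespan and interior estimates with the local area bound and Langer compactness, stationarity from the scale-invariant dissipation $\int_0^T\!\int_\Sigma|\Delta^{\frac{p+1}{2}}H|^2\,d\mu\,dt\le2\varepsilon_0$, the gap theorem, and the exclusions. The plane is ruled out by the concentration normalisation. A compact sphere is ruled out by the open-and-closed/connectedness argument together with area monotonicity, volume preservation and the isoperimetric inequality. Those steps are fine.

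\textbf{The gap is in how you build the blowup sequence.} You correctly note that the interior estimates degenerate at the initial time of a window. So the concentration time must lie strictly inside a window on which small concentration is already known. Your remedy is to restart at $s_j:=t_j-\tfrac12C_{\mathrm{life}}^{-1}r_j^{2p+2}$. To apply Theorem \ref{LifespanTheorem} or Theorem \ref{T:interiorests} at scale $r_j$ from $s_j$, you need $\C(s_j,r_j)\le\varepsilon$, i.e.\ $\rho^\varepsilon(s_j)\ge r_j$. For an arbitrary sequence $t_j\nearrow T$ nothing gives this: $\rho^\varepsilon$ is not monotone and may have dropped below $r_j$ shortly before $t_j$. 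Switching to the smaller scale $\rho^\varepsilon(s_j)$ at time $s_j$ does not help. That window need not reach $t_j$, and the only lower concentration bound you then have sits again at the start of a window.

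The missing idea is the paper's choice of \emph{record-low times}. Pick $t_j\nearrow T$ with $r_j=\rho^\varepsilon(t_j)\le\rho^\varepsilon(t)$ for all $t\le t_j$. This is possible because $\rho^\varepsilon$ is bounded below on each $[0,t]$ with $t<T$, and tends to $0$; an almost-minimiser up to a factor $2$ plus a covering argument also works. Then $\C(t,r_j)\le\varepsilon$ for every $t\le t_j$. Hence the rescaled flows $f_j(q,\tau)=r_j^{-1}\big(f(q,t_j+r_j^{2p+2}\tau)-x_j\big)$ have small unit-scale concentration for all $\tau\in[-r_j^{-2p-2}t_j,0]$. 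The lifespan theorem extends this to $[0,C_{\mathrm{life}}^{-1}]$. The interior estimates, started at a negative time, then give uniform bounds through $\tau=0$. The limit is therefore defined on $(-\infty,c_0]$, and the concentration normalisation at $\tau=0$ passes to it.
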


\begin{proof}
Assume, to the contrary, that the maximal time $T$ is finite.  Choose a number $\varepsilon_*$ smaller than the thresholds in the lifespan theorem, the interior estimates, and the gap theorem.  By the concentration alternative, $\rho^{\varepsilon_*}(t)\to0$ as $t\nearrow T$.  Choose record-low times $t_i\nearrow T$ and set
\[
    r_i:=\rho^{\varepsilon_*}(t_i)\downarrow0,
    \qquad
    \rho^{\varepsilon_*}(t)\geq r_i\quad\text{for }0\leq t\leq t_i .
\]
By continuity of the concentration function in the radius, this gives, for all $t\leq t_i$,
\begin{equation}
    \C(t,r_i)\leq\varepsilon_*.
\label{E:blowup-pretime-small}
\end{equation}
At time $t_i$ the scale $r_i$ is maximal nonconcentrating, so the larger scale $2r_i$ is concentrating.  Hence there are centres $x_i\in\mathbb R^3$ such that
\begin{equation}
    \int_{f^{-1}(B_{2r_i}(x_i))}|A|^2\,d\mu(t_i)
    \geq \frac12\varepsilon_* .
\label{E:blowup-normalisation}
\end{equation}
Define the rescaled flows
\[
    f_i(q,\tau):=r_i^{-1}\big(f(q,t_i+r_i^{2(p+1)}\tau)-x_i\big),
    \qquad
    \tau\in[-r_i^{-2(p+1)}t_i,\,r_i^{-2(p+1)}(T-t_i)).
\]
By \eqref{E:blowup-pretime-small}, the rescaled flows have uniformly small curvature concentration on unit balls for $\tau<0$.  The lifespan theorem applied at time $t_i$ also gives a uniform forward time interval $[0,c_0]$ on which the unit-scale concentration is bounded by a fixed multiple of $\varepsilon_*$.  Choosing $\varepsilon_*$ sufficiently small, Theorem \ref{T:interiorests} gives uniform bounds for all derivatives of curvature on compact subsets of space-time.  The local area bound of Lemma \ref{L:local-area-bound} supplies the area hypothesis in the Langer--Breuning compactness theorem, Theorem \ref{KuwertSchatzleCompactnessTheorem}.  Thus a subsequence converges locally smoothly, after reparametrisation, to a proper solution
\[
    \hat f:\hat\Sigma\times(-\infty,c_0]\rightarrow\mathbb R^3
\]
of \eqref{PolyharmonicIntro1}.  The normalisation \eqref{E:blowup-normalisation} passes to the limit and gives
\begin{equation}
    \int_{\hat f^{-1}(B_2(0))}|\hat A|^2\,d\hat\mu
    \geq \frac12\varepsilon_* .
\label{E:blowup-limit-nonflat}
\end{equation}

We next show that the limit is stationary.  The dissipation estimate in Theorem \ref{T:preservealmostsphere} gives
\[
    \int_0^T\int_\Sigma |\Delta^{\frac{p+1}{2}}H|^2\,d\mu\,dt<\infty.
\]
This spacetime integral is invariant under the above parabolic rescaling.  Therefore, for every compact interval $I\subset(-\infty,c_0]$ and every compact spatial set in the limit,
\[
\int_I\int |\Delta_i^{\frac{p+1}{2}}H_i|^2\,d\mu_i\,d\tau
=\int_{t_i+r_i^{2(p+1)}I}\int |\Delta^{\frac{p+1}{2}}H|^2\,d\mu\,dt
\longrightarrow0,
\]
because the corresponding original time intervals shrink to $T$.  Passing to the smooth limit gives
\[
    \Delta^{\frac{p+1}{2}}\hat H\equiv0.
\]
If $p$ is odd this says $\Delta^{(p+1)/2}\hat H=0$; if $p$ is even it says $\nabla\Delta^{p/2}\hat H=0$.  In either case $\Delta^p\hat H\equiv0$ on each connected component, so the blowup limit is stationary.

The tracefree curvature smallness is scale-invariant and nonincreasing by Theorem \ref{T:preservealmostsphere}; hence every compact subset of the limit satisfies
\[
    \int |\hat A^o|^2\,d\hat\mu\leq\varepsilon_0.
\]
The gap theorem, Theorem \ref{GapTheorem}, implies that each connected component of the limit is either a plane or a round sphere.  The component carrying \eqref{E:blowup-limit-nonflat} cannot be a plane.

It remains to exclude a compact round-sphere component.  The signed enclosed volume is preserved by Lemma \ref{L:volpreserve}, and the area is nonincreasing, so the isoperimetric inequality gives the uniform lower bound
\begin{equation}
    |\Sigma_t|^3\geq36\pi V_0^2>0.
\label{E:global-area-lower-bound}
\end{equation}
If the concentrated component of the rescaled limit were compact, proper smooth convergence gives a large ball $B_R(0)$ and an annular neighbourhood separating that component from infinity.  For all large $i$, the corresponding component of the rescaled surface inside $B_R(0)$ is compact without boundary.  Since the original surface is connected, this component is the whole surface.  Undoing the scaling then gives
\[
    |\Sigma_{t_i}|=r_i^2\,|\Sigma_i(0)|\longrightarrow0,
\]
contradicting \eqref{E:global-area-lower-bound}.  The concentrated component is therefore neither a plane nor a compact round sphere, contradicting the gap theorem.  Thus $T=\infty$.
\end{proof}

\begin{proposition}[Uniform curvature bounds]\label{P:uniform-curvature-bounds}
Under the hypotheses of Theorem \ref{T:globalalmostsphere}, all
covariant derivatives of curvature are uniformly bounded along the flow:
for every $m\in\mathbb N_0$ there is a constant $C_m<\infty$ such that
\[
    \sup_{\Sigma_t}|\nabla_{(m)}A|\leq C_m
    \qquad\text{for all }t\geq0.
\]
\end{proposition}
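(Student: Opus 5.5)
The approach is to produce a single scale $\rho_0>0$ at which the curvature concentration $\C(t,\rho_0)$ stays below the interior-estimate threshold uniformly for all $t\geq0$. Theorem \ref{T:interiorests} is then applied on time windows of fixed length ending at each time $t$.

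\textbf{Step 1: a uniform nonconcentration scale.} The aim is to show $\inf_{t\geq0}\rho^{\varepsilon}(t)>0$ for a fixed $\varepsilon$ below the thresholds of Theorem \ref{LifespanTheorem} and of Theorem \ref{T:interiorests} for order $m$. I would argue by contradiction, exactly as in the proof of Theorem \ref{T:globalalmostsphere}, but with $T=\infty$.
\begin{itemize}
\item If the infimum is zero, then, since $\rho^\varepsilon$ is positive on compact time intervals by Theorem \ref{LifespanTheorem}, there are record-low times $t_i\to\infty$ with $r_i=\rho^\varepsilon(t_i)\to0$.
\item Blow up at $(x_i,t_i)$ exactly as in that proof. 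This uses Lemma \ref{L:local-area-bound}, the interior estimates and the compactness theorem.
\item The dissipation bound $\int_0^\infty\!\int|\Delta^{\frac{p+1}{2}}H|^2\,d\mu\,dt<\infty$ from Theorem \ref{T:preservealmostsphere} is scale invariant. On rescaled windows it is evaluated over original intervals $t_i+r_i^{2(p+1)}I$, which lie in tails $[t_i-1,\infty)$. These tail integrals tend to zero, so the limit is again stationary.
\item Theorem \ref{GapTheorem} forces each component of the limit to be a plane or a sphere. A plane is excluded by the normalisation \eqref{E:blowup-limit-nonflat}. A compact sphere is excluded because it would give $|\Sigma_{t_i}|\to0$, contradicting \eqref{E:global-area-lower-bound}.
\end{itemize}
This yields $\rho_0>0$ with $\C(t,\rho_0)\leq\varepsilon$ for all $t$.

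\textbf{Step 2: uniform derivative bounds.} Fix $t\geq \tau_0:=c_0(\rho_0/2)^{2(p+1)}$. I would apply Theorem \ref{T:interiorests} to the flow restarted at time $t-\tau_0$, with $\rho=\rho_0/2$ and $T^*=\tau_0$. The smallness hypothesis holds on $B_{\rho_0}(x)$ for every $x$. Evaluating \eqref{InteriorEstimatesEqn02} at the final time gives
\[
|\nabla_{(m)}A|\leq c_m\sqrt{\varepsilon}\,\tau_0^{-\frac{m+1}{2(p+1)}}
\]
on $f^{-1}(B_{\rho_0/2}(x))$, uniformly in $x$ and $t$. For $t\in[0,\tau_0]$ the solution is smooth on the compact set $\Sigma\times[0,\tau_0]$, so its derivatives are bounded there by smoothness and compactness. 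Together these give the constant $C_m$ for all $t\geq0$.

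\textbf{Main obstacle.} The hard step is Step 1, in particular making the blowup argument work at infinite time.
\begin{itemize}
\item Centres $x_i$ attaining concentration at scale $2r_i$ must exist. This follows from maximality of $r_i$ as before.
\item Backward windows must lie in $[0,t_i]$. Since $t_i\to\infty$ and $r_i\to0$, the times $-r_i^{-2(p+1)}t_i\to-\infty$, so this holds.
\item The excluded compact-sphere case relies only on the area lower bound \eqref{E:global-area-lower-bound} from volume preservation and area monotonicity. That bound is valid for all time.
\end{itemize}
If the record-low argument proves delicate, I would first try instead to show that $\rho^\varepsilon$ cannot drop by a factor quickly, using Theorem \ref{LifespanTheorem} forward in time. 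This reduces the problem to excluding $\rho^\varepsilon(t_i)\to0$ along arbitrary sequences, which is handled by the same blowup.
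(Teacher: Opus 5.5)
Your argument is correct in outline, but it follows a different route from the paper's. The paper blows up at the curvature scale rather than the concentration scale. It picks $t_i\to\infty$ and $p_i$ with $|A|(p_i,t_i)=Q_i:=\max_{\Sigma\times[0,t_i]}|A|\to\infty$, and rescales by $Q_i$ so that $|A_i|\le1$ for all $\tau\le0$ and $|A_i|(p_i,0)=1$. Lemma \ref{L:local-area-bound} then gives $\int_{f_i^{-1}(B_{2r_0}(y))}|A_i|^2\,d\mu_i\le Cr_0^2$. The limit is ruled out by the same stationarity, gap and area arguments you use, with the plane excluded by the pointwise normalisation, and this gives $\sup|A|\le C$. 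A second pass with the area bound then produces a uniform small-concentration scale, and Theorem \ref{T:interiorests} gives the higher derivatives. Your route instead proves $\inf_t\rho^{\varepsilon}(t)>0$ directly by recycling the blowup of Theorem \ref{T:globalalmostsphere} at record-low times. Step 2 is then a single application of the interior estimates at the end of each window. This avoids point-picking and yields exactly the scale-invariant hypothesis the interior estimates need, at the cost of the record-low selection.

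The paper's normalisation handles one point that you should make explicit in Step 1. The thresholds $\varepsilon_m$ in Theorem \ref{T:interiorests} depend on $m$, so a fixed $\varepsilon$ lies below only finitely many of them. Theorem \ref{KuwertSchatzleCompactnessTheorem}, however, needs bounds on $\nabla_{(k)}A_i$ for every $k$. Unit-ball concentration at most $C_{\mathrm{life}}\varepsilon$ therefore does not by itself give smooth subconvergence of your blowups. In the paper, $|A_i|\le1$ makes the local energy $O(r_0^2)$, so each $\varepsilon_k$ is met by shrinking $r_0$. You can repair Step 1 the same way, in three moves:
\begin{enumerate}
\item Use the order-zero interior estimate, which has one fixed threshold, to bound $|A_i|$ on compact subsets of space-time.
\item Use Lemma \ref{L:local-area-bound} to bound the energy on balls of radius $r$ by $Cr^2$, which is below $\varepsilon_k$ once $r=r(k)$ is small.
\item Apply the interior estimates at scale $r(k)$ to get bounds of every order.
\end{enumerate}
The proof of Theorem \ref{T:globalalmostsphere}, which you are copying, also leaves this step implicit. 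With it in place, you can either rerun Step 1 with $\varepsilon\le\varepsilon_m$ for each $m$, or run Step 2 only for $m=0$ and then shrink scales as in the paper's last paragraph. A minor point: positivity of $\rho^{\varepsilon}$ on $[0,T_1]$ comes from smoothness of the flow on $\Sigma\times[0,T_1]$ together with the area bound, not from Theorem \ref{LifespanTheorem}.
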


\begin{proof}
Suppose first that $|A|$ is not uniformly bounded.  Choose times $t_i\to\infty$ so that
\[
    Q_i:=\max_{\Sigma\times[0,t_i]}|A|\longrightarrow\infty,
\]
and choose $p_i\in\Sigma$ with $|A|(p_i,t_i)=Q_i$.  This is the standard Hamilton point-picking choice; it gives
\[
    |A|(\cdot,t)\leq Q_i\qquad\text{for }0\leq t\leq t_i.
\]
Define the curvature-scale rescalings
\[
    f_i(q,\tau):=Q_i\big(f(q,t_i+Q_i^{-2(p+1)}\tau)-f(p_i,t_i)\big),
    \qquad
    \tau\in[-Q_i^{2(p+1)}t_i,0].
\]
Then $|A_i|\leq1$ for $\tau\leq0$ and $|A_i|(p_i,0)=1$.

By the scale-invariant local area bound, $\mu_i(B_R(y))\leq C R^2$ for all $R>0$.  Choosing a small fixed $r_0>0$ gives
\[
    \int_{f_i^{-1}(B_{2r_0}(y))}|A_i|^2\,d\mu_i
    \leq C r_0^2,
\]
which is below the interior-estimate threshold if $r_0$ is sufficiently small.  Theorem \ref{T:interiorests}, applied on balls of radius $r_0$, yields uniform bounds for all curvature derivatives on compact subsets of space-time.  Together with the local area bound and Theorem \ref{KuwertSchatzleCompactnessTheorem}, a subsequence converges locally smoothly to a proper ancient limit with
\[
    |\hat A|(\hat p,0)=1.
\]

The total dissipation on $[0,\infty)$ is finite by Theorem \ref{T:preservealmostsphere}.  Since the rescaled spacetime dissipation is invariant and the original time intervals corresponding to any compact $\tau$-interval drift to infinity, the limit satisfies $\Delta^{(p+1)/2}\hat H\equiv0$, and hence $\Delta^p\hat H\equiv0$ on each component.  The preserved smallness of $\int |A^o|^2$ allows us to apply the gap theorem.  Therefore each component is a plane or a round sphere.  The normalisation $|\hat A|(\hat p,0)=1$ excludes a plane.  A compact round-sphere component is excluded exactly as in the proof of Theorem \ref{T:globalalmostsphere}: proper convergence makes the corresponding approximating component closed, connectedness makes it the whole original surface, and undoing the scaling gives $|\Sigma_{t_i}|=O(Q_i^{-2})\to0$, contradicting \eqref{E:global-area-lower-bound}.  This contradiction proves a uniform bound for $|A|$.

Once $|A|$ is uniformly bounded, the local area bound implies that, on sufficiently small extrinsic balls, the $L^2$-curvature concentration is below the threshold in Theorem \ref{T:interiorests}, uniformly in time.  Applying the interior estimates on a finite cover at each time and using the uniformity of the constants gives uniform bounds for every covariant derivative of $A$.
\end{proof}

\begin{lemma}\label{L:volpreserve}
The signed enclosed volume is preserved along \eqref{PolyharmonicIntro1}.
\end{lemma}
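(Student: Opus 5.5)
The plan is to use the first variation formula for the signed enclosed volume: $\partial_t f$ is purely normal, and $\Delta^pH$ integrates to zero on a closed surface. For a closed immersed surface $f:\Sigma\to\R^3$ the signed enclosed volume is
\[
    V(f)=\frac13\intM{(f,\nu)},
\]
which agrees with the volume of the enclosed region when $f$ is an embedding, by the divergence theorem applied to the position vector field. For a general normal variation $\partial_t f=\phi\,\nu$ I will establish $\frac{d}{dt}V=\intM{\phi}$, with the sign fixed by the orientation convention of $\nu$.

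The first step is a direct differentiation of $(f,\nu)\,d\mu$ using Lemma \ref{EvolutionLemma1}. The three ingredients are:
\[
    \partial_t(f,\nu)=\phi+(f,\partial_t\nu), \qquad \partial_t\nu=-\nabla\phi, \qquad \partial_t d\mu=H\phi\,d\mu,
\]
up to the sign conventions of that lemma. Writing $f^T$ for the tangential part of $f$, the term $(f,-\nabla\phi)$ equals $-\inner{f^T,\nabla\phi}$. Integrating by parts, this becomes $\intM{\phi\,\operatorname{div}f^T}$.

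Next I use $\operatorname{div}f^T=2+(f,\nu)\cdot(\text{curvature term})$, which comes from $\Delta f=\pm H\nu$ and $\operatorname{div}(f^T)=\operatorname{div}_\Sigma f-(f,\nu)\operatorname{div}_\Sigma\nu$. The terms involving $H(f,\nu)\phi$ then cancel against the measure variation. What remains is $\frac13(\phi+2\phi)=\phi$. Since the sign bookkeeping here is delicate, I would cross-check the result against the geometric fact that an outward normal speed increases volume.

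Finally, I substitute $\phi=(-1)^{p+1}\Delta^pH$, so that
\[
    \frac{d}{dt}V=(-1)^{p+1}\intM{\Delta^pH}.
\]
For $p\ge1$ this integral is the integral of a Laplacian on a closed surface, so it vanishes by the divergence theorem. Hence $V$ is constant in time.

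The only real obstacle is consistent sign and orientation bookkeeping, in particular matching $A_{ij}=-(\partial_{ij}f,\nu)$, $\vec H=-H\nu$, and the outer normal. The conclusion is insensitive to these signs, however, because the variation is always a multiple of $\intM{\Delta^pH}=0$.
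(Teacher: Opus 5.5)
Your argument is correct and is the same as the paper's: the first variation of signed enclosed volume under a normal velocity $\phi\nu$ is a multiple of $\int_\Sigma\phi\,d\mu$, and $\int_\Sigma\Delta^pH\,d\mu=0$ on a closed surface. The paper simply quotes that first-variation formula, whereas you derive it from $V=\frac13\int_\Sigma(f,\nu)\,d\mu$; your computation is sound, including the cancellation of the $H(f,\nu)\phi$ terms via $\operatorname{div}f^T=2-H(f,\nu)$, and it gives $\frac{d}{dt}V=\int_\Sigma\phi\,d\mu$.
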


\begin{proof}
With the chosen orientation, the first variation of signed enclosed
volume under a normal velocity $F\nu$ is a constant multiple of
$\int_\Sigma F\,d\mu$.  For \eqref{PolyharmonicIntro1},
$F=(-1)^{p+1}\Delta^pH$, and since $\Sigma$ is closed,
\[
    \int_\Sigma \Delta^pH\,d\mu=0.
\]
Thus the signed enclosed volume is independent of time.
\end{proof}

\begin{theorem}[Smooth convergence]\label{T:smoothconvergence}
Under the hypotheses of Theorem \ref{T:globalalmostsphere}, the flow
exists for all time and converges smoothly, after reparametrisation, to a round sphere enclosing the same signed volume as the initial immersion.
\end{theorem}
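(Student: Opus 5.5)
Global existence is already Theorem \ref{T:globalalmostsphere}, so the plan is to prove convergence. Three ingredients are combined: uniform smooth bounds (Proposition \ref{P:uniform-curvature-bounds}), exponential decay of $\|A^o\|_2^2$ (Theorem \ref{T:expdecayenergy}), and interpolation. Together they give exponential decay of every curvature derivative. That decay then controls the velocity, and integrating the velocity in time gives convergence of the immersions.

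\emph{Geometric control.} Area is nonincreasing. It is bounded below by \eqref{E:global-area-lower-bound}, and $\int H^2\le C$. Together with the uniform bounds on $\nabla_{(m)}A$, these give a uniform bound on the intrinsic and extrinsic diameter, for instance via Simon's diameter estimate $\operatorname{diam}\le C|\Sigma|^{1/2}(\int H^2)^{1/2}$.

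\emph{Decay of derivatives.} The first target is the closed-surface bound
\[
\|\nabla_{(m)}A^o\|_2^2\le C_m \|A^o\|_2^{2\theta_m}\big(\|\nabla_{(M)}A^o\|_2^2+\|A^o\|_2^2\big)^{1-\theta_m}.
\]
This comes from Lemma \ref{NewLemma1} and Theorem \ref{HigherOrderSobolevTheorem1} with $\gamma\equiv1$, using $|\nabla_{(k)}A|^2\le3|\nabla_{(k)}A^o|^2$ for $k\ge1$. Because the higher norms are uniformly bounded, $\|\nabla_{(m)}A\|_2\le C_m e^{-\delta_m t}$ for every $m\ge1$. Sobolev embedding (Theorem \ref{EvolutionTheorem1}, with uniformly bounded $|A|$ controlling the constants) then gives
\[
\sup|\nabla_{(m)}A|+\sup|A^o|\le C_m e^{-\delta_m t}.
\]
In particular $\sup|\Delta^pH|\le Ce^{-\delta t}$, so $|\partial_tf|$ and all its covariant derivatives decay exponentially.

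\emph{Convergence.} Since $\int_0^\infty\sup|\partial_t f|\,dt<\infty$, $f(\cdot,t)$ converges uniformly to a limit $f_\infty$. The metric evolves by $\partial_tg=2(-1)^{p+1}(\Delta^pH)A$, so it stays uniformly equivalent and converges; the same argument with time derivatives of coordinate derivatives, which are controlled by the decaying $\nabla_{(m)}A$, gives $C^k$ convergence for all $k$ with exponential rate. Thus $f_\infty$ is a smooth immersion with $A^o\equiv0$, and it is connected and closed, so it is a round sphere $\mathbb S_r(x_0)$. Signed volume is preserved (Lemma \ref{L:volpreserve}) and passes to the smooth limit, so $\tfrac43\pi r^3=V_0$, giving $r=(3V_0/4\pi)^{1/3}$. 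No further reparametrisation is needed beyond the normal parametrisation of the flow.

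\emph{Main obstacle.} The hardest step is the second one: obtaining genuinely exponential decay of all derivatives uniformly. The interpolation must be done globally on evolving surfaces, with constants that stay uniform in time. I would secure this through the uniform bounds on area, diameter and $|A|$, and through the smallness of $\|A^o\|_2$. The lower-order terms from the cut-off then disappear when $\gamma\equiv1$, which is legitimate since $c_\gamma$ may be taken to be $0$.
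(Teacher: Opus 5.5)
Your proof is correct, and its first half is the paper's. Exponential decay of $\|A^o\|_2^2$ (Theorem \ref{T:expdecayenergy}), the uniform bounds of Proposition \ref{P:uniform-curvature-bounds} and closed-surface interpolation give exponential decay of all derivatives of $A^o$, hence of $\|\Delta^pH\|_\infty$, and preservation of signed volume fixes the radius.

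The endgames differ. The paper uses the velocity decay only at the $C^0$ level, to get $\sup_q|f(q,t_2)-f(q,t_1)|\le Ce^{-\delta t_1}$. Its smoothness comes from subsequential compactness: each sequence $t_i\to\infty$ has a subsequence converging smoothly, after reparametrisation, to a totally umbilic closed surface. The $C^0$ Cauchy estimate is then used only to show that all subsequential limit spheres share one centre.

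You instead integrate the exponentially decaying time derivatives of the metric and of the coordinate derivatives of $f$ in the fixed parametrisation, in the style of Huisken and Hamilton. This bypasses the compactness theorem and the uniqueness-of-centre step, and gives convergence of $f(\cdot,t)$ itself with no reparametrisation. It also yields an exponential rate in every $C^k$ norm, which is the form of convergence asserted in Theorem \ref{MainTheorem} and which the subsequence argument does not directly provide. The price is the routine bookkeeping that converts covariant into coordinate derivatives through the evolution of the Christoffel symbols, which you only sketch.

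Your diameter bound is never actually used. The interpolation and Sobolev constants are universal, and the area bound alone converts the uniform pointwise bounds into the $L^2$ bounds you need.
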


\begin{proof}
By Theorem \ref{T:expdecayenergy},
\[
    \int_\Sigma |A^o|^2\,d\mu(t)\leq Ce^{-\delta t}.
\]
The uniform curvature bounds from Proposition \ref{P:uniform-curvature-bounds}, interpolation, and the evolution equations imply exponential decay of all covariant derivatives of $A^o$.  In particular, using $\nabla H=2\operatorname{div}A^o$,
\begin{equation}
    \|\Delta^pH\|_{\infty}\leq Ce^{-\delta t}.
\label{E:velocity-decay}
\end{equation}
Thus the normal velocity is integrable in time:
\[
    \|\partial_t f\|_\infty
    =\|\Delta^pH\|_\infty
    \leq Ce^{-\delta t}.
\]
Consequently the images cannot drift by unbounded translations; for $t_2>t_1$,
\[
    \sup_{q\in\Sigma}|f(q,t_2)-f(q,t_1)|
    \leq C e^{-\delta t_1}.
\]

Every sequence $t_i\rightarrow\infty$ has a subsequence for which $f(\cdot,t_i)$ converges smoothly, after reparametrisation, to a totally umbilic closed surface.  The limit is a round sphere.  The preceding velocity estimate shows that the centres of any such subsequential limiting spheres form a Cauchy family, so the centre is unique.  Since the signed enclosed volume is preserved by Lemma \ref{L:volpreserve}, the radius is also unique.  Therefore the whole flow converges smoothly, after reparametrisation, to the round sphere of radius $(3V_0/4\pi)^{1/3}$ and some fixed centre $x_0\in\mathbb R^3$.
\end{proof}

\section{Appendix: Further Interpolation and Sobolev Inequalities, Selected Estimates}
\label{A:appendix}

\begin{theorem}[Michael-Simon \cite{michael1973sobolev}]\label{MichaelSimon}
Let $\Sigma^{m}\,(m\geq2)$ be a sufficiently smooth immersed submanifold in $\R^{n}$, and $u\in C_{c}^{\infty}\oo{\Sigma}$ be a non-negative smooth function with compact support. Then, for a universal, bounded constant $\cmss>0$ that depends only on $m$,
\[
\intM{u^{m/(m-1)}}\leq\cmss\oo{\intM{\norm{\nabla u}+|\vec{H}|u}}^{m/(m-1)}.     
\] 
Here $\vec{H}$ denotes the mean curvature vector. In particular, for surfaces immersed in Euclidean space, there exists a bounded universal constant $\cmss>0$ such that
\[
\intM{u^{2}} \leq \cmss\oo{\intM{\norm{\nabla u}}+\intM{\norm{u}|H| }}^{2}
\]
\end{theorem}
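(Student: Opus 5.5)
This is the Michael--Simon Sobolev inequality, a classical result we quote rather than reprove. The plan below indicates the standard argument and how the surface case follows.

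\emph{Approach.} The general statement rests on a monotonicity-type estimate derived from the first variation formula. For a vector field $X$ on $\R^n$ with $X$ applied along $\Sigma$, the tangential divergence satisfies
\[
\int_\Sigma \operatorname{div}_\Sigma X\,d\mu=-\int_\Sigma \langle X,\vec H\rangle\,d\mu .
\]
We apply this with $X(y)=\phi(|y-x_0|)\,(y-x_0)\,u(y)$, where $\phi$ is a radial cut-off, and we use $\operatorname{div}_\Sigma(y-x_0)=m$. This yields the following integrated monotonicity inequality for every point $x_0$ with $u(x_0)>0$:
\[
\frac{d}{d\rho}\Big(\rho^{-m}\!\int_{B_\rho(x_0)}u\Big)\ \ge\ -\rho^{-m}\!\int_{B_\rho(x_0)}\big(|\nabla u|+|\vec H|u\big)\quad\text{(in a weak sense)} .
\]

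\emph{Key steps.} First, integrate the inequality above from a small radius, where the density limit $\omega_m u(x_0)$ appears, up to a radius $\rho_0$. We choose $\rho_0$ by a stopping-time argument: it is the first radius at which
\[
\int_{B_\rho}u\ \ge\ C\rho^m\!\int_{B_\rho}\big(|\nabla u|+|\vec H|u\big)
\]
fails in a controlled way. This is the main technical obstacle. Michael--Simon handle it with a calculus lemma comparing the functions $\rho\mapsto\int_{B_\rho}u$ and $\rho\mapsto\int_{B_\rho}(|\nabla u|+|\vec H|u)$, and we would follow that lemma.

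Second, the first step produces, for each point of the level set $\{u>t\}$, a radius $\rho(x_0)$ bounded by $C\big(\int_\Sigma(|\nabla u|+|\vec H|u)\big)^{1/m}$. On the ball of that radius, the mass of $\{u>t\}$ is controlled by $\rho^{m}$ times the $L^1$ quantity.

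Third, apply the Vitali covering lemma to these balls. This gives the weak-type estimate
\[
|\{u>t\}|^{(m-1)/m}\le C\!\int_{\{u>t\}}\big(|\nabla u|+|\vec H|u\big)/t\quad\text{(schematically)} .
\]

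Fourth, upgrade the weak-type estimate to the strong $L^{m/(m-1)}$ bound by the usual truncation and layer-cake argument. This argument applies the estimate to $\min\{(u-t)^+,h\}$ and sums dyadically, in the manner of Federer--Fleming.

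\emph{Surface case.} Setting $m=2$ gives $m/(m-1)=2$. In codimension one, $|\vec H|=|H|$ since $\vec H=-H\nu$. We then bound
\[
\int_\Sigma|\vec H|u\,d\mu\le\int_\Sigma|u||H|\,d\mu ,
\]
which gives the stated form with a universal constant $\cmss$.

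We therefore do not reproduce the covering argument in detail. We cite \cite{michael1973sobolev} for the general statement; the expected difficulty is the stopping-radius step.
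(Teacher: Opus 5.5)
Your proposal rests on the same thing the paper does. The paper gives no argument for this theorem and simply imports it from \cite{michael1973sobolev}. Your passage to the surface case ($m=2$, $m/(m-1)=2$, $|\vec H|=|H|$ because $\vec H=-H\nu$) is the intended specialisation. Your monotonicity inequality from the first variation formula is also correct, and it holds at every $x_0$.

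The sketch of the covering argument, however, would fail if you tried to carry it out. Three of your statements are inconsistent under dilations $y\mapsto\lambda y$ of the submanifold: the stopping condition $\int_{B_\rho}u\ge C\rho^m\int_{B_\rho}(|\nabla u|+|\vec H|u)$, the radius bound $\rho(x_0)\le C\big(\int_\Sigma(|\nabla u|+|\vec H|u)\big)^{1/m}$, and the claim that the mass in the ball is at most $\rho^m$ times the local $L^1$ quantity. For instance, the two sides of the stopping condition scale like $\lambda^m$ and $\lambda^{2m-1}$, and $(\int|\nabla u|)^{1/m}$ scales like $\lambda^{(m-1)/m}$, which is not a length. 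So none of these can hold with constants depending only on $m$. The correct calculus lemma (as in Simon's lecture notes on geometric measure theory, \S18) gives, for $h(x_0)\ge1$, some $\rho\in(0,\rho_0)$ with
\[
\rho_0:=2\Big(\omega_m^{-1}\int_\Sigma h\,d\mu\Big)^{1/m},
\qquad
\int_{B_{5\rho}(x_0)}h\,d\mu\le\tfrac12\,5^m\rho_0\int_{B_\rho(x_0)}\big(|\nabla h|+|\vec H|h\big)\,d\mu .
\]
Three features matter here. The radius is controlled by the $L^1$ norm of $h$ itself, not by the gradient term. The bound $\rho_0$ enters only linearly. The asymmetry $5\rho$ versus $\rho$ is exactly what Vitali needs. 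The covering then yields $\int_{\{h\ge1\}}h\le C(\int_\Sigma h)^{1/m}\int_\Sigma(|\nabla h|+|\vec H|h)$.

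Second, your weak-type estimate with the integral restricted to $\{u>t\}$ is false. In flat $\mathbb{R}^m$, take $u=t+\delta$ on $B_R$, decreasing radially to $0$ across $B_{R+1}\setminus B_R$. The coarea formula gives $\int_{\{u>t\}}|\nabla u|\le C\delta R^{m-1}$, so your right side is about $C(\delta/t)R^{m-1}$, while the left side is at least $cR^{m-1}$. The right step is to apply the covering estimate to (a smoothing of) $h_\epsilon=\min\{(u-t)^+/\epsilon,1\}$, giving
\[
|\{u\ge t+\epsilon\}|\le C\,|\{u>t\}|^{1/m}\Big(\epsilon^{-1}\int_{\{t<u<t+\epsilon\}}|\nabla u|\,d\mu+\int_{\{u>t\}}|\vec H|\,d\mu\Big).
\]
Integrate over $t\in(0,\infty)$ and let $\epsilon\downarrow0$; this gives $\int_0^\infty|\{u>t\}|^{(m-1)/m}\,dt\le C\int_\Sigma(|\nabla u|+|\vec H|u)\,d\mu$. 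Minkowski's inequality $\|u\|_{m/(m-1)}\le\int_0^\infty|\{u>t\}|^{(m-1)/m}\,dt$ then finishes the proof, with no dyadic truncation needed.
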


\begin{theorem}[Kuwert--Sch\"{a}tzle multiplicative Sobolev inequality (\cite{Kuwert2}, Theorem 5.6)]\label{EvolutionTheorem1}
Let $f:\Sigma\rightarrow\mathbb{R}^{3}$ be a smooth immersion. For
$2<p\leq\infty$, $0\leq\beta\leq\infty$ and $0<\alpha\leq1$, where
$\frac{1}{\alpha}=\oo{\frac{1}{2}-\frac{1}{p}}\beta+1$, there is a bounded constant $c>0$
depending only on $p$ and $\beta$ such that for all $u\in C_{c}^{1}\oo{\Sigma}$,
\begin{equation*}
\llll{u}_{\infty}\leq c\llll{u}_{\beta}^{1-\alpha}\oo{\llll{\nabla u}_{p}+\llll{Hu}_{p}}^{\alpha} \mbox{.}\label{EvolutionTheorem1,1}
\end{equation*}
\end{theorem}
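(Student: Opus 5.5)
The plan is a Moser-type iteration built on the Michael--Simon inequality, Theorem \ref{MichaelSimon}. Two reductions come first. Since $|\nabla|u||\leq|\nabla u|$ almost everywhere, we may assume $u\geq0$, approximating $|u|$ by $\sqrt{u^2+\epsilon^2}-\epsilon$ if necessary. The second reduction is a consistency check on the exponents. Under the rescaling $f\mapsto rf$ we have $\llll{u}_\beta\mapsto r^{2/\beta}\llll{u}_\beta$ and $\llll{\nabla u}_p+\llll{Hu}_p\mapsto r^{2/p-1}(\llll{\nabla u}_p+\llll{Hu}_p)$. Hence the right-hand side is scale-invariant precisely when $(1-\alpha)\frac{2}{\beta}+\alpha(\frac{2}{p}-1)=0$, that is, when $\frac1\alpha=(\frac12-\frac1p)\beta+1$. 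This is the stated relation, so no inhomogeneous lower-order terms can arise, and it suffices to prove an estimate homogeneous in these two quantities.

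\emph{Basic step.} Write $D:=\llll{\nabla u}_p+\llll{Hu}_p$ and let $p'=p/(p-1)$. For $q\geq1$, apply Michael--Simon to $w=u^{q}$, which is compactly supported and $C^1$. Since $|\nabla w|=qu^{q-1}|\nabla u|$, Hölder's inequality in $L^p\times L^{p'}$ gives
\[
\llll{u}_{2q}^{q}\leq c\,q\,\llll{u}_{(q-1)p'}^{q-1}\,D .
\]
We then choose $q_{j+1}$ so that $(q_{j+1}-1)p'$ is the exponent controlled at the previous stage, with $2q_j$ feeding into the next stage. Since $2>p'$ (because $p>2$), the exponents grow geometrically: $q_j\sim\kappa^j$ with $\kappa=2/p'>1$.

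\emph{Iteration.} Taking $2q_j$-th roots and telescoping gives
\[
\llll{u}_{s_j}\leq \prod_i (c q_i)^{1/(2q_i)}\;\llll{u}_{\beta}^{\theta_j}\,D^{1-\theta_j},
\]
where $\theta_j$ is determined by the homogeneity. The product converges because $\sum_i \frac{\log q_i}{q_i}<\infty$ under geometric growth. By the scaling identity, $\theta_j\to1-\alpha$, so letting $j\to\infty$ yields $\llll{u}_\infty\leq c\,\llll{u}_\beta^{1-\alpha}D^{\alpha}$.

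The iteration must be started at some $\beta_0\geq p'$, and the arbitrary $\beta$ in $[0,\infty]$ has to be matched to that start. For $\beta$ at least the starting exponent, we use Hölder to interpolate between $\llll{u}_\beta$ and the iterated higher norms. For small $\beta$, including $\beta<1$ and the degenerate case $\beta\to0$, we instead use
\[
\llll{u}_{\beta_0}\leq\llll{u}_\infty^{1-\beta/\beta_0}\llll{u}_\beta^{\beta/\beta_0}.
\]
We insert this into the inequality obtained for $\beta_0$ and absorb the resulting power of $\llll{u}_\infty$, which is finite since $u\in C^1_c$. This gives an inequality of the form $X\leq cX^{\lambda}Y$ with $\lambda<1$, and solving it produces exactly the exponent $\alpha$ by homogeneity. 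The case $p=\infty$ is handled identically, with $p'=1$.

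\emph{Main obstacle.} The hard step is the bookkeeping in the iteration: choosing the sequence $q_j$ so that each step uses only the previously controlled norm, and verifying that the constants $c\,q_j$ have a convergent product. The absorption argument for small $\beta$ is what makes the constant depend only on $p$ and $\beta$. The endpoint cases $\beta=\infty$ (where $\alpha=0$ and the statement is trivial) and $\beta=0$ must be interpreted by continuity.
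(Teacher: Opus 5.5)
The paper gives no proof of this inequality; it is quoted from Kuwert--Sch\"atzle. So there is nothing internal to compare against. Your Moser iteration is a sound independent proof.

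\textbf{What checks out.} Michael--Simon applied to $u^q$, together with H\"older, gives $\|u\|_{2q}^{q}\le c\,q\,\|u\|_{(q-1)p'}^{q-1}D$. Every such step is exactly balanced under $f\mapsto\lambda f$: both sides scale like $\lambda$, since $2/p'+2/p-1=1$. By induction, the telescoped bound $\|u\|_{r_j}\le C_j\|u\|_{r_0}^{\theta_j}D^{1-\theta_j}$ satisfies $2/r_j=2\theta_j/r_0+(1-\theta_j)(2/p-1)$. Letting $r_j\to\infty$ yields precisely $1/\alpha=(\frac12-\frac1p)r_0+1$. The constants converge because $q_j$ grows like $(2/p')^j$.

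\textbf{One step needs repair.} Nothing forces $r_0\ge p'$. The first step only needs $q_1=1+r_0/p'\ge1$, and H\"older with $\|u\|_s:=(\int|u|^s\,d\mu)^{1/s}$ is fine for $0<s<1$. So start the iteration at $r_0=\beta$ for every $\beta\in(0,\infty)$.
\begin{itemize}
\item Your step for $\beta$ above a fixed starting exponent $\beta_0$ (``use H\"older to interpolate'') does not work as stated. $\|u\|_{\beta_0}$ with $\beta_0<\beta$ cannot be controlled by $\|u\|_\beta$ and higher norms without a factor of $\mu(\operatorname{supp}u)$.
\item Your absorption argument for small $\beta$ is correct: with $1/\alpha_0=(\frac12-\frac1p)\beta_0+1$, one checks $\alpha_0/(\alpha_0+\frac{\beta}{\beta_0}(1-\alpha_0))=\alpha$. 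It simply becomes redundant once you start at $r_0=\beta$.
\item Taking $r_0=0$, i.e.\ $q_1=1$, gives the $\beta=0$ endpoint directly, in the only form in which it is true: $\|u\|_\infty\le c\,\mu(\operatorname{supp}u)^{\frac12-\frac1p}D$. Read literally with $\|u\|_0^{0}=1$, that endpoint fails by scaling. Take a dilated bump on a plane, where $\|\nabla u\|_p$ scales like $\lambda^{2/p-1}\to0$.
\end{itemize}

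\textbf{Comparison with the standard route.} A standard alternative is the De Giorgi--Stampacchia truncation argument. Michael--Simon applied to $(|u|-k)_+$ gives $(h-k)\,\mu(A(h))^{1/2}\le c\,\mu(A(k))^{1-\frac1p}D$ for $h>k$, where $A(k)=\{|u|>k\}$. Since $2-\frac2p>1$, Stampacchia's lemma yields $\|u\|_\infty\le k+c\,D\,\mu(A(k))^{\frac12-\frac1p}$. Chebyshev's inequality $\mu(A(k))\le k^{-\beta}\|u\|_\beta^\beta$, followed by optimising in $k$, then produces the exponent $\alpha$ for all $\beta$ at once. That route works with truncations rather than powers and needs no exponent recursion or infinite product of constants. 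Yours delivers the intermediate $L^{r_j}$ bounds along the way, at the price of the bookkeeping you identified as the main obstacle. Once your iteration is started at $r_0=\beta$, both approaches cover the full range of $\beta$ uniformly.
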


The following theorem is a classical result originally due to Codazzi. Recall an immersion $\Sigma$ is said to be \emph{umbilic} at a point $p$ if the trace-free second fundamental form is equal to zero at $p$. If the trace-free second fundamental form is identically zero at all points of $\Sigma$, then $\Sigma$ is said to be an \emph{umbilic immersion}.
\begin{theorem}\label{SphereCodazzi}
Let $f:\Sigma^{2}\rightarrow\mathbb{R}^{3}$ be a connected regular, umbilic immersion. Then $f$ is a plane or sphere.
\end{theorem}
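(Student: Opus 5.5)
The plan is to argue pointwise and reduce everything to the classical fact that an umbilic point set with continuous umbilic curvature forces a constant principal curvature. I would work in local coordinates about an arbitrary point $q\in\Sigma$. Since $A^o\equiv0$, we have $A_{ij}=\frac12 H g_{ij}$ on all of $\Sigma$. Differentiating covariantly gives $\nabla_k A_{ij}=\frac12\nabla_k H\,g_{ij}$. The Codazzi equations say this $(0,3)$-tensor is totally symmetric, so
\[
\nabla_kH\,g_{ij}=\nabla_iH\,g_{kj}.
\]
Tracing over $i$ and $j$, with the dimension equal to $2$, gives $2\nabla_kH=\nabla_kH$. Equivalently, one can use \eqref{EQabove10}: since $A^o\equiv0$ we get $\frac12\nabla H=\nabla^*A^o=0$. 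Hence $\nabla H\equiv0$, and because $\Sigma$ is connected, $H\equiv 2\kappa$ for a constant $\kappa$.

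Next I use the Weingarten equation. In a chart, $\partial_i\nu=A_i^{\,j}\partial_jf=\kappa\,\partial_if$, with the sign fixed by the convention $A_{ij}=-(\partial_{ij}f,\nu)$. This follows from differentiating $(\partial_j f,\nu)=0$, which gives $(\partial_jf,\partial_i\nu)=A_{ij}$, together with $(\nu,\partial_i\nu)=0$. We then split into two cases.

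\textbf{Case $\kappa=0$.} Here $\partial_i\nu=0$, so $\nu\equiv\nu_0$ is constant on the connected surface. Then $\partial_i(f,\nu_0)=0$, so $f(\Sigma)$ lies in the affine plane $\{x:(x,\nu_0)=c\}$.

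\textbf{Case $\kappa\neq0$.} Consider $x_0:=f-\kappa^{-1}\nu$. We have $\partial_i x_0=\partial_if-\kappa^{-1}\kappa\,\partial_if=0$, so $x_0$ is constant by connectedness. Then $|f-x_0|=|\kappa|^{-1}$, and $f(\Sigma)$ lies in the round sphere of radius $|\kappa|^{-1}$ about $x_0$.

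The only mildly delicate step is the regularity needed to differentiate, together with getting the sign conventions consistent. Any $C^3$ immersion suffices: we need $\nabla A$ and Codazzi, and here "regular" gives enough. The proof only asserts containment of the image in a plane or sphere, which is what "is a plane or sphere" means. If full equality of images is wanted, as used in Theorem \ref{GapTheorem}, one adds that the image is open in the plane or sphere, since $f$ is an immersion between $2$-manifolds. Properness and connectedness then make it closed as well, so it is the whole plane or sphere.
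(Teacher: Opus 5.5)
Your proof is correct and follows essentially the same route as the paper. The paper states this result as classical without a separate proof, but sketches exactly this argument in the proof of Theorem~\ref{GapTheorem}: umbilicity plus the Codazzi equations make the common principal curvature constant, so the image lies in a plane or round sphere, and properness with connectedness upgrades containment to equality. Your version supplies the details the paper leaves implicit: $\nabla H=0$ via the trace computation or $\nabla^*A^o=\frac12\nabla H$, the Weingarten equation, the explicit centre $f-\kappa^{-1}\nu$, and the open--closed argument for the full image.
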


\begin{proposition}[Kuwert--Sch\"{a}tzle product interpolation (\cite{Kuwert2}, Corollary 5.5)]\label{EvolutionProposition4}
Let $0\leq i_{1},\dots,i_{r}\leq k,\sum_{j=1}^{r}i_{j}=2\, k$ and $s\geq2\, k$. Then for any tensor $T$ defined over a smooth immersed surface $f:\Sigma\rightarrow\mathbb{R}^{n}$ we have
\begin{equation*}
\norm{\intM{\nabla_{i_{1}}T*\cdots*\nabla_{i_{r}}T\gamma^{s} }}
\leq c\llll{T}_{\infty,[\gamma>0]}^{r-2}\oo{\intM{\nablanorm{k}{T}{2}\gamma^{s}}+\cgam^{2 k}\llll{T}_{2,\cc{\gamma>0}}^{2}},
\end{equation*}
where $c=c(k,r,s)$.
\end{proposition}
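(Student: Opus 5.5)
The plan is to reduce the multilinear integral to a one-factor interpolation inequality by H\"older's inequality, and then prove that one-factor inequality by integrating by parts once and running an induction. Only one derivative of $\gamma$ ever appears, and $|\nabla\gamma|=|D\tilde\gamma|_f*Df|\leq\cgam$. So no curvature terms from the cut-off arise, and the statement holds for an arbitrary tensor $T$.

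\textbf{Step 1 (H\"older reduction).} First I discard the factors with $i_j=0$ by bounding each of them by $\llll{T}_{\infty,[\gamma>0]}$. Let $r'$ denote the number of remaining factors, all with $1\leq i_j\leq k$ and $\sum i_j=2k$. Since $\sum_j \frac{i_j}{2k}=1$, H\"older's inequality with exponents $2k/i_j$, and with the weight $\gamma^s$ split as $\prod_j\gamma^{s i_j/2k}$, gives
\[
\Big|\intM{\nabla_{i_1}T*\cdots*\nabla_{i_r}T\,\gamma^s}\Big|
\leq c\,\llll{T}_{\infty,[\gamma>0]}^{\,r-r'}\prod_{j:\,i_j\geq1}\Big(\intM{|\nabla_{(i_j)}T|^{2k/i_j}\gamma^s}\Big)^{i_j/2k}.
\]

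\textbf{Step 2 (key interpolation).} Next I prove that for $1\leq i\leq k$ and $s\geq 2k$,
\begin{equation*}
\intM{|\nabla_{(i)}T|^{2k/i}\gamma^s}\leq c\,\llll{T}_{\infty,[\gamma>0]}^{\,2k/i-2}\Big(\intM{|\nabla_{(k)}T|^2\gamma^s}+\cgam^{2k}\llll{T}_{2,[\gamma>0]}^2\Big).
\end{equation*}
Substituting this into Step 1, the power of $\llll{T}_\infty$ coming from the nonzero factors is $\sum_j(1-\frac{i_j}{k})=r'-2$. The bracket appears with total exponent $\sum_j i_j/2k=1$. Together with the $r-r'$ factors of $\llll{T}_\infty$ from Step 1, this gives exactly the claimed power $r-2$ and proves the proposition. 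The case $i=k$ is trivial, so the content lies in $1\leq i<k$.

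\textbf{Step 3 (proof of the interpolation, the main obstacle).} This is the step I expect to carry the real work, namely keeping the powers of $\gamma$ sharp. I would follow Kuwert--Sch\"atzle. For $q=2k/i$, write
\[
|\nabla_{(i)}T|^{q}\gamma^s=\inner{\nabla\nabla_{(i-1)}T,\;\nabla_{(i)}T\,|\nabla_{(i)}T|^{q-2}\gamma^s}
\]
and integrate by parts once. This produces three kinds of terms:
\begin{itemize}
\item terms $\nabla_{(i-1)}T*\nabla_{(i+1)}T\,|\nabla_{(i)}T|^{q-2}\gamma^s$, where the derivative of $|\nabla_{(i)}T|^{q-2}$ is again bounded by $|\nabla_{(i+1)}T|$ since $\nabla g=0$;
\item a term $s\,\cgam\,|\nabla_{(i-1)}T|\,|\nabla_{(i)}T|^{q-1}\gamma^{s-1}$ coming from $\nabla\gamma$;
\item no curvature terms.
\end{itemize}
H\"older's and Young's inequalities then give a chain of estimates linking the $L^{q_i}$-norms, $q_i=2k/i$, of $\nabla_{(i-1)}T$, $\nabla_{(i)}T$ and $\nabla_{(i+1)}T$ with weights $\gamma^{s-(k-i)}$-type. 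I would close this chain by induction on $k$, as in \cite{Kuwert2}, Lemma 5.3--Corollary 5.5. The endpoints are $\nabla_{(0)}T$, estimated in $L^\infty$ times $L^2$, and $\nabla_{(k)}T$ in $L^2$. The lower-order cut-off terms accumulate into $\cgam^{2k}\llll{T}_{2,[\gamma>0]}^2$ because of the assumption $s\geq 2k$: each use of $\nabla\gamma$ costs one power of $\gamma$ and one $\cgam$, and at most $2k$ are spent. The delicate bookkeeping is to verify at each Young step that the $\gamma$-exponent stays nonnegative and that the factor of the top-order term can be absorbed; I would check this by tracking the scaling, since each term must scale like $\cgam^{2k}|T|^{q}$ in $T$ and like length$^{-2k}$.
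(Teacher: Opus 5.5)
Your proposal is correct and is essentially the Kuwert--Sch\"atzle argument behind the cited Corollary 5.5 (the paper itself gives no proof). The product estimate follows from the weighted Gagliardo--Nirenberg inequality of your Step 2 by exactly the H\"older splitting of your Step 1, and that inequality comes from one integration by parts per level, H\"older, and a Hamilton-type iteration with endpoints $\llll{T}_{\infty,[\gamma>0]}$ and $\llll{\nabla_{(k)}T}_{2,\gamma^s}$.

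When you write out Step 3, two details need care. First, keep the same weight $\gamma^s$ at every level, since the main term then splits exactly as $\gamma^{s(i-1)/2k}\gamma^{s(i+1)/2k}\gamma^{s(k-i)/k}$; the $i$-dependent weights $\gamma^{s-(k-i)}$ you suggest do not recombine under H\"older. Second, close the cut-off term as follows: apply H\"older with $\nabla_{(i-1)}T$ in $L^{2k/i}(\gamma^{s-2k/i})$, then iterate the $L^{2k/i}$ form of Lemma \ref{EvolutionLemma3} down to $T$ (this is where $s\geq 2k$ is used), and finish with $|T|^{2k/i}\leq\llll{T}_{\infty,[\gamma>0]}^{2k/i-2}|T|^2$.
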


\begin{lemma}[Kuwert--Sch\"{a}tzle derivative interpolation (\cite{Kuwert2}, Corollary 5.3)] \label{EvolutionLemma3}
Let $2\leq p<\infty$, $k,m\in\mathbb{N}$ and $s\geq k\, p$.
Then for any $\delta>0$, there exists a constant $c_{\delta}>0$, depending only on $p,\,s$, $k$ and $\delta$, such that for any smooth tensor $T$,
\begin{align*}
  \oo{\intM{\nablanorm{k}{T}{p}\gamma^{s} }}^{\frac{1}{p}}
\leq\delta\, \cgam^{-1}\oo{\intM{\nablanorm{k+1}{T}{p}\gamma^{s+p} }}^{\frac{1}{p}}+c_{\delta}\, \cgam^{k}\oo{\int_{\cc{\gamma>0}}{\norm{T}^{p}}\gamma^{s-k\, p}\,d\mu}^{\frac{1}{p}} \mbox{.}
\end{align*}
\end{lemma}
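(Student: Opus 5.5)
The plan is to prove a one-step estimate linking derivative orders $k-1$, $k$, $k+1$ by a single integration by parts, and then to iterate it downward in $k$, absorbing intermediate terms. Throughout I use only $0\le\gamma\le1$, $\operatorname{supp}\gamma\subset[\gamma>0]$, and $|\nabla\gamma|\le|D\tilde\gamma||Df|\le c\,\cgam$, which follows from \eqref{GammaProp0}. For the weighted quantities I write
\[
J_j(\sigma):=\oo{\intM{\nablanorm{j}{T}{p}\gamma^{\sigma}}}^{\frac1p}.
\]

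\textbf{Step 1 (one-step estimate).} For $k\ge1$ and $s\ge p$ I want
\[
J_k(s)\le \varepsilon\,\cgam^{-1}J_{k+1}(s+p)+C_\varepsilon\,\cgam\,J_{k-1}(s-p).
\]
Write $\nablanorm{k}{T}{p}=\inner{\nabla\nabla_{(k-1)}T,\nabla_{(k)}T}\,\nablanorm{k}{T}{p-2}$ and integrate by parts once. When $2\le p<4$ the factor $\nablanorm{k}{T}{p-2}$ may fail to be differentiable at zeros of $\nabla_{(k)}T$. I would therefore replace it by $(\nablanorm{k}{T}{2}+\sigma)^{(p-2)/2}$ and let $\sigma\searrow0$ at the end; the derivative of this factor is bounded by $(p-2)\nablanorm{k}{T}{p-2}\nablanorm{k+1}{T}{}$ uniformly in $\sigma$.

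The result is
\[
J_k(s)^p\le c\intM{\nablanorm{k-1}{T}{}\nablanorm{k+1}{T}{}\nablanorm{k}{T}{p-2}\gamma^{s}}
+c\,s\,\cgam\intM{\nablanorm{k-1}{T}{}\nablanorm{k}{T}{p-1}\gamma^{s-1}}.
\]
I apply H\"older to the first integral with exponents $p,p,\frac{p}{p-2}$, splitting the weight as $\gamma^{(s-p)/p}\gamma^{(s+p)/p}\gamma^{s(p-2)/p}$. I apply H\"older to the second with exponents $p,\frac{p}{p-1}$ and weight split $\gamma^{(s-p)/p}\gamma^{s(p-1)/p}$; the exponents sum to $s-1$, as required. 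Dividing by $J_k(s)^{p-2}$ gives
\[
J_k(s)^2\le c\,J_{k-1}(s-p)J_{k+1}(s+p)+c\,\cgam\,J_{k-1}(s-p)J_k(s).
\]
Young's inequality, inserting the factors $\cgam^{\pm1}$ in the first product, then yields the one-step estimate.

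\textbf{Step 2 (induction on $k$).} I prove the lemma for all $s\ge kp$ by induction on $k$, with the one-step estimate as the base case $k=1$. For the inductive step I apply the one-step estimate at level $k$ with weight $s$. The error term $\cgam J_{k-1}(s-p)$ is then bounded using the induction hypothesis at order $k-1$, whose weight requirement $s-p\ge(k-1)p$ holds. That hypothesis gives
\[
J_{k-1}(s-p)\le \eta\,\cgam^{-1}J_k(s)+c_\eta\,\cgam^{k-1}\oo{\int_{[\gamma>0]}\norm{T}^p\gamma^{s-kp}\,d\mu}^{\frac1p}.
\]
Choosing $\eta$ with $C_\varepsilon\,\eta\le\tfrac12$ lets me absorb $\tfrac12J_k(s)$ into the left-hand side. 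After renaming $\varepsilon$, this gives exactly the stated inequality with constant $c_\delta(p,s,k,\delta)$. The powers of $\cgam$ match because the statement is scale-consistent.

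\textbf{Main obstacle.} The main difficulty is the weight bookkeeping. Every H\"older split must produce nonnegative powers of $\gamma$, so that no negative power appears where $\gamma$ vanishes, and the final weight must come out as exactly $\gamma^{s-kp}$. This is precisely what the hypothesis $s\ge kp$ guarantees. A secondary point is justifying the integration by parts when $p<4$, which is handled by the $\sigma$-regularisation of Step 1; the bounds are uniform in $\sigma$, so the limit passes by dominated convergence.
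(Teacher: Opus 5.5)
Your proof is correct and is essentially the standard Kuwert--Sch\"atzle argument that the paper cites (Corollary 5.3 of \cite{Kuwert2}) without reproducing. That argument uses one integration by parts with H\"older and Young to get $J_k(s)\le\varepsilon\, c_\gamma^{-1}J_{k+1}(s+p)+C_\varepsilon\, c_\gamma J_{k-1}(s-p)$, then inducts on $k$, absorbing $J_k(s)$, with $s-p\ge(k-1)p$ making the induction hypothesis applicable. (A cosmetic point: the $\sigma$-uniform bound should be stated for the product $|\nabla_{(k)}T|\,\bigl|\nabla\bigl((|\nabla_{(k)}T|^2+\sigma)^{(p-2)/2}\bigr)\bigr|\le(p-2)(|\nabla_{(k)}T|^2+\sigma)^{(p-2)/2}|\nabla_{(k+1)}T|$ rather than for the derivative alone, and the regularisation is in fact unnecessary since $v\mapsto|v|^{p-2}v$ is $C^1$ for $p\ge2$.)
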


\begin{lemma}\label{NewLemma1}
Suppose $T$ is a tensor field, let $k\in\mathbb{N}$ and $s\geq 2k$.
Then there exists a constant $C=C(k,s)$ such that
\begin{equation}
\intM{\nablanorm{k}{T}{2}\gamma^{s}}
\leq
C\oo{\intM{\norm{T}^{2}\gamma^{s-2k}}}^{\frac{1}{k+1}}
 \oo{\intM{\nablanorm{k+1}{T}{2}\gamma^{s+2}}}^{\frac{k}{k+1}}
+ C\,\cgam^{2k}\intM{\norm{T}^{2}\gamma^{s-2k}}.
\label{NewLemma1:1}
\end{equation}
Moreover, for any $k,m\in\mathbb{N}$ and $s\geq2k$,
\begin{multline}
\intM{\nablanorm{k}{T}{2}\gamma^{s}}
\leq
C\oo{\intM{\norm{T}^{2}\gamma^{s-2k}}}^{\frac{m}{k+m}}
 \oo{\intM{\nablanorm{k+m}{T}{2}\gamma^{s+2m}}}^{\frac{k}{k+m}}\\
+ C\,\cgam^{2k}\intM{\norm{T}^{2}\gamma^{s-2k}},
\label{NewLemma1:2}
\end{multline}
where $C=C(k,m,s)$ in \eqref{NewLemma1:2}.
\end{lemma}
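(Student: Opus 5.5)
We prove Lemma \ref{NewLemma1} in two stages: first \eqref{NewLemma1:1} by induction on $k$, then \eqref{NewLemma1:2} by iterating \eqref{NewLemma1:1} in the manner of the proof of Theorem \ref{HigherOrderSobolevTheorem1}.

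\textbf{Base case $k=1$.} Integrate by parts once:
\[
\intM{|\nabla T|^2\gamma^s}
=-\intM{\inner{T,\Delta T}\gamma^s}
-\intM{T*\nabla T*\nabla\gamma^s}.
\]
Since $|\Delta T|\le c|\nabla_{(2)}T|$, Cauchy--Schwarz on the first term, splitting the weight as $\gamma^{s-2}\cdot\gamma^{s+2}$, gives at most
\[
c\Big(\intM{|T|^2\gamma^{s-2}}\Big)^{1/2}\Big(\intM{|\nabla_{(2)}T|^2\gamma^{s+2}}\Big)^{1/2}.
\]
The second term is controlled by $c\,\cgam\intM{|T||\nabla T|\gamma^{s-1}}$ via $|\nabla\gamma|\le c\cgam$. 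Because $s-1=\tfrac{s-2}{2}+\tfrac s2$, Young's inequality bounds it by $\tfrac12\intM{|\nabla T|^2\gamma^s}+c\,\cgam^2\intM{|T|^2\gamma^{s-2}}$, and the first piece is absorbed on the left. This proves \eqref{NewLemma1:1} for $k=1$.

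One caveat concerns \eqref{GammaProp}: the full bound on $\nabla\gamma$ involves only $D\tilde\gamma*Df$, so $|\nabla\gamma|\le c\cgam$ holds with no curvature terms. Higher derivatives of $\gamma$ would bring in $A$. To avoid them, we only ever differentiate $\gamma$ once, performing a single integration by parts at each step rather than expanding $\nabla_{(j)}\gamma^s$.

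\textbf{Inductive step for \eqref{NewLemma1:1}.} Apply the $k=1$ estimate to the tensor $\nabla_{(k-1)}T$ with weight $\gamma^s$:
\[
\intM{|\nabla_{(k)}T|^2\gamma^s}\le C\Big(\intM{|\nabla_{(k-1)}T|^2\gamma^{s-2}}\Big)^{1/2}\Big(\intM{|\nabla_{(k+1)}T|^2\gamma^{s+2}}\Big)^{1/2}+C\cgam^2\intM{|\nabla_{(k-1)}T|^2\gamma^{s-2}}.
\]
The middle quantity $\intM{|\nabla_{(k-1)}T|^2\gamma^{s-2}}$ is then estimated by the inductive hypothesis with $s-2\ge 2(k-1)$. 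This bounds it by $I_0^{1/k}J^{(k-1)/k}+\cgam^{2(k-1)}I_0$, where $I_0=\intM{|T|^2\gamma^{s-2k}}$ and $J=\intM{|\nabla_{(k)}T|^2\gamma^s}$; the weights match, since $(s-2)-2(k-1)=s-2k$ and $(s-2)+2=s$.

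Denote the left side also by $J$ and set $K=\intM{|\nabla_{(k+1)}T|^2\gamma^{s+2}}$. The resulting inequality has the form
\[
J\le C\big(I_0^{1/k}J^{(k-1)/k}+\cgam^{2k-2}I_0\big)^{1/2}K^{1/2}+C\cgam^2\big(I_0^{1/k}J^{(k-1)/k}+\cgam^{2k-2}I_0\big).
\]
Repeated weighted Young inequalities, with exponents chosen so that each term is homogeneous under the scaling $I_0\sim\cgam^{-2k}J$, $K\sim\cgam^{2}J$, absorb every $J$ power $<1$ into the left. They leave exactly $C I_0^{1/(k+1)}K^{k/(k+1)}+C\cgam^{2k}I_0$, which is \eqref{NewLemma1:1}.

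\textbf{The exponent bookkeeping is the main obstacle.} The cleanest way to handle it is to check the two scaling homogeneities: in $\cgam$-dimension, and in the $I_0{:}J{:}K$ degrees. With those verified, the Young exponents are forced. For example, $I_0^{1/(2k)}J^{(k-1)/(2k)}K^{1/2}\le\eta J+C_\eta I_0^{1/(k+1)}K^{k/(k+1)}$.

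\textbf{Proof of \eqref{NewLemma1:2}.} Induct on $m$ exactly as in Theorem \ref{HigherOrderSobolevTheorem1}. Apply \eqref{NewLemma1:1} with $k+m$ in place of $k$ and weight $s+2m$, which is admissible since $s+2m\ge2(k+m)$. This controls $\intM{|\nabla_{(k+m)}T|^2\gamma^{s+2m}}$ by $I_0'^{\,1/(k+m+1)}K'^{(k+m)/(k+m+1)}+\cgam^{2(k+m)}I_0'$, where $I_0'=\intM{|T|^2\gamma^{s-2k}}$ (the same $I_0$) and $K'=\intM{|\nabla_{(k+m+1)}T|^2\gamma^{s+2m+2}}$. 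Substituting this into the $m$-case and using $(a+b)^\theta\le a^\theta+b^\theta$ for $\theta\le1$ combines the exponents into $\tfrac{m+1}{k+m+1}$ and $\tfrac{k}{k+m+1}$, while the cross term collapses to $\cgam^{2k}I_0$. This proves \eqref{NewLemma1:2} with $C=C(k,m,s)$.
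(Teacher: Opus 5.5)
Your argument is correct, but it is organised differently from the paper's. Both proofs rest on the same single integration by parts, which only ever differentiates $\gamma$ once:
$\|\nabla_{(j)}T\|^2_{2,\gamma^\sigma}\le C\|\nabla_{(j-1)}T\|_{2,\gamma^{\sigma-2}}\|\nabla_{(j+1)}T\|_{2,\gamma^{\sigma+2}}+C\cgam^2\|\nabla_{(j-1)}T\|^2_{2,\gamma^{\sigma-2}}$.

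\textbf{The paper's route.} It uses the Kuwert--Sch\"atzle interpolation, Lemma \ref{EvolutionLemma3}, to replace the lower-order error by $\cgam^{2j}\|T\|^2_{2,\gamma^{\sigma-2j}}$. This gives a log-convexity chain $X_j\le CX_{j-1}^{1/2}X_{j+1}^{1/2}+C\cgam^{2j}X_0$ for $X_j=\|\nabla_{(j)}T\|^2_{2,\gamma^{s-2(k-j)}}$. It then invokes a discrete interpolation lemma, only sketched, with $N=k+m$ to obtain \eqref{NewLemma1:1} and \eqref{NewLemma1:2} simultaneously.

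\textbf{Your route.} You avoid Lemma \ref{EvolutionLemma3} entirely. The inductive hypothesis at order $k-1$, with weight $s-2\ge2(k-1)$, is fed into both the product term and the error term, and Young's inequality closes the step. Then \eqref{NewLemma1:2} follows by composing \eqref{NewLemma1:1} at level $k+m$ with weight $\gamma^{s+2m}$. The bookkeeping there is right: $s+2m-2(k+m)=s-2k$, and $\frac{m}{k+m}+\frac{k}{(k+m)(k+m+1)}=\frac{m+1}{k+m+1}$.

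\textbf{Comparison.} Your version is fully self-contained; in effect it proves the case of the discrete lemma that is needed. The paper's version is shorter on the page and yields every intermediate interpolation $X_j$, $0\le j\le N$, at once.

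\textbf{One step to make explicit.} In the inductive step there is a $J$-free cross term $\cgam^{k-1}I_0^{1/2}K^{1/2}$. It is not absorbed into the left side. Instead it is bounded by weighted AM--GM as $\bigl(I_0^{1/(k+1)}K^{k/(k+1)}\bigr)^{\frac{k+1}{2k}}\bigl(\cgam^{2k}I_0\bigr)^{\frac{k-1}{2k}}$. Your homogeneity check only places this term on the line through the two target terms. You also need it to lie between them, which holds because its $I_0$-exponent $\tfrac12$ lies in $[\tfrac{1}{k+1},1]$.
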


\begin{proof}
We first prove the elementary two-step estimate.  If $j\geq1$ and
$\sigma\geq2j$, then integration by parts gives
\begin{align}
\llll{\nabla_{(j)}T}_{2,\gamma^\sigma}^{2}
&\leq
C\llll{\nabla_{(j-1)}T}_{2,\gamma^{\sigma-2}}
 \llll{\nabla_{(j+1)}T}_{2,\gamma^{\sigma+2}}
+C\cgam
\llll{\nabla_{(j-1)}T}_{2,\gamma^{\sigma-2}}
 \llll{\nabla_{(j)}T}_{2,\gamma^\sigma}
\nonumber\\
&\leq
C\llll{\nabla_{(j-1)}T}_{2,\gamma^{\sigma-2}}
 \llll{\nabla_{(j+1)}T}_{2,\gamma^{\sigma+2}}
+\frac12\llll{\nabla_{(j)}T}_{2,\gamma^\sigma}^{2}
+C\cgam^2\llll{\nabla_{(j-1)}T}_{2,\gamma^{\sigma-2}}^{2}.
\end{align}
Iterating Lemma \ref{EvolutionLemma3} on the last term, with the powers
of $\gamma$ decreasing by two at each step, yields
\begin{equation}
\llll{\nabla_{(j)}T}_{2,\gamma^\sigma}^{2}
\leq
C\llll{\nabla_{(j-1)}T}_{2,\gamma^{\sigma-2}}
 \llll{\nabla_{(j+1)}T}_{2,\gamma^{\sigma+2}}
+C\cgam^{2j}\llll{T}_{2,\gamma^{\sigma-2j}}^{2}.
\label{NewLemma1:3}
\end{equation}

Now fix $k$ and put
\[
    X_j:=\llll{\nabla_{(j)}T}_{2,\gamma^{s-2(k-j)}}^{2},
    \qquad j=0,\ldots,k+m .
\]
Thus $X_0=\llll{T}_{2,\gamma^{s-2k}}^2$, $X_k$ is the left hand side of
\eqref{NewLemma1:2}, and
$X_{k+m}=\llll{\nabla_{(k+m)}T}_{2,\gamma^{s+2m}}^2$.
Applying \eqref{NewLemma1:3} with
$\sigma=s-2(k-j)$ gives, for $1\leq j\leq k+m-1$,
\begin{equation}
    X_j\leq C X_{j-1}^{1/2}X_{j+1}^{1/2}
       +C\cgam^{2j}X_0 .
\label{NewLemma1:discrete}
\end{equation}
The standard discrete interpolation lemma obtained by inducting on the
number of intermediate indices in \eqref{NewLemma1:discrete} gives
\begin{equation}
    X_j
    \leq
    C X_0^{1-j/N}X_N^{j/N}+C\cgam^{2j}X_0,
    \qquad 0\leq j\leq N,
\label{NewLemma1:discrete-result}
\end{equation}
whenever $N\geq1$.  Indeed, the error terms are stable under the
induction because the inequality
$\cgam^{2j}\leq C(\cgam^{2(j-1)}\cgam^{2(j+1)})^{1/2}$ and Young's
inequality absorbs the resulting powers.
Taking $N=k+m$ and $j=k$ in \eqref{NewLemma1:discrete-result} proves
\eqref{NewLemma1:2}.  The special case $m=1$ is exactly
\eqref{NewLemma1:1}.
\end{proof}

\begin{lemma}[Simon local area bound]\label{L:simon-local-area}
This is the local area estimate used by Kuwert--Sch\"{a}tzle \cite[Lemma 4.1]{Kuwert1}.
Let $f:\Sigma\to\mathbb R^3$ be a proper immersion.  There is a
universal constant $C<\infty$ such that, for $0<\sigma\leq R<\infty$,
\[
    \frac{\mu(f^{-1}(B_\sigma(x)))}{\sigma^2}
    \leq
    C\left(
        \frac{\mu(f^{-1}(B_R(x)))}{R^2}
        +\int_{f^{-1}(B_R(x))}|H|^2\,d\mu
    \right).
\]
In particular, if $\Sigma$ is closed, then letting $R\to\infty$ gives
\[
    \mu(f^{-1}(B_\sigma(x)))
    \leq C\sigma^2\int_\Sigma |H|^2\,d\mu .
\]
\end{lemma}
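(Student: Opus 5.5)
The plan is to derive the estimate from the first variation formula for the area of the proper immersion, tested against Simon's radial vector field. This is the classical monotonicity argument. Translate so that $x=0$ and write $r=|y|$ for $y\in\R^3$. Because $f$ is proper, $f^{-1}(\overline{B_R(0)})$ is compact. Hence any vector field on $\R^3$ supported in $\overline{B_R(0)}$ pulls back to a compactly supported field along $f$, and the first variation formula applies to it:
\[
\int_\Sigma \operatorname{div}_\Sigma X\,d\mu=-\int_\Sigma\inner{\vec H,X}\,d\mu .
\]
Here $\vec H=-H\nu$, so $|\vec H|=|H|$. The kink of the field below at $r=\sigma$ and $r=R$ is harmless: it is Lipschitz, or one can smooth the cut-off and pass to the limit.

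I will take $X(y)=\big(\max(r,\sigma)^{-2}-R^{-2}\big)_+\,y$ and use $\operatorname{div}_\Sigma y=2$ together with $\operatorname{div}_\Sigma(g(r)y)=2g+rg'(r)|\nabla^\Sigma r|^2$. On $[r<\sigma]$ the divergence equals $2\sigma^{-2}-2R^{-2}$. On $[\sigma<r<R]$ it equals $2|y^\perp|^2/r^4-2R^{-2}$, using $1-|\nabla^\Sigma r|^2=|y^\perp|^2/r^2$. Since $\inner{\vec H,y}=\inner{\vec H,y^\perp}$, completing the square in the annulus should give Simon's identity
\begin{align*}
\frac{\mu(f^{-1}(B_\sigma))}{\sigma^2}+\int_{f^{-1}(B_R\setminus B_\sigma)}\Big|\tfrac14\vec H+\tfrac{y^\perp}{r^2}\Big|^2d\mu
&=\frac{\mu(f^{-1}(B_R))}{R^2}+\frac1{16}\int_{f^{-1}(B_R\setminus B_\sigma)}|H|^2d\mu\\
&\quad-\frac{1}{2\sigma^2}\int_{f^{-1}(B_\sigma)}\inner{\vec H,y}\,d\mu+\frac{1}{2R^2}\int_{f^{-1}(B_R)}\inner{\vec H,y}\,d\mu .
\end{align*}
I expect to check the constants carefully, but only their structure matters for the argument.

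Next I drop the nonnegative square on the left. The last two terms are controlled by Cauchy--Schwarz and Young, using $|y|\le\sigma$ on the small ball:
\[
\frac1{2\sigma^2}\int_{f^{-1}(B_\sigma)}|H||y|\,d\mu\le\frac{1}{2\sigma}\,\mu(f^{-1}(B_\sigma))^{1/2}\Big(\int_{f^{-1}(B_\sigma)}|H|^2\Big)^{1/2}\le\frac{\mu(f^{-1}(B_\sigma))}{4\sigma^2}+\frac14\int_{f^{-1}(B_R)}|H|^2 .
\]
The same bound with $R$ in place of $\sigma$ handles the $R$-term, giving $\tfrac14R^{-2}\mu(f^{-1}(B_R))+\tfrac14\int|H|^2$. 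Absorbing the $\sigma$-term into the left-hand side then yields
\[
\frac{\mu(f^{-1}(B_\sigma(x)))}{\sigma^2}\le C\Big(\frac{\mu(f^{-1}(B_R(x)))}{R^2}+\int_{f^{-1}(B_R(x))}|H|^2\,d\mu\Big)
\]
with a universal $C$ (for instance $C=4$).

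For the closed-surface consequence, note that $\mu(\Sigma)<\infty$, so $R^{-2}\mu(f^{-1}(B_R))\le R^{-2}|\Sigma|\to0$ as $R\to\infty$, while the curvature term increases to $\int_\Sigma|H|^2$. The only real obstacle I anticipate is getting the completed-square identity right, since the sign conventions for $\vec H$ versus the scalar $H$ must match the first variation formula. Everything after that is routine absorption. If the identity proves troublesome, I would instead bound $\int g|H||y|$ directly by $\tfrac14\int_{\text{annulus}}|y^\perp|^2/r^4+C\int|H|^2$ on the annulus and reabsorb into the $|y^\perp|^2/r^4$ term, which gives the same conclusion.
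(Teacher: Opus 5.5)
Your argument is correct, and it is the same route as the paper, which offers no proof of its own beyond citing Kuwert--Sch\"atzle \cite[Lemma 4.1]{Kuwert1}: your completed-square identity is exactly Simon's monotonicity formula (constants included, with $\int_\Sigma\operatorname{div}_\Sigma X\,d\mu=-\int_\Sigma\langle\vec H,X\rangle\,d\mu$ and $\vec H=-H\nu$), and that source derives the bound from it by the same Cauchy--Schwarz/Young absorption you use. Your bookkeeping in fact gives $\tfrac34\sigma^{-2}\mu(f^{-1}(B_\sigma))\leq\tfrac54R^{-2}\mu(f^{-1}(B_R))+\tfrac9{16}\int_{f^{-1}(B_R)}|H|^2\,d\mu$, so $C=2$ suffices, and properness supplies both the compact support of the test field and the finiteness of $\mu(f^{-1}(B_\sigma))$ needed for the absorption.
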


Next we state Langer's compactness theorem, in the local form used by Breuning \cite{Breuning1} and Kuwert--Sch\"{a}tzle \cite[Theorem 4.2]{Kuwert1}.

\begin{theorem}[Langer's compactness theorem \cite{langer1984compactness}]\label{KuwertSchatzleCompactnessTheorem}
Let $f_{j}:\Sigma_{j}\rightarrow\mathbb{R}^{m\geq3}$ be a sequence of proper immersions, where each $\Sigma_{j}$ is a surface without boundary. For $R>0$ define
\[
\Sigma_{j}(R):=\left\{p\in\Sigma_{j}:|f_{j}(p)| < R\right\} = \Sigma_{j}\cap f_{j}^{-1}(B_{R}).
\]
Assume the bounds
\[
  \begin{cases}
    \mu_{j}(\Sigma_{j}(R)) \leq c(R)\,\,\text{for any}\,\,R>0,\,\,\text{and}\\
    \llll{\nabla_{(k)}A}_{\infty}\leq c(k)\,\,\text{for any}\,\,k\in\mathbb{N}_{0} 
  \end{cases}
\]
hold. Then there is a proper immersion $\tilde{f}:\tilde{\Sigma}\rightarrow\R^{3}$ (where $\tilde{\Sigma}$ is also a $2-$manifold without boundary) such that after passing to a subsequence one has
\begin{equation}
f_{j}\circ \phi_{j} = \tilde{f} + u_{j}\,\,\text{on}\,\,\tilde{\Sigma}\cap \tilde{f}^{-1}(B_{j})\label{KuwertSchatzleCompactnessTheoremEqn1}
\end{equation}
satisfying the following properties
\[
  \begin{cases}
    \phi_{j}:\tilde{\Sigma}(j)\rightarrow U_{j}\subset\Sigma_{j}\,\,\text{is a diffeomorphism},\\
    \Sigma_{j}(R)\subset U_{j}\,\,\text{if}\,\,j \geq j(R),\\
    u_{j}\in C^{\infty}(\tilde{\Sigma},\R^{3})\,\,\text{is normal along}\,\,\tilde{f}(\tilde{\Sigma}),\,\,\text{and}\\
    ||\tilde{\nabla}_{(k)}u_{j}||_{\infty,\Sigma_{j}}\rightarrow 0 \,\, \text{as}\,\,j\rightarrow\infty\,\,\text{for any}\,\,k\in\mathbb{N}_{0}.
  \end{cases}
\]
Here $\tilde{\nabla}$ refers to the Levi-Civita connection for the limit immersion.
\end{theorem}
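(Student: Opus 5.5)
The plan is to follow Langer's original argument, as localised by Breuning and Kuwert--Sch\"atzle: represent each $f_j$ locally as a union of graphs over planes with uniform size, extract convergent subsequences chart by chart using Arzel\`a--Ascoli, and then glue the limits into a proper immersed surface $\tilde f:\tilde\Sigma\to\R^m$ over which the $f_j$ are eventually normal graphs.

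\textbf{Uniform graph radius.} From $\|A\|_\infty\leq c(0)$ I will show there is $r_0=r_0(c(0))>0$ such that, for every $j$ and every $q\in\Sigma_j$, the connected component of $f_j^{-1}(B_{r_0}(f_j(q)))$ containing $q$ is the graph of a function $u:T_q\to T_q^\perp$ over a domain in the tangent plane $T_q=df_j(T_q\Sigma_j)$ containing $B_{r_0/2}(0)$. This should follow from comparing normals along geodesics, $|\nu(q')-\nu(q)|\leq c(0)\,d(q,q')$, together with an inverse-function argument. In these graph coordinates $|Du|\leq 1$. The bounds $\|\nabla_{(k)}A\|_\infty\leq c(k)$ then translate, via the graph formulas for $g$, the Christoffel symbols and $A$ in terms of $D^2u$, into bounds $\|D^{k+2}u\|_\infty\leq C(k)$.

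\textbf{Counting sheets and extracting limits.} The area bound $\mu_j(\Sigma_j(R))\leq c(R)$, combined with the fact that each graph sheet through $B_{r_0/4}(x)$ has area at least $c r_0^2$, bounds the number of sheets meeting a given small ball uniformly in $j$. I will cover $B_R$ by finitely many balls $B_{r_0/4}(x_\alpha)$. After passing to a subsequence, the number of sheets over each ball is constant, and their tangent planes converge in the Grassmannian. Arzel\`a--Ascoli in every $C^k$, applied with a diagonal argument over $k$, $\alpha$ and $R=1,2,\dots$, then gives smooth convergence of every sheet to a limit graph.

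\textbf{Gluing (main obstacle).} Constructing $\tilde\Sigma$ is the delicate step. I define $\tilde\Sigma$ abstractly as the disjoint union of the limit sheets, modulo the identification induced by the overlap of the approximating sheets. Two limit sheets are identified on an open set when, for all large $j$, their approximating sheets belong to the same local piece of $\Sigma_j$. This relation is stable under the convergence, and transition maps between charts converge smoothly, so the result is a smooth 2-manifold. Hausdorffness comes from the uniform graph radius: sheets that separate stay separated by a definite amount. The map $\tilde f$ is an immersion because its pieces are graphs, and it is proper because only finitely many sheets meet each bounded set. To produce the normal graph representation $f_j\circ\phi_j=\tilde f+u_j$, I will use a tubular neighbourhood of $\tilde f(\tilde\Sigma\cap\tilde f^{-1}(B_j))$. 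Its width is uniform on compact sets thanks to the curvature bound. For large $j$ the $C^1$-closeness of the sheets lets me write each sheet of $f_j$ as a normal graph over the corresponding sheet of $\tilde f$, and these local graphs patch to a diffeomorphism $\phi_j$ onto an open $U_j\supset\Sigma_j(R)$. Finally, $C^k$-convergence of the graph functions yields $\|\tilde\nabla_{(k)}u_j\|_\infty\to0$. I expect the bookkeeping of the identification and the verification that $\phi_j$ is globally injective to be the hardest part; I would handle injectivity by the sheet-counting, which is eventually constant on each ball.
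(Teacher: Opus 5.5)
The paper gives no proof of this statement: it is quoted as a black box from Langer, in the localised form of Breuning and Kuwert--Sch\"atzle. Your outline is the standard proof from those sources: uniform graph radius from $\|A\|_\infty\leq c(0)$, sheet counting from the local area bound, Arzel\`a--Ascoli with a diagonal argument, and gluing the limit sheets according to the eventually constant overlap pattern of the approximating sheets in $\Sigma_j$.

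Three points need tightening.

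First, injectivity of $\phi_j$ does not come from the sheet count alone. Within one ball the count gives a bijection between approximating sheets and limit sheets. Across balls, injectivity comes from having defined the identification through that overlap pattern: if two limit points have the same image under $\phi_j$, that point lies in both approximating sheets, so the two limit points were identified.

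Second, Hausdorffness of $\tilde\Sigma$ comes from the same overlap-pattern definition. The clean way to see it is that points of distinct components of $f_j^{-1}(B_{r_0}(x))$ lying over $B_{r_0/2}(x)$ are at intrinsic distance at least $r_0$.

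Third, to get $\|\tilde\nabla_{(k)}u_j\|_\infty\to0$ on the growing domains $\tilde\Sigma(j)$, you must re-index the subsequence diagonally.
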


The next lemma is similar to Lemma 23 from \cite{mccoy2015geometric}.
\begin{lemma}\label{MultliplicativeSobolevLemma2}
Suppose $f:\Sigma\rightarrow\mathbb{R}^{3}$ is an immersion satisfying
\[
\int_{[\gamma>0]}{\norm{A^{o}}^{2}\,d\mu} \leq \varepsilon_{0}
\]
for $\varepsilon_{0}>0$ sufficiently small, then for a universal constant $c>0$,
\begin{equation}
\intM{\norm{\nabla A^{o}}^{2}\gamma^{2}} + \intM{\norm{A^{o}}^{2}H^{2}\gamma^{2}} \leq c\intM{\norm{\nabla H}^{2}\gamma^{2}} +  c\,\cgam^{2}\Ao{2}, \label{MultliplicativeSobolevLemma2Eqn1}
\end{equation}
\begin{equation}
\intM{\nablanorm{2}{H}{2}\gamma^{4}} + \intM{\norm{\nabla H}^{2}H^{2}\gamma^{4}} \leq c \intM{\norm{\Delta H}^{2}\gamma^{4}} + c\,\cgam^{4}\Ao{2}, \label{MultliplicativeSobolevLemma2Eqn2}
\end{equation}
and
\begin{equation}
\intM{\norm{A^{o}}^{4}\gamma^{2}} \leq c\Ao{2}\oo{\intM{\norm{\nabla H}^{2}\gamma^{2}} + c\,\cgam^{2}\Ao{2}}.\label{MultliplicativeSobolevLemma2Eqn3}
\end{equation}
\end{lemma}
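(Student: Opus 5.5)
The proof follows the template of Kuwert--Sch\"atzle's local tracefree estimates: integrate by parts against the weight $\gamma^2$ (respectively $\gamma^4$), commute covariant derivatives using the Gauss equation, and absorb the cubic and quartic terms using the smallness of $\|A^o\|_{2,[\gamma>0]}$.

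\emph{First inequality.} The starting point is the tracefree Codazzi identity \eqref{EQabove10}, $\nabla^*A^o=\frac12\nabla H$, together with the Simons-type identity. For the symmetric, tracefree, Codazzi-type tensor $A^o$ on a surface this reads
\[
\Delta A^o=\tfrac12\nabla^2H-\tfrac12 g\,\Delta H+K*A^o,
\qquad K=\tfrac14H^2-\tfrac12|A^o|^2 .
\]
One then expands $\intM{|\nabla A^o|^2\gamma^2}$ by integrating by parts twice, exchanging the divergence and curl parts; in two dimensions the antisymmetric part of $\nabla A^o$ is controlled by $\nabla^*A^o$. This gives
\[
\intM{|\nabla A^o|^2\gamma^2}+\intM{K|A^o|^2\gamma^2}
\le c\intM{|\nabla H|^2\gamma^2}+(\text{terms with }\nabla\gamma).
\]
The curvature term splits as
\[
K|A^o|^2=\tfrac14H^2|A^o|^2-\tfrac12|A^o|^4 .
\]
The piece $\tfrac14 H^2|A^o|^2$ has a favourable sign and produces the second term on the left of \eqref{MultliplicativeSobolevLemma2Eqn1}. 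The cut-off terms are of the form $\cgam|A^o||\nabla A^o|\gamma$ and are absorbed by Young's inequality, leaving $c\,\cgam^2\Ao{2}$.

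\emph{Bad quartic term.} The piece $\intM{|A^o|^4\gamma^2}$ is exactly what \eqref{MultliplicativeSobolevLemma2Eqn3} controls. Apply Theorem \ref{MichaelSimon} with $u=|A^o|^2\gamma$ and use Cauchy--Schwarz:
\[
\intM{|A^o|^4\gamma^2}\le c\Ao{2}\Big(\intM{|\nabla A^o|^2\gamma^2}+\intM{|A^o|^2H^2\gamma^2}+\cgam^2\Ao{2}\Big).
\]
For $\varepsilon_0$ small, $c\varepsilon_0$ times the left side of \eqref{MultliplicativeSobolevLemma2Eqn1} is absorbed, which proves \eqref{MultliplicativeSobolevLemma2Eqn1}. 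Substituting \eqref{MultliplicativeSobolevLemma2Eqn1} back into the Michael--Simon bound then yields \eqref{MultliplicativeSobolevLemma2Eqn3}.

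\emph{Second inequality.} Apply the same scheme one derivative higher, to $\nabla H$ with weight $\gamma^4$. By the Bochner formula,
\[
\intM{|\nabla^2H|^2\gamma^4}=\intM{|\Delta H|^2\gamma^4}-\intM{K|\nabla H|^2\gamma^4}+(\text{terms with }\nabla\gamma).
\]
The $H^2|\nabla H|^2$ part of $K$ again has a good sign and supplies the second term on the left of \eqref{MultliplicativeSobolevLemma2Eqn2}. The remaining piece $|A^o|^2|\nabla H|^2\gamma^4$ is bounded by Cauchy--Schwarz and Michael--Simon, applied to $|A^o|\gamma$ and $|\nabla H|\gamma^2$, and this requires passing through \eqref{MultliplicativeSobolevLemma2Eqn1}--\eqref{MultliplicativeSobolevLemma2Eqn3} with a $\gamma^2$ shift. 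The resulting bound is small times the left-hand side plus lower-order terms. Lower-order cut-off terms such as $\cgam^2\intM{|\nabla H|^2\gamma^2}$ are reduced by interpolation, $|\nabla H|^2\le 4|\nabla A^o|^2$, Lemma \ref{NewLemma1}, and \eqref{MultliplicativeSobolevLemma2Eqn1}, until only $\cgam^4\Ao{2}$ remains.

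\emph{Main obstacle.} The hardest step is the quartic term $|A^o|^2|\nabla H|^2\gamma^4$. It needs an $L^4$ bound on $\nabla H$, which I would obtain from Michael--Simon applied to $|\nabla H|^2\gamma^2$. Closing the estimate uses the smallness of $\varepsilon_0$ twice, and care is needed to keep the powers of $\gamma$ consistent so that every error term carries enough cut-off weight to be absorbed.
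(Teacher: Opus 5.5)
Your arguments for \eqref{MultliplicativeSobolevLemma2Eqn1} and \eqref{MultliplicativeSobolevLemma2Eqn3} are essentially the paper's. You use Simons' identity, two integrations by parts, then Theorem~\ref{MichaelSimon} with $u=|A^o|^2\gamma$ and absorption. (The identity should read $\Delta A^o=\nabla_{(2)}H-\frac12 g\,\Delta H+2KA^o$; your version with $\frac12\nabla_{(2)}H$ is not tracefree, but only constants change.)

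The gap is in \eqref{MultliplicativeSobolevLemma2Eqn2}. After the Bochner identity you are left with terms involving the lower-order quantity $X:=\intM{|\nabla H|^2\gamma^2}$:
\begin{itemize}
\item the cut-off term $\cgam^2X$;
\item the quartic term $\intM{|\nabla H|^2|A^o|^2\gamma^4}$, which after Michael--Simon (in your $L^4$ form, or the paper's) leaves factors such as $\Ao{}\,X\,(\intM{|\nabla_{(2)}H|^2\gamma^4}+\dots)^{1/2}$ or $(\intM{|\nabla A^o|^2\gamma^2})^2$.
\end{itemize}
The right-hand side of \eqref{MultliplicativeSobolevLemma2Eqn2} contains only $\intM{|\Delta H|^2\gamma^4}$ and $\cgam^4\Ao{2}$. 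So $X$ must be bounded using $\|A^o\|_{2,[\gamma>0]}$ and second derivatives of $H$; $\|H\|_2$ and $\|A\|_2$ are neither small nor admissible.

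The chain you name does not achieve this:
\begin{itemize}
\item Using $|\nabla H|^2\le4|\nabla A^o|^2$ and then Lemma~\ref{NewLemma1} with $T=A^o$ produces $\intM{|\nabla_{(2)}A^o|^2\gamma^4}$. This is not controlled by the left side of \eqref{MultliplicativeSobolevLemma2Eqn2}: pointwise you only have $|\nabla_{(2)}H|^2\le4|\nabla_{(2)}A^o|^2$, which is the wrong direction, and the reverse would need a separate second-order elliptic estimate.
\item Appealing to \eqref{MultliplicativeSobolevLemma2Eqn1} just returns you to $X$, which is circular.
\item Lemma~\ref{NewLemma1} with $T=H$ brings in $\|H\|_{2,[\gamma>0]}$.
\end{itemize}
So the real crux is not the $L^4$ bound on $\nabla H$ but the elimination of $X$.

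The missing idea is to use the tracefree Codazzi identity \eqref{EQabove10} in the opposite direction. Put the divergence back onto $\nabla H$ with one integration by parts:
\[
X=2\intM{\inner{\nabla^*A^o,\nabla H}\gamma^2}
=-2\intM{\inner{A^o,\nabla_{(2)}H}\gamma^2}+\intM{A^o*\nabla H*\nabla\gamma\,\gamma}.
\]
After Young's inequality this gives
\[
X\leq c\Ao{}\Big(\intM{|\nabla_{(2)}H|^2\gamma^4}\Big)^{1/2}+c\,\cgam^2\Ao{2}.
\]
This is exactly the step the paper uses, and it closes both problem terms:
\begin{itemize}
\item It turns $\cgam^2X$ into $\eta\intM{|\nabla_{(2)}H|^2\gamma^4}+C_\eta\cgam^4\Ao{2}$.
\item For the quartic term, the paper applies Theorem~\ref{MichaelSimon} directly to $u=|\nabla H||A^o|\gamma^2$. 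Combined with \eqref{MultliplicativeSobolevLemma2Eqn1} and the bound above, $(\intM{|\nabla A^o|^2\gamma^2})^2$ becomes $c\Ao{2}\intM{|\nabla_{(2)}H|^2\gamma^4}+c\,\cgam^4\Ao{4}$, which is absorbed for small $\varepsilon_0$.
\end{itemize}
Your $L^4$ route also closes once you have this bound, but fix the cut-off weight. Theorem~\ref{MichaelSimon} with $u=|\nabla H|^2\gamma^2$ produces $\cgam\intM{|\nabla H|^2\gamma}$, which carries too little weight to be bounded by $X$. Take $u=|\nabla H|^2\gamma^3$ instead, and pair $\intM{|\nabla H|^4\gamma^6}$ with $\intM{|A^o|^4\gamma^2}$.
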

\begin{proof}
Firstly using the results of Lemma 22 from \cite{mccoy2015geometric}, the following identities hold:
\begin{equation}
\Delta A^{o} = S^{o}\left(\nabla_{(2)}H\right) + \frac{1}{2}H^{2}A^{o} - \norm{A^{o}}^{2}A^{o}\label{MultliplicativeSobolevLemma2Eqn4}
\end{equation}
and
\begin{equation}
\Delta\nabla H = \nabla\Delta H + \frac{1}{4}\nabla H\,\oo{H^{2}-2\norm{A^{o}}^{2}}.\label{MultliplicativeSobolevLemma2Eqn5}
\end{equation}
Here $S^{o}\oo{\nabla_{(2)}H} = \nabla_{(2)}H - \frac{1}{2}g\cdot\Delta H$ denotes the symmetric tracefree part of $\nabla_{(2)}H$. We aim to prove \eqref{MultliplicativeSobolevLemma2Eqn1} first. Applying integration by parts once, the identity \eqref{MultliplicativeSobolevLemma2Eqn4} and integration by parts again then yields
\begin{align}
&\intM{\norm{\nabla A^{o}}^{2}\gamma^{2}}
= -\intM{\inner{A^{o},\Delta A^{o}}\gamma^{2}} + \intM{\nabla A^{o} * A^{o} * \nabla\gamma * \gamma}\\
&= -\intM{\inner{A^{o},\nabla_{(2)}H}\gamma^{2}} - \frac{1}{2}\intM{\norm{A^{o}}^{2}H^{2}\gamma^{2}} + \intM{\norm{A^{o}}^{4}\gamma^{2}} + \intM{\nabla A^{o} * A^{o} * \nabla\gamma * \gamma}\\
&=\intM{\inner{\nabla^{*}A^{o},\nabla H}\gamma^{2}} - \frac{1}{2}\intM{\norm{A^{o}}^{2}H^{2}\gamma^{2}} + \intM{\norm{A^{o}}^{4}\gamma^{2}} + \intM{\nabla A^{o} * A^{o} * \nabla\gamma * \gamma}\\
& \qquad + \intM{\nabla H * A^{o} * \nabla\gamma * \gamma} \\
&\leq \frac{1}{2}\intM{\norm{\nabla H}^{2}\gamma^{2}}  - \frac{1}{2}\intM{\norm{A^{o}}^{2}H^{2}\gamma^{2}} + \intM{\norm{A^{o}}^{4}\gamma^{2}} + c\,\cgam\intM{\norm{\nabla A^{o}}\norm{A^{o}}\gamma}\\
& \qquad + c\,\cgam\intM{\norm{\nabla H}\norm{A^{o}}\gamma}\\
&\leq \frac{1}{2}\intM{\norm{\nabla H}^{2}\gamma^{2}}  - \frac{1}{2}\intM{\norm{A^{o}}^{2}H^{2}\gamma^{2}} + \intM{\norm{A^{o}}^{4}\gamma^{2}} + \eta\intM{\norm{\nabla A^{o}}^{2}\gamma^{2}} + c\,\cgam^{2}\Ao{2}.
\end{align}
Therefore absorbing into the left hand side and multiplying out gives
\begin{equation}
\intM{\norm{\nabla A^{o}}^{2}\gamma^{2}} + \intM{\norm{A^{o}}^{2}H^{2}\gamma^{2}} \leq c \intM{\norm{\nabla H}^{2}\gamma^{2}} + c\intM{\norm{A^{o}}^{4}\gamma^{2}} + c\,\cgam^{2}\Ao{2}.\label{MultliplicativeSobolevLemma2Eqn6}
\end{equation}

Next, we apply Theorem \ref{MichaelSimon} with $u=\norm{A^{o}}^{2}\gamma$, along with \eqref{MultliplicativeSobolevLemma2Eqn6} to give:
\begin{align}
\intM{\norm{A^{o}}^{4}\gamma^{2}} &\leq c\oo{\intM{\norm{\nabla A^{o}}\norm{A^{o}}\gamma} + \cgam\Ao{2} + \intM{\norm{A^{o}}^{2}\norm{H}\gamma}}^{2}\\
&\leq c\Ao{2}\oo{\intM{\norm{\nabla A^{o}}^{2}\gamma^{2}} + \intM{\norm{A^{o}}^{2}H^{2}\gamma^{2}}} + c\,\cgam^{2}\Ao{4}\\
&\leq c\Aoc{2}\oo{\intM{\norm{\nabla H}^{2}\gamma^{2}} + \intM{\norm{A^{o}}^{4}\gamma^{2}} + c\,\cgam^{2}\Ao{2}} + c\,\cgam^{2}\Ao{4},
\end{align}
and hence
\[
\bigl(1-c\Aoc{2}\bigr)\intM{\norm{A^{o}}^{4}\gamma^{2}} \leq c\Ao{2}\oo{\intM{\norm{\nabla H}^{2}\gamma^{2}} + c\,\cgam^{2}\Ao{2}}.
\]
Therefore if $\varepsilon_{0}>0$ is sufficiently small then \eqref{MultliplicativeSobolevLemma2Eqn3} holds. Combining \eqref{MultliplicativeSobolevLemma2Eqn3} (which was just proven) with \eqref{MultliplicativeSobolevLemma2Eqn6} then immediately gives \eqref{MultliplicativeSobolevLemma2Eqn1}.
Next we look at \eqref{MultliplicativeSobolevLemma2Eqn2}. Applying \eqref{MultliplicativeSobolevLemma2Eqn5} and integration by parts twice gives
\begin{align}
\intM{\nablanorm{2}{H}{2}\gamma^{4}} &= -\intM{\inner{\nabla H,\Delta\nabla H}\gamma^{4}} + \intM{\nabla_{(2)}H * \nabla H * \nabla\gamma * \gamma^{3}}\\
&= - \intM{\inner{\nabla H,\nabla\Delta H}\gamma^{4}} - \frac{1}{4}\intM{\norm{\nabla H}^{2}H^{2}\gamma^{4}} + \frac{1}{2}\intM{\norm{\nabla H}^{2}\norm{A^{o}}^{2}\gamma^{4}}\\
&\quad +  \intM{\nabla_{(2)}H * \nabla H * \nabla\gamma * \gamma^{3}}\\
&= \intM{\norm{\Delta H}^{2}\gamma^{4}} + \intM{\Delta H * \nabla H * \nabla\gamma * \gamma^{3}} - \frac{1}{4}\intM{\norm{\nabla H}^{2}H^{2}\gamma^{4}}\\
&\quad + \frac{1}{2}\intM{\norm{\nabla H}^{2}\norm{A^{o}}^{2}\gamma^{4}} +  \intM{\nabla_{(2)}H * \nabla H * \nabla\gamma * \gamma^{3}}\\
&\leq \intM{\norm{\Delta H}^{2}\gamma^{4}}  - \frac{1}{4}\intM{\norm{\nabla H}^{2}H^{2}\gamma^{4}} + c\,\cgam\intM{\norm{\Delta H}\norm{\nabla H}\gamma^{3}}\\
&\quad + \frac{1}{2}\intM{\norm{\nabla H}^{2}\norm{A^{o}}^{2}\gamma^{4}} + c\,\cgam\intM{\nablanorm{2}{H}{}\norm{\nabla H}\gamma^{3}}\\
&\leq c\intM{\norm{\Delta H}^{2}\gamma^{4}} + \eta\intM{\nablanorm{2}{H}{2}\gamma^{4}} -\frac{1}{4}\intM{\norm{\nabla H}^{2}H^{2}\gamma^{4}}\\
&\quad + c\,\eta^{-1}\,\cgam^{2}\intM{\norm{\nabla H}^{2}\gamma^{2}} + \frac{1}{2}\intM{\norm{\nabla H}^{2}\norm{A^{o}}^{2}\gamma^{4}},\label{MultliplicativeSobolevLemma2Eqn7}
\end{align}
which holds for any $\eta>0$. Next, by applying integration by parts and Young's inequality we have
\begin{align}
\intM{\norm{\nabla H}^{2}\gamma^{2}} &= 2\intM{\inner{\nabla^{*}A^{o},\nabla H}\gamma^{2}}\\
&=-2\intM{\inner{A^{o},\nabla_{(2)}H}\gamma^{2}} + \intM{A^{o} * \nabla H * \nabla\gamma * \gamma}\\
&\leq c\Ao{}\oo{\intM{\nablanorm{2}{H}{2}\gamma^{4}}}^{\frac{1}{2}} + c\,\cgam\Ao{}\oo{\intM{\norm{\nabla H}^{2}\gamma^{2}}}^{\frac{1}{2}}\\
&\leq c\Ao{}\oo{\intM{\nablanorm{2}{H}{2}\gamma^{4}}}^{\frac{1}{2}} + \frac{1}{2}\intM{\norm{\nabla H}^{2}\gamma^{2}} + c\,\cgam^{2}\Ao{2},
\end{align}
which implies
\begin{equation}
\intM{\norm{\nabla H}^{2}\gamma^{2}} \leq c\Ao{}\oo{\intM{\nablanorm{2}{H}{2}\gamma^{4}}}^{\frac{1}{2}} + c\,\cgam^{2}\Ao{2}.\label{MultliplicativeSobolevLemma2Eqn7,0}
\end{equation}
Substituting this back into \eqref{MultliplicativeSobolevLemma2Eqn7} and applying Young's inequality gives
\begin{align}
&(1-\eta)\intM{\nablanorm{2}{H}{2}\gamma^{4}} + \frac{1}{4}\intM{\norm{\nabla H}^{2}H^{2}\gamma^{4}}\\
&\leq c\intM{\norm{\Delta H}^{2}\gamma^{4}}
+ \frac{1}{2}\intM{\norm{\nabla H}^{2}\norm{A^{o}}^{2}\gamma^{4}}
\\&\qquad 
+ c\,\eta^{-1}\cgam^{2}\bigg(\Ao{}\oo{\intM{\nablanorm{2}{H}{2}\gamma^{4}}}^{\frac{1}{2}} + c\,\cgam^{2}\Ao{2}\bigg)
\\
&\leq c\intM{\norm{\Delta H}^{2}\gamma^{4}} + \tilde{\eta}\intM{\nablanorm{2}{H}{2}\gamma^{4}} + c\,\tilde{\eta}^{-1}\cgam^{4}\Ao{2}+ \frac{1}{2}\intM{\norm{\nabla H}^{2}\norm{A^{o}}^{2}\gamma^{4}}
\,,
\label{MultliplicativeSobolevLemma2Eqn7,1}
\end{align}
where $\tilde{\eta}=\tilde{\eta}(\eta)>0$ is a new constant obtained from Young's inequality that is allowed to be as small as desired. Then by choosing $\eta,\tilde{\eta}>0$ sufficiently small in \eqref{MultliplicativeSobolevLemma2Eqn7,1} and absorbing into the left hand side, we obtain
\begin{align}
\intM{\nablanorm{2}{H}{2}\gamma^{4}} &+ \intM{\norm{\nabla H}^{2}H^{2}\gamma^{4}}
\\&\leq c\intM{\norm{\Delta H}^{2}\gamma^{4}} + c\intM{\norm{\nabla H}^{2}\norm{A^{o}}^{2}\gamma^{4}} + c\,\cgam^{4}\Ao{2}.\label{MultliplicativeSobolevLemma2Eqn8}
\end{align}
for some absolute constants $c>0$. Next, applying Theorem \ref{MichaelSimon} with $u=\norm{\nabla H}\norm{A^{o}}\gamma^{2}$ yields
\begin{align}
&\intM{\norm{\nabla H}^{2}\norm{A^{o}}^{2}\gamma^{4}} \leq c\oo{\intM{\nablanorm{2}{H}{}\norm{A^{o}}\gamma^{2}}+ \intM{\norm{\nabla A^{o}}^{2}\gamma^{2}}+ c\,\cgam\intM{\norm{\nabla A^{o}}\norm{A^{o}}\gamma}}^{2}\\
&\quad \leq c\Ao{2}\intM{\nablanorm{2}{H}{2}\gamma^{4}}+c\oo{\intM{\norm{\nabla A^{o}}^{2}\gamma^{2}}}^{2}+ c\,\cgam^{2}\Ao{2}\intM{\norm{\nabla A^{o}}^{2}\gamma^{2}}\\
&\quad \leq c\Ao{2}\intM{\nablanorm{2}{H}{2}\gamma^{4}} + c\oo{\intM{\norm{\nabla A^{o}}^{2}\gamma^{2}}}^{2} + c\cgam^{4}\Ao{4},\label{MultliplicativeSobolevLemma2Eqn9}
\end{align}
where we have also applied the Cauchy-Schwarz and Young inequalities. Combining \eqref{MultliplicativeSobolevLemma2Eqn7,0}, \eqref{MultliplicativeSobolevLemma2Eqn8}, \eqref{MultliplicativeSobolevLemma2Eqn9}, and  \eqref{MultliplicativeSobolevLemma2Eqn1}, we therefore have:
\begin{align}
&\intM{\nablanorm{2}{H}{2}\gamma^{4}} + \intM{\norm{\nabla H}^{2}H^{2}\gamma^{4}}\leq c\intM{\norm{\Delta H}^{2}\gamma^{4}} + c\oo{\intM{\norm{\nabla A^{o}}^{2}\gamma^{2}}}^{2} + c\cgam^{4}\Ao{2}\\
& \leq c\intM{\norm{\Delta H}^{2}\gamma^{4}} + c\oo{\Ao{}\oo{\intM{\nablanorm{2}{H}{2}\gamma^{4}}}^{\frac{1}{2}} + c\,\cgam^{2}\Ao{2}}^{2} + c\,\cgam^{4}\Ao{2}\\
& \leq c\intM{\norm{\Delta H}^{2}\gamma^{4}} +  c\Ao{2}\intM{\nablanorm{2}{H}{2}\gamma^{4}} + c\,\cgam^{4}\Ao{2}.\label{MultliplicativeSobolevLemma2Eqn10}
\end{align}
Therefore if $\varepsilon_{0}>0$ is sufficiently small, absorbing into the left hand side proves \eqref{MultliplicativeSobolevLemma2Eqn2}.
\end{proof}

Our next estimate will be a key ingredient for our interpolation inequalities later on.
\begin{lemma}\label{AppendixLemmaGradHInfty}
Suppose $f:\Sigma\rightarrow\mathbb{R}^{3}$ is a closed immersion satisfying $\Aoc{2}\leq\varepsilon_{0}$ for $\varepsilon_{0}>0$ sufficiently small. Then there exists a universal constant $c>0$ such that
\[
\llll{\nabla H}_{\infty}^{6} \leq c \Aoc{2} \cdot \llll{\nabla\Delta H}_{2}^{4}.
\]
\end{lemma}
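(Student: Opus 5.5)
The plan is to mimic the proof of Proposition \ref{Chapter3NewProp2}, with $\gamma\equiv1$ since $\Sigma$ is closed (so $\cgam=0$), and to close the estimate with the tracefree interpolation of Theorem \ref{HigherOrderSobolevTheorem1} instead of Lemma \ref{NewLemma1}. First apply the multiplicative Sobolev inequality, Theorem \ref{EvolutionTheorem1}, to $u=|\nabla H|$ with exponents $p=4$ and $\beta=2$. Then $\frac1\alpha=(\frac12-\frac14)\cdot2+1=\frac32$, so $\alpha=\frac23$. Using Kato, $|\nabla|\nabla H||\le|\nabla_{(2)}H|$, and taking sixth powers gives
\[
\llll{\nabla H}_\infty^6\leq c\,\llll{\nabla H}_2^2\oo{\intM{\nablanorm{2}{H}{4}}+\intM{H^4\norm{\nabla H}^4}}.
\]
As a consistency check, both sides scale like $L^{-12}$.

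Next I estimate the two quartic integrals by Michael--Simon, Theorem \ref{MichaelSimon}.
\begin{itemize}
\item For the first, take $u=|\nabla_{(2)}H|^2$ and use Cauchy--Schwarz. This gives
\[
\intM{\nablanorm{2}{H}{4}}\leq c\,\llll{\nabla_{(2)}H}_2^2\oo{\llll{\nabla_{(3)}H}_2^2+\intM{\nablanorm{2}{H}{2}H^2}}.
\]
\item For the second, take $u=H^2|\nabla H|^2$. This gives
\[
\intM{H^4\norm{\nabla H}^4}\leq c\intM{H^2\norm{\nabla H}^2}\cdot\oo{\intM{H^2\nablanorm{2}{H}{2}}+\intM{\norm{\nabla H}^4}+\intM{H^4\norm{\nabla H}^2}}.
\]
\item The term $\int|\nabla H|^4$ is handled by one more Michael--Simon step with $u=|\nabla H|^2$. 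It is bounded by $c\llll{\nabla H}_2^2\big(\llll{\nabla_{(2)}H}_2^2+\int H^2|\nabla H|^2\big)$.
\end{itemize}

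The second-order quantities are controlled by \eqref{MultliplicativeSobolevLemma2Eqn2} with $\gamma\equiv1$:
\[
\llll{\nabla_{(2)}H}_2^2+\intM{H^2\norm{\nabla H}^2}\leq c\llll{\Delta H}_2^2 .
\]
The third-order quantities are controlled by \eqref{LemmaPreservedSphericityEstimates1,Eqn4} with $\varphi=H$:
\[
\llll{\nabla_{(3)}H}_2^2+\intM{\nablanorm{2}{H}{2}H^2}+\intM{\norm{\nabla H}^2H^4}\leq c\llll{\nabla\Delta H}_2^2+c\llll{\nabla H}_2^2\llll{\Delta H}_2^2 .
\]
By Theorem \ref{HigherOrderSobolevTheorem1} (with $\gamma\equiv1$, $l=1,m=2$ and $l=2,m=1$),
\[
\llll{\nabla H}_2^2\leq c\Aoc{\frac43}\llll{\nabla\Delta H}_2^{\frac23},\qquad \llll{\Delta H}_2^2\leq c\Aoc{\frac23}\llll{\nabla\Delta H}_2^{\frac43}.
\]
Multiplying these gives $\llll{\nabla H}_2^2\llll{\Delta H}_2^2\leq c\Aoc{2}\llll{\nabla\Delta H}_2^2$. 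So the lower-order product is absorbed by smallness, and the third-order quantities are all $\leq c\llll{\nabla\Delta H}_2^2$. The $\int|\nabla H|^4$ term is of lower order in the same way.

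Collecting these bounds gives
\[
\llll{\nabla H}_\infty^6\leq c\,\llll{\nabla H}_2^2\,\llll{\Delta H}_2^2\,\llll{\nabla\Delta H}_2^2 .
\]
Applying the same product interpolation once more yields $\llll{\nabla H}_\infty^6\leq c\Aoc{2}\llll{\nabla\Delta H}_2^4$, as claimed.

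The main obstacle is the third-order $H$-weighted coercivity estimate \eqref{LemmaPreservedSphericityEstimates1,Eqn4}. It is proved only at the end of the appendix, after this lemma, so I will check that its proof does not use the present lemma; if it does, the argument would be circular. If that check fails, I would prove the case $\varphi=H$ directly. The route is to integrate by parts $\int|\nabla_{(3)}H|^2$ twice and commute derivatives using \eqref{LemmaPreservedSphericityEstimates2,Eqn3} with $R=\frac12H^2-|A^o|^2$, which produces $\int|\nabla\Delta H|^2$ plus terms of type $H^2|\nabla_{(2)}H|^2$ and $P_4$. The positive $H^2$-weighted terms can then be obtained as in the proof of \eqref{MultliplicativeSobolevLemma2Eqn2}, via the identity $\Delta\nabla H=\nabla\Delta H+\frac14\nabla H(H^2-2|A^o|^2)$. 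The remaining $|A^o|^2$-weighted errors would be absorbed using \eqref{MultliplicativeSobolevLemma2Eqn3} and the smallness of $\Aoc{2}$.
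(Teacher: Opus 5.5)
Your proposal is correct and follows the paper's own proof. The paper uses the same steps:
\begin{itemize}
\item the multiplicative Sobolev inequality for $|\nabla H|$ with $p=4$, $\beta=2$;
\item Michael--Simon applied to $|\nabla_{(2)}H|^2$, $H^2|\nabla H|^2$ and $|\nabla H|^2$;
\item \eqref{MultliplicativeSobolevLemma2Eqn2}, and \eqref{LemmaPreservedSphericityEstimates1,Eqn4} with $\varphi=H$;
\item and finally the product bound $\|\nabla H\|_2^2\|\Delta H\|_2^2\le c\|A^o\|_2^2\|\nabla\Delta H\|_2^2$.
\end{itemize}
Your Cauchy--Schwarz grouping and your direct use of Theorem \ref{HigherOrderSobolevTheorem1} with $(l,m)=(1,2)$ differ only cosmetically.

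Your circularity check passes. The proof of \eqref{LemmaPreservedSphericityEstimates1,Eqn4} at the end of the appendix uses only the commutator identities, Michael--Simon, \eqref{LemmaPreservedSphericityEstimates1,Eqn3} and Lemma \ref{MultliplicativeSobolevLemma2}. It never uses the present lemma, so your fallback direct argument is not needed.
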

\begin{proof}
Using the multiplicative Sobolev inequality from Theorem \ref{EvolutionTheorem1} with $p=4,\,m=2$ immediately gives
\begin{equation}
\llll{\nabla H}_{\infty}^{6} \leq c \intM{\norm{\nabla H}^{2}}\cdot\oo{\intM{\nablanorm{2}{H}{4}} + \intM{\norm{\nabla H}^{4}H^{4}}}.\label{AppendixLemmaGradHInftyEqn1}
\end{equation}
Note that an estimate for $\llll{\nabla H}_{2}^{2}$ follows directly from \eqref{MultliplicativeSobolevLemma2Eqn1} and the identity $|\nabla H|^{2} \leq 4 \norm{\nabla A^{o}}^{2}$:
\[
\intM{\norm{\nabla H}^{2}} \leq c\Aoc{}\oo{\intM{\norm{\Delta H}^{2}}}^{\frac{1}{2}}.
\]
Next using Theorem \ref{MichaelSimon} with $u=\nablanorm{2}{H}{2}$ along with the inequalities \eqref{LemmaPreservedSphericityEstimates1,Eqn3} and \eqref{LemmaPreservedSphericityEstimates1,Eqn4} with $\varphi=H$ to estimate the first term in parenthesis on the right gives
\begin{align}
\intM{\nablanorm{2}{H}{4}} &\leq c\oo{\intM{\nablanorm{3}{H}{}\nablanorm{2}{H}{}}+\intM{\nablanorm{2}{H}{2}|H|}}^{2}\\
&\leq c\intM{\nablanorm{2}{H}{2}}\cdot\oo{\intM{\nablanorm{3}{H}{2}}+\intM{\nablanorm{2}{H}{2}H^{2}}}\\
&\leq c\intM{\norm{\Delta H}^{2}}\cdot\oo{\intM{\nablanorm{3}{H}{2}}+\intM{\nablanorm{2}{H}{2}H^{2}}}\\
&\leq c\intM{\norm{\Delta H}^{2}}\cdot\oo{\intM{\norm{\nabla\Delta H}^{2}}+\intM{\norm{\nabla H}^{2}}\cdot\intM{\norm{\Delta H}^{2}}}.\label{AppendixLemmaGradHInftyEqn2}
\end{align}
We leave \eqref{AppendixLemmaGradHInftyEqn2} for the time being, and go on to estimate the rest of the terms on the right hand side of \eqref{AppendixLemmaGradHInftyEqn1}. Using Theorem \ref{MichaelSimon} with $u=\norm{\nabla H}^{2}H^{2}$ gives
\begin{align}
\intM{\norm{\nabla H}^{4}H^{4}}
&\quad \leq c\oo{\intM{\nablanorm{2}{H}{}\norm{\nabla H}H^{2}}+ \intM{\norm{\nabla H}^{3}\norm{H}}+\intM{\norm{\nabla H}^{2}\norm{H}^{3}}}^{2}\\
&\quad \leq c\oo{\intM{\nablanorm{2}{H}{2}}+\intM{\norm{\nabla H}^{2}H^{2}}}\cdot\oo{\intM{\norm{\nabla H}^{2}H^{4}}+\intM{\norm{\nabla H}^{4}}}\\
&\quad \leq c\intM{\norm{\Delta H}^{2}}\cdot \oo{\intM{\norm{\nabla H}^{2}H^{4}}+\intM{\norm{\nabla H}^{4}}}\\
&\quad \leq c\intM{\norm{\Delta H}^{2}}\cdot\oo{\intM{\norm{\nabla\Delta H}^{2}} + c\intM{\norm{\nabla H}^{2}}\cdot\intM{\norm{\Delta H}^{2}} + \intM{\norm{\nabla H}^{4}}},\label{AppendixLemmaGradHInftyEqn3}
\end{align}
where we have again used the inequalities \eqref{LemmaPreservedSphericityEstimates1,Eqn3} and \eqref{LemmaPreservedSphericityEstimates1,Eqn4} with $\varphi=H$. The rightmost term in \eqref{AppendixLemmaGradHInftyEqn3} is estimated similarly. Using Theorem \ref{MichaelSimon} with $u=\norm{\nabla H}^{2}$ combined with \eqref{MultliplicativeSobolevLemma2Eqn2} yields
\begin{multline}
\intM{\norm{\nabla H}^{4}} \leq c\oo{\intM{\nablanorm{2}{H}{}\norm{\nabla H}} +\intM{\norm{\nabla H}^{2}\norm{H}} }^{2}\\
\leq c\intM{\norm{\nabla H}^{2}}\cdot\oo{\intM{\nablanorm{2}{H}{2}}+\intM{\norm{\nabla H}^{2}H^{2}}}\leq c \intM{\norm{\nabla H}^{2}}\cdot\intM{\norm{\Delta H}^{2}}.\label{AppendixLemmaGradHInftyEqn4}
\end{multline}

Substituting \eqref{AppendixLemmaGradHInftyEqn2} and \eqref{AppendixLemmaGradHInftyEqn4} into \eqref{AppendixLemmaGradHInftyEqn1} then gives
\begin{equation}
\llll{\nabla H}_{\infty}^{6} \leq c\intM{\norm{\nabla H}^{2}}\cdot\intM{\norm{\Delta H}^{2}}\cdot\oo{\intM{\norm{\nabla\Delta H}^{2}}+\intM{\norm{\nabla H}^{2}}\cdot\intM{\norm{\Delta H}^{2}}}.\label{AppendixLemmaGradHInftyEqn5}
\end{equation}
The term $\llll{\nabla H}_{2}^{2}\cdot\llll{\Delta H}_{2}^{2}$ can be estimated quite simply: first note that applying Theorem \ref{HigherOrderSobolevTheorem1}, followed by integration by parts and Young's inequality with conjugate exponents $p=4$ and $p^{*}=\frac{4}{3}$ yields
\begin{multline}
\intM{\norm{\nabla H}^{2}} \leq c\Aoc{}\oo{\intM{\norm{\Delta H}^{2}}}^{\frac{1}{2}}
\leq c\Aoc{}\oo{\intM{\norm{\nabla H}^{2}}}^{\frac{1}{4}}\cdot\oo{\intM{\norm{\nabla\Delta H}^{2}}}^{\frac{1}{4}}\\
\leq \frac{1}{2}\intM{\norm{\nabla H}^{2}} + c\Aoc{\frac{4}{3}}\oo{\intM{\norm{\nabla\Delta H}^{2}}}^{\frac{1}{3}},
\end{multline}
which implies the estimate
\[
\intM{\norm{\nabla H}^{2}} \leq c\Aoc{\frac{4}{3}}\oo{\intM{\norm{\nabla\Delta H}^{2}}}^{\frac{1}{3}}.
\]
Combining the above estimate with integration by parts and the Cauchy-Schwarz inequality then gives
\begin{multline*}
\intM{\norm{\nabla H}^{2}}\cdot\intM{\norm{\Delta H}^{2}} = -\intM{\norm{\nabla H}^{2}}\cdot\intM{\inner{\nabla H,\nabla\Delta H}}\\
\leq c\oo{\intM{\norm{\nabla H}^{2}}}^{\frac{3}{2}}\oo{\intM{\norm{\nabla\Delta H}^{2}}}^{\frac{1}{2}}
\leq c\Aoc{2}\intM{\norm{\nabla\Delta H}^{2}}.
\end{multline*}
Substituting into \eqref{AppendixLemmaGradHInftyEqn5} then completes the proof.
\end{proof}

\begin{theorem}\label{TheoremX}
Suppose $f:\Sigma\rightarrow\mathbb{R}^{3}$ is a closed immersion satisfying $\Aoc{2}\leq\varepsilon_{0}$ for $\varepsilon_{0}>0$ sufficiently small. Fix $m\geq3$ and define $Q_{m,l},0\leq l\leq m$ by
\[
Q_{m,l}:=\intM{\nablanorm{m-l}{A^{o}}{2}H^{2l}}.
\]
Then, for every $1\leq l\leq m-2$, we have the recurrence inequality
\[
Q_{m,l}\leq c\,Q_{m,l-1}^{\frac{1}{2}}Q_{m,l+1}^{\frac{1}{2}}+c\Aoc{2}\oo{Q_{m,0} + Q_{m,l+1} + S_{m}}.
\]
Here we have used the notation for $S_{m}$ from Lemma \ref{HigherOrderSobolevLemma1}.
\end{theorem}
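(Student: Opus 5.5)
We prove the recurrence by a single integration by parts that lowers the derivative order on one factor of $\nabla_{(m-l)}A^o$ and raises it on the other, while one factor $H^2$ is moved between them. Write $T:=\nabla_{(m-l-1)}A^o$, so $Q_{m,l}=\int_\Sigma |\nabla T|^2H^{2l}\,d\mu$. Integrating by parts once gives
\[
Q_{m,l}=-\int_\Sigma \langle T,\Delta T\rangle H^{2l}\,d\mu-2l\int_\Sigma \langle T\otimes\nabla H,\nabla T\rangle H^{2l-1}\,d\mu .
\]
For the first term we commute $\Delta$ past $\nabla_{(m-l-1)}$ using \eqref{LemmaPreservedSphericityEstimates2,Eqn3}, and then substitute the Simons-type identity \eqref{MultliplicativeSobolevLemma2Eqn4}, $\Delta A^o=S^o(\nabla_{(2)}H)+\frac12H^2A^o-|A^o|^2A^o$. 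The result has three pieces:
\begin{itemize}
\item a leading term $\int \nabla_{(m-l-1)}A^o*\nabla_{(m-l+1)}H\,H^{2l}$;
\item a term $\int \nabla_{(m-l-1)}A^o*\nabla_{(m-l-1)}(H^2A^o)\,H^{2l}$;
\item lower-order terms of type $P_r(A^o,H)$, using $R=\frac12H^2-|A^o|^2$.
\end{itemize}

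In the second piece, the part where all derivatives fall on $A^o$ is $\int|\nabla_{(m-l-1)}A^o|^2H^{2l+2}=Q_{m,l+1}$ (more precisely $Q_{m+1,l+1}$ reindexed, i.e.\ the product of the two neighbours). By Cauchy--Schwarz this gives $Q_{m,l}$ controlled by $c\,Q_{m,l-1}^{1/2}Q_{m,l+1}^{1/2}$, after splitting $H^{2l}=H^{l-1}\cdot H^{l+1}$ across the two factors. The same split, Cauchy--Schwarz, and Young absorption into the left side handle the term with $2l\,\nabla H$: here $T H^{l+1}$ pairs with $\nabla T\,H^{l-1}$, the extra $|\nabla H|$ factor is traded using $|\nabla H|\le 2|\nabla A^o|$, and the result is grouped as a lower product.

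For the leading term we integrate by parts once more, moving one derivative from $\nabla_{(m-l+1)}H$ onto $\nabla_{(m-l-1)}A^o\,H^{2l}$. Using $|\nabla_{(k)}H|\le 2|\nabla_{(k)}A^o|$ from \eqref{Prelimiaries2}, this produces again a product of the form $Q_{m,l-1}^{1/2}Q_{m,l+1}^{1/2}$, up to terms in which derivatives fall on $H^{2l}$. Those terms are again of the neighbour type after Cauchy--Schwarz, or they are genuinely lower order.

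All remaining terms are monomials $\int P$ with at least four factors, two of which carry total weight of order $2m$ and at least two of which are $A^o$ or its derivatives. Each such monomial we bound by $\|A^o\|_\infty^2$ times $\int|\nabla_{(m-j)}A^o|^2H^{2j}$-type quantities. Then we use \eqref{EpsilonRegularityProof1} (global, $\gamma\equiv1$) to replace $\|A^o\|_\infty^2$ by $\|A^o\|_2^{2(m-1)/m}S_m^{1/m}$. Lemma \ref{NewLemma1} and Theorem \ref{HigherOrderSobolevTheorem1} then interpolate the intermediate $Q$'s between $Q_{m,0}$, $Q_{m,l+1}$ and $S_m$. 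A final Young inequality yields the error $c\|A^o\|_2^2(Q_{m,0}+Q_{m,l+1}+S_m)$.

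The main obstacle is this bookkeeping. Monomials mixing powers of $H$ with derivatives of $A^o$, for example $\nabla_{(i)}H\,\nabla_{(j)}A^o\,H^{2l-1}\cdots$, must be shown either to factor through neighbours or to carry two $A^o$-factors, so that they are small. I would organise them by writing every $H$-derivative of order $\ge1$ as an $A^o$-derivative and counting the undifferentiated powers of $H$, treating the top-order pieces first.
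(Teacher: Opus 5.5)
Your opening integration by parts is exactly the paper's first step, but from there the plan goes wrong twice. For the $\langle T,\Delta T\rangle$ term no commutation or Simons identity is needed. Since $|\Delta T|\le\sqrt2\,|\nabla_{(m-l+1)}A^o|$, Cauchy--Schwarz with $H^{2l}=H^{l+1}H^{l-1}$ gives $Q_{m,l+1}^{1/2}Q_{m,l-1}^{1/2}$ immediately. Your detour through \eqref{LemmaPreservedSphericityEstimates2,Eqn3} and \eqref{MultliplicativeSobolevLemma2Eqn4} instead creates integrals $\int_\Sigma\nabla_{(m-l-1)}A^o*\nabla_{(m-l-1)}A^o\,H^{2l+2}\,d\mu$. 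These come from $\tfrac12H^2A^o$ and from the $\tfrac12H^2$ part of $R$, and they are $O(1)$ multiples of $Q_{m,l+1}$ itself. Cauchy--Schwarz does not turn them into the neighbour product, and the recurrence tolerates $Q_{m,l+1}$ only with the small coefficient $c\|A^o\|_2^2$. The Simons piece can be discarded by sign, but the commutator piece is a quadratic form whose sign the $*$-calculus does not see. Likewise, integrating by parts once more in your leading term gives $\int_\Sigma\nabla_{(m-l)}A^o*\nabla_{(m-l)}H\,H^{2l}\,d\mu\le c\,Q_{m,l}$, which is circular. Direct Cauchy--Schwarz, without that step, would have worked.

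The real gap is the term $2l\int_\Sigma\langle T\otimes\nabla H,\nabla T\rangle H^{2l-1}\,d\mu$. Pairing $TH^{l+1}$ with $\nabla T\,H^{l-1}$ uses up $H^{2l}$, but the integrand carries only $|\nabla H|\,|H|^{2l-1}$, so a factor $|\nabla H|/|H|$ is left over. Trading $|\nabla H|\le2|\nabla A^o|$ then yields a quartic integral that is neither a $Q_{m,j}$ nor small. Your source of smallness, $\|A^o\|_\infty^2$ from \eqref{EpsilonRegularityProof1}, is also unavailable: for $l\le m-2$ this term contains no undifferentiated $A^o$.

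What is missing is an $L^\infty$ bound on $\nabla H$ that carries a power of $\|A^o\|_2$. Lemma \ref{AppendixLemmaGradHInfty} and Theorem \ref{HigherOrderSobolevTheorem1} give $\|\nabla H\|_\infty^2\le c\|A^o\|_2^{2(m-2)/m}S_m^{2/m}$. The paper pairs $\nabla_{(m-l)}A^o\,H^{l}$, which is absorbed into $Q_{m,l}$ by Young, against $\|\nabla H\|_\infty\,|\nabla_{(m-l-1)}A^o|\,|H|^{l-1}$. For $l\ge2$ it then uses
\[
\int_\Sigma|\nabla_{(m-l-1)}A^o|^2H^{2(l-1)}\,d\mu\le Q_{m,l+1}^{\frac{l-1}{l+1}}\Big(\int_\Sigma|\nabla_{(m-l-1)}A^o|^2\,d\mu\Big)^{\frac{2}{l+1}},
\]
interpolates the last factor towards $Q_{m,0}$ by Lemma \ref{NewLemma1}, and closes with a three-factor Young inequality. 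This works because the exponents of $Q_{m,0}$, $Q_{m,l+1}$, $S_m$ sum to one and the powers of $\|A^o\|_2$ sum to two. For $l=1$ the H\"older step is unnecessary. So the bookkeeping you single out as the main obstacle is self-inflicted, while the step that actually produces the factor $\|A^o\|_2^2$ is absent from your plan.
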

\begin{proof}
Integration by parts and H\"{o}lder's inequality, followed by Young's inequality immediately gives
\begin{align}
Q_{m,l} &= \intM{\nabla_{(m-(l+1))}A^{o} * \nabla_{(m-(l-1))}A^{o} * H^{2l}}+ \intM{\nabla_{(m-l)}A^{o} * \nabla_{(m-(l+1))}A^{o} * \nabla H * H^{2l-1}}\\
&\leq c\,Q_{m,l-1}^{\frac{1}{2}}Q_{m,l+1}^{\frac{1}{2}} + c\llll{\nabla H}_{\infty}Q_{m,l}^{\frac{1}{2}}\oo{\intM{\nablanorm{m-(l+1)}{A^{o}}{2}H^{2(l-1)}}}^{\frac{1}{2}}\\
&\leq c\,Q_{m,l-1}^{\frac{1}{2}}Q_{m,l+1}^{\frac{1}{2}} + \eta\,Q_{m,l} + c\,\eta^{-1}\llll{\nabla H}_{\infty}^{2}\intM{\nablanorm{m-(l+1)}{A^{o}}{2}H^{2(l-1)}}.
\end{align}
We then absorb and multiply out to give
\begin{equation}
Q_{m,l} \leq c\,Q_{m,l-1}^{\frac{1}{2}}Q_{m,l+1}^{\frac{1}{2}} + c\llll{\nabla H}_{\infty}^{2}\intM{\nablanorm{m-(l+1)}{A^{o}}{2}H^{2(l-1)}}.\label{TheoremX,1}
\end{equation}
We first dispose of the endpoint $l=1$.  In this case the last integral
in \eqref{TheoremX,1} is $\int|\nabla_{(m-2)}A^o|^2$.  Lemma
\ref{NewLemma1} and \eqref{TheoremX,4} below give
\[
\llll{\nabla H}_{\infty}^{2}
\intM{|\nabla_{(m-2)}A^o|^2}
\leq
C\Aoc{2}S_m^{2/m}Q_{m,0}^{(m-2)/m}
\leq
C\Aoc{2}\oo{Q_{m,0}+S_m}.
\]
Thus the desired estimate follows for $l=1$.  We may therefore assume
$l\geq2$.
Next we apply H\"{o}lder's inequality with conjugate exponents $\alpha=(l+1)/(l-1)$ and $\alpha^{*}=(l+1)/2$ to the last term on the right hand side:
\begin{equation}
\intM{\nablanorm{m-(l+1)}{A^{o}}{2}H^{2(l-1)}} \leq Q_{m,l+1}^{\frac{l-1}{l+1}}\cdot\oo{\intM{\nablanorm{m-(l+1)}{A^{o}}{2}}}^{\frac{2}{l+1}}.\label{TheoremX,2}
\end{equation}
The interpolation inequality of Lemma \ref{NewLemma1} implies
\begin{equation*}
\intM{\nablanorm{m-(l+1)}{A^{o}}{2}} \leq c\Aoc{\frac{2(l+1)}{m}}\oo{\intM{\nablanorm{m}{A^{o}}{2}}}^{\frac{m-(l+1)}{m}}
= c\Aoc{\frac{2(l+1)}{m}}Q_{m,0}^{\frac{m-(l+1)}{m}},
\end{equation*}
meaning that \eqref{TheoremX,2} becomes
\begin{equation}
\intM{\nablanorm{m-(l+1)}{A^{o}}{2}H^{2(l-1)}} \leq c\Aoc{\frac{4}{m}}Q_{m,0}^{\frac{2(m-(l+1))}{m(l+1)}}Q_{m,l+1}^{\frac{l-1}{l+1}}.\label{TheoremX,3}
\end{equation}
Next, using Lemma \ref{AppendixLemmaGradHInfty} along with the interpolation inequalities from Theorem \ref{HigherOrderSobolevTheorem1} we have
\begin{equation}
\llll{\nabla H}_{\infty}^{2} \leq c\Aoc{\frac{2}{3}}\oo{\intM{\norm{\nabla\Delta H}^{2}}}^{\frac{2}{3}}
\leq c\Aoc{\frac{2}{3} + \frac{4(m-3)}{3m}}\oo{\intM{\norm{\Delta^{\frac{m}{2}}H}^{2}}}^{\frac{2}{m}}
= c\Aoc{\frac{2}{3}+\frac{4(m-3)}{3m}}S_{m}^{\frac{2}{m}}.\label{TheoremX,4}
\end{equation}
Substituting \eqref{TheoremX,2}, \eqref{TheoremX,3} and \eqref{TheoremX,4} into \eqref{TheoremX,1} then gives
\[
Q_{m,l} \leq c\,Q_{m,l-1}^{\frac{1}{2}}Q_{m,l+1}^{\frac{1}{2}} + c\Aoc{2}Q_{m,0}^{\frac{2(m-(l+1))}{m(l+1)}}Q_{m,l+1}^{\frac{l-1}{l+1}}S_{m}^{\frac{2}{m}}.
\]
Since $l\leq m-2$, the exponent $m-(l+1)$ is positive.  Using the generalised Young's inequality with conjugate exponents $\alpha=\frac{m(l+1)}{2(m-(l+1))},\,\beta=\frac{l+1}{l-1}$ and $\delta=\frac{m}{2}$ on the last term then completes the proof.
\end{proof}

\begin{theorem}\label{TheoremY}
Suppose $f:\Sigma^{2}\rightarrow\mathbb{R}^{3}$ is a closed immersion satisfying $\Aoc{2}\leq\varepsilon_{0}$ for $\varepsilon_{0}$ sufficiently small. Fix $m\geq3$. Then for some bounded constant $c=c(m)$:
\begin{equation}
Q_{m,m}+Q_{m,m-1}\leq c\,Q_{m,m-2}^{\frac{1}{2}}Q_{m,m}^{\frac{1}{2}}+c\Aoc{m-1}S_{m}.\label{TheoremY,0}
\end{equation}
\end{theorem}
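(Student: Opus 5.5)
Here $Q_{m,m}=\int H^{2m}$ (no derivatives of $A^o$), $Q_{m,m-1}=\int|\nabla A^o|^2H^{2(m-1)}$ and $Q_{m,m-2}=\int|\nabla_{(2)}A^o|^2H^{2(m-2)}$. The plan is to handle $Q_{m,m-1}$ exactly as in Theorem \ref{TheoremX}, and to control $Q_{m,m}$ through the Codazzi identity and Gauss--Bonnet. Interpreting $Q_{m,m}$ literally as $\int H^{2m}$ does not scale like $S_m$, so I will read the intended quantity as $\int|A^o|^2H^{2m}$. With this reading $Q_{m,l}$ corresponds to the index $m-l=-1$: the derivative count goes one below the undifferentiated tensor. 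This is consistent with the way the recurrence chain of Theorem \ref{TheoremX} must terminate, and the plan below is built on it.

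\textbf{Step 1: $Q_{m,m-1}$.} Integrate by parts once in $Q_{m,m-1}=\int\langle\nabla A^o,\nabla A^o\rangle H^{2(m-1)}$, moving one derivative off $\nabla A^o$. This gives a main term
\[
\int \nabla_{(2)}A^o*A^o\,H^{2(m-1)}\le Q_{m,m-2}^{1/2}\Big(\int|A^o|^2H^{2m}\Big)^{1/2},
\]
after H\"older, and an error term $\int\nabla A^o*A^o*\nabla H\,H^{2m-3}$. For the error term I use $|\nabla H|\le 2|\nabla A^o|$, which gives
\[
\int|\nabla A^o|^2|A^o||H|^{2m-3}\le\|A^o\|_\infty\int|\nabla A^o|^2|H|^{2m-3}.
\]
This is the expected obstacle: the integrand carries one power of $H$ too few to be a multiple of $Q_{m,m-1}$. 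I would first try H\"older interpolation between $Q_{m,m-1}$ and $\int|\nabla A^o|^2H^{2(m-2)}$. The latter is estimated by Lemma \ref{NewLemma1}--type interpolation together with \eqref{LemmaPreservedSphericityEstimates1,Eqn3}, producing $\|A^o\|_2$-small multiples of $S_m$. The factor $\|A^o\|_\infty$ is controlled by \eqref{EpsilonRegularityProof1} (global, $\gamma\equiv1$), namely $\|A^o\|_\infty^{2m}\le c\|A^o\|_2^{2m-2}S_m$. After Young's inequality this yields $\eta Q_{m,m-1}+c\,\|A^o\|_2^{m-1}S_m$, which is the source of the power $m-1$ in \eqref{TheoremY,0}.

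\textbf{Step 2: $Q_{m,m}$.} In $\int|A^o|^2H^{2m}$ I split one factor as $|A^o|^2\le\|A^o\|_\infty^2$. The remaining $\int H^{2m}$-type factor is not controlled by $S_m$ for a sphere, so it cannot be handled in isolation. Instead I will use the Michael--Simon inequality (Theorem \ref{MichaelSimon}) with $u=|A^o|H^m$:
\[
\int|A^o|^2H^{2m}\le c\Big(\int|\nabla A^o||H|^m+\int|A^o||H|^{m-1}|\nabla H|+\int|A^o||H|^{m+1}\Big)^2.
\]
The first two terms are bounded by $c\,\|A^o\|_2^2\,Q_{m,m-1}$ via Cauchy--Schwarz and $|\nabla H|\le 2|\nabla A^o|$. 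For the third term I apply Cauchy--Schwarz as $\|A^o\|_2^2\int|A^o|^0H^{2m+2}$, which again loses homogeneity. To avoid this I will instead keep $\int|A^o||H|^{m+1}\le\|A^o\|_2\big(\int|A^o|^2H^{2m}\big)^{1/2}\cdot\big(\ldots\big)$, and if necessary fall back on the pointwise bound $\|A^o\|_\infty$ combined with \eqref{EpsilonRegularityProof1}. The goal is to absorb into $Q_{m,m}$ using the smallness of $\|A^o\|_2$.

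\textbf{Step 3: combining.} Adding Steps 1 and 2 and absorbing the $\eta$-terms gives \eqref{TheoremY,0}, with all error terms of the form $\|A^o\|_2^{\,m-1}S_m$ up to powers of $\varepsilon_0$. The weak point of the plan is the homogeneity bookkeeping in the lower-order $H$-powers appearing in both steps. I expect this to require one further application of Theorem \ref{HigherOrderSobolevTheorem1}, to convert $\int|\nabla H|^2$-type factors into $S_m^{1/m}$ powers before the final Young's inequality.
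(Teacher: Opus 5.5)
Your Step 2 is where the plan fails. Nothing in it gives control of $Q_{m,m}$, and Michael--Simon cannot supply that control. (No reinterpretation is needed, by the way: $\nabla_{(0)}A^o=A^o$, so $Q_{m,m}=\int|A^o|^2H^{2m}\,d\mu$ literally, with $m-l=0$.)

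With $u=|A^o||H|^m$, the term $\int|A^o||H|^{m+1}$ carries only one factor of $A^o$, so it cannot produce both $\|A^o\|_2$ and $Q_{m,m}^{1/2}$. On a near-sphere of radius one it is comparable to $\int|A^o|$, whereas $\|A^o\|_2Q_{m,m}^{1/2}$ is comparable to $\|A^o\|_2^2$. The homogeneous bound $(\int|A^o||H|^{m+1})^2\le Q_{m,m}\int H^2$ cannot be absorbed. Indeed $\int H^2\ge16\pi$, and testing Michael--Simon with $u\equiv1$ on a round sphere forces $c_{\mathrm{MSS}}\ge(16\pi)^{-1}$, so the coefficient of $Q_{m,m}$ is at least one. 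Every other splitting leaves a factor such as $\int H^{2m+2}$ or $\int|H|^{m+1}$. Such a factor is not controlled by $S_m$, since it is positive on round spheres, where $S_m=0$. Your $\|A^o\|_\infty$ fallback has the same defect. Also, the first Michael--Simon term is only bounded by $Q_{m,m-1}\int H^2$, not by $c\|A^o\|_2^2Q_{m,m-1}$.

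The missing idea is the Simons identity \eqref{MultliplicativeSobolevLemma2Eqn4}, $\Delta A^o=S^o(\nabla_{(2)}H)+\frac12H^2A^o-|A^o|^2A^o$. In Step 1 you bound $\Delta A^o$ by a generic second derivative, and this discards exactly the term that controls $Q_{m,m}$. Inserting the identity into your integration by parts instead gives
\[
Q_{m,m-1}+\tfrac12 Q_{m,m}
=-\int_\Sigma\langle A^o,\nabla_{(2)}H\rangle H^{2(m-1)}\,d\mu
+\int_\Sigma|A^o|^4H^{2(m-1)}\,d\mu
+\int_\Sigma\nabla A^o*A^o*\nabla H\,H^{2m-3}\,d\mu .
\]
The three terms on the right are handled as follows:
\begin{itemize}
\item The first term is at most $c\,Q_{m,m-2}^{1/2}Q_{m,m}^{1/2}$, because $|\nabla_{(2)}H|\le2|\nabla_{(2)}A^o|$.
\item By H\"older the quartic term is at most $\|A^o\|_2^{2/m}\|A^o\|_\infty^2Q_{m,m}^{(m-1)/m}$. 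Then \eqref{EpsilonRegularityProof1} with $\gamma\equiv1$ and Young's inequality turn this into $\eta Q_{m,m}+c\|A^o\|_2^{2m}S_m$.
\item The gradient term is treated as in your Step 1.
\end{itemize}
This is how the paper's proof begins, and it makes your Step 2 unnecessary.

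One correction to your Step 1 gradient term: $\int|\nabla A^o|^2H^{2(m-2)}$ is not a scale-invariant multiple of $S_m$, so that bound is wrong as stated. Instead interpolate
$\int|\nabla A^o|^2|H|^{2m-3}\le Q_{m,m-1}^{(2m-3)/(2m-2)}(\int|\nabla A^o|^2)^{1/(2m-2)}$. Then use $\int|\nabla A^o|^2\le c\int|\nabla H|^2\le c\|A^o\|_2^{2(m-1)/m}S_m^{1/m}$ and $\|A^o\|_\infty^2\le c\|A^o\|_2^{2(m-1)/m}S_m^{1/m}$. This gives $\eta Q_{m,m-1}+c\|A^o\|_2^{2m-2}S_m$; the paper instead puts the sup norm on $\nabla H$.
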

\begin{proof}
Firstly, integration by parts and the identity \eqref{MultliplicativeSobolevLemma2Eqn1} yields
\begin{align}
&Q_{m,m-1} = \intM{\norm{\nabla A^{o}}^{2}H^{2(m-1)}}
= -\intM{\inner{A^{o},\Delta A^{o}}H^{2(m-1)}} + \intM{\nabla A^{o} * A^{o} * \nabla H * H^{2m-3}}\\
&= -\intM{\inner{A^{o},\nabla_{(2)}H}H^{2(m-1)}} -\frac{1}{2}\intM{\norm{A^{o}}^{2}H^{2m}} + \intM{\norm{A^{o}}^{4}H^{2(m-1)}}\\
& \quad + \intM{\nabla A^{o} * A^{o} * \nabla H * H^{2m-3}}\\
&\leq \oo{\intM{\nablanorm{2}{H}{2}H^{2(m-2)}}}^{\frac{1}{2}}Q_{m,m}^{\frac{1}{2}} -\frac{1}{2}Q_{m,m}+ \intM{\norm{A^{o}}^{4}H^{2(m-1)}}\\
& \quad + c\intM{\norm{\nabla A^{o}}\norm{\nabla H}\norm{A^{o}}\norm{H}^{2m-3}}\\
&\leq \left\{\eta \, Q_{m,m} + c\,\eta^{-1}\intM{\nablanorm{2}{H}{2}H^{2(m-2)}}\right\} -\frac{1}{2}Q_{m,m}+ \intM{\norm{A^{o}}^{4}H^{2(m-1)}}\\
&\qquad+ c\llll{\nabla H}_{\infty}\oo{\intM{\norm{\nabla A^{o}}^{2}H^{2(m-1)}}}^{\frac{1}{2}}\oo{\intM{\norm{A^{o}}^{2}H^{2m}}}^{\frac{m-2}{2m}}\oo{\intM{\norm{A^{o}}^{2}}}^{\frac{1}{m}}\\
&=\{\eta \, Q_{m,m} + c\,\eta^{-1}\intM{\nablanorm{2}{H}{2}H^{2(m-2)}}\} -\frac{1}{2}Q_{m,m}+ \intM{\norm{A^{o}}^{4}H^{2(m-1)}}\\
&\quad + c\llll{\nabla H}_{\infty}\Aoc{\frac{2}{m}}Q_{m,m-1}^{\frac{1}{2}}Q_{m,m}^{\frac{m-2}{2m}}
\end{align}
for any $\eta>0$. This can be rearranged to give
\begin{multline}
Q_{m,m-1} + \oo{\frac{1}{2} - \eta}Q_{m,m}\\
\leq c\,\eta^{-1}\intM{\nablanorm{2}{H}{2}H^{2(m-2)}} + \intM{\norm{A^{o}}^{4}H^{2(m-1)}} + c\llll{\nabla H}_{\infty}\Aoc{\frac{2}{m}}Q_{m,m-1}^{\frac{1}{2}}Q_{m,m}^{\frac{m-2}{2m}},
\end{multline}
which holds for any $\eta>0$. By choosing $\eta$ sufficiently small, the above implies
\begin{equation}
Q_{m,m-1} + Q_{m,m} \leq c\intM{\nablanorm{2}{H}{2}H^{2(m-2)}} + c\intM{\norm{A^{o}}^{4}H^{2(m-1)}} + c\llll{\nabla H}_{\infty}\Aoc{\frac{2}{m}}Q_{m,m-1}^{\frac{1}{2}}Q_{m,m}^{\frac{m-2}{2m}} \label{TheoremYEqn1}
\end{equation}
The last term on the right hand side of \eqref{TheoremYEqn1} is estimated quite easily. First note that by our $L^{\infty}$ estimate in Lemma \ref{AppendixLemmaGradHInfty} and the interpolation inequality of Theorem \ref{HigherOrderSobolevTheorem1}, we have
\begin{equation}
\llll{\nabla H}_{\infty} \leq c\Aoc{\frac{m-2}{m}}S_{m}^{\frac{1}{m}},\label{TheoremYEqn2}
\end{equation}
and therefore
\begin{align}
c\llll{\nabla H}_{\infty}\Aoc{\frac{2}{m}}Q_{m,m-1}^{\frac{1}{2}}Q_{m,m}^{\frac{m-2}{2m}} &\leq c \Aoc{}S_{m}^{\frac{1}{m}}Q_{m,m-1}^{\frac{1}{2}}Q_{m,m}^{\frac{m-2}{2m}}\\
&\leq \eta\oo{Q_{m,m-1}+Q_{m,m}} + c\,\eta^{-1}\Aoc{m}S_{m}\label{TheoremYEqn3}
\end{align}
for any $\eta>0$. Next, we estimate the penultimate term in \eqref{TheoremYEqn1}. We first employ the following estimate which follows from H\"{o}lder's inequality with conjugate exponents $\alpha=\frac{m}{m-1},\,\alpha^{*}=m$:
 \begin{multline}
\intM{\norm{A^{o}}^{4}H^{2(m-1)}} \leq \oo{\intM{\norm{A^{o}}^{2}H^{2m}}}^{\frac{m-1}{m}}\oo{\intM{\norm{A^{o}}^{2(m+1)}}}^{\frac{1}{m}}\\
\leq \Aoc{\frac{2}{m}}\llll{A^{o}}_{\infty}^{2}\oo{\intM{\norm{A^{o}}^{2}H^{2m}}}^{\frac{m-1}{m}}
\leq c\Aoc{2}S_{m}^{\frac{1}{m}}Q_{m,m}^{\frac{m-1}{m}} 
\leq \eta\,Q_{m,m} + c\,\eta^{-1}\Aoc{2m}S_{m}.\label{TheoremYEqn4}
\end{multline}
Here we have also employed \eqref{EpsilonRegularityProof1} and Theorem \ref{HigherOrderSobolevTheorem1}.

Substituting \eqref{TheoremYEqn3} and \eqref{TheoremYEqn4} back into \eqref{TheoremYEqn1} yields
\begin{equation}
\oo{1-\eta}Q_{m,m-1}+ \oo{1-2\eta}Q_{m,m} \leq c\intM{\nablanorm{2}{H}{2}H^{2(m-2)}} + c\,\eta^{-1}\Aoc{m}S_{m},
\end{equation}
which is valid for any $\eta>0$. Choosing $\eta$ sufficiently small gives
\begin{equation}
Q_{m,m-1} + Q_{m,m} \leq c\intM{\nablanorm{2}{H}{2}H^{2(m-2)}} + c\Aoc{m}S_{m}\label{TheoremYEqn7}
\end{equation}
for some absolute constant $c>0$. We now estimate the penultimate term on the right hand side of \eqref{TheoremYEqn7}. Employing identity \eqref{MultliplicativeSobolevLemma2Eqn5} and two applications of integration by parts yields
\begin{align}
&\intM{\nablanorm{2}{H}{2}H^{2(m-2)}}\\
&\quad = -\intM{\inner{\nabla H,\nabla\Delta H + \frac{1}{4}\nabla H\oo{H^{2}-2\norm{A^{o}}^{2}}}H^{2(m-2)}} + \intM{\nabla_{(2)}H * \nabla H * \nabla H * H^{2m-5}}\\
&\quad = \intM{\norm{\Delta H}^{2} H^{2(m-2)}} - \frac{1}{4}\intM{\norm{\nabla H}^{2}H^{2(m-1)}} + \frac{1}{2}\intM{\norm{\nabla H}^{2}\norm{A^{o}}^{2}H^{2(m-2)}}\\
&\qquad + c\intM{\nabla_{(2)}H * \nabla H * \nabla H * H^{2m-5}}\\
&\quad \leq \intM{\norm{\Delta H}^{2} H^{2(m-2)}} - \frac{1}{4}\intM{\norm{\nabla H}^{2}H^{2(m-1)}} + \frac{1}{2}\intM{\norm{\nabla H}^{2}\norm{A^{o}}^{2}H^{2(m-2)}}\\
&\qquad + c\intM{\nablanorm{2}{H}{}\norm{\nabla H}^{2}\norm{H}^{2m-5}}
\end{align} 
The above can be rearranged to read
\begin{multline}
\intM{\nablanorm{2}{H}{2}H^{2(m-2)}} + \frac{1}{4}\intM{\norm{\nabla H}^{2}H^{2(m-1)}}\\
\leq \intM{\norm{\Delta H}^{2} H^{2(m-2)}} + \frac{1}{2}\intM{\norm{\nabla H}^{2}\norm{A^{o}}^{2}H^{2(m-2)}} + c\intM{\nablanorm{2}{H}{}\norm{\nabla H}^{2}\norm{H}^{2m-5}}.\label{TheoremYEqn8}
\end{multline}
We estimate the last term on the right hand side of \eqref{TheoremYEqn8} in a similar way to our above estimate \eqref{TheoremYEqn3}. Employing the H\"{o}lder inequality as well as the generalised Young's inequality with conjugate exponents $\alpha=2,\,\beta=2(m-1)/(m-3)$ and $\delta=m-1$ and our $L^{\infty}$ estimate from \eqref{TheoremYEqn2} yields the following estimate.  For the borderline case $m=3$, the same estimate is understood in the limiting sense: the middle H\"{o}lder factor below has exponent zero and is omitted.
\begin{align}
&\intM{\nablanorm{2}{H}{}\norm{\nabla H}^{2}\norm{H}^{2m-5}} \leq \llll{\nabla H}_{\infty}\intM{\nablanorm{2}{H}{}\norm{\nabla H}\norm{H}^{2m-5}}\\
&\leq \llll{\nabla H}_{\infty}\oo{\intM{\nablanorm{2}{H}{2}H^{2(m-2)}}}^{\frac{1}{2}}\oo{\intM{\norm{\nabla H}^{2}H^{2(m-1)}}}^{\frac{m-3}{2(m-1)}}\oo{\intM{\norm{\nabla H}^{2}}}^{\frac{1}{m-1}}\\
&\leq c\Aoc{}S_{m}^{\frac{1}{m-1}}\oo{\intM{\nablanorm{2}{H}{2}H^{2(m-2)}}}^{\frac{1}{2}}\oo{\intM{\norm{\nabla H}^{2}H^{2(m-1)}}}^{\frac{m-3}{2(m-1)}}\\
&\leq \eta \oo{\intM{\nablanorm{2}{H}{2}H^{2(m-2)}}+\intM{\norm{\nabla H}^{2}H^{2(m-1)}}} + c\,\eta^{-1}\Aoc{m-1}S_{m},\label{TheoremYEqn9}
\end{align}
which is valid for any $\eta>0$. We leave \eqref{TheoremYEqn9} for the time being, and look to estimate the penultimate term in \eqref{TheoremYEqn8}. Using the $L^{\infty}$ estimate of \eqref{EpsilonRegularityProof1} as well as the interpolation inequalities from Theorem \ref{HigherOrderSobolevTheorem1} gives
\begin{align}
\intM{\norm{\nabla H}^{2}\norm{A^{o}}^{2}H^{2(m-2)}} &\leq \llll{A^{o}}_{\infty}^{2}\intM{\norm{\nabla H}^{2}H^{2(m-2)}}\\
&\leq \llll{A^{o}}_{\infty}^{2}\oo{\intM{\norm{\nabla H}^{2}H^{2(m-1)}}}^{\frac{m-2}{m-1}}\oo{\intM{\norm{\nabla H}^{2}}}^{\frac{1}{m-1}}\\
&\leq c\Aoc{2}S_{m}^{\frac{1}{m-1}}\oo{\intM{\norm{\nabla H}^{2}H^{2(m-1)}}}^{\frac{m-2}{m-1}}\\
&\leq \eta\intM{\norm{\nabla H}^{2}H^{2(m-1)}} + c\,\eta^{-1}\Aoc{2(m-1)}S_{m}.\label{TheoremYEqn10}
\end{align}
for any $\eta>0$. Substituting \eqref{TheoremYEqn9} and \eqref{TheoremYEqn10} into \eqref{TheoremYEqn8} gives, for $\eta>0$ sufficiently small,
\begin{equation}
\intM{\nablanorm{2}{H}{2}H^{2(m-2)}} + \intM{\norm{\nabla H}^{2}H^{2(m-1)}} \leq c\intM{\norm{\Delta H}^{2}H^{2(m-2)}} + c\Aoc{m-1}S_{m}.
\end{equation}
Combining this with \eqref{TheoremYEqn7} then completes the proof.
\end{proof}

\begin{theorem}\label{TheoremXX}
Suppose $f:\Sigma^{2}\rightarrow\mathbb{R}^{3}$ is a closed immersion satisfying $||A^{o}||_{2}^{2}\leq\varepsilon_{0}$ for $\varepsilon_{0}$ sufficiently small. Fix $m\geq3$. Then for any $0\leq l \leq m-1$ there exists a constant $c$ such that
\[
Q_{m,0}+Q_{m,1}+\hdots+Q_{m,l}\leq c\,Q_{m,l+1} + c\,S_{m}.
\]
\end{theorem}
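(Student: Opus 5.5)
We argue by induction on $l$, combining the recurrence of Theorem \ref{TheoremX} with Young's inequality.

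The base case $l=0$ asks for $Q_{m,0}\leq c\,Q_{m,1}+c\,S_m$. We apply Theorem \ref{TheoremX} with $l=1$:
\[
Q_{m,1}\leq c\,Q_{m,0}^{\frac12}Q_{m,2}^{\frac12}+c\Aoc{2}\oo{Q_{m,0}+Q_{m,2}+S_m}.
\]
This inequality runs the wrong way for the base case, so we work with $Q_{m,0}$ directly. Since $Q_{m,0}=\|\nabla_{(m)}A^o\|_2^2$, the first step is to show $Q_{m,0}\leq c\,S_m+c\,Q_{m,1}$. We follow the proof of Lemma \ref{LemmaPreservedSphericityEstimates2} with $p$ replaced by $m$; that proof is valid for any integer order. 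Estimate \eqref{LemmaPreservedSphericityEstimates2,Eqn6} gives
\[
Q_{m,0}\leq c\,S_m+c\,Q_{m-1,1}+\int|P_4^{2(m-1)}(A^o)|.
\]
The quartic term is bounded by $c\Aoc{2}(S_m+Q_{m,0})$, exactly as in \eqref{LemmaPreservedSphericityEstimates2,Eqn7}, and is then absorbed. The remaining problem is $Q_{m-1,1}=\int|\nabla_{(m-2)}A^o|^2H^2$. This term does not match the $Q_{m,\cdot}$ family, because it has one fewer derivative than $Q_{m,1}$. Lemma \ref{LemmaPreservedSphericityEstimates2} itself handled it by citing the present theorem, so reusing that route would be circular.

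To avoid the circularity, we bound $Q_{m-1,1}$ by H\"older and interpolation. First,
\[
Q_{m-1,1}\leq \|H\|_\infty^{2}\int|\nabla_{(m-2)}A^o|^2 .
\]
This introduces $\|H\|_\infty$, which is not small, so instead we integrate by parts once:
\[
Q_{m-1,1}=\int \nabla_{(m-3)}A^o*\nabla_{(m-1)}A^o H^2+\int\nabla_{(m-3)}A^o*\nabla_{(m-2)}A^o*\nabla H\,H .
\]
The second integral is controlled by $\|\nabla H\|_\infty$ through \eqref{TheoremX,4} and Lemma \ref{NewLemma1}, which yields $c\Aoc{2}(Q_{m,0}+S_m)$. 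This is the same mechanism as the $l=1$ endpoint in the proof of Theorem \ref{TheoremX}. The first integral is at most $c\,Q_{m-1,0}'^{1/2}Q_{m,1}^{1/2}$-type products, where the primed quantity has fewer derivatives. Hence, by Cauchy--Schwarz and Young, the whole chain closes as $Q_{m,0}\leq c\,S_m+c\,Q_{m,1}+c\Aoc{2}Q_{m,0}$. Absorbing the last term gives the case $l=0$.

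For the inductive step, assume $\sum_{j\leq l-1}Q_{m,j}\leq c\,Q_{m,l}+c\,S_m$ with $1\leq l\leq m-1$. If $l\leq m-2$, Theorem \ref{TheoremX} and Young give
\[
Q_{m,l}\leq \eta Q_{m,l-1}+c_\eta Q_{m,l+1}+c\Aoc{2}(Q_{m,0}+Q_{m,l+1}+S_m).
\]
By the inductive hypothesis, $Q_{m,l-1}+Q_{m,0}\leq c\,Q_{m,l}+c\,S_m$. Choosing $\eta$ small and $\varepsilon_0$ small, we absorb the $Q_{m,l}$ terms on the right and obtain $Q_{m,l}\leq c\,Q_{m,l+1}+c\,S_m$. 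Adding the inductive hypothesis then gives the claim for $l$. If $l=m-1$, we use Theorem \ref{TheoremY} in place of Theorem \ref{TheoremX}:
\[
Q_{m,m-1}\leq c\,Q_{m,m-2}^{1/2}Q_{m,m}^{1/2}+c\,S_m\leq \eta Q_{m,m-2}+c_\eta Q_{m,m}+c\,S_m .
\]
Since $Q_{m,m-2}\leq c\,Q_{m,m-1}+c\,S_m$ by induction, absorption again closes the step.

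The main obstacle is the base case. It requires a clean closed elliptic estimate $Q_{m,0}\lesssim S_m+Q_{m,1}$ that does not invoke the present theorem, because the lower-order mixed term $\int|\nabla_{(m-2)}A^o|^2H^2$ has to be rewritten in terms of the $Q_{m,\cdot}$ family without that theorem. If the integration-by-parts reduction above does not close cleanly, we would try induction in $m$ as a fallback. The $m=3$ case would come from \eqref{LemmaPreservedSphericityEstimates1,Eqn4}, and the lower-order mixed terms at each larger $m$ would be treated as already controlled.
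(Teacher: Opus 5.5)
Your inductive step is the paper's: Theorem \ref{TheoremX} plus Young, with $Q_{m,l-1}$ and $Q_{m,0}$ absorbed through the inductive hypothesis. Your endpoint $l=m-1$ is also the paper's: Theorem \ref{TheoremY} together with $Q_{m,m-2}\le c\,Q_{m,m-1}+c\,S_m$.

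The gap is in the base case, and it comes from a misreading. In \eqref{LemmaPreservedSphericityEstimates2,Eqn6} with $p=m$, the mixed term is $\int_\Sigma|\nabla_{(m-1)}A^o|^2H^2\,d\mu$. Since $Q_{m,l}=\int_\Sigma|\nabla_{(m-l)}A^o|^2H^{2l}\,d\mu$, this is exactly $Q_{m,1}$, not $\int_\Sigma|\nabla_{(m-2)}A^o|^2H^2\,d\mu$. Hence \eqref{LemmaPreservedSphericityEstimates2,Eqn6} and \eqref{LemmaPreservedSphericityEstimates2,Eqn7} already give
\[
Q_{m,0}\le c\,S_m+c\,Q_{m,1}+c\,\|A^o\|_2^2\,(S_m+Q_{m,0}),
\]
and absorbing the last $Q_{m,0}$ finishes the base case. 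This is precisely the paper's base case, which it phrases as commuting derivatives in $\operatorname{div}A^o=\frac12\nabla H$. The argument is not circular. The present theorem enters Lemma \ref{LemmaPreservedSphericityEstimates2} only at \eqref{LemmaPreservedSphericityEstimates2,Eqn8}, which you do not need. The quartic estimate \eqref{LemmaPreservedSphericityEstimates2,Eqn7} uses only \eqref{EpsilonRegularityProof1}, which is valid since $m\ge3$, and Lemma \ref{NewLemma1}.

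You should discard the detour you built for the misidentified term, because it cannot work as written. Under $f\mapsto\lambda f$, every $Q_{m,l}$ and $S_m$ scales like $\lambda^{-2m}$, and $\|A^o\|_2$ is scale-invariant. By contrast, $\int_\Sigma|\nabla_{(m-2)}A^o|^2H^2\,d\mu$ scales like $\lambda^{-2m+2}$. So no bound of that term by $c(S_m+Q_{m,1})+c\|A^o\|_2^2Q_{m,0}$ can hold for all surfaces satisfying the scale-invariant smallness hypothesis. This shows up concretely in two places.

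\textbf{The $\nabla H\,H$ integral.} You claim $\int_\Sigma\nabla_{(m-3)}A^o*\nabla_{(m-2)}A^o*\nabla H\,H\,d\mu$ is bounded by $c\|A^o\|_2^2(Q_{m,0}+S_m)$. That integral also scales like $\lambda^{-2m+2}$, so the claimed bound is dimensionally inconsistent. It also still carries an undifferentiated factor $H$, unlike the $l=1$ endpoint in the proof of Theorem \ref{TheoremX}, so the same mechanism does not apply.

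\textbf{The Young step.} Your ``primed quantity'' is again an $H^2$-weighted lower-order term, namely $\int_\Sigma|\nabla_{(m-3)}A^o|^2H^2\,d\mu$, and nothing in your argument controls it.

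The fallback induction in $m$ is likewise unnecessary once the term is read correctly.
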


\begin{proof}
The induction is in $l$, with $m$ fixed.  The required base case is
\begin{equation}
    Q_{m,0}\leq C Q_{m,1}+C S_m .
\label{TheoremXX-base}
\end{equation}
This follows from the tracefree Codazzi elliptic estimate on a closed
surface with small $\|A^o\|_2$: commuting derivatives in
$\operatorname{div}A^o=\frac12\nabla H$ gives
\[
    \int |\nabla_{(m)}A^o|^2
    \leq
    C\int |\Delta^{m/2}H|^2
    + C\int |\nabla_{(m-1)}A^o|^2H^2,
\]
and the last term is $Q_{m,1}$.  This proves
\eqref{TheoremXX-base}.

For the induction step below the endpoint, assume that, for some
$0\leq n\leq m-3$,
\[
Q_{m,0}+\cdots+Q_{m,n}
\leq C Q_{m,n+1}+C S_m .
\]
Using Theorem \ref{TheoremX} with $l=n+1$ and Young's inequality gives
\begin{align*}
Q_{m,n+1}
&\leq C Q_{m,n}^{1/2}Q_{m,n+2}^{1/2}
 +C\Aoc{2}\oo{Q_{m,0}+Q_{m,n+2}+S_m}
\\
&\leq \eta Q_{m,n}+C_\eta Q_{m,n+2}
 +C\Aoc{2}\oo{Q_{m,0}+Q_{m,n+1}+Q_{m,n+2}+S_m}.
\end{align*}
Choosing $\eta$ and then $\varepsilon_0$ sufficiently small, and combining
with the induction hypothesis, yields
\[
Q_{m,0}+\cdots+Q_{m,n+1}
\leq C Q_{m,n+2}+C S_m .
\]
This proves the theorem for all $l\leq m-2$.

It remains to handle the endpoint $l=m-1$, where Theorem \ref{TheoremX} is not used.  The already-proved case $l=m-2$ gives
\begin{equation}
    Q_{m,0}+\cdots+Q_{m,m-2}
    \leq C Q_{m,m-1}+C S_m .
\label{TheoremXX-endpoint-input}
\end{equation}
In particular $Q_{m,m-2}\leq C Q_{m,m-1}+C S_m$.  The endpoint estimate of Theorem \ref{TheoremY} then yields
\[
Q_{m,m-1}+Q_{m,m}
\leq
C\oo{Q_{m,m-1}+S_m}^{1/2}Q_{m,m}^{1/2}+C S_m.
\]
Young's inequality gives, after absorbing a small multiple of $Q_{m,m-1}$,
\[
    Q_{m,m-1}
    \leq C Q_{m,m}+C S_m .
\]
Combining this with \eqref{TheoremXX-endpoint-input} proves
\[
    Q_{m,0}+\cdots+Q_{m,m-1}
    \leq C Q_{m,m}+C S_m,
\]
which is the desired endpoint case.
\end{proof}

\begin{lemma}\label{AppendxLemma1}
Suppose $f:\Sigma^{2}\rightarrow\mathbb{R}^{3}$ is a closed immersion satisfying $||A^{o}||_{2}^{2}\leq\varepsilon_{0}$ for $\varepsilon_{0}$ sufficiently small. Fix $m\geq3$ and define
\[
R_{m,l} := \intM{|\Delta^{\frac{l}{2}}H|^{2}H^{2(m-l)}}\quad(1\leq l\leq m),
\qquad
R_{m,0}:=\intM{H^{2m}}.
\]
Observe that $R_{m,m}=S_{m}$ when $f$ is closed. Then, for $1\leq l\leq m-1$, there is a universal constant $c>0$ such that
\begin{equation}
R_{m,l} \leq c\,R_{m,l-1}^{\frac{1}{2}}R_{m,l+1}^{\frac{1}{2}}+c\Aoc{2}\oo{R_{m,l-1} + S_{m}},\label{AppendxLemma1Eqn1}
\end{equation}
where $S_{m}$ is defined as in Lemma \ref{HigherOrderSobolevLemma1}. Furthermore, for any $1\leq l \leq m-1$ we have
\begin{equation}
R_{m,1} + R_{m,2} + \hdots + R_{m,l} \leq c\,R_{m,l+1} + c\Aoc{2}S_{m}.\label{AppendxLemma1Eqn2}
\end{equation}
\end{lemma}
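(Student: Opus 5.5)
We prove the recurrence \eqref{AppendxLemma1Eqn1} by one integration by parts, exactly as in Theorem \ref{TheoremX}, and then obtain \eqref{AppendxLemma1Eqn2} by the same downward induction used in Theorem \ref{TheoremXX}.

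\textbf{The recurrence.} Fix $1\leq l\leq m-1$ and write $|\Delta^{l/2}H|^2=\Delta^{\frac{l-1}{2}}H*\Delta^{\frac{l+1}{2}}H$ after one integration by parts, using the convention $\Delta^{(2k+1)/2}H=\nabla\Delta^kH$. When $l$ is odd, $|\nabla\Delta^kH|^2$ becomes $-\Delta^kH\,\Delta^{k+1}H$. When $l$ is even, $|\Delta^kH|^2$ becomes $-\langle\nabla\Delta^{k-1}H,\nabla\Delta^kH\rangle$. In both cases the only error term comes from the derivative landing on the weight $H^{2(m-l)}$:
\[
R_{m,l}\leq \intM{|\Delta^{\frac{l-1}{2}}H|\,|\Delta^{\frac{l+1}{2}}H|\,H^{2(m-l)}}+c\intM{|\Delta^{\frac{l-1}{2}}H|\,|\Delta^{\frac{l}{2}}H|'\,|\nabla H|\,|H|^{2(m-l)-1}}.
\]
In the second integral, $|\Delta^{l/2}H|'$ denotes whichever of $\Delta^{l/2}H$ or $\Delta^{(l+1)/2}H$ is paired with the undifferentiated factor. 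Splitting $H^{2(m-l)}=H^{m-l+1}H^{m-l-1}$ and applying Cauchy--Schwarz to the first integral gives exactly $R_{m,l-1}^{1/2}R_{m,l+1}^{1/2}$.

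For the error term I would follow Theorem \ref{TheoremX}. First pull out $\|\nabla H\|_\infty$. Then use H\"older so that a factor $R_{m,l}^{1/2}$ appears, or $R_{m,l+1}^{1/2}$ in the odd arrangement. What remains is $\big(\int|\Delta^{\frac{l-1}{2}}H|^2H^{2(m-l)-2}\big)^{1/2}$, which I interpolate in the power of $H$ by H\"older between $R_{m,l-1}$ and $\int|\Delta^{\frac{l-1}{2}}H|^2$. For the $L^\infty$ factor I use \eqref{TheoremYEqn2}, namely $\|\nabla H\|_\infty\leq c\Aoc{\frac{m-2}{m}}S_m^{1/m}$. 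For the unweighted factor I use Theorem \ref{HigherOrderSobolevTheorem1} with $\gamma\equiv1$, which bounds $\int|\Delta^{\frac{l-1}{2}}H|^2$ by $c\Aoc{\frac{2(m-l+1)}{m}}S_m^{\frac{l-1}{m}}$. Absorbing a small multiple of $R_{m,l}$ into the left-hand side and applying the generalised Young inequality, the exponents of $R_{m,l-1}$ and $S_m$ should sum to one by scaling. The total power of $\|A^o\|_2$ is then at least $2$, which gives the term $c\Aoc{2}(R_{m,l-1}+S_m)$. The case $l=1$ has no middle H\"older factor and is handled the same way, as in the $l=1$ endpoint of Theorem \ref{TheoremX}.

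\textbf{The summed estimate.} To get \eqref{AppendxLemma1Eqn2} I induct on $l$. Applying Young to \eqref{AppendxLemma1Eqn1} gives $R_{m,l}\leq \eta R_{m,l-1}+C_\eta R_{m,l+1}+c\Aoc2(R_{m,l-1}+S_m)$. The bottom term $R_{m,0}=\int H^{2m}$ is not in the sum, so the base case $l=1$ needs separate treatment. I would bound $R_{m,0}^{1/2}R_{m,2}^{1/2}$ by rewriting it, integrating by parts once more, as $\int|\nabla H|^2H^{2m-2}$-type quantities, i.e. $R_{m,1}$. Alternatively I would use that only differences $H-\bar H$ matter: derivative terms never see the constant mode. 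The honest route is to note that after the integration by parts for $l=1$, the $R_{m,0}$ factor actually appears as $\int|\nabla H|\,|\Delta H|\,H^{2m-2}$ with no bare $H^{2m}$. Cauchy--Schwarz then pairs $R_{m,1}$ with $\int|\Delta H|^2H^{2m-4}=R_{m,2}$, giving $R_{m,1}\leq cR_{m,2}+c\Aoc2S_m$ directly. For the inductive step, the $\eta R_{m,l-1}$ and $c\Aoc2R_{m,l-1}$ terms are absorbed by the induction hypothesis once $\varepsilon_0$ is small. This yields $R_{m,1}+\dots+R_{m,l+1}\leq cR_{m,l+2}+c\Aoc2S_m$.

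I expect the main obstacle to be this bookkeeping at the bottom end, namely ensuring that $R_{m,0}$ never enters with a coefficient that is not small. I would handle it by always integrating by parts towards the higher index, so that the $l=1$ step produces only $R_{m,1}$, $R_{m,2}$ and $\nabla H$ error terms. A secondary difficulty is checking that the Young exponents close for every $l$ with a power of $\|A^o\|_2$ that is at least $2$.
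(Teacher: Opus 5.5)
\textbf{Gap: the base case of \eqref{AppendxLemma1Eqn2}.} The genuine gap is the base case $R_{m,1}\le cR_{m,2}+c\Aoc{2}S_m$ of \eqref{AppendxLemma1Eqn2}.

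It cannot come from the $l=1$ recurrence. $R_{m,0}$ is of order one: it is positive on a round sphere, where $R_{m,1}=R_{m,2}=S_m=0$. So Young's inequality always leaves a non-small multiple of it.

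Your ``honest route'' relies on an identity that does not exist. Integration by parts preserves both the number of derivatives and the number of factors of $H$. From $R_{m,1}$ you therefore only get $-\frac{1}{2m-1}\intM{H^{2m-1}\Delta H}$, with the bare power of $H$ still present. The term $\intM{|\nabla H||\Delta H|H^{2m-2}}$ has the wrong scaling. Even granting it, Cauchy--Schwarz against $R_{m,1}$ would leave $\intM{|\Delta H|^2H^{2m-2}}$, not $R_{m,2}$.

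The missing idea is to commute covariant derivatives, which trades two derivatives for two factors of $H$ through the Gauss curvature. Integrate $\intM{|\nabla_{(2)}H|^2H^{2(m-2)}}$ by parts and use \eqref{MultliplicativeSobolevLemma2Eqn5}, $\Delta\nabla H=\nabla\Delta H+\frac14(H^2-2|A^o|^2)\nabla H$. Then $\frac14R_{m,1}$ appears on the left with a favourable sign. The remaining error terms are handled exactly by the computation \eqref{TheoremYEqn8}--\eqref{TheoremYEqn10} in the proof of Theorem \ref{TheoremY}. This yields $\intM{|\nabla_{(2)}H|^2H^{2(m-2)}}+R_{m,1}\le cR_{m,2}+c\Aoc{m-1}S_m$, which is the base case since $m\ge3$.

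This plays the role of the separately proved base case \eqref{TheoremXX-base} in Theorem \ref{TheoremXX}, which likewise does not follow from the recurrence. With it, your inductive step goes through as written, using the recurrence only for $l\ge2$.

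\textbf{The recurrence.} For $2\le l\le m-1$ your argument is the paper's: one integration by parts, Cauchy--Schwarz, $\|\nabla H\|_\infty$ via \eqref{TheoremYEqn2}, H\"older in the power of $H$, Theorem \ref{HigherOrderSobolevTheorem1} for $S_{l-1}$, then absorption and Young. The powers of $\|A^o\|_2$ add up to exactly $2$.

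Two corrections are needed. First, by derivative count the error term is $\Delta^{\frac{l-1}{2}}H*\Delta^{\frac{l}{2}}H*\nabla H*H^{2(m-l)-1}$ for both parities of $l$. So it always produces $R_{m,l}^{1/2}$, never $R_{m,l+1}^{1/2}$.

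Second, $l=1$ is not handled the same way. The leftover factor $(\int_\Sigma H^{2m-2}\,d\mu)^{1/2}$ interpolates onto $S_0=\int_\Sigma H^2\,d\mu$, which is neither small nor covered by Theorem \ref{HigherOrderSobolevTheorem1}. Instead, note that at $l=1$ the error term is exactly $-(2m-2)R_{m,1}$. Hence $(2m-1)R_{m,1}=-\intM{H^{2m-1}\Delta H}\le(\int_\Sigma H^{2m+2}\,d\mu)^{1/2}R_{m,2}^{1/2}$. This is the $l=1$ case provided $R_{m,0}$ is read as $\int_\Sigma H^{2m+2}\,d\mu$, the $l=0$ member of the family. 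With the literal $\int_\Sigma H^{2m}\,d\mu$ the two sides scale differently. Shrinking a fixed surface would then force $R_{m,1}\le c\Aoc{2}S_m$, which fails for slightly perturbed round spheres.
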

\begin{proof}
For the first interpolation inequality we proceed in a similar manner to the proof of Theorem \ref{TheoremX}. Using integration by parts, our estimate for $\llll{\nabla H}_{\infty}$ from Lemma \ref{AppendixLemmaGradHInfty} gives 
\begin{align}
R_{m,l} &= \intM{|\Delta^{\frac{l}{2}}H|^{2}H^{2(m-l)}}\\
&= \intM{\Delta^{\frac{l-1}{2}}H * \Delta^{\frac{l+1}{2}}H * H^{2(m-l)}} + \intM{\Delta^{\frac{l-1}{2}}H * \Delta^{\frac{l}{2}}H * \nabla H * H^{2(m-l)-1}}\\
&\leq c\,R_{m,l-1}^{\frac{1}{2}}R_{m,l+1}^{\frac{1}{2}} + c \llll{\nabla H}_{\infty}R_{m,l}^{\frac{1}{2}}\oo{\intM{|\Delta^{\frac{l-1}{2}}H|^{2}H^{2(m-(l+1))}}}^{\frac{1}{2}}\\
&\leq c\,R_{m,l-1}^{\frac{1}{2}}R_{m,l+1}^{\frac{1}{2}} + c \llll{\nabla H}_{\infty}R_{m,l}^{\frac{1}{2}}R_{m,l-1}^{\frac{m-(l+1)}{2(m-(l-1))}}\oo{\intM{|\Delta^{\frac{l-1}{2}}H|^{2}}}^{\frac{1}{m-(l-1)}}\\
&= c\,R_{m,l-1}^{\frac{1}{2}}R_{m,l+1}^{\frac{1}{2}} + c \llll{\nabla H}_{\infty}R_{m,l}^{\frac{1}{2}}R_{m,l-1}^{\frac{m-(l+1)}{2(m-(l-1))}}S_{l-1}^{\frac{1}{m-(l-1)}}\\
&\leq c\,R_{m,l-1}^{\frac{1}{2}}R_{m,l+1}^{\frac{1}{2}} + c\Aoc{\frac{1}{3}}S_{3}^{\frac{1}{3}}R_{m,l}^{\frac{1}{2}}R_{m,l-1}^{\frac{m-(l+1)}{2(m-(l-1))}}S_{l-1}^{\frac{1}{m-(l-1)}}\\
&\leq c\,R_{m,l-1}^{\frac{1}{2}}R_{m,l+1}^{\frac{1}{2}} + \frac{1}{2}R_{m,l} + c\Aoc{\frac{2}{3}}R_{m,l-1}^{\frac{m-(l+1)}{m-(l-1)}}S_{3}^{\frac{2}{3}}S_{l-1}^{\frac{2}{m-(l-1)}}
\end{align}
By absorbing $\frac{1}{2}R_{m,l}$ into the left hand side, the above implies that
\begin{equation}
R_{m,l} \leq c\,R_{m,l-1}^{\frac{1}{2}}R_{m,l+1}^{\frac{1}{2}} + c\Aoc{\frac{2}{3}}R_{m,l-1}^{\frac{m-(l+1)}{m-(l-1)}}S_{3}^{\frac{2}{3}}S_{l-1}^{\frac{2}{m-(l-1)}}.\label{AppendxLemma1Eqn3}
\end{equation}
Next, applying Theorem \ref{HigherOrderSobolevTheorem1} we have
\[
S_{3}^{\frac{2}{3}} \leq c\Aoc{\frac{4(m-3)}{3m}}S_{m}^{\frac{2}{m}}\,\,\text{and}\,\,S_{l-1}^{\frac{2}{m-(l-1)}}\leq c\Aoc{\frac{4}{m}}S_{m}^{\frac{2(l-1)}{m(m-(l-1))}}.
\]
Substituting into \eqref{AppendxLemma1Eqn3} and applying Young's inequality with conjugate exponents $\alpha = \frac{m-(l-1)}{(m-(l+1))}$ and $\alpha^{*}=\frac{(m-(l-1))}{2}$ then gives
\begin{multline*}
R_{m,l} \leq c\,R_{m,l-1}^{\frac{1}{2}}R_{m,l+1}^{\frac{1}{2}} + c\Aoc{\frac{2}{3}+\frac{4(m-3)}{3m}+\frac{4}{m}}R_{m,l-1}^{\frac{m-(l+1)}{m-(l-1)}}S_{m}^{\frac{2}{m}+\frac{2(l-1)}{m(m-(l-1))}}\\
\leq c\,R_{m,l-1}^{\frac{1}{2}}R_{m,l+1}^{\frac{1}{2}} + c\Aoc{2}R_{m,l-1}^{\frac{m-(l+1)}{m-(l-1)}}S_{m}^{\frac{2}{m-(l-1)}}
\leq c\,R_{m,l-1}^{\frac{1}{2}}R_{m,l+1}^{\frac{1}{2}} + c\Aoc{2}\oo{R_{m,l-1} + S_{m}},
\end{multline*}
which proves \eqref{AppendxLemma1Eqn1} for $l\geq2$.  The endpoint
$l=1$ is obtained by the same argument, using the definition of $R_{m,0}$
and estimating the harmless $S_0$ term by the closed-surface
interpolation inequalities and the smallness of $\|A^o\|_2$.  The proof
of \eqref{AppendxLemma1Eqn2} then follows in the same manner as the proof
of Theorem \ref{TheoremXX}.
\end{proof}

\begin{theorem}\label{AppendixTheorem1}
Let $f:\Sigma\rightarrow\mathbb{R}^{3}$ be an immersion and let
$n\in\mathbb N$.  There exist constants $\varepsilon_0>0$ and
$C<\infty$, depending only on $n$ and on the cut-off constants, such that
if
\[
    \int_{[\gamma>0]}|A|^2\,d\mu\leq\varepsilon_0,
\]
then
\begin{multline}
\sum_{i=2}^{n}\intM{\left(P_i^{n-i}(A)\right)^{\#2}\gamma^{2(n-1)}}
\leq
C\A{2}\intM{|\nabla_{(n-1)}A|^2\gamma^{2(n-1)}}
\label{AppendixTheorem1,1}\\
+ C\cgam^{2(n-1)}\A{2}.
\end{multline}
\end{theorem}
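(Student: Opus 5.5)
We reduce to a single monomial and estimate it with the Kuwert--Sch\"atzle product interpolation, Proposition~\ref{EvolutionProposition4}, together with the local $L^\infty$ bound for $A$ in Corollary~\ref{InfinityCorollary}.

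Fix $i\in\{2,\ldots,n\}$ and a monomial $\nabla_{(j_1)}A*\cdots*\nabla_{(j_i)}A$ with $\sum_\alpha j_\alpha=n-i$. Its square $(P_i^{n-i}(A))^{\#2}$ is a product of $r=2i$ factors of $A$ with total derivative order $2(n-i)$. Set $k:=n-1$. The terms with $i$ close to $2$ have derivative total $2(n-i)<2k$, so they do not directly satisfy the hypothesis $\sum j=2k$ of Proposition~\ref{EvolutionProposition4}.

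The remedy is to apply that proposition at the natural order $k_i:=n-i$ and then interpolate upward. Each factor carries at most $n-i=k_i$ derivatives, and the weight satisfies $2(n-1)\geq 2k_i$. If $k_i\geq1$, Proposition~\ref{EvolutionProposition4} gives
\[
\intM{(P_i^{n-i}(A))^{\#2}\gamma^{2(n-1)}}
\leq c\,\llll{A}_{\infty,[\gamma>0]}^{2i-2}\Big(\intM{|\nabla_{(n-i)}A|^2\gamma^{2(n-1)}}+\cgam^{2(n-i)}\A{2}\Big).
\]
If $k_i=0$, that is $i=n$, the integrand is $|A|^{2n}$. We bound it directly by $\llll{A}_{\infty}^{2n-2}\A{2}$.

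Next we interpolate the middle term. Lemma~\ref{NewLemma1} is applied with $T=A$, weight exponent $s=2(n-1)-2(i-1)$ raised to $2(n-1)$, and $m=i-1$. This gives
\[
\intM{|\nabla_{(n-i)}A|^2\gamma^{2(n-1)}}\leq C\A{\frac{2(i-1)}{n-1}}M^{\frac{n-i}{n-1}}+C\cgam^{2(n-i)}\A{2},
\qquad M:=\intM{|\nabla_{(n-1)}A|^2\gamma^{2(n-1)}}.
\]

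The main obstacle is the $L^\infty$ factor. Corollary~\ref{InfinityCorollary} only controls $\llll{A}_{\infty,[\gamma=1]}$, while Proposition~\ref{EvolutionProposition4} requires $\llll{A}_{\infty,[\gamma>0]}$. Moreover, the corollary is expressed through $\nabla_{(m+1)}A$ with weight $\gamma^{2(m+1)}$ rather than the single quantity $M$.

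I would try to avoid the sup norm entirely and redo the product interpolation in Gagliardo--Nirenberg form. Each factor $\nabla_{(j)}A$ is estimated in $L^{p_j}$ with $\sum 2/p_j=1$, using the local Michael--Simon Sobolev inequality (Theorem~\ref{MichaelSimon}) applied to powers of $|\nabla_{(j)}A|\gamma^{\sigma}$. This yields a bound of the form $C\,\A{2(i-1)}\,\A{\frac{2(i-1)}{n-1}}M^{\frac{n-i}{n-1}}\cdot(\cdots)$, which after scaling bookkeeping should reduce to
\[
C\A{2i-2+\theta}M^{1-\theta'}+\ldots
\]
with homogeneity exactly that of $\A{2}(M+\cgam^{2(n-1)}\A{2})$. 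This is the same mechanism already used in Lemma~\ref{L:local-product-interpolation}, and its critical case~\eqref{E:localinterp-critical} with $r=2i\geq4$ and $\theta=0$ is essentially the desired estimate once the derivative total is matched.

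Concretely, I expect the cleanest route to be writing
\[
(P_i^{n-i})^{\#2}=P_{2i}^{2(n-1)-2i+2-\theta}(A),\qquad \theta=2(i-2),
\]
and invoking Lemma~\ref{L:local-product-interpolation} with $m=n-1$. That gives $\cgam^{-\theta}$ times $\eta M+C_\eta\cgam^{2m}E$. This is not quite the bound we want, so I would instead track the intermediate Gagliardo--Nirenberg bound $C E^{\frac{r-2}{2}+\frac{\theta}{2m}}M^{1-\frac{\theta}{2m}}$. Young's inequality then splits it as
\[
E^{\frac{r-2}{2}}\big(M+\cgam^{2m}E\big)\cdot E^{\frac{\theta}{2m}}\cgam^{-\theta}\ldots
\]
balanced by scaling into $E\cdot(M+\cgam^{2(n-1)}E)$, using $E\leq\varepsilon_0\leq1$ and $r\geq4$.

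Finally, we sum over the finitely many monomials and over $i=2,\ldots,n$. After shrinking $\varepsilon_0$ so that every power $E^{a}$ with $a\geq1$ is at most $E$, this yields \eqref{AppendixTheorem1,1}.
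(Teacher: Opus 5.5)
There is a genuine gap, and it comes from a derivative miscount rather than a missing tool. A monomial of $(P_i^{n-i}(A))^{\#2}$ has $r=2i$ factors of $A$ and $2(n-i)$ derivatives in total. With $m=n-1$, the critical order in Lemma~\ref{L:local-product-interpolation} is $2m-r+2=2(n-1)-2i+2=2(n-i)$. This order depends on $r$, and it matches exactly, so the defect is $\theta=0$, not $\theta=2(i-2)$. Your formula gives $2(n-1)-2i+2-2(i-2)=2n-4i+4$ derivatives, which is wrong for every $i\geq3$.

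With $\theta=0$, check the hypotheses of the critical case: $r=2i\geq4$, $s=2m$, and each factor carries at most $n-i\leq n-2<m$ derivatives. So \eqref{E:localinterp-critical} applies verbatim. Writing $E:=\A{2}$ and $M$ as in your notation, it gives $CEM+C\cgam^{2(n-1)}E$ for each monomial once $\varepsilon_0\leq1$, since then $E^{(r-2)/2}\leq E$. Summing over the finitely many monomials and over $i=2,\ldots,n$ finishes the proof. This one-line application is exactly the paper's argument. You half-saw it when you wrote that the critical case is the desired estimate ``once the derivative total is matched''---the total already is matched.

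Because of the miscount, the route you actually commit to does not close. The positive-defect estimate \eqref{E:localinterp-positive-defect} applies only to integrands with $2m-r+2-\theta$ derivatives and weight $\gamma^{s-\theta}$, which is not your situation. Moreover, a bound of the form $\cgam^{-\theta}(\eta M+C_\eta\cgam^{2m}E)$ with $\theta>0$ has the wrong homogeneity. Under $f\mapsto\lambda f$ it scales like $\lambda^{\theta-2m}$, whereas both sides of the claimed inequality scale like $\lambda^{-2m}$. So no Young's inequality can ``balance'' it into $E(M+\cgam^{2m}E)$, and the displayed product ending in $\ldots$ is not an estimate. That scaling mismatch is itself the signal that $\theta$ must vanish.

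The alternative of redoing Gagliardo--Nirenberg from the Michael--Simon inequality is only asserted (``should reduce to''), not carried out. It would amount to reproving Lemma~\ref{L:local-product-interpolation}, including absorbing the $|H|u$ term through the smallness of $E$. You were right to abandon your first route: Proposition~\ref{EvolutionProposition4} produces $\llll{A}_{\infty,[\gamma>0]}$. That quantity cannot be controlled by the $\gamma$-weighted integrals on the right-hand side, and Corollary~\ref{InfinityCorollary} only reaches $[\gamma=1]$.
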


\begin{proof}
Set $m=n-1$ and
\[
    M_m:=\llll{\nabla_{(m)}A}_{2,\gamma^{2m}}^2,
    \qquad
    E:=\A{2}.
\]
The case $n=1$ is empty.  Let $n\geq2$.  A monomial in
$\big(P_i^{n-i}(A)\big)^{\#2}$ has $r=2i\geq4$ curvature factors and
has total derivative order $2(n-i)=2m-r+2$.  Since $i\geq2$, no single
factor has more than $n-i\leq n-2<m$ derivatives of $A$.  Therefore
Lemma \ref{L:local-product-interpolation}, in the critical case
\eqref{E:localinterp-critical} with $s=2m$ and $\theta=0$, gives
\[
\intM{\left|\big(P_i^{n-i}(A)\big)^{\#2}\right|\gamma^{2m}}
\leq
C E M_m+C\cgam^{2m}E .
\]
Summing over $i=2,\ldots,n$ proves \eqref{AppendixTheorem1,1}.  The
smallness condition is used only to ensure $E\leq1$ in Lemma
\ref{L:local-product-interpolation}.
\end{proof}

\begin{proof}[Proof of \eqref{LemmaPreservedSphericityEstimates1,Eqn3}]
Using the formula for the interchange of covariant derivatives gives
\begin{equation}
\Delta\nabla\varphi=\nabla\Delta\varphi+\frac{1}{2}R\,\nabla\varphi=\nabla\Delta\varphi+\frac{1}{4}\oo{H^{2}-2\norm{A^{o}}^{2}}\,\nabla\varphi.\label{InterchangeDerivFormula}
\end{equation}
Therefore integrating by parts and applying this identity gives
\begin{multline*}
\intM{\nablanorm{2}{\varphi}{2}}=-\intM{\inner{\nabla\varphi,\Delta\nabla\varphi}}
=-\intM{\inner{\nabla\varphi,\nabla\Delta\varphi}}-\frac{1}{4}\intM{\norm{\nabla\varphi}^{2}\oo{H^{2}-2\norm{A^{o}}^{2}}}\\
=\intM{\norm{\Delta\varphi}^{2}}-\frac{1}{4}\intM{\norm{\nabla\varphi}^{2}\oo{H^{2}-2\norm{A^{o}}^{2}}}.
\end{multline*}
Rearranging and using Theorem \ref{MichaelSimon} with $u=\norm{\nabla \varphi}\norm{A^{o}}$ then yields
\begin{align*}
&\intM{\nablanorm{2}{\varphi}{2}}+\frac{1}{4}\intM{\norm{\nabla\varphi}^{2}H^{2}}=\intM{\norm{\Delta\varphi}^{2}}+\frac{1}{2}\intM{\norm{\nabla\varphi}^{2}\norm{A^{o}}^{2}}\\
&\leq\intM{\norm{\Delta\varphi}^{2}}+c\oo{\intM{\nablanorm{2}{\varphi}{}\norm{A^{o}}}+\intM{\norm{\nabla\varphi}\norm{\nabla A^{o}}}+\intM{\norm{\nabla\varphi}\norm{A^{o}}\norm{H}}}^{2}\\
&\leq\intM{\norm{\Delta\varphi}^{2}}+c\Aoc{2}\oo{\intM{\nablanorm{2}{\varphi}{2}}+\intM{\norm{\nabla\varphi}^{2}H^{2}}}+c\intM{\norm{\nabla A^{o}}^{2}}\cdot\intM{\norm{\nabla\varphi}^{2}}.
\end{align*}
Therefore
\[
\oo{1-c\Aoc{2}}\oo{\intM{\nablanorm{2}{\varphi}{2}}+\intM{\norm{\nabla\varphi}^{2}H^{2}}}\leq c\intM{\norm{\Delta\varphi}^{2}}+c\intM{\norm{\nabla A^{o}}^{2}}\cdot\intM{\norm{\nabla\varphi}^{2}},
\]
and so if $\Aoc{2}$ is small enough then multiplying out proves \eqref{LemmaPreservedSphericityEstimates1,Eqn3}.
\end{proof}

\begin{proof}[Proof of \eqref{LemmaPreservedSphericityEstimates1,Eqn4}]
We proceed in a similar fashion to the proof of \eqref{LemmaPreservedSphericityEstimates1,Eqn3}. Firstly, using integration by parts along with the identity \eqref{InterchangeDerivFormula} gives
\begin{multline}
\intM{\nablanorm{2}{\varphi}{2}H^{2}} = -\intM{\inner{\nabla\varphi,\Delta\nabla\varphi}H^{2}} -2\intM{\inner{\nabla_{(2)}\varphi,\nabla\varphi \otimes \nabla H}H}\\
=-\intM{\inner{\nabla\varphi,\nabla\Delta\varphi + \frac{1}{4}\oo{H^{2}-2\norm{A^{o}}^{2}}\nabla\varphi}H^{2}}-2\intM{\inner{\nabla_{(2)}\varphi,\nabla\varphi \otimes \nabla H}H}.
\end{multline}
Rearranging and using Young's inequality then gives
\begin{align}
&\intM{\nablanorm{2}{\varphi}{2}H^{2}} + \frac{1}{4}\intM{\norm{\nabla\varphi}^{2}H^{4}}\\
&\quad= -\intM{\inner{\nabla\varphi,\nabla\Delta\varphi}H^{2}} + \frac{1}{2}\intM{\norm{\nabla\varphi}^{2}\norm{A^{o}}^{2}H^{2}} -2\intM{\inner{\nabla_{(2)}\varphi,\nabla\varphi \otimes \nabla H}H}\\
&\quad\leq \oo{\intM{\norm{\nabla\varphi}^{2}H^{4}}}^{\frac{1}{2}}\oo{\intM{\norm{\nabla\Delta\varphi}^{2}}}^{\frac{1}{2}} + \frac{1}{2}\intM{\norm{\nabla\varphi}^{2}\norm{A^{o}}^{2}H^{2}}\\
&\qquad + 2\oo{\intM{\nablanorm{2}{\varphi}{2}H^{2}}}^{\frac{1}{2}}\oo{\intM{\norm{\nabla\varphi}^{2}\norm{\nabla H}^{2}}}^{\frac{1}{2}}\\
&\quad\leq \eta_{1}\intM{\norm{\nabla\varphi}^{2}H^{4}}+\frac{1}{4\eta_{1}}\intM{\norm{\nabla\Delta\varphi}^{2}} + \frac{1}{2}\intM{\norm{\nabla\varphi}^{2}\norm{A^{o}}^{2}H^{2}}\\
&\qquad+ \eta_{2}\intM{\nablanorm{2}{\varphi}{2}H^{2}}+\frac{1}{\eta_{2}}\intM{\norm{\nabla\varphi}^{2}\norm{\nabla H}^{2}},
\end{align}
for any $\eta_{1},\eta_{2}>0$. Choosing $\eta_{1},\eta_{2}$ sufficiently small, these terms can be absorbed into the left hand side of the above inequality. This implies that the following inequality holds for some absolute constant $c>0$:
\begin{equation}
\intM{\nablanorm{2}{\varphi}{2}H^{2}} + \intM{\norm{\nabla\varphi}^{2}H^{4}} \leq c \intM{\norm{\nabla\Delta\varphi}^{2}} + c \intM{\norm{\nabla\varphi}^{2}\norm{A^{o}}^{2}H^{2}} + c \intM{\norm{\nabla\varphi}^{2}\norm{\nabla H}^{2}}.\label{Preservedsmallnesstheorem1,03}
\end{equation}
Next we estimate the final two terms on the right hand side of \eqref{Preservedsmallnesstheorem1,03}. The first term can be estimated by using Theorem \ref{MichaelSimon} with $u=\norm{\nabla\varphi}\norm{A^{o}}\norm{H}$:
\begin{align}
&\intM{\norm{\nabla\varphi}^{2}\norm{A^{o}}^{2}H^{2}}\\
&\quad\leq c\oo{\intM{\nablanorm{2}{\varphi}{}\norm{A^{o}}\norm{H}} + \intM{\norm{\nabla\varphi}\norm{\nabla A^{o}}\norm{H}} + \intM{\norm{\nabla\varphi}\norm{A^{o}}\norm{\nabla H}} + \intM{\norm{\nabla\varphi}\norm{A^{o}}H^{2}}}^{2}\\
&\quad\leq c\Aoc{2}\oo{\intM{\nablanorm{2}{\varphi}{2}H^{2}}+\intM{\norm{\nabla\varphi}^{2}H^{4}}+\intM{\norm{\nabla\varphi}^{2}\norm{\nabla H}^{2}}}\\
&\qquad + c\intM{\norm{\nabla A^{o}}^{2}}\cdot\intM{\norm{\nabla\varphi}^{2}H^{2}}.\label{Preservedsmallnesstheorem1,04}
\end{align}
For the final term on the right hand side of \eqref{Preservedsmallnesstheorem1,03} we use Theorem \ref{MichaelSimon} with $u=\norm{\nabla\varphi}\norm{\nabla H}$:
\begin{multline}
\intM{\norm{\nabla\varphi}^{2}\norm{\nabla H}^{2}}\leq c\oo{\intM{\nablanorm{2}{\varphi}{}\norm{\nabla H}}+\intM{\norm{\nabla\varphi}\nablanorm{2}{H}{}}+\intM{\norm{\nabla\varphi}\norm{\nabla H}\norm{H}}}^{2}\\
\leq c\intM{\norm{\nabla H}^{2}}\cdot \oo{\intM{\nablanorm{2}{\varphi}{2}}+\intM{\norm{\nabla\varphi}^{2}H^{2}}} + c \intM{\nablanorm{2}{H}{2}}\cdot\intM{\norm{\nabla\varphi}^{2}}.\label{Preservedsmallnesstheorem1,05x}
\end{multline}
Substituting \eqref{Preservedsmallnesstheorem1,04} and \eqref{Preservedsmallnesstheorem1,05x} into \eqref{Preservedsmallnesstheorem1,03} yields
\begin{align*}
&\oo{1 - c\Aoc{2}}\oo{\intM{\nablanorm{2}{\varphi}{2}H^{2}} + \intM{\norm{\nabla\varphi}^{2}H^{4}}}\\
&\leq c\intM{\norm{\nabla\Delta\varphi}^{2}} + c\oo{\Aoc{2}+1}\intM{\norm{\nabla H}^{2}}\cdot \oo{\intM{\nablanorm{2}{\varphi}{2}}+\intM{\norm{\nabla\varphi}^{2}H^{2}}}\\
&\quad 
 + c \intM{\nablanorm{2}{H}{2}}\cdot\intM{\norm{\nabla\varphi}^{2}}.
\end{align*}
Therefore, if $\varepsilon_{0}>0$ is sufficiently small, by applying \eqref{LemmaPreservedSphericityEstimates1,Eqn3} to the above inequality yields
\begin{align}
&\intM{\nablanorm{2}{\varphi}{2}H^{2}} + \intM{\norm{\nabla\varphi}^{2}H^{4}}\\
&\quad\leq c\intM{\norm{\nabla\Delta\varphi}^{2}} + c \intM{\norm{\nabla H}^{2}}\cdot\oo{\intM{\norm{\Delta\varphi}^{2}}+\intM{\norm{\nabla A^{o}}^{2}}\cdot\intM{\norm{\nabla\varphi}^{2}}}\\
&\qquad + c\intM{\norm{\nabla\varphi}^{2}}\cdot\oo{\intM{\norm{\Delta H}^{2}}+\intM{\norm{\nabla A^{o}}^{2}}\cdot\intM{\norm{\nabla H}^{2}}}\\
&\quad\leq c\intM{\norm{\nabla\Delta\varphi}^{2}} + c\intM{\norm{\nabla H}^{2}}\cdot \intM{\norm{\Delta\varphi}^{2}} + c\intM{\norm{\Delta H}^{2}}\cdot\intM{\norm{\nabla\varphi}^{2}}\\
&\qquad + c\oo{\intM{\norm{\nabla A^{o}}^{2}}}^{2}\cdot\intM{\norm{\nabla\varphi}^{2}}.\label{Preservedsmallnesstheorem1,05}
\end{align}
Lastly, applying the following estimate (which follows immediately from \eqref{MultliplicativeSobolevLemma2Eqn1}):
\[
\oo{\intM{\norm{\nabla A^{o}}^{2}}}^{2} \leq c \Aoc{2} \intM{\norm{\Delta H}^{2}},
\]
we then obtain the estimate
\begin{multline}
\intM{\nablanorm{2}{\varphi}{2}H^{2}} + \intM{\norm{\nabla\varphi}^{2}H^{4}}\\
\leq c\intM{\norm{\nabla\Delta\varphi}^{2}} + c\intM{\norm{\nabla H}^{2}}\cdot \intM{\norm{\Delta\varphi}^{2}} + c\intM{\norm{\Delta H}^{2}}\cdot\intM{\norm{\nabla\varphi}^{2}}.\label{Preservedsmallnesstheorem1,06}
\end{multline}
We are almost in a position to prove \eqref{LemmaPreservedSphericityEstimates1,Eqn4}, with $\llll{\nabla_{(3)}\varphi}_{2}^{2}$ the remaining term to be estimated. 

Two applications of the formula for the interchange of derivatives gives
\begin{equation}
\Delta\nabla_{(2)}\varphi = \nabla_{(2)}\Delta\varphi +2R\oo{\nabla_{(2)}\varphi  - \frac{1}{2}\Delta\varphi\cdot g} + \nabla R * \nabla\varphi.\label{InterchangeDerivFormula2}
\end{equation}
Therefore by using \eqref{InterchangeDerivFormula} and \eqref{InterchangeDerivFormula2} in a similar fashion to above, we have
\begin{align}
&\intM{\nablanorm{3}{\varphi}{2}}=-\intM{\inner{\nabla_{(2)}\varphi,\Delta\nabla_{(2)}\varphi}}\\
&=-\intM{\inner{\nabla_{(2)}\varphi,\nabla_{(2)}\Delta\varphi + 2R\oo{\nabla_{(2)}\varphi  - \frac{1}{2}\Delta\varphi\cdot g}}} +\intM{\nabla_{(2)}\varphi * \nabla\varphi * \nabla R}\\
&= -\intM{\inner{\nabla_{(2)}\varphi,\nabla_{(2)}\Delta\varphi}} -2\intM{\oo{\nablanorm{2}{\varphi}{2}-\frac{1}{2}\norm{\Delta\varphi}^{2}}R} + \intM{\nabla_{(2)}\varphi * \nabla\varphi * \nabla R}\\
&=\intM{\inner{\Delta\nabla\varphi,\nabla\Delta\varphi}} -2\intM{\oo{\nablanorm{2}{\varphi}{2}-\frac{1}{2}\norm{\Delta\varphi}^{2}}R} + \intM{\nabla_{(2)}\varphi * \nabla\varphi * \nabla R}\\
&=\intM{\inner{\nabla\Delta\varphi + \frac{1}{2}R\,\nabla\varphi,\nabla\Delta\varphi}} -2\intM{\oo{\nablanorm{2}{\varphi}{2}-\frac{1}{2}\norm{\Delta\varphi}^{2}}R} + \intM{\nabla_{(2)}\varphi * \nabla\varphi * \nabla R}\\
&=\intM{\norm{\nabla\Delta\varphi}^{2}} + \frac{1}{2}\intM{\inner{\nabla\varphi,\nabla\Delta\varphi}R}-2\intM{\oo{\nablanorm{2}{\varphi}{2}-\frac{1}{2}\norm{\Delta\varphi}^{2}}R}\\
&\quad + \intM{\nabla_{(2)}\varphi * \nabla\varphi * \nabla R}\\
&=\intM{\norm{\nabla\Delta\varphi}^{2}} - \frac{1}{2}\intM{\norm{\Delta \varphi}^{2}R}-2\intM{\oo{\nablanorm{2}{\varphi}{2}-\frac{1}{2}\norm{\Delta\varphi}^{2}}R} + \intM{\nabla_{(2)}\varphi * \nabla\varphi * \nabla R}\\
&=\intM{\norm{\nabla\Delta\varphi}^{2}} - 2\intM{\nablanorm{2}{\varphi}{2}R} + \frac{1}{2}\intM{\norm{\Delta\varphi}^{2}R} + \intM{\nabla_{(2)}\varphi * \nabla\varphi * \nabla R}.
\end{align}
Next using the identities $R=\frac{1}{2}\oo{H^{2} - 2\norm{A^{o}}^{2}}$ and $\norm{\Delta \varphi} = \norm{\inner{g,\nabla_{(2)}\varphi}}\leq \sqrt{2}\nablanorm{2}{\varphi}{}$, the above can be rearranged to give
\begin{multline}
\intM{\nablanorm{3}{\varphi}{2}} + \frac{1}{2}\intM{\nablanorm{2}{\varphi}{2}H^{2}} + \frac{1}{2}\intM{\norm{\Delta\varphi}^{2}\norm{A^{o}}^{2}}\\
\leq \intM{\norm{\nabla\Delta\varphi}^{2}} + 2\intM{\nablanorm{2}{\varphi}{2}\norm{A^{o}}^{2}} + \intM{\nabla_{(2)}\varphi * \nabla\varphi * \nabla R}.\label{Preservedsmallnesstheorem1,07}
\end{multline}

Using $R=H * H + A^{o} * A^{o}$ and integration by parts, the $P$-style terms in the last integral can be estimated as follows:
\begin{align}
&\intM{\nabla_{(2)}\varphi * \nabla\varphi * \nabla R}= \intM{\nabla_{(2)}\varphi * \nabla\varphi * \nabla \oo{H * H + A^{o} * A^{o}}}\\
&= \intM{\nabla_{(2)}\varphi * \nabla\varphi * \nabla H * H} + \intM{\nabla\oo{\nabla_{(2)}\varphi * \nabla\varphi } * A^{o} * A^{o}}\\
&\leq c\intM{\nablanorm{2}{\varphi}{}\norm{\nabla\varphi}\norm{\nabla H}\norm{H}} + c\intM{\oo{\nablanorm{3}{\varphi}{}\norm{\nabla\varphi}+\nablanorm{2}{\varphi}{2}}\norm{A^{o}}^{2}}\\
&\leq c\oo{\intM{\nablanorm{2}{\varphi}{2}\norm{\nabla\varphi}^{2}}}^{\frac{1}{2}}\cdot\oo{\intM{\norm{\nabla H}^{2}H^{2}}}^{\frac{1}{2}} + c\oo{\intM{\nablanorm{3}{\varphi}{2}}}^{\frac{1}{2}}\cdot\oo{\intM{\norm{\nabla\varphi}^{2}\norm{A^{o}}^{4}}}^{\frac{1}{2}}\\
&\quad + c\intM{\nablanorm{2}{\varphi}{2}\norm{A^{o}}^{2}}\\
&\leq c\oo{\intM{\norm{\Delta H}^{2}}}^{\frac{1}{2}}\cdot\oo{\intM{\nablanorm{2}{\varphi}{2}\norm{\nabla\varphi}^{2}}}^{\frac{1}{2}} + c\oo{\intM{\nablanorm{3}{\varphi}{2}}}^{\frac{1}{2}}\cdot\oo{\intM{\norm{\nabla\varphi}^{2}\norm{A^{o}}^{4}}}^{\frac{1}{2}}\\
&\quad + c\intM{\nablanorm{2}{\varphi}{2}\norm{A^{o}}^{2}}\\
&\leq c\oo{\intM{\norm{\Delta H}^{2}}}^{\frac{1}{2}}\cdot\oo{\intM{\nablanorm{2}{\varphi}{2}\norm{\nabla\varphi}^{2}}}^{\frac{1}{2}} + \frac{1}{2}\intM{\nablanorm{3}{\varphi}{2}} + c\intM{\norm{\nabla\varphi}^{2}\norm{A^{o}}^{4}}\\
&\quad + c\intM{\nablanorm{2}{\varphi}{2}\norm{A^{o}}^{2}}.\label{Preservedsmallnesstheorem1,08x}
\end{align}
Substituting this back into \eqref{Preservedsmallnesstheorem1,07} and absorbing the positive factor of $\llll{\nabla_{(3)}\varphi}_{2}^{2}$ into the left hand side yields
\begin{align}
&\frac{1}{2}\intM{\nablanorm{3}{\varphi}{2}} + \frac{1}{2}\intM{\nablanorm{2}{\varphi}{2}H^{2}} + \frac{1}{2}\intM{\norm{\Delta\varphi}^{2}\norm{A^{o}}^{2}}\\ \nonumber
&\leq \intM{\norm{\nabla\Delta\varphi}^{2}} + c\intM{\nablanorm{2}{\varphi}{2}\norm{A^{o}}^{2}} + c\intM{\norm{\nabla\varphi}^{2}\norm{A^{o}}^{4}}\\ \nonumber 
&\quad +  c\oo{\intM{\norm{\Delta H}^{2}}}^{\frac{1}{2}}\cdot\oo{\intM{\nablanorm{2}{\varphi}{2}\norm{\nabla\varphi}^{2}}}^{\frac{1}{2}} .\label{Preservedsmallnesstheorem1,08}
\end{align}
We spend some time estimating the terms on the right hand side of \eqref{Preservedsmallnesstheorem1,08}.
Firstly by applying Theorem \ref{MichaelSimon} with $u=\norm{\nabla\varphi}\norm{A^{o}}^{2}$, \eqref{MultliplicativeSobolevLemma2Eqn1} and Young's inequality, we have
\begin{align}
&\intM{\norm{\nabla\varphi}^{2}\norm{A^{o}}^{4}}\leq c\oo{\intM{\nablanorm{2}{\varphi}{}\norm{A^{o}}^{2}}+\intM{\norm{\nabla\varphi}\norm{\nabla A^{o}}\norm{A^{o}}}+\intM{\norm{\nabla\varphi}\norm{A^{o}}^{2}\norm{H}}}^{2}\\
&\quad \leq c \Aoc{2}\intM{\nablanorm{2}{\varphi}{2}\norm{A^{o}}^{2}} + \intM{\norm{\nabla\varphi}^{2}\norm{A^{o}}^{2}}\cdot\oo{\intM{\norm{\nabla A^{o}}^{2}}+\intM{\norm{A^{o}}^{2}H^{2}}}\\
&\quad \leq c \Aoc{2}\intM{\nablanorm{2}{\varphi}{2}\norm{A^{o}}^{2}} + c\Aoc{}\oo{\intM{\norm{\Delta H}^{2}}}^{\frac{1}{2}}\oo{\intM{\norm{\nabla\varphi}^{2}}}^{\frac{1}{2}}\oo{\intM{\norm{\nabla\varphi}^{2}\norm{A^{o}}^{4}}}^{\frac{1}{2}}\\
&\quad \leq c \Aoc{2}\intM{\nablanorm{2}{\varphi}{2}\norm{A^{o}}^{2}} + \eta\intM{\norm{\nabla\varphi}^{2}\norm{A^{o}}^{4}} + c\,\eta^{-1}\Aoc{2}\intM{\norm{\Delta H}^{2}}\cdot\intM{\norm{\nabla\varphi}^{2}}
\end{align}
for any $\eta>0$, which implies the estimate
\begin{equation}
\intM{\norm{\nabla\varphi}^{2}\norm{A^{o}}^{4}} \leq c \Aoc{2}\intM{\nablanorm{2}{\varphi}{2}\norm{A^{o}}^{2}} + c\Aoc{2}\intM{\norm{\Delta H}^{2}}\cdot\intM{\norm{\nabla\varphi}^{2}}.\label{Preservedsmallnesstheorem1,09}
\end{equation}
Next by Theorem \ref{MichaelSimon} with $u=\nablanorm{2}{\varphi}{}\norm{A^{o}}$ along with the estimates \eqref{LemmaPreservedSphericityEstimates1,Eqn3} and \eqref{MultliplicativeSobolevLemma2Eqn1} we have
\begin{align}
&\intM{\nablanorm{2}{\varphi}{2}\norm{A^{o}}^{2}}
\leq c\oo{\intM{\nablanorm{3}{\varphi}{}\norm{A^{o}}}+\intM{\nablanorm{2}{\varphi}{}\norm{\nabla A^{o}}}+\intM{\nablanorm{2}{\varphi}{}\norm{A^{o}}\norm{H}}}^{2}\\
&\leq c\Aoc{2}\intM{\nablanorm{3}{\varphi}{2}} + c\intM{\nablanorm{2}{\varphi}{2}}\cdot\oo{\intM{\norm{\nabla A^{o}}^{2}}+\intM{\norm{A^{o}}^{2}H^{2}}}\\
&\leq c\Aoc{2}\intM{\nablanorm{3}{\varphi}{2}} + c\Aoc{}\intM{\norm{\Delta\varphi}^{2}}\cdot\oo{\intM{\norm{\Delta H}^{2}}}^{\frac{1}{2}}\\
&\leq c\Aoc{2}\intM{\nablanorm{3}{\varphi}{2}} + c\Aoc{}\oo{\intM{\norm{\nabla\varphi}^{2}}}^{\frac{1}{2}}\oo{\intM{\norm{\nabla\Delta\varphi}^{2}}}^{\frac{1}{2}}\oo{\intM{\norm{\Delta H}^{2}}}^{\frac{1}{2}}\\
&\leq c\Aoc{2}\intM{\nablanorm{3}{\varphi}{2}} + c\Aoc{}\oo{\intM{\norm{\nabla\Delta\varphi}^{2}}+ \intM{\norm{\Delta H}^{2}}\cdot\intM{\norm{\nabla\varphi}^{2}}}.\label{Preservedsmallnesstheorem1,10}
\end{align}
Therefore combining \eqref{Preservedsmallnesstheorem1,09} and \eqref{Preservedsmallnesstheorem1,10} gives
\begin{multline}
\intM{\nablanorm{2}{\varphi}{2}\norm{A^{o}}^{2}} + \intM{\norm{\nabla\varphi}^{2}\norm{A^{o}}^{4}}\\
\leq c\Aoc{2}\intM{\nablanorm{3}{\varphi}{2}} + c\Aoc{}\oo{\intM{\norm{\nabla\Delta\varphi}^{2}}+\intM{\norm{\Delta H}^{2}}\cdot\intM{\norm{\nabla\varphi}^{2}}}.\label{Preservedsmallnesstheorem1,11}
\end{multline}
Now we estimate the last outstanding term on the right hand side of \eqref{Preservedsmallnesstheorem1,08}. By applying Theorem \ref{MichaelSimon} with $u=\nablanorm{2}{\varphi}{}\norm{\nabla\varphi}$ we obtain
\begin{align}
&\intM{\nablanorm{2}{\varphi}{2}\norm{\nabla\varphi}^{2}} \leq c\oo{\intM{\nablanorm{3}{\varphi}{}\norm{\nabla\varphi}}+\intM{\nablanorm{2}{\varphi}{2}}+\intM{\nablanorm{2}{\varphi}{}\norm{\nabla\varphi}\norm{H}}}^{2}\\
&\quad \leq c \intM{\norm{\nabla\varphi}^{2}}\cdot\intM{\nablanorm{3}{\varphi}{2}} + c\oo{\intM{\nablanorm{2}{\varphi}{2}}+\intM{\norm{\nabla\varphi}^{2}H^{2}}}^{2}\\
&\quad \leq c \intM{\norm{\nabla\varphi}^{2}}\cdot\intM{\nablanorm{3}{\varphi}{2}} + c\oo{\intM{\norm{\Delta \varphi}^{2}}}^{2} + c\oo{\intM{\norm{\nabla A^{o}}^{2}}}^{2}\oo{\intM{\norm{\nabla\varphi}^{2}}}^{2}\\
&\quad \leq c \intM{\norm{\nabla\varphi}^{2}}\cdot\intM{\nablanorm{3}{\varphi}{2}} + c\oo{\intM{\norm{\Delta \varphi}^{2}}}^{2} + c\intM{\norm{\Delta H}^{2}}\cdot\oo{\intM{\norm{\nabla\varphi}^{2}}}^{2}\\
&\quad \leq c \intM{\norm{\nabla\varphi}^{2}}\cdot\intM{\nablanorm{3}{\varphi}{2}} + c\intM{\norm{\nabla\varphi}^{2}}\cdot\intM{\norm{\nabla\Delta\varphi}^{2}} + c\intM{\norm{\Delta H}^{2}}\cdot\oo{\intM{\norm{\nabla\varphi}^{2}}}^{2}.\label{Preservedsmallnesstheorem1,12}
\end{align}
We substitute \eqref{Preservedsmallnesstheorem1,11} and \eqref{Preservedsmallnesstheorem1,12} back into \eqref{Preservedsmallnesstheorem1,08}.
Substituting the result back into \eqref{Preservedsmallnesstheorem1,07} then gives
\begin{align*}
&\oo{\frac{1}{2}-c\Aoc{2}}\intM{\nablanorm{3}{\varphi}{2}} + \frac{1}{2}\intM{\nablanorm{2}{\varphi}{2}H^{2}} + \frac{1}{2}\intM{\norm{\Delta\varphi}^{2}\norm{A^{o}}^{2}}\\
&\leq \oo{1+c\Aoc{}}\intM{\norm{\nabla\Delta\varphi}^{2}} + c\intM{\norm{\Delta H}^{2}}\cdot\intM{\norm{\nabla\varphi}^{2}}\\
&\quad +c\oo{\intM{\norm{\Delta H}^{2}}}^{\frac{1}{2}}\oo{\intM{\norm{\nabla\varphi}^{2}}}^{\frac{1}{2}}\oo{\intM{\nablanorm{3}{\varphi}{2}}}^{\frac{1}{2}}\\
&\quad + c\oo{\intM{\norm{\Delta H}^{2}}}^{\frac{1}{2}}\cdot\oo{\intM{\norm{\nabla\varphi}^{2}}}^{\frac{1}{2}}\cdot\oo{\intM{\norm{\nabla\Delta\varphi}^{2}}}^{\frac{1}{2}}\\
&\leq c\oo{1+\Aoc{}}\intM{\norm{\nabla\Delta\varphi}^{2}} + c\,\eta^{-1}\intM{\norm{\Delta H}^{2}}\cdot\intM{\norm{\nabla\varphi}^{2}} + \eta\intM{\nablanorm{3}{\varphi}{2}}
\end{align*}
for any $\eta>0$, where we have used Young's inequality in the last step. Absorbing the small positive factor of $\llll{\nabla_{(3)}\varphi}_{2}^{2}$ into the left hand side then gives, for $\varepsilon_{0}>0$ sufficiently small,
\begin{equation}
\intM{\nablanorm{3}{\varphi}{2}} + \intM{\nablanorm{2}{\varphi}{2}H^{2}} + \intM{\norm{\Delta\varphi}^{2}\norm{A^{o}}^{2}}\leq c \intM{\norm{\nabla\Delta\varphi}^{2}} + c\intM{\norm{\Delta H}^{2}}\cdot\intM{\norm{\nabla\varphi}^{2}}.\label{Preservedsmallnesstheorem1,13}
\end{equation}
Combining \eqref{Preservedsmallnesstheorem1,06} and \eqref{Preservedsmallnesstheorem1,13} then gives \eqref{LemmaPreservedSphericityEstimates1,Eqn4}.
\end{proof}
\bibliographystyle{plain}
\bibliography{polybib}
\end{document}